\documentclass{amsart}
\usepackage{graphicx}
\usepackage{amsmath}
\usepackage{amscd}
\usepackage{mathtools}
\usepackage{orcidlink}
\usepackage{enumerate}
\usepackage{tikz-cd}
\usepackage[all,cmtip]{xy}
\usepackage{tikz}
\usepackage{amssymb}
\usepackage{ascmac} 
\usepackage{color}
\usepackage{autobreak}
\usepackage{breqn}
\usepackage{array}
\usepackage{makecell}
\usepackage{longtable}
\usepackage{docmute}

\newtheorem{dfn}{Definition}[section]
\newtheorem{thm}[dfn]{Theorem}
\newtheorem{prop}[dfn]{Proposition}
\newtheorem{lem}[dfn]{Lemma}

\theoremstyle{definition}

\newcommand{\ol}{\overline}
\newcommand{\Sing}{\mathrm{Sing}}
\newcommand{\Spec}{\mathrm{Spec}\,}
\newcommand{\length}{\mathrm{length}}

\newcommand{\Div}{\mathrm{Div}}

\title[Singular fibers of elliptic fibrations]{Singular fibers of elliptic fibrations on normal $K3$ surfaces}

\author{Taro Hayashi}
\author{Kazuki Utsumi}
\address{(Taro Hayashi)
Department of Mathematical Sciences,
Ritsumeikan University,
1$-$1$-$1 Nojihigashi, Kusatsu, Shiga, 525$-$8577, Japan}
\email{haya4taro@gmail.com}
\address{(Kazuki Utsumi)
College of Science and Engineering, Ritsumeikan University,
1$-$1$-$1 Nojihigashi, Kusatsu, Shiga, 525$-$8577, Japan}
\email{kutsumi@fc.ritsumei.ac.jp}

\date{\today}
\subjclass{Primary 14J27; Secondary 14J28, 14J17}
\keywords{normal K3 surface, elliptic fibration, singular fiber, ADE singularity.}

\begin{document}
\maketitle
\begin{abstract}
We study singular fibers of elliptic fibrations on normal \(K3\) surfaces. 
For a normal \(K3\) surface \(Y\) with minimal resolution \(\nu \colon X \to Y\), we describe singular fibers on \(Y\) in terms of contractions of suitable ADE configurations of \((-2)\)-curves in singular fibers on \(X\). 
We determine ADE configurations occurring in singular fibers and describe fibers obtained after contraction. 
As a consequence, we obtain a description of singular fibers of elliptic fibrations on normal \(K3\) surfaces.
\end{abstract}

\section{Introduction}

Throughout this paper, we work over an algebraically closed field of
characteristic \(0\).
The theory of elliptic fibrations on smooth surfaces is governed, in a
fundamental way, by Kodaira's classification of singular fibers
\cite{k63}; see also \cite{schs10} for a modern account of elliptic
surfaces.  In the case of a smooth \(K3\) surface, every singular fiber
of a relatively minimal elliptic fibration is one of the Kodaira fibers
\[
I_n\ (1 \leq n \leq 19),\quad I_m^*\ (0 \leq m\leq 14),\quad
II,\ III,\ IV,\ II^*,\ III^*,\ IV^* .
\]
Singular fibers of elliptic fibrations on smooth \(K3\) surfaces have
also been studied systematically.  For example, Miranda and Persson
studied configurations of \(I_n\)-fibers on elliptic \(K3\) surfaces
\cite{mp89}, Shimada studied the possible ADE-types of singular fibers
of complex Jacobian elliptic \(K3\) surfaces \cite{s00}, and Shimada and
Zhang classified extremal elliptic \(K3\) surfaces \cite{sz01}.
Elliptic \(K3\) surfaces with large or maximal singular fibers were
studied in \cite{s03,sch07,schs13}.
There is also a closely related line of work in which one fixes a special
\(K3\) surface, or a special class of \(K3\) surfaces, and determines
the elliptic fibrations and the singular fibers that occur on them.  This
includes Jacobian fibrations on Kummer surfaces and related \(K3\)
surfaces \cite{o89,n96,ku14,ku18,u23}, and elliptic fibrations on
singular or special \(K3\) surfaces \cite{k06,u12,be15,u16,ba18,gs19,gs20,c25}.

In this paper, we study elliptic fibrations on normal \(K3\) surfaces.
Normal \(K3\) surfaces have been studied from several viewpoints; see,
for example, Shimada \cite{s07}.  They also appear naturally in the
study of smooth loci and universal covers of open \(K3\) surfaces
\cite{cko03}.  The first author studied normal \(K3\) surfaces in
connection with automorphisms of \(K3\) surfaces, where their rational
double points were also analyzed
\cite[Theorems~1.1 and~1.2]{h24}, \cite[Theorem~1.3]{h26}.
These results indicate that, on a normal \(K3\) surface, the rational
double points should be treated as part of the geometry, rather than as
auxiliary data.  This motivates us to describe singular fibers not only
by their surviving components, but also by the positions and ADE types
of the rational double points lying on them.
However, singular fibers of elliptic fibrations on normal \(K3\)
surfaces do not seem to have been studied systematically.  The purpose
of this paper is to initiate such a study.

Let \(Y\) be a normal \(K3\) surface and let
\[
\nu\colon X\rightarrow Y
\]
be its minimal resolution.  Then \(X\) is a smooth \(K3\) surface.  The
singularities of \(Y\) are rational double points; equivalently, the
exceptional curves on the minimal resolution form configurations whose
dual graphs are Dynkin diagrams of type \(A_k\), \(D_l\), or \(E_m\)
\cite{a66} where $1\leq k$, $4\leq l$, and $m=6,7,8$.  Conversely, by Artin's contractibility criterion
\cite{a62}, negative definite configurations of smooth rational curves
can be contracted under suitable hypotheses.

Suppose that
\[
f\colon Y\rightarrow \mathbb P^1
\]
is an elliptic fibration.  Then the composition
\[
g:=f\circ \nu\colon X\rightarrow \mathbb P^1
\]
is an elliptic fibration on the smooth \(K3\) surface \(X\).  Moreover,
every exceptional curve of \(\nu\) is contained in a singular fiber of
\(g\).  Thus the study of singular fibers of \(f\) is reduced to the
following problem: given a Kodaira fiber \(F\) of an elliptic fibration
on a smooth \(K3\) surface and a reduced subdivisor \(D\subset F\) whose
dual graph is of ADE type, describe the fiber obtained after contracting
\(D\).
We call such a divisor \(D\) an ADE reduced divisor, abbreviated as an ADER. If
\[
\nu\colon X\rightarrow Y
\]
is the contraction of \(D\), then each connected component of \(D\)
gives a rational double point on \(Y\) of the corresponding ADE type.
The remaining irreducible components of \(F\) survive on \(Y\), and their
multiplicities in the induced fiber are the same as their multiplicities
in \(F\).  Hence the main task is to determine which ADE configurations
can occur inside each Kodaira fiber and to describe how the surviving
components meet at the newly created rational double points.

The first basic observation is that singular fibers on \(Y\) are
controlled by singular fibers on the minimal resolution.  More precisely,
if \(f\colon Y\to \mathbb P^1\) is an elliptic fibration and
\(g=f\circ\nu\), then the exceptional curves of \(\nu\) are contained in
singular fibers of \(g\); see Proposition~\ref{2.1}.  Moreover,
\(f^*(s)\) is a singular fiber if and only if \(g^*(s)\) is a singular
fiber; see Lemma~\ref{2.3}.  Conversely, contracting an ADE configuration
contained in a singular fiber of \(g\) produces a normal \(K3\) surface
together with an induced elliptic fibration.  We also prove that, after
such a contraction, the irreducible components and multiplicities of the
fiber on \(Y\) are obtained simply by deleting the contracted components
from the fiber on \(X\); see Theorem~\ref{2.4}.

We then analyze each Kodaira type.  Kodaira fibers of type \(I_1\) and \(II\)
contain no ADE configuration of smooth rational components, and hence no
contraction occurs.  For Kodaira fibers of type \(I_2\), the only possible
connected ADER is of type \(A_1\), and its contraction transforms the
fiber into an irreducible nodal rational curve, which has underlying
Kodaira type \(I_1\); see
Propositions~\ref{prop:I2-ADER} and~\ref{prop:I2-contraction}.  For
Kodaira fibers of type \(III\), the only possible connected ADER is again of
type \(A_1\), and its contraction transforms the fiber into an
irreducible cuspidal rational curve, which has underlying Kodaira type
\(II\); see
Propositions~\ref{prop:III-ADER} and~\ref{prop:III-contraction}.
Similarly, for a Kodaira fiber of type \(IV\), the possible connected ADERs are
of types \(A_1\) and \(A_2\), and the corresponding contractions yield
the two types described in Propositions~\ref{prop:ADE-IV}
and~\ref{prop:IV-contraction}.

For fibers of type \(I_n\) with \(n\geq 3\), we show that every connected
component of an ADER is of type \(A\).  More precisely, the possible
ADERs are exactly
\[
A_{\lambda_1}\oplus\cdots\oplus A_{\lambda_r}
\]
with
\[
\lambda_1+\cdots+\lambda_r+r\leq n.
\]
Here the symbol \(\oplus\) means the disjoint union of Dynkin diagrams.
After contracting such a divisor, the resulting fiber has
\(n-\sum_i\lambda_i\) surviving components.  Put
\[
m:=n-\sum_i\lambda_i.
\]
If \(m\geq 2\), the surviving components form a cycle, and the resulting
fiber has underlying Kodaira type \(I_m\).  If \(m=1\), the resulting fiber
is an irreducible nodal rational curve, and hence has underlying Kodaira
type \(I_1\); see
Propositions~\ref{prop:ADE-In-fiber} and~\ref{prop:In-fiber-contraction}.

The cases of type \(I_0^*\) and \(I_n^*\) require a more detailed
analysis.  For \(I_0^*\), the possible ADE configurations are
\[
D_4,\qquad A_3,\qquad A_2,\qquad A_1^{\oplus s}
\quad (1\leq s\leq 4),
\]
and the resulting fibers are described in
Propositions~\ref{prop:ADE-I0star} and~\ref{prop:I0star-contraction}.
Some of the new fiber types appearing in this case are introduced in
Definitions~\ref{dfn:IV-contracted-fibers} and~\ref{dfn:new-I0star-fibers}.

For \(I_n^*\) with \(n\geq 1\), we divide the classification according
to whether the two end components of the central chain are contained in
the ADER.  This leads to several families of decorated fibers, denoted
by symbols such as
\[
\mathfrak D_m^{\mathrm{mark},1}(\Gamma),\quad
\mathfrak A_m^{\mathrm{mark},1,1}(\Gamma_L,\Gamma_R),\quad
\mathfrak I_m^{*,\mathrm{fork},\varepsilon}(\Gamma),\quad
\mathfrak A_m^{\tau_L,\tau_R}(\Gamma_L,\Gamma_R),
\]
and
\[
\mathfrak S_t(\Gamma).
\]
The precise definitions of these fiber types are given in
Definitions~\ref{dfn:new-I0star-fibers},
\ref{dfn:marked-fibers}, \ref{dfn:left-end-marked-fibers}, and
\ref{dfn:both-end-marked-fibers}.  The three cases for \(I_n^*\) are
treated in Propositions~\ref{prop:Instar-no-end},
\ref{prop:Instar-left-end}, and~\ref{prop:Instar-both-ends}, and the
corresponding contracted fibers are described in
Propositions~\ref{prop:Instar-no-end-contraction},
\ref{prop:Instar-left-end-contraction}, and
\ref{prop:Instar-both-ends-contraction}.  These symbols record not only
the weighted configuration of surviving components but also the
positions and ADE types of the rational double points created by the
contraction.

Finally, we treat the additive fibers
\[
II^*,\qquad III^*,\qquad IV^*.
\]
For each of these types, we give a complete classification of ADERs
contained in the fiber.  The case \(II^*\) is treated in
Propositions~\ref{prop:ADE-IIstar-no-C}, \ref{prop:ADE-IIstar-C},
and~\ref{prop:IIstar-contraction}; the case \(III^*\) is treated in
Propositions~\ref{prop:ADE-IIIstar-no-C}, \ref{prop:ADE-IIIstar-C},
and~\ref{prop:IIIstar-contraction}; and the case \(IV^*\) is treated in
Propositions~\ref{prop:ADE-IVstar-no-branch},
\ref{prop:ADE-IVstar-with-branch}, and~\ref{prop:IVstar-contraction}.
The corresponding contracted fibers are denoted by the new symbols
introduced in Definitions~\ref{dfn:IIstar-contracted-fibers},
\ref{dfn:IIIstar-contracted-fibers}, and
\ref{dfn:IVstar-contracted-fibers}.  These definitions describe the
resulting fibers by deleting the contracted components from the original
Kodaira fiber, preserving the multiplicities of the remaining components,
and marking the rational double points produced by the connected
components of the ADER.  The explicit lists are given in
Appendices~\ref{app:IIstar-no-branch-list},
\ref{app:IIstar-with-branch-list},
\ref{app:IIIstar-no-branch-list},
\ref{app:IIIstar-with-branch-list},
\ref{app:IVstar-no-branch-list}, and
\ref{app:IVstar-with-branch-list}.

The paper is organized as follows.  In Section~2 we collect basic facts
on normal \(K3\) surfaces, elliptic fibrations, contractions of ADE
configurations, and local criteria for the smoothness, nodality, and
cuspidality of curves on normal surfaces.  In Section~5 we classify the
ADE configurations contained in each Kodaira fiber and describe the
singular fibers obtained after their contractions.  This gives a
systematic description of singular fibers of elliptic fibrations on
normal \(K3\) surfaces in terms of Kodaira fibers on their minimal
resolutions.

\section{Preliminaries}

In this section, we collect some basic facts that will be used throughout the
paper.  We first recall the relation between an elliptic fibration on a normal
\(K3\) surface and the induced elliptic fibration on its minimal resolution.
We then review the \(\delta\)-invariant of curves and a simple criterion for
deciding whether an irreducible curve obtained by contraction is smooth, nodal,
or cuspidal at a rational double point.
\subsection{Fibers on the minimal resolution}
Let $Y$ be a projective surface, possibly with singularities. An \emph{elliptic fibration} on $Y$ is a surjective morphism $f \colon Y \to C$
to a smooth projective curve $C$ such that the general fiber of $f$ is a smooth curve of genus one.
For any \(t\in C\), the fiber
\[
F_t:=f^*(t)
\]
is a Cartier divisor on \(Y\). 
Since any two fibers are linearly equivalent and
distinct fibers are disjoint, one has
\[
F_t^2=0.
\]
A fiber $F_t$ over a point $t \in C$ is called a \emph{singular fiber} if $F_t$ is not a smooth curve of genus one.
If moreover there exists a section
$\sigma \colon C \to Y$
satisfying $f \circ \sigma = \mathrm{id}_C$, then $f$ is called a \emph{Jacobian fibration} (or a \emph{Jacobian elliptic fibration}).

For an effective divisor
\[
D=\sum_i m_iD_i
\]
on a surface, where the \(D_i\) are distinct irreducible curves and
\(m_i>0\), we denote by
\[
\operatorname{Supp}D:=\bigcup_i D_i
\]
the support of \(D\), regarded as a reduced closed subset.
\begin{prop}\label{2.1}
Let $Y$ be a normal $K3$ surface admitting an elliptic fibration $f \colon Y \to \mathbb{P}^1$, and let
$\nu \colon X \to Y$
be its minimal resolution. Denote by
$g := f \circ \nu \colon X \to \mathbb{P}^1$
the induced elliptic fibration on $X$.
Then $Y$ is obtained from $X$ by contracting certain ADE configurations of $(-2)$-curves contained in singular fibers of $g$. 
\end{prop}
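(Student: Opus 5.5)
The plan is to combine the standard description of the minimal resolution of a normal \(K3\) surface with a Zariski-type argument placing exceptional curves inside fibers. First, by definition a normal \(K3\) surface \(Y\) has only rational double points, and its minimal resolution \(\nu\colon X\to Y\) is a smooth \(K3\) surface \cite{a66}. Rational double points are exactly the surface singularities whose minimal resolution has exceptional locus a configuration of smooth rational \((-2)\)-curves with Dynkin dual graph of type \(A\), \(D\) or \(E\). Hence \(\nu\) is the contraction of finitely many ADE configurations of \((-2)\)-curves, one for each singular point of \(Y\). It then remains to show that each exceptional curve lies in a singular fiber of \(g=f\circ\nu\).

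For the fiber statement, let \(E\) be an irreducible exceptional curve of \(\nu\). Since \(\nu(E)\) is a point \(p\), the composite \(g(E)=f(p)=:t\) is a point. So \(E\subset \operatorname{Supp} g^*(t)\) and \(E\) is a fiber component. Equivalently, \(E\cdot g^*(t')=E\cdot\nu^*f^*(t')=0\) by the projection formula, so \(E\) is vertical. The main point is that \(g^*(t)\) is singular. Suppose instead that \(g^*(t)\) were a smooth curve of genus one. Then it would be irreducible with multiplicity one, so \(E=g^*(t)\) and \(E^2=0\), contradicting \(E^2=-2\). One could alternatively argue that \(E\) is rational while a smooth fiber has genus one. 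Therefore \(E\) lies in a singular fiber.

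Finally, each connected ADE configuration is connected and maps to a single point \(p\). So the whole configuration lies in the single fiber \(g^*(f(p))\), which is singular by the previous step. This gives the description of \(Y\) as the contraction of ADE configurations contained in singular fibers of \(g\). I expect no serious obstacle. The only point needing care is to justify cleanly that \(\nu\) contracts precisely ADE configurations of \((-2)\)-curves, that is, that it is crepant with \(K_X=\nu^*K_Y=0\). This follows from the rationality of the singularities and the classification recalled in the introduction.
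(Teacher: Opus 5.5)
Your proposal is correct and follows essentially the same route as the paper: each exceptional configuration \(\nu^{-1}(y)\) lies in the fiber \(g^*(f(y))\). Since it consists of rational \((-2)\)-curves, that fiber cannot be a smooth genus-one curve; you make this last step explicit (a smooth fiber would be irreducible, forcing \(E=g^*(t)\) and \(E^2=0\neq -2\)), whereas the paper leaves it implicit in its ``hence''.
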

\begin{proof}
Let $y\in Y$ be a singular point of $Y$, and put  $t:=f(y)\in\mathbb P^1$.
Since $g=f\circ \nu$, we have $\nu^{-1}(y)\subset g^{-1}(t)$.
Since $y$ is a rational double point, $\nu^{-1}(y)$ is an
ADE configuration of smooth rational $(-2)$-curves, and hence $g^{*}(t)$ is a singular fiber of $g$.
\end{proof}
\begin{lem}\label{2.2}
Let \(f \colon Y \to \mathbb{P}^1\) be an elliptic fibration on a normal
\(K3\) surface \(Y\), and let \(\nu \colon X \to Y\) be the minimal resolution.
We put $g:=f\circ\nu \colon X\to \mathbb P^1$.
Let \(s\in\mathbb P^1\).  If
\[
\operatorname{Supp} f^*(s)\cap \Sing(Y)=\emptyset,
\]
then the pullback by \(\nu\) identifies the Cartier divisor \(f^*(s)\) with
the Cartier divisor \(g^*(s)\).
\end{lem}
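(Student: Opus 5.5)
The plan is to show that \(\nu\) is an isomorphism over a neighborhood of \(\operatorname{Supp} f^*(s)\), and then to note that pullback of Cartier divisors is compatible with composition of morphisms.

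\emph{Step 1: find an open set over which \(\nu\) is an isomorphism.} Since \(Y\) is a normal surface, \(\Sing(Y)\) is a finite set of points. It is therefore closed, and
\[
U:=Y\setminus \Sing(Y)
\]
is open. By hypothesis \(\operatorname{Supp} f^*(s)\subset U\). The minimal resolution is an isomorphism over the smooth locus, so
\[
\nu|_{\nu^{-1}(U)}\colon \nu^{-1}(U)\to U
\]
is an isomorphism.

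\emph{Step 2: compare the divisors as pullbacks.} By definition,
\[
g^*(s)=(f\circ\nu)^*(s)=\nu^*\bigl(f^*(s)\bigr)
\]
as Cartier divisors, because pullback is functorial: a local equation \(t_s\) of \(s\) pulls back to \(t_s\circ f\circ\nu\). So \(g^*(s)\) is the pullback of \(f^*(s)\) by \(\nu\).

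\emph{Step 3: locate the supports and conclude.} We have
\[
\operatorname{Supp} g^*(s)=\nu^{-1}\bigl(\operatorname{Supp} f^*(s)\bigr)\subset \nu^{-1}(U).
\]
Hence both divisors are supported in open sets that \(\nu\) identifies isomorphically. On \(\nu^{-1}(U)\), the local equations of \(g^*(s)\) are exactly the local equations of \(f^*(s)\) transported by the isomorphism \(\nu|_{\nu^{-1}(U)}\). Outside these open sets both divisors are trivial. Consequently \(\nu\) restricts to an isomorphism of schemes \(g^{-1}(s)\cong f^{-1}(s)\) with the same multiplicities. In particular, the irreducible components of \(g^*(s)\) are the strict transforms of those of \(f^*(s)\), with the same multiplicities, and no exceptional curve of \(\nu\) occurs in \(g^*(s)\).

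The only step that needs any care is Step 3: we must check that no exceptional curve can sneak into \(g^*(s)\). This is guaranteed because every exceptional curve maps to a singular point of \(Y\), and by hypothesis no singular point of \(Y\) lies on \(\operatorname{Supp} f^*(s)\). Apart from this, the argument is formal.
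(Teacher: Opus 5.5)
Your proposal is correct and follows essentially the same route as the paper: both use the functoriality $g^*(s)=\nu^*f^*(s)$ together with the fact that $\nu$ is an isomorphism over the smooth locus, which contains $\operatorname{Supp} f^*(s)$. Your Step~3 makes explicit, via $\operatorname{Supp} g^*(s)=\nu^{-1}(\operatorname{Supp} f^*(s))$, the check that no exceptional curve enters $g^*(s)$; the paper leaves this point implicit.
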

\begin{proof}
Since \(\operatorname{Supp} f^*(s)\) does not meet \(\Sing(Y)\), the Cartier
divisor \(f^*(s)\) is contained in the smooth locus of \(Y\).  The morphism
\(\nu\) is an isomorphism over the smooth locus of \(Y\).  Therefore the
pullback of the Cartier divisor \(f^*(s)\) by \(\nu\) is identified with
\(f^*(s)\).  Since
\[
\nu^*f^*(s)=(f\circ\nu)^*(s)=g^*(s),
\]
the pullback by \(\nu\) identifies \(f^*(s)\) with \(g^*(s)\) as Cartier
divisors.
\end{proof}

\begin{lem}\label{2.3}
Let \(f \colon Y \to \mathbb{P}^1\) be an elliptic fibration on a normal
\(K3\) surface \(Y\), and let \(\nu \colon X \to Y\) be the minimal resolution.
We put $g:=f\circ\nu \colon X\to \mathbb P^1$.
Let \(s\in\mathbb P^1\).  Then \(f^*(s)\) is a singular fiber if and only if
\(g^*(s)\) is a singular fiber.  In particular, if
\[
\operatorname{Supp} f^*(s)\cap \Sing(Y)\neq\emptyset,
\]
then \(f^*(s)\) is a singular fiber.
\end{lem}

\begin{proof}
First, we assume that $\operatorname{Supp} f^*(s)\cap \,\Sing(Y)=\emptyset$.
By Lemma~\ref{2.2}, the pullback by \(\nu\) identifies \(f^*(s)\) with
\(g^*(s)\) as Cartier divisors.  Hence \(f^*(s)\) is a singular fiber if and
only if \(g^*(s)\) is a singular fiber.

Next, we assume that $\operatorname{Supp} f^*(s)\cap \,\Sing(Y)\neq\emptyset$.
Then the Cartier divisor $g^*(s)=\nu^*f^*(s)$
contains exceptional curves over the singular points of \(Y\) lying on
\(\operatorname{Supp} f^*(s)\).  Hence \(g^*(s)\) is a singular fiber.
We assume that \(f^*(s)\) is a smooth curve of genus one.  Then \(g^*(s)\)
contains the strict transform of this smooth genus-one curve, together with
exceptional \((-2)\)-curves over the singular points of \(Y\) lying on
\(\operatorname{Supp} f^*(s)\).  In particular, \(g^*(s)\) has an irreducible
component of genus one. Since \(X\) is a smooth \(K3\) surface, the
elliptic fibration \(g\) is relatively minimal and has no multiple smooth
fibers.  Hence, by Kodaira's classification, every irreducible component of a
singular fiber of \(g\) is a rational curve.  Thus \(f^*(s)\) is a singular
fiber.
\end{proof}

\begin{thm}\label{2.4}
Let \(f \colon Y \to \mathbb P^1\) be an elliptic fibration on a normal
\(K3\) surface \(Y\), and let $\nu \colon X \to Y$
be the minimal resolution.  We put $g:=f\circ \nu \colon X \to \mathbb P^1$.
Let \(s\in\mathbb P^1\), and assume that
\[
\operatorname{Supp} f^*(s)\cap \Sing(Y)\neq\emptyset.
\]
We write
\[
g^*(s)=\sum_{i=1}^m a_iG_i+\sum_{j=1}^n b_jE_j,
\]
where \(G_1,\dots,G_m\) are the irreducible components of \(g^*(s)\) which are
not contracted by \(\nu\), and \(E_1,\dots,E_n\) are the irreducible components
of \(g^*(s)\) contracted by \(\nu\).
For each \(i=1,\dots,m\), we put $\overline{G_i}:=\nu(G_i)$,
regarded as an irreducible curve on \(Y\) with the reduced structure.  Then
\[
f^*(s)=\sum_{i=1}^m a_i\,\overline{G_i}.
\]
In other words, the fiber \(f^*(s)\) is obtained from \(g^*(s)\) by deleting
the components contracted by \(\nu\), while preserving the multiplicities of
the remaining components.
\end{thm}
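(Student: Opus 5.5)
The approach is to compare multiplicities of the Cartier divisor \(f^*(s)\) along each component \(\overline{G_i}\) with the multiplicities of \(g^*(s)=\nu^*f^*(s)\) along the strict transforms \(G_i\). The comparison works because \(\nu\) is an isomorphism away from finitely many points. First, since \(f^*(s)\) is an effective Cartier divisor on the normal surface \(Y\), it determines a Weil divisor. I would write it as
\[
f^*(s)=\sum_k c_k C_k
\]
with \(C_k\) distinct irreducible curves and \(c_k>0\), where \(c_k\) is the order of vanishing of a local equation of \(f^*(s)\) at the generic point of \(C_k\). This is well defined because \(Y\) is normal, so the local ring at the generic point of a curve is a DVR.

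Next I would identify the components. We have \(\nu^{-1}(\operatorname{Supp} f^*(s))=\operatorname{Supp} g^*(s)\). The images of the components of \(g^*(s)\) are the curves \(\overline{G_i}\) together with finitely many points \(\nu(E_j)\), which are singular points of \(Y\). Hence \(\operatorname{Supp} f^*(s)=\bigcup_i \overline{G_i}\). Moreover, the \(\overline{G_i}\) are pairwise distinct, since \(\nu\) is birational and \(G_i\) is the strict transform of \(\overline{G_i}\). So the curves \(C_k\) are exactly the \(\overline{G_i}\).

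For the multiplicities, let \(U:=Y\setminus\Sing(Y)\). Then \(\nu\) restricts to an isomorphism \(\nu^{-1}(U)\to U\). The open set \(U\cap\overline{G_i}\) is dense in \(\overline{G_i}\) because \(\Sing(Y)\) is finite, and the generic point of \(G_i\) maps to the generic point of \(\overline{G_i}\). A local equation \(h\) of \(f^*(s)\) near the generic point of \(\overline{G_i}\) pulls back to a local equation \(\nu^*h\) of \(g^*(s)\) near the generic point of \(G_i\). The valuations agree under the isomorphism of local rings \(\mathcal O_{Y,\overline{G_i}}\cong\mathcal O_{X,G_i}\), so \(c_i=a_i\). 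Equivalently, one can restrict both divisors to \(U\cong\nu^{-1}(U)\) and use the argument of Lemma~\ref{2.2} there: \(f^*(s)|_U\) corresponds to \(g^*(s)|_{\nu^{-1}(U)}=\sum a_i (G_i\cap\nu^{-1}(U))\), because the \(E_j\) lie outside \(\nu^{-1}(U)\).

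The main (though mild) obstacle is being careful about what \(f^*(s)=\sum a_i\overline{G_i}\) means on a singular surface. The left side is Cartier, while the \(\overline{G_i}\) need not be Cartier at the singular points. I would therefore state explicitly that the equality is one of Weil divisors, via the injection \(\mathrm{CDiv}(Y)\to\Div(Y)\) valid for normal \(Y\). Since a Weil divisor is determined by its restriction to the complement of a finite set, nothing is lost by working on \(U\). The hypothesis \(\operatorname{Supp} f^*(s)\cap\Sing(Y)\neq\emptyset\) only guarantees that some \(E_j\) actually occur; the argument itself does not use it.
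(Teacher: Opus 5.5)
Your proposal is correct and is essentially the paper's argument: decompose $f^*(s)$ into its irreducible components, use $g^*(s)=\nu^*f^*(s)$, identify the non-contracted components of $g^*(s)$ with the strict transforms of those components, and compare coefficients at generic points, where $\nu$ is an isomorphism. Your justification of the component matching via supports, and your remark that the equality holds as Weil divisors, spell out steps the paper leaves implicit.
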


\begin{proof}
We write
\[
f^*(s)=\sum_{k=1}^r c_kF_k
\]
as the decomposition of \(f^*(s)\) into irreducible components with
multiplicities.  Let \(\widetilde F_k\) be the strict transform of \(F_k\) on
\(X\).

By the functoriality of pullbacks of Cartier divisors, we have
\[
g^*(s)=(f\circ\nu)^*(s)=\nu^*f^*(s).
\]
Since \(\nu\) is an isomorphism at the generic point of \(F_k\), the coefficient
of \(\widetilde F_k\) in \(\nu^*f^*(s)\) is equal to the coefficient of \(F_k\)
in \(f^*(s)\), namely \(c_k\).  Hence the coefficient of \(\widetilde F_k\) in
\(g^*(s)\) is equal to \(c_k\).

The irreducible components of \(g^*(s)\) which are not contracted by \(\nu\)
are precisely the strict transforms
\[
\widetilde F_1,\dots,\widetilde F_r.
\]
Thus \(m=r\), and after reindexing we may write
\[
G_i=\widetilde F_i
\qquad (i=1,\dots,m).
\]
Then
\[
a_i=c_i
\qquad (i=1,\dots,m).
\]
Since $\overline{G_i}=\nu(G_i)=F_i$,
we obtain $f^*(s)=\sum_{i=1}^m a_i\,\overline{G_i}$.
\end{proof}
\subsection{Curves and normalization}
Let \(C\) be a projective integral curve, and let $\rho\colon \widetilde C\rightarrow C$
be its normalization.  We set
\[
Q:=\operatorname{coker}
\left(
\rho^\#\colon\mathcal O_C\to \rho_*\mathcal O_{\widetilde C}
\right).
\]
For a point \(x\in C\), the \(\delta\)-invariant of \(C\) at \(x\) is defined by
\[
\delta_x(C)
:=
\length_{\mathcal O_{C,x}} Q_x.
\]
By the exact sequence $0\to \mathcal O_C \to \rho_*\mathcal O_{\widetilde C}\to Q\to 0$,
we have
\[
p_a(C)-g(\widetilde C)
=
\sum_{x\in \Sing(C)}\delta_x(C).
\]

Let \(Y\) be a normal \(K3\) surface, and let \(C\subset Y\) be an integral Cartier curve.  Since \(Y\) is Gorenstein and \(K_Y\sim 0\),
adjunction gives
\[
2p_a(C)-2=C^2.
\]
Hence
\[
\sum_{x\in \Sing(C)}\delta_x(C)
=
1+\frac{C^2}{2}-g(\widetilde C).
\]
In particular, if \(\widetilde C\cong \mathbb P^1\), then
\[
\sum_{x\in \Sing(C)}\delta_x(C)
=
1+\frac{C^2}{2}.
\]

We also use the following elementary observation.  Let $\pi\colon X\rightarrow Y$
be a resolution of a normal surface \(Y\).  Let \(C\subset Y\) be an irreducible
curve, and let \(D\subset X\) be its strict transform.
We assume that the
restriction
\[
\rho:=\pi|_D\colon D\rightarrow C
\]
is the normalization of \(C\).
Let \(y\in C\).  Since \(\rho=\pi|_D\), we have a natural identification of
schemes
\[
\rho^{-1}(y)
\cong
D\times_X \pi^{-1}(y).
\]
Let \(\mathcal I_D\subset \mathcal O_X\) and
\(\mathcal I_{\pi^{-1}(y)}\subset \mathcal O_X\) be the ideal sheaves defining
\(D\) and the scheme-theoretic fiber \(\pi^{-1}(y)\), respectively.  Then, for
a point
\[
x\in D\cap \pi^{-1}(y),
\]
we have
\[
\mathcal O_{\rho^{-1}(y),x}
\cong
\mathcal O_{X,x}/
\bigl(
\mathcal I_{D,x}+\mathcal I_{\pi^{-1}(y),x}
\bigr).
\]
Consequently,
\[
\length_{\mathcal O_{X,x}}
\mathcal O_{\rho^{-1}(y),x}
=
\length_{\mathcal O_{X,x}}
\frac{\mathcal O_{X,x}}
{\mathcal I_{D,x}+\mathcal I_{\pi^{-1}(y),x}}
=
(\pi^{-1}(y)\cdot D)_x.
\]
Here the right-hand side denotes the local intersection multiplicity on
the smooth surface \(X\).
\begin{prop}\label{prop:smoothness-criterion}
Let \(Y\) be a normal \(K3\) surface, and let $\pi \colon X \to Y$
be its minimal resolution. Then \(X\) is a smooth \(K3\) surface.
Let \(C\subset Y\) be an irreducible curve, and let \(D\subset X\) be its strict transform.
We assume that the restriction
\[
\rho:=\pi|_D \colon D \to C
\]
is the normalization of \(C\), and that
there is a point \(y\in C\cap\Sing(Y)\) such that
\[
D\cap \pi^{-1}(y)=\{x\}
\qquad \mathrm{and}
\qquad
(\pi^{-1}(y)\cdot D)_x=1.
\]
Then \(C\) is smooth at \(y\).
\end{prop}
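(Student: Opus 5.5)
We reduce the smoothness of \(C\) at \(y\) to a statement about the normalization: we show that \(\rho\) is an isomorphism over a neighbourhood of \(y\). Since \(\widetilde C = D\) is smooth, this gives smoothness of \(C\) at \(y\).

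The first step is to show that the scheme-theoretic fiber of \(\rho\) over \(y\) is a single reduced point. By the identification \(\rho^{-1}(y)\cong D\times_X\pi^{-1}(y)\) recalled above, the fiber is supported at \(D\cap\pi^{-1}(y)=\{x\}\). Its length at \(x\) equals the local intersection multiplicity \((\pi^{-1}(y)\cdot D)_x\), which is \(1\) by hypothesis. Hence
\[
\rho^{-1}(y)\cong \Spec k(x)=\Spec k ,
\]
so \(\rho^{-1}(y)\) is the reduced point \(x\).

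Next we pass to a statement about the map of local rings. The morphism \(\rho\) is finite, being the normalization of a curve. Put \(A:=\mathcal O_{C,y}\) with maximal ideal \(\mathfrak m_y\). Since \(\rho^{-1}(y)=\{x\}\), the ring \(B:=(\rho_*\mathcal O_D)_y\) is local and equal to \(\mathcal O_{D,x}\); it is a finite \(A\)-module. Step one says \(B/\mathfrak m_yB\cong k\). The map \(A\to B\) induces \(A/\mathfrak m_y\to B/\mathfrak m_yB\), i.e. \(k\to k\), which is an isomorphism. Therefore \(A\to B/\mathfrak m_yB\) is surjective, and Nakayama's lemma (\(B\) being finite over \(A\)) shows that \(A\to B\) is surjective. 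It is also injective, since \(C\) is integral and \(\rho\) is birational. So \(A\cong B=\mathcal O_{D,x}\), which is a discrete valuation ring because \(D\) is a smooth curve. Hence \(\mathcal O_{C,y}\) is regular, and \(C\) is smooth at \(y\).

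Equivalently, step one says \(\delta_y(C)=\length(B/A)=0\), because \(B/A\otimes k=0\) by the computation above.

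The main obstacle is making the identification \(\rho^{-1}(y)=D\times_X\pi^{-1}(y)\) compatible with the scheme structure used in Nakayama's lemma, namely with \(\mathfrak m_y B\). The point is that \(\mathcal I_{\pi^{-1}(y)}=\mathfrak m_y\mathcal O_X\) by definition of the scheme-theoretic fiber. Restricting to \(D\) gives \(\mathfrak m_y\mathcal O_{D,x}\), since \(\rho\) is \(\pi|_D\). With this, the length computation in the preliminaries applies verbatim.

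Notably, this argument does not use the \(K3\) hypothesis or the ADE structure beyond finiteness of \(\rho\) and the intersection-number input.
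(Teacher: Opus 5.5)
Your proof is correct and is essentially the paper's argument: the length-one scheme-theoretic fiber gives \(B/\mathfrak m_yB\cong k\) for \(B=(\rho_*\mathcal O_D)_y=\mathcal O_{D,x}\), and Nakayama's lemma then forces \(\mathcal O_{C,y}=\mathcal O_{D,x}\), which is a discrete valuation ring. You also make explicit two points the paper leaves implicit: the injectivity of \(\mathcal O_{C,y}\to\mathcal O_{D,x}\), and the fact that the multiplicity in the hypothesis is taken with the scheme-theoretic fiber \(\mathcal I_{\pi^{-1}(y)}=\mathfrak m_y\mathcal O_X\) (the fundamental cycle), not with the reduced exceptional curve.
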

\begin{proof}
Let $y:=\pi(x)\in \Sing(Y)$.
We set
\[
A:=\mathcal O_{C,y},\qquad B:=\mathcal O_{D,x}.
\]
Since \(D\) is a smooth curve, \(B\) is a regular local ring of dimension one.

We choose an affine open neighbourhood \(U=\operatorname{Spec} R\) of \(y\) in \(C\).
Since \(\rho\) is finite, we may write
\[
\rho^{-1}(U)=\operatorname{Spec} S
\]
for some finite \(R\)-algebra \(S\).
Let \(\mathfrak p\subset R\) and \(\mathfrak q\subset S\) be the prime ideals corresponding
to \(y\) and \(x\), respectively. Then
\[
A=R_{\mathfrak p},\qquad B=S_{\mathfrak q}.
\]
The scheme-theoretic fiber of \(\rho\) over \(y\) is
\[
\rho^{-1}(y)\cong \Spec(S\otimes_R \kappa(y)).
\]
Since \(\rho^{-1}(y)=\{x\}\),
\(\mathfrak q\) is the unique
prime ideal of \(S\) lying over \(\mathfrak p\). It follows that \(S_{\mathfrak p}\) is a local
ring with maximal ideal \(\mathfrak qS_{\mathfrak p}\), and therefore
\[
S_{\mathfrak p}=S_{\mathfrak q}=B.
\]
Consequently,
\[
\rho^{-1}(y)\cong \Spec(S_{\mathfrak p}/\mathfrak pS_{\mathfrak p})= \Spec(B/\mathfrak m_A B),
\]
where \(\mathfrak m_A=\mathfrak pA\) is the maximal ideal of \(A\).

By the observation above, $(\pi^{-1}(y)\cdot D)_x=1$ implies that
\[
\length_{\mathcal O_{X,x}}
\rho^{-1}(y)=1.
\]
Therefore \(\rho^{-1}(y)\) is a zero-dimensional local \(k\)-scheme of length one.
Hence it is reduced and isomorphic to \(\Spec k\). Thus
\[
\rho^{-1}(y)\cong \Spec k.
\]
Therefore
\[
B/\mathfrak m_A B\cong k.
\]
Since \(B/\mathfrak m_A B\cong k\), Nakayama's lemma implies that \(B\) is generated
by one element as an \(A\)-module. Since \(1\in B\), this forces \(A=B\). Therefore
\[
\mathcal O_{C,y}\cong \mathcal O_{D,x}.
\]
Since \(B\) is a regular local ring of dimension one, the same is true for \(A\).
Hence \(C\) is smooth at \(y\).
\end{proof}
\begin{prop}\label{prop:delta-one-node-cusp}
Let \(Y\) be a normal \(K3\) surface, and let $\pi \colon X \to Y$
be its minimal resolution. Then \(X\) is a smooth \(K3\) surface.
Let \(C\subset Y\) be an irreducible curve, and let \(D\subset X\) be its strict transform.
We assume that the restriction
\[
\rho:=\pi|_D \colon D \to C
\]
is the normalization of \(C\), and that there exists a point
\[
y\in C\cap\Sing(Y)
\]
such that
\[
\delta_y(C)=1.
\]
Then the following hold:
\begin{enumerate}[$(1)$]
\item If $D\cap \pi^{-1}(y)=\{x_1,x_2\}$ with $(\pi^{-1}(y)\cdot D)_{x_i}=1$ for $i=1,2$,
then \(C\) has a node at \(y\).
\item If $D\cap \pi^{-1}(y)=\{x\}$
with $(\pi^{-1}(y)\cdot D)_x=2$,
then \(C\) has a cusp at \(y\).
\end{enumerate}
\end{prop}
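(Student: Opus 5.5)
We work in the complete local rings and use only the normalization \(\rho\), together with the two facts already established before Proposition~\ref{prop:smoothness-criterion}. Put \(A:=\widehat{\mathcal O}_{C,y}\) and \(\widetilde A:=\widehat{(\rho_*\mathcal O_D)}_y\). The latter is the product of \(\widehat{\mathcal O}_{D,x_i}\cong k[[t_i]]\) over the points \(x_i\in\rho^{-1}(y)\), and by definition \(\delta_y(C)=\dim_k \widetilde A/A=1\). The observation before Proposition~\ref{prop:smoothness-criterion} gives
\[
\length\,\mathcal O_{\rho^{-1}(y),x_i}=(\pi^{-1}(y)\cdot D)_{x_i}.
\]
Since \(\mathcal O_{\rho^{-1}(y)}=\rho_*\mathcal O_D\otimes \kappa(y)\), this says \(\mathfrak m_A\widetilde A\) has colength equal to this intersection number in each factor.

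\textbf{Case (1).} Here \(\widetilde A=k[[t_1]]\times k[[t_2]]\), and \(\mathfrak m_A\widetilde A\) has colength one in each factor. Hence \(\mathfrak m_A\widetilde A=(t_1)\times(t_2)\), which is the maximal-ideal product \(\mathfrak n\) with \(\widetilde A/\mathfrak n\cong k^2\). Since \(\dim_k\widetilde A/A=1\) and \(A\supseteq k+\mathfrak m_A\widetilde A\)-type considerations apply, we get \(A=k+\mathfrak n\). To justify this:
\begin{itemize}
\item \(A\) is a local subring containing \(k\), so \(k+\mathfrak m_A\subseteq A\).
\item \(\mathfrak m_A\subseteq\mathfrak n\), and \(\dim_k\widetilde A/A=1\) forces \(\dim_k \mathfrak n/\mathfrak m_A=0\).
\item Hence \(\mathfrak m_A=\mathfrak n\), with \(\dim\widetilde A/(k+\mathfrak n)=1\).
\end{itemize}
Then \(A=\{(f,g): f(0)=g(0)\}\cong k[[x,y]]/(xy)\), via \(x=(t_1,0)\) and \(y=(0,t_2)\). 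So \(C\) has a node at \(y\).

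\textbf{Case (2).} Here \(\widetilde A=k[[t]]\) and \(\mathfrak m_A\widetilde A=(t^2)\). The conductor-type argument goes as follows:
\begin{itemize}
\item \(A\subsetneq k[[t]]\) with codimension one.
\item \(A\) contains \(k\), and \(\mathfrak m_A\) contains an element of order exactly \(2\), since \(\mathfrak m_A\widetilde A=(t^2)\) is generated by elements of \(\mathfrak m_A\).
\item \(A\) contains no element of order \(1\); otherwise \(\mathfrak m_A\widetilde A=(t)\).
\end{itemize}
The value semigroup \(S\) of \(A\) therefore contains \(0\) and \(2\), omits \(1\), and has exactly one gap because \(\delta=1\). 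So \(S=\langle 2,3\rangle\), and there are elements \(u=t^2+\cdots\) and \(v=t^3+\cdots\) in \(A\). By a standard completeness argument, \(k[[u,v]]\subseteq A\) has the same value semigroup and hence equals \(A\). After a change of parameter \(t\), with \(u=t^2\) (characteristic \(0\) allows taking a square root), and subtracting from \(v\) a multiple of \(u\), we may take \(v=t^3\cdot(\text{unit})\). Then \(A\cong k[[x,y]]/(y^2-x^3)\), which is a cusp.

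The main obstacle is the identification \(A=k[[u,v]]\) in case (2), that is, the completeness/semigroup argument. I would handle it by noting that \(k[[u,v]]\) contains \(t^n\widetilde A\) for all \(n\ge2\), by successive approximation, since \(2\) and \(3\) generate all \(n\ge2\). Then \(\dim\widetilde A/k[[u,v]]\le 1=\dim\widetilde A/A\), which forces equality.

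Formally, pass to completions: the map \(\widehat{\mathcal O}_{C,y}\to\widehat{\rho_*\mathcal O_D}\) stays injective with the same finite cokernel length, because \(\rho\) is finite.
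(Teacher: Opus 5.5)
Your proof is correct. Case (1) is the paper's argument in different words. The paper notes that the image of \(\widehat{\mathcal O}_{C,y}\) lies in \(R=\{(f,g)\mid f(0)=g(0)\}=k+\mathfrak n\), which has colength one in \(k[[u]]\times k[[v]]\), so equality is forced. You reach the same conclusion from \(\mathfrak m_A\subseteq\mathfrak n\) together with the count \(\dim_k\widetilde A/\mathfrak m_A=1+1=2=\dim_k\widetilde A/\mathfrak n\). State that count explicitly, since your second bullet leaves it implicit.

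Case (2) is where you take a genuinely longer route: the value semigroup, the fact that \(\delta\) equals the number of gaps, explicit elements \(u,v\) of orders \(2\) and \(3\), successive approximation, and finally a reparametrization. The paper instead observes that \(\mathfrak m_{\widehat A}\widehat B=(t^2)\) puts the image of \(\widehat A\) inside \(k+t^2k[[t]]\). That subring already has colength one in \(k[[t]]\), so the length condition gives \(\widehat A=k+t^2k[[t]]=k[[t^2,t^3]]\) at once.

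Your own third bullet (no element of order \(1\)) yields exactly this containment \(A\subseteq k+t^2k[[t]]\). So everything after it can be replaced by that one-line colength squeeze, and you no longer need to quote \(\delta=\#(\mathbb N\setminus S)\). In particular, the change of parameter and the step from \(v=t^3\cdot(\text{unit})\) to \(y^2=x^3\) become unnecessary, because \(t^2,t^3\in A\) for the original \(t\). That last step would, as written, also need an extra rescaling of \(v\) by a unit of \(A\).

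What your semigroup approach buys is a method that adapts to larger \(\delta\), where colength counting alone no longer determines the ring. For \(\delta=1\), the paper's direct containment argument is strictly simpler.
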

\begin{proof}
We set
\[
A:=\mathcal O_{C,y},
\qquad
B:=(\rho_*\mathcal O_D)_y.
\]
Since \(\delta_y(C)=1\), we have 
\[\length_A(B/A)=1.\]

We show \textup{(1)}.  We assume that $D\cap \pi^{-1}(y)=\{x_1,x_2\}$
and $(\pi^{-1}(y)\cdot D)_{x_i}=1$ for $i=1,2$.
By the observation above, the scheme-theoretic fiber \(\rho^{-1}(y)\) is
identified with $D\times_X \pi^{-1}(y)$.
As in the proof of Proposition~\ref{prop:smoothness-criterion}, the assumptions imply that
\[
\rho^{-1}(y)\cong \Spec k\sqcup \Spec k.
\]

Let $\widehat A$
be the \(\mathfrak m_A\)-adic completion of \(A\), and set $\widehat B:=B\otimes_A \widehat A$. Since \(B\) is finite over \(A\), completion is exact, and
\[
\length_{\widehat A}(\widehat B/\widehat A)
=
\length_A(B/A)
=
1.
\]
Since \(B\) is the semilocal normalization of \(A\) and
$\rho^{-1}(y)=\{x_1,x_2\}$,
the completion of \(B\) decomposes as
\[
\widehat B
\cong
\widehat{\mathcal O}_{D,x_1}
\times
\widehat{\mathcal O}_{D,x_2}.
\]
Since \(D\) is smooth at \(x_1\) and \(x_2\), we may choose local parameters
\(u\) and \(v\) such that
\[
\widehat{\mathcal O}_{D,x_1}\cong k[[u]],
\qquad
\widehat{\mathcal O}_{D,x_2}\cong k[[v]].
\]
Hence
\[
\widehat B
\cong
k[[u]]\times k[[v]].
\]
The homomorphism $\rho^\#\colon A\rightarrow B$
induces $\widehat{\rho^\#}\colon \widehat A\rightarrow \widehat B
\cong k[[u]]\times k[[v]]$.
Since the two residue maps to \(k\) induced by the two branches both factor
through \(A/\mathfrak m_A\),
the image of \(\widehat A\) is contained in
\[
R:=
\{(f(u),g(v))\in k[[u]]\times k[[v]]
\mid f(0)=g(0)\}.
\]
On the other hand,
\[
\widehat B/R\cong k.
\]
Since the action of \(\widehat A\) on this quotient factors through
\(\widehat A/\mathfrak m_{\widehat A}\cong k\), we have
\[
\length_{\widehat A}(\widehat B/R)=1.
\]
Since $\widehat A\subset R\subset \widehat B$
and $\length_{\widehat A}(\widehat B/\widehat A)=1$,
we obtain
\[
\widehat A=R.
\]
Therefore
\[
\widehat{\mathcal O}_{C,y}
\cong
\{(f(u),g(v))\in k[[u]]\times k[[v]]
\mid f(0)=g(0)\}.
\]
Thus the completed local ring of \(C\) at \(y\) is isomorphic to
\[
k[[U,V]]/(UV).
\]
Hence \(C\) has an ordinary node at \(y\).

We show \textup{(2)}.  We assume that
$D\cap \pi^{-1}(y)=\{x\}$ and $(\pi^{-1}(y)\cdot D)_x=2$.
In this case, $B\cong \mathcal O_{D,x}$.
Since \(D\) is smooth at \(x\), we may choose a local parameter \(t\) such that
\[
\widehat B\cong k[[t]].
\]
By the observation above, the scheme-theoretic fiber \(\rho^{-1}(y)\) is
identified with $D\times_X \pi^{-1}(y)$.
As in the proof of Proposition~\ref{prop:smoothness-criterion}, the assumption $(\pi^{-1}(y)\cdot D)_x=2$ gives
\[
\dim_k \widehat B/\mathfrak m_{\widehat A}\widehat B=2.
\]
Since \(\widehat B\cong k[[t]]\), this means
\[
\mathfrak m_{\widehat A}\widehat B=(t^2).
\]
Moreover,
\[
\length_{\widehat A}(\widehat B/\widehat A)
=
\length_A(B/A)
=
1.
\]
Since $\mathfrak m_{\widehat A}\widehat B=(t^2)$,
the image of \(\mathfrak m_{\widehat A}\) in
\(\widehat B\cong k[[t]]\) is contained in \((t^2)\).  Hence the image of
\(\widehat A\) is contained in
\[
k+t^2k[[t]].
\]
On the other hand,
\[
\length_{\widehat A}(\widehat B/\widehat A)=1
\]
and
\[
\widehat B/(k+t^2k[[t]])\cong k
\]
has length \(1\) as an \(\widehat A\)-module.  Therefore
\[
\widehat A=k+t^2k[[t]]=k[[t^2,t^3]].
\]
Thus the completed local ring of \(C\) at \(y\) is isomorphic to $k[[U,V]]/(V^2-U^3)$.
Hence \(C\) has a cusp at \(y\).
\end{proof}

\section{General setup for singular fibers}

Let $g\colon X\rightarrow \mathbb P^1$
be an elliptic fibration on a \(K3\) surface \(X\).  Let $F:=g^*(s)$
be a singular fiber of \(g\).  By Kodaira's classification \cite{k63}, \(F\) is
one of the following types:
\[
I_n\ (1 \leq n\leq 19),\quad I_m^*\ ( 0 \leq m\leq 14),\quad
II,\ III,\ IV,\ 
II^*,\ III^*,\ IV^*.
\]

An \emph{ADE reduced divisor}, abbreviated as an \emph{ADER}, in \(F\) is a
reduced subdivisor of \(F\) whose dual graph is a disjoint union of Dynkin
diagrams of ADE type.
For a chosen ADER \(R\subset F\),
let
\[
\nu\colon X\rightarrow Y
\]
be the contraction of \(R\).  By the standard contraction theory for configurations of rational curves on
surfaces \cite{a62,a66}, the contraction of \(R\) produces rational double
points. Then \(Y\) is a normal \(K3\) surface, and each
connected component of \(R\) gives a rational double point on \(Y\).  Since
\(R\) is contained in the fiber \(F\), the fibration \(g\) induces an elliptic
fibration
\[
f\colon Y\rightarrow \mathbb P^1
\]
such that
\[
g=f\circ\nu.
\]
We denote the corresponding fiber of \(f\) by
\[
\overline F:=f^*(s).
\]
By Lemma~\ref{2.3}, \(\overline F\) is a singular fiber of \(f\).  Moreover, by
Theorem~\ref{2.4}, the fiber \(\overline F\) is obtained from \(F\) by deleting
the components contained in \(R\), while preserving the multiplicities of the
remaining components.

In the following sections, we classify the possible fibers \(\overline F\) by
dividing the argument according to the Kodaira type of \(F\).  More precisely,
for each Kodaira fiber \(F\), we determine the possible ADERs contained in \(F\)
and describe the singular fiber obtained after contracting them.

This construction also gives all singular fibers of elliptic fibrations on
normal \(K3\) surfaces. 
Indeed, conversely, let $f'\colon Y'\rightarrow \mathbb P^1$ be an elliptic fibration on a normal \(K3\) surface $Y'$, and let
$\nu'\colon X'\rightarrow Y'$
be the minimal resolution.  We put $
g':=f'\circ\nu'$.
Then \(g'\) is an elliptic fibration on the \(K3\) surface \(X'\). 
By Lemma~\ref{2.3}, the
exceptional curves of \(\nu'\) are contained in singular fibers of \(g'\), and in each singular fiber they form an ADER.  
Therefore the classification below gives the classification of the singular
fibers of elliptic fibrations on normal \(K3\) surfaces. 
If a singular fiber of \(f'\) does not meet \(\Sing(Y')\), then by Lemma~\ref{2.2} it is identified with the corresponding Kodaira fiber of \(g'\).

\begin{dfn}\label{dfn:underlying-kodaira-type}
Let \(f\colon Y\to \mathbb P^1\) be an elliptic fibration on a normal
\(K3\) surface, and let \(\overline F\) be a singular fiber of \(f\).
We say that \(\overline F\) has underlying Kodaira type \(T\) if, as a
one-dimensional scheme with multiplicities, \(\overline F\) has the same
curve-theoretic configuration as a Kodaira fiber of type \(T\).
\end{dfn}

This terminology only records the curve-theoretic type of the fiber and does
not record the singularities of the ambient surface \(Y\).  When necessary, we
will specify separately the rational double points of \(Y\) lying on
\(\operatorname{Supp}(\overline F)\).  For example, an irreducible nodal
rational curve has underlying Kodaira type \(I_1\), even if its node lies at a
rational double point of the ambient surface.

\section{An example }

Before we proceed to the detailed classification, we present a concrete computational example.


Let $g : X \longrightarrow \mathbb{P}^1$ be an elliptic fibration on a
$K3$ surface $X$, and $F=g^{*}(s)$ be its singular fiber of Kodaira type $II^{*}$. We write
\[
  F = 6C + 3S_1 + 4M_1 + 2M_2 + 5L_1 + 4L_2 +3L_3 + 2L_4 + L_5
\]
whose dual graph is the following.
\[
  \begin{tikzpicture}[baseline=(bs)]
    \coordinate (bs) at (0, 0.4);

    \node (L5) at (0,0) {$L_5$};
    \node (L4) at (1,0) {$L_4$};
    \node (L3) at (2,0) {$L_3$};
    \node (L2) at (3,0) {$L_2$};
    \node (L1) at (4,0) {$L_1$};
    \node (C) at (5,0) {$C$};
    \node (M1) at (6,0) {$M_1$};
    \node (M2) at (7,0) {$M_2$};
    \node (S1) at (5,0.8) {$S_1$};

    \draw (L5) -- (L4) -- (L3) -- (L2) -- (L1) -- (C) -- (M1) -- (M2) (C) -- (S1);
  \end{tikzpicture}
\]
We choose an \emph{ADER}
\[
  R = L_5+L_3+L_1+M_1+M_2+S_1 \subset F
\]
of type $A_1^{\oplus 4} \oplus A_2$. More precisely, the connected
components of $R$ are $L_5, L_3, L_1$ and $S_1$, all of which are of
type $A_1$, and $M_1+M_2$ of type $A_2$. Let
$\nu : X \longrightarrow Y$ be the contraction of $R$, where $Y$ is a
normal $K3$ surface. Then we describe the singular fiber
$\ol{F}:=f^{*}(s)$ of the induced elliptic fibration
$f : Y \longrightarrow \mathbb{P}^1$ which is obtained by the
contraction $\nu$ such that $g = f \circ \nu$. The corresponding singular
fiber of $f$ is
\[
  \ol{F} = 2G_1 + 4G_2 + 6G_3,
\]
where $G_1 := \nu(L_4), \; G_2 := \nu(L_2)$ and $G_3:= \nu(C)$ are
smooth rational curves on $Y$ (as we shall see in
Proposition\ref{prop:IIstar-contraction}). The dual graph of $\ol{F}$
is of type $A_3$:
\[
  \begin{tikzpicture}[
    baseline=(bs),
    vertex/.style={circle, draw, inner sep=0pt, minimum size=5pt, thick},
    blackV/.style={vertex, fill=black},
    whiteV/.style={vertex, fill=white},
    every label/.style={font=\small, inner sep=4pt}
    ]

    \coordinate (bs) at (0,0.4);
    
    \node[blackV, label=below:{$L_5$}] (L5) at (0,0) {};
    \node[whiteV, label=below:{$L_4$}] (L4) at (1,0) {};
    \node[blackV, label=below:{$L_3$}] (L3) at (2,0) {};
    \node[whiteV, label=below:{$L_2$}] (L2) at (3,0) {};
    \node[blackV, label=below:{$L_1$}] (L1) at (4,0) {};
    \node[whiteV, label=below:{$C$}] (C) at (5,0) {};
    \node[blackV, label=below:{$M_1$}] (M1) at (6,0) {};
    \node[blackV, label=below:{$M_2$}] (M2) at (7,0) {};
    \node[blackV, label=right:{$S_1$}] (S1) at (5,0.8) {};
    
    \draw[thick] (L5) -- (L4) -- (L3) -- (L2) -- (L1) -- (C) -- (M1) -- (M2) (C) -- (S1);

    \node[whiteV, label=below:{$G_1$}] (G1) at (9,0) {};
    \node[whiteV, label=below:{$G_2$}] (G2) at (10,0) {};
    \node[whiteV, label=below:{$G_3$}] (G3) at (11,0) {};

    \draw[->] (7.5,0) -- node[above]{$\nu$} (8.5,0);
    
    \draw[thick] (G1) -- (G2) -- (G3);

  \end{tikzpicture}
\]
The normal $K3$ surface $Y$ has the following five singular points. 
\begin{itemize}
\item The point $p_1:=\nu(L_1)$ is of type $A_1$ and lies on the curve $G_1$.

\item The point $p_{2}:=\nu(L_3)$ is of type $A_1$ and $\{p_2\} = G_1 \cap G_2$.

\item The point $p_{3}:=\nu(L_1)$ is of type $A_1$ and $\{p_3\} = G_2 \cap G_3$.
  
\item The point $p_4:=\nu(S_1)$ is of type $A_1$ and lies on the curve $G_3$.

\item The point $p_5:=\nu(M_1 \cup M_2)$ is of type $A_2$ and lies on the curve $G_3$.
\end{itemize}
We will later define the type of this fiber $\ol{F}$ in
Definition\ref{dfn:IIstar-contracted-fibers} as
\[
  \mathfrak{II}^{*,\mathrm{br}}
(C ; M_1 + M_2 ,L_2+L_4).
\]

We calculate the rational pullbacks of $G_1, G_2$ and $G_3$ for
determining the intersection numbers among them. Let $F_1 := L_4, F_2:=L_2, F_3:=C$ and
\[
  \begin{aligned}
    \nu^{*} G_i = F_i + a_{i1} L_5 + a_{i2}L_3 + a_{i3}L_1 + a_{i4} M_1 + a_{i5}M_2 + a_{i6}S_1 \in \Div(X) \otimes_{\mathbb{Z}} \mathbb{Q}
  \end{aligned}
\]
be the rational pullback of each $G_i$. The
coefficients $a_{ij} \in \mathbb{Q}$ are uniquely determined by the
conditions
\[
  \left( \nu^{*}G_i \cdot D \right) =0
\]
for $(i,D) \in \{1,2,3\} \times \{ L_5, L_3, L_1, M_1, M_2,
S_1\}$. Thus, we have
\[
  \begin{aligned}
  \nu^{*}G_1 &= L_4 + \frac{1}{2}L_5 + \frac{1}{2} L_3, \quad
  \nu^{*}G_2 = L_2 + \frac{1}{2}L_3 + \frac{1}{2}L_1,\\
    \nu^{*}G_3& = C + \frac{1}{2}L_1 + \frac{2}{3}M_1 + \frac{1}{3}M_2 + \frac{1}{2}S_1.
  \end{aligned}
\]
Since the intersection number $(G_i \cdot G_j)$ on
$Y$ is equal to $(\nu^{*}G_i \cdot \nu^{*}G_j)$ on $X$, we obtain
the following intersection matrix of $\ol{F}$.
\[
  \left( \left( G_{i} \cdot G_{j}\right) \right)  =
  \begin{pmatrix*}[r]
    -1 & \frac{1}{2} & 0\\[1ex]
    \frac{1}{2} & -1 & \frac{1}{2}\\[1ex]
    0 & \frac{1}{2} & -\frac{1}{3}
  \end{pmatrix*}
\]
Moreover, it is easy to check that $(\ol{F} \cdot \ol{F}) =0$ from this matrix.

We assume that the fibration $g$ has a zero section $O$. Then
$\ol{O}=\nu(O)$ is a zero section of $f$.  Note that although
$\ol{F} = 2 (G_1 + 2 G_2 + 3G_3)$ is a multiple fiber of $f$, it
intersects the zero section $\ol{O}$ at $p_1$ and
$(\ol{F} \cdot \ol{O}) =1$. Indeed, since
$\nu^{*}\ol{O} = O + \frac{1}{2}L_5$ , we have
\[
  \begin{aligned}
    \left( \ol{F} \cdot \ol{O}\right)
    &= (2G_1 \cdot \ol{O})
      = \left(\left( 2L_4 + L_5 + L_3 \right) \cdot \left( O + \frac{1}{2}L_5\right) \right)\\
    &= (L_4 \cdot L_5) + (L_5\cdot O) +\frac{1}{2}  (L_5 \cdot L_5) =  1 + 1 -1 = 1.
  \end{aligned}
\]
This is an effect unique to the singular case. A smooth elliptic
surface with a section has no multiple fibers, since the multiplicity
of an irreducible component of a fiber intersecting a section must be
one. Furthermore, an elliptic fibration on a smooth $K3$ surface has
no multiple fibers even without a section.

\section{Fibers of type \(I_1\) and \(II\)}

Fibers of type \(I_1\) and \(II\) do not contain any ADER.  Indeed,
they are irreducible singular rational curves and hence contain no smooth rational component.
Therefore no contraction occurs in these cases.

\section{Fibers of type \(I_2\)}

We next consider the first case in which a non-empty ADER can occur.
An \(I_2\)-fiber consists of two smooth rational \((-2)\)-curves meeting
transversely at two distinct points.  This simple configuration gives only
one possible ADER.

\begin{prop}\label{prop:I2-ADER}
We assume that \(F\) is of Kodaira type \(I_2\).  
Let $R$ be an ADER in \(F\).
Then $R$ is of type \(A_1\).
\end{prop}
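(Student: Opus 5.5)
The plan is to enumerate the reduced subdivisors of \(F\) and check which have a Dynkin dual graph. Write \(F=\Theta_0+\Theta_1\), where \(\Theta_0\) and \(\Theta_1\) are smooth rational \((-2)\)-curves meeting transversally at two distinct points, so that \((\Theta_0\cdot\Theta_1)=2\). A reduced subdivisor of \(F\) is a sum of distinct irreducible components of \(F\). Since an ADER is a nonempty configuration (as implicit in the paper), the only candidates for \(R\) are
\[
\Theta_0,\qquad \Theta_1,\qquad \Theta_0+\Theta_1 .
\]

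First I dispose of the full fiber \(R=\Theta_0+\Theta_1\). Its dual graph has two vertices joined by a double edge, since \((\Theta_0\cdot\Theta_1)=2\). This is not a disjoint union of ADE Dynkin diagrams, because in a Dynkin diagram of type \(A\), \(D\) or \(E\) any two distinct vertices are joined by at most one simple edge. The same conclusion can be phrased via the intersection form: \((\Theta_0+\Theta_1)^2=-2-2+4=0\), so the form on \(\Theta_0,\Theta_1\) is not negative definite, whereas the lattice of an ADE configuration is negative definite. Equivalently, \(\Theta_0+\Theta_1=F\) is the whole fiber, and by Zariski's lemma the whole fiber is never contractible. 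So \(R\) cannot be the full fiber.

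It remains to treat \(R=\Theta_0\) or \(R=\Theta_1\). Each is a single smooth rational \((-2)\)-curve, whose dual graph is a single vertex, namely the Dynkin diagram \(A_1\). Hence every ADER in \(F\) is of type \(A_1\).

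The only point needing care is the first case: one must justify that the double intersection gives a non-ADE graph. The negative-definiteness computation settles this cleanly, so I do not expect a genuine obstacle.
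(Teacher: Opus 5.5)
Your proof is correct and follows essentially the same route as the paper, which also excludes $R=C_1+C_2$ because the two components meet in two points; this gives a double edge, which is not an ADE diagram, so $R$ is a single $(-2)$-curve of type $A_1$. Your extra check that $(\Theta_0+\Theta_1)^2=0$, so the configuration is not negative definite, is a valid alternative justification for excluding the same case.
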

\begin{proof}
We write \(F=C_1+C_2\), where \(C_1\) and \(C_2\) are the two irreducible
components of the fiber.  Since \(F\) is of Kodaira type \(I_2\), the curves
\(C_1\) and \(C_2\) are smooth rational \((-2)\)-curves and they meet
transversely at two distinct points. 
If \(R\) contains both \(C_1\) and \(C_2\), then the corresponding dual graph
has two vertices joined by two edges, because \(C_1\) and \(C_2\) meet at two
points.  This is not the Dynkin diagram of type \(A_2\), where the two vertices
are joined by only one edge. 
Therefore $R$ contain only one irreducible
component of \(F\).  Each \(C_i\) is a smooth rational \((-2)\)-curve, hence
gives a Dynkin diagram of type \(A_1\).  Thus \(R\) is of type \(A_1\).
\end{proof}

\begin{prop}\label{prop:I2-contraction}
We assume that \(F\) is of Kodaira type \(I_2\).  Let \(R\) be an ADER in
\(F\), and let $\nu\colon X\rightarrow Y$
be the contraction of \(R\).  Let \(G\) be the irreducible component of \(F\)
different from \(R\). We put $C:=\nu(G)$.
Then 
\[\overline F=C.\]
Moreover, \(C\) is an irreducible rational curve with one node at the point
\(\nu(R)\), and \(\nu(R)\) is a rational double point of type \(A_1\).  In
particular, \(\overline F\) has underlying Kodaira type \(I_1\).
\end{prop}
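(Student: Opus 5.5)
By Proposition~\ref{prop:I2-ADER}, \(R\) consists of exactly one of the two components. After renaming, write \(F=R+G\), where \(R\) and \(G\) are smooth rational \((-2)\)-curves meeting transversely at two distinct points \(x_1,x_2\). The plan has four steps: identify \(\overline F\), identify the singular point, compute the \(\delta\)-invariant there, and apply the node criterion.

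\emph{The fiber and the singular point.} The contraction \(\nu\) contracts only \(R\), so \(\operatorname{Supp}f^*(s)\) contains the point \(\nu(R)\in\Sing(Y)\). Hence Theorem~\ref{2.4} applies. Since \(G\) has multiplicity one in \(F\), it gives \(\overline F=\nu(G)=C\). Because \(R\) is a single \((-2)\)-curve, i.e.\ a configuration of type \(A_1\), the point \(y:=\nu(R)\) is a rational double point of type \(A_1\).

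\emph{The \(\delta\)-invariant.} The restriction \(\rho:=\nu|_G\colon G\to C\) is finite and birational, and \(G\cong\mathbb P^1\) is smooth, so \(\rho\) is the normalization of \(C\). Since \(C=\overline F\) is a fiber, it is a Cartier divisor with \(C^2=\overline F^{\,2}=0\). The adjunction formula from the previous subsection then gives
\[
\sum_{x\in\Sing(C)}\delta_x(C)=1+\tfrac{C^2}{2}=1 .
\]
Away from \(y\), the map \(\nu\) is an isomorphism, so \(C\) is isomorphic to the smooth curve \(G\setminus R\) there. Hence \(\Sing(C)\subset\{y\}\). The point \(y\) really is singular on \(C\), because \(\rho^{-1}(y)=G\cap R=\{x_1,x_2\}\) consists of two points. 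Therefore \(\delta_y(C)=1\).

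\emph{The node criterion.} The scheme-theoretic fiber \(\nu^{-1}(y)\) must be identified with \(R\) with its reduced structure; I expect this to be the only step requiring care. For an \(A_1\) singularity, the fundamental cycle is \(Z=R\) and the maximal ideal pulls back to \(\mathcal O_X(-R)\), so \(\nu^{-1}(y)=R\) scheme-theoretically. This gives \((\nu^{-1}(y)\cdot G)_{x_i}=(R\cdot G)_{x_i}=1\) for \(i=1,2\), by transversality. Proposition~\ref{prop:delta-one-node-cusp}(1) now shows that \(C\) has a node at \(y\).

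Consequently \(C\) is an irreducible rational curve with a single node, so \(\overline F\) has underlying Kodaira type \(I_1\).
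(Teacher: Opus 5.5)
Your proof is correct and follows the paper's argument essentially step for step: Theorem~\ref{2.4} gives $\overline F=C$, adjunction with $C^2=0$ gives $\delta_y(C)=1$, and Proposition~\ref{prop:delta-one-node-cusp}(1) then yields the node. Your justification that the scheme-theoretic fiber $\nu^{-1}(y)$ is the reduced curve $R$, via the fundamental cycle of an $A_1$ point, is something the paper uses without comment, so it adds care rather than changing the route.
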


\begin{proof}
By Proposition~\ref{prop:I2-ADER}, the ADER \(R\) is of type \(A_1\).  Hence
the contraction of \(R\) produces a rational double point of type \(A_1\) at
the point $y:=\nu(R)$.
Since \(F\) is of Kodaira type \(I_2\), we may write
\[
F=R+G,
\]
where \(R\) and \(G\) are smooth rational \((-2)\)-curves meeting transversely
at two distinct points.  By Theorem~\ref{2.4}, the fiber \(\overline F\) is
obtained from \(F\) by deleting the contracted component \(R\), while preserving
the multiplicity of the remaining component.  Since both components of an
\(I_2\)-fiber have multiplicity \(1\), we obtain
\[
\overline F=\nu(G)=C.
\]

The curve \(G\) is smooth and rational, and the morphism
\[
\nu|_G\colon G\rightarrow C
\]
is the normalization of \(C\).  Away from the point \(y=\nu(R)\), the morphism \(\nu\) is an isomorphism.  Therefore \(C\) is smooth away from \(y\).
Let $G\cap R=\{x_1,x_2\}$.
Since \(R\) and \(G\) meet transversely at two distinct points, we have 
\[(R\cdot G)_{x_i}=1\qquad
(i=1,2).\]
Since \(\nu^{-1}(y)=R\), this gives
\[
G\cap \nu^{-1}(y)=\{x_1,x_2\}
\quad
\mathrm{and}
\quad
(\nu^{-1}(y)\cdot G)_{x_i}=1
\qquad
(i=1,2).
\]
Since \(\overline F=C\) is a fiber of the elliptic fibration \(f\), we have
\[
C^2=0.
\]
By adjunction on the normal \(K3\) surface \(Y\),
\[
2p_a(C)-2=C^2.
\]
Hence
\[
p_a(C)=1.
\]
Since the normalization of \(C\) is \(G\cong \mathbb P^1\), we have
\[
g(G)=0.
\]
Moreover, \(y\) is the only singular point of \(C\).  Therefore
\[
\delta_y(C)=p_a(C)-g(G)=1.
\]
Thus the assumptions of Proposition~\ref{prop:delta-one-node-cusp} are
satisfied.  Hence \(C\) has a node at \(y=\nu(R)\).  
Therefore
\(\overline F=C\) is an irreducible nodal rational curve.  In particular,
\(\overline F\) has underlying Kodaira type \(I_1\) in the sense of
Definition~\ref{dfn:underlying-kodaira-type}.  The node is the rational double
point \(y=\nu(R)\) of type \(A_1\) on \(Y\).
\end{proof}

\section{Fibers of type \(III\)}

The case of a fiber of type \(III\) is similar to the case of type \(I_2\),
but the two components meet at one point with intersection multiplicity \(2\).
This difference is reflected after contraction: the surviving component becomes
cuspidal rather than nodal.

\begin{prop}\label{prop:III-ADER}
We assume that \(F\) is of Kodaira type \(III\).  Let \(R\) be an
ADER in \(F\).  Then \(R\) is of type \(A_1\).
\end{prop}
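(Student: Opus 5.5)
The proof will follow the argument of Proposition~\ref{prop:I2-ADER} almost verbatim. We write \(F=C_1+C_2\), where, by Kodaira's description of a type \(III\) fiber, \(C_1\) and \(C_2\) are smooth rational \((-2)\)-curves meeting at exactly one point \(x\) with \((C_1\cdot C_2)_x=2\). In particular \(C_1\cdot C_2=2\). An ADER \(R\) is a nonempty reduced subdivisor of \(F\), so there are only two cases: \(R\) consists of exactly one component, or \(R=C_1+C_2\).

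First we exclude \(R=C_1+C_2\). Its intersection matrix is
\[
\begin{pmatrix} -2 & 2\\ 2 & -2\end{pmatrix},
\]
which has determinant \(0\) and is therefore not negative definite. Equivalently, \((C_1+C_2)^2=F^2=0\). By contrast, the intersection matrix of a Dynkin configuration of \((-2)\)-curves is negative definite; for \(A_2\) it has off-diagonal entries \(1\), since in a Dynkin diagram two adjacent curves meet transversally in exactly one point. So the dual graph of \(C_1+C_2\) is not \(A_2\), nor any other ADE diagram, because two vertices with intersection number \(2\) do not form a simple edge.

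The step that needs the most care is making precise what the dual graph of \(C_1+C_2\) is. Since the two curves meet at a single point, one might be tempted to read the dual graph as a single edge. We avoid this by using the intersection number \(C_1\cdot C_2=2\), or equivalently the failure of negative definiteness, rather than counting intersection points. This is the point where the type \(III\) case genuinely differs from the \(I_2\) case.

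It follows that \(R\) consists of exactly one component \(C_i\). A single smooth rational \((-2)\)-curve has dual graph \(A_1\), so \(R\) is of type \(A_1\).
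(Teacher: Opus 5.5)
Your proof is correct and is essentially the paper's argument: the paper also excludes \(R=C_1+C_2\) because the two components meet with intersection multiplicity \(2\), so the dual graph has two vertices joined by two edges, which is not an ADE diagram, and the remaining possibility is a single component of type \(A_1\). Your observation that the intersection matrix of \(C_1+C_2\) is degenerate, since \((C_1+C_2)^2=F^2=0\), just states the same obstruction in a way that does not depend on how one draws the dual graph.
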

\begin{proof}
We write $F=C_1+C_2$,
where \(C_1\) and \(C_2\) are smooth rational curves.
Since \(F\) is of Kodaira type \(III\), \(C_1\) and \(C_2\) meet at one point with intersection multiplicity \(2\).
If \(R\) contained both \(C_1\) and \(C_2\), then the dual graph of \(R\) would
have two vertices joined by two edges.  This is not a Dynkin diagram of ADE
type.  Hence \(R\) contains only one irreducible component of \(F\).  
Thus \(R\) is of type \(A_1\).
\end{proof}

\begin{prop}\label{prop:III-contraction}
We assume that \(F\) is of Kodaira type \(III\).  Let \(R\) be an
ADER in \(F\), and let $\nu\colon X\rightarrow Y$ be the contraction of \(R\).  Let \(G\) be the irreducible component of \(F\)
different from \(R\). We put $C:=\nu(G)$.
Then 
\[\overline F=C.\]
Moreover, \(C\) is an irreducible rational curve with one cusp at the point
\(\nu(R)\), and \(\nu(R)\) is a rational double point of type \(A_1\).  
In particular, \(\overline F\) has underlying Kodaira type \(II\).
\end{prop}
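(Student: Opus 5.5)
The argument runs parallel to the proof of Proposition~\ref{prop:I2-contraction}, with case (2) of Proposition~\ref{prop:delta-one-node-cusp} used in place of case (1).

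First, by Proposition~\ref{prop:III-ADER}, \(R\) is a single smooth rational \((-2)\)-curve of type \(A_1\). Hence its contraction produces a rational double point of type \(A_1\) at \(y:=\nu(R)\). We write \(F=R+G\), where both components have multiplicity \(1\). By Theorem~\ref{2.4}, the fiber \(\overline F\) is obtained by deleting \(R\) while keeping the multiplicity of \(G\), so \(\overline F=\nu(G)=C\).

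Second, we record the local geometry. The morphism \(\nu\) is an isomorphism away from \(y\), so \(C\) is smooth away from \(y\), and \(\nu|_G\colon G\to C\) is finite and birational from the smooth curve \(G\cong\mathbb P^1\). Hence \(\nu|_G\) is the normalization of \(C\). Since \(F\) is of type \(III\), the curves \(R\) and \(G\) meet in a single point \(x\) with \((R\cdot G)_x=2\). As \(\nu^{-1}(y)=R\) scheme-theoretically (the fiber of the contraction of a single \((-2)\)-curve is reduced), this gives
\[
G\cap\nu^{-1}(y)=\{x\},\qquad (\nu^{-1}(y)\cdot G)_x=2 .
\]

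Third, we compute the \(\delta\)-invariant. Since \(C=\overline F\) is a fiber, \(C^2=0\). Adjunction on the Gorenstein surface \(Y\) with \(K_Y\sim 0\) then gives \(p_a(C)=1\). Since \(g(G)=0\) and \(y\) is the only singular point of \(C\), we get \(\delta_y(C)=1\). The hypotheses of Proposition~\ref{prop:delta-one-node-cusp}(2) are now satisfied, so \(C\) has a cusp at \(y\). Thus \(\overline F\) is an irreducible cuspidal rational curve, which has underlying Kodaira type \(II\) in the sense of Definition~\ref{dfn:underlying-kodaira-type}.

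The only delicate point is the identification of the scheme-theoretic fiber \(\nu^{-1}(y)\) with the reduced curve \(R\), which is needed to turn \((R\cdot G)_x=2\) into \((\nu^{-1}(y)\cdot G)_x=2\). This is handled exactly as in the \(I_2\) case, where the same identification was used. If a justification is required, one can use that the maximal ideal of an \(A_1\) singularity pulls back to \(\mathcal O_X(-Z)\), where \(Z=R\) is the fundamental cycle.
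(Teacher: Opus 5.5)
Your proposal is correct and follows the paper's proof step by step. Proposition~\ref{prop:III-ADER} and Theorem~\ref{2.4} give $\overline F=C$, adjunction gives $\delta_y(C)=1$ as in the $I_2$ case, and Proposition~\ref{prop:delta-one-node-cusp}(2) yields the cusp; your extra justification that $\nu^{-1}(y)=R$ holds scheme-theoretically (since $\mathfrak m_y\mathcal O_X=\mathcal O_X(-Z)$ with fundamental cycle $Z=R$ for an $A_1$ point) supplies a step the paper only asserts, and it is exactly what makes $(\nu^{-1}(y)\cdot G)_x=(R\cdot G)_x=2$ legitimate.
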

\begin{proof}
By Proposition~\ref{prop:III-ADER}, the ADER \(R\) is of type \(A_1\).  Hence
the contraction of \(R\) produces a rational double point of type \(A_1\) at
the point $y:=\nu(R)$.
Since \(F\) is of Kodaira type \(III\), we may write
$F=R+G$
where \(R\) and \(G\) are smooth rational \((-2)\)-curves meeting at one point
with intersection multiplicity \(2\).  By Theorem~\ref{2.4}, the fiber
\(\overline F\) is obtained from \(F\) by deleting the contracted component
\(R\), while preserving the multiplicity of the remaining component.  Since
both components of a \(III\)-fiber have multiplicity \(1\), we obtain
\[
\overline F=\nu(G)=C.
\]
The curve \(G\) is smooth and rational, and the morphism $\nu|_G\colon G\rightarrow C$
is the normalization of \(C\).  Away from the point \(y=\nu(R)\), the morphism
\(\nu\) is an isomorphism.  Therefore \(C\) is smooth away from \(y\).
Let $G\cap R=\{x\}$.
Since \(G\) and \(R\) meet at \(x\) with intersection multiplicity \(2\), we have
\[
(R\cdot G)_x=2.
\]
Since \(\nu^{-1}(y)=R\), this gives
\[
G\cap \nu^{-1}(y)=\{x\}
\qquad\text{and}\qquad
(\nu^{-1}(y)\cdot G)_x=2.
\]
As in the proof of Proposition~\ref{prop:I2-contraction},
\[
\delta_y(C)=1.
\]
Thus the assumptions of Proposition~\ref{prop:delta-one-node-cusp} are
satisfied.  Hence \(C\) has a cusp at \(y=\nu(R)\).  Therefore
\(\overline F=C\) is an irreducible cuspidal rational curve.  In particular,
\(\overline F\) has underlying Kodaira type \(II\) in the sense of
Definition~\ref{dfn:underlying-kodaira-type}.  The cusp is located at the
rational double point \(y=\nu(R)\) of type \(A_1\) on \(Y\).
\end{proof}

\section{Fibers of type \(IV\)}

Throughout this section, we use the following notation.  Let \(F\) be a singular
fiber of Kodaira type \(IV\).  We write
\[
F=C_1+C_2+C_3,
\]
where \(C_1,C_2,C_3\) are smooth rational curves such that \(C_1,C_2,C_3\) meet at one common point with distinct tangent directions.
We represent the support of \(F\) by the following graph:
\[
\begin{tikzpicture}[
  baseline=(P),
  scale=0.8,
  every label/.style={font=\scriptsize, inner sep=2pt}
]
  \coordinate (P) at (0,0);

  \node (C1) at (0,1.1) {$C_1$};
  \node (C2) at (-1.0,-0.9) {$C_2$};
  \node (C3) at (1.0,-0.9) {$C_3$};

  \node[circle, fill, inner sep=1.5pt] (Q) at (0,0) {};

  \draw[thick] (Q) -- (C1.south);
  \draw[thick] (Q) -- (C2.north east);
  \draw[thick] (Q) -- (C3.north west);
\end{tikzpicture}
\]
Here and in the sequel, a solid line segment represents an irreducible
component of a fiber, and a black circle represents an intersection point of
irreducible components.  

\begin{prop}\label{prop:ADE-IV}
We assume that \(F\) is of Kodaira type \(IV\), with notation as above.  Let
\(R\) be an ADER in \(F\).  Then \(R\) is either of type \(A_1\) or of
type \(A_2\).  More precisely, one of the following holds.
\begin{enumerate}[\rm(1)]
\item If \(R\) consists of one irreducible component of \(F\), then \(R\) is of
type \(A_1\).

\item If \(R\) consists of two irreducible components of \(F\), then \(R\) is of
type \(A_2\).
\end{enumerate}

Conversely, both types occur.
\end{prop}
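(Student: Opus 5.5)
The plan is to split according to how many irreducible components of \(F=C_1+C_2+C_3\) the ADER \(R\) contains. Since \(R\) is a reduced subdivisor of \(F\), it is a sum of some of the \(C_i\), each with coefficient one. The only input we need is the dual graph of \(F\). Each \(C_i\) is a smooth rational \((-2)\)-curve, and any two distinct components satisfy \(C_i\cdot C_j=1\): they meet only at the common point, and there transversally, because the tangent directions are distinct. So the dual graph of \(\operatorname{Supp}F\) is a triangle, with every pair of vertices joined by exactly one edge.

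First, if \(R\) is a single component \(C_i\), then its dual graph is one vertex, which is the Dynkin diagram \(A_1\). This gives (1). Second, if \(R=C_i+C_j\) with \(i\neq j\), then \(C_i\cdot C_j=1\), so the dual graph is two vertices joined by one edge, namely \(A_2\). Equivalently, the intersection matrix \(\begin{pmatrix}-2&1\\1&-2\end{pmatrix}\) is the negative of the \(A_2\) Cartan matrix. This gives (2). Third, I would exclude \(R=C_1+C_2+C_3\) in one of two ways. The dual graph is a triangle, which is a cycle and hence not a Dynkin diagram. Alternatively, \(R=F\) has \(R^2=F^2=0\), so its intersection form is not negative definite and the configuration is not of ADE type. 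Either argument suffices, and together these cases show that \(R\) is of type \(A_1\) or \(A_2\).

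For the converse, I would take \(R=C_1\), which is a reduced subdivisor with dual graph \(A_1\), and \(R=C_1+C_2\), whose dual graph is \(A_2\) as computed above. Both are therefore ADERs in \(F\), so both types occur.

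The only point requiring care is the third case. For a type \(IV\) fiber, the three curves pass through one point, so the "dual graph" should be read through pairwise intersection numbers rather than through intersection points. I would make this convention explicit and then rely on the negative-definiteness argument (\(F^2=0\)), which avoids any ambiguity about how to draw the graph.
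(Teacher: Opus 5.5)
Your proposal is correct and follows essentially the same route as the paper. Both split by the number of components of $R$, get $A_1$ and $A_2$ for one and two components, and exclude $R=C_1+C_2+C_3$ because its dual graph is the affine diagram $\widetilde A_2$ rather than a finite Dynkin diagram. Your alternative exclusion via $R^2=F^2=0$ (so the configuration is not negative definite) is valid; the paper does not use it, but it avoids any ambiguity about how to draw the dual graph of three concurrent curves.
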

\begin{proof}
Since \(F\) has precisely three
irreducible components \(C_1,C_2,C_3\), the number of irreducible components of
\(R\) is \(1\), \(2\), or \(3\).
If \(R\) consists of one irreducible component, then its support graph has one
vertex, and hence \(R\) is of type \(A_1\).  If \(R\) consists of two
irreducible components, then its support graph is a chain with two vertices,
and hence \(R\) is of type \(A_2\).
It remains to exclude the case where \(R\) contains all three components
\(C_1,C_2,C_3\).  In this case, the associated intersection graph is the affine
Dynkin diagram \(\widetilde A_2\), not a finite ADE diagram.  Hence this case
cannot occur for an ADER.
Therefore the only possibilities are \(A_1\) and \(A_2\).  

Conversely, any one
component of \(F\) gives an \(A_1\)-configuration, and any two components of
\(F\) give an \(A_2\)-configuration.
\end{proof}

Before describing the contraction, we introduce notation for the cuspidal fiber
which occurs after contracting an \(A_2\)-configuration in a fiber of type
\(IV\).

\begin{dfn}\label{dfn:IV-contracted-fibers}
We define the following two types of singular fibers.

\begin{enumerate}[\rm(1)]
\item We say that a singular fiber is of type
\[
\mathfrak S_2(A_1)
\]
if it is of the form
\[
C_1+C_2,
\]
where \(C_1\) and \(C_2\) are distinct smooth rational curves, both with
coefficient \(1\), passing through one rational double point of type \(A_1\),
and they have no other intersection points.

\item We say that a singular fiber is of type
\[
\mathfrak C(A_2)
\]
if it is of the form
\[
C,
\]
where \(C\) is an irreducible rational curve with one cusp at a rational double
point of type \(A_2\), and \(C\) has no other singularities.
\end{enumerate}
\end{dfn}

\begin{prop}\label{prop:IV-contraction}
We assume that \(F\) is of Kodaira type \(IV\), with notation as above.  Let
\(R\) be an ADER in \(F\), and let \(\nu\colon X\rightarrow Y\)
be the contraction of \(R\).  Then the resulting singular fiber \(\overline F\)
is described as follows.

\begin{enumerate}[\rm(1)]
\item If \(R\) is of type \(A_1\), then \(\overline F\) is of type
\[
\mathfrak S_2(A_1)
\]
in the sense of Definition~\ref{dfn:IV-contracted-fibers}.

\item If \(R\) is of type \(A_2\), then \(\overline F\) is of type
\[
\mathfrak C(A_2)
\]
in the sense of Definition~\ref{dfn:IV-contracted-fibers}.
\end{enumerate}
\end{prop}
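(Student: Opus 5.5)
The plan is to treat the two cases of Proposition~\ref{prop:ADE-IV} separately, following the pattern of Propositions~\ref{prop:I2-contraction} and~\ref{prop:III-contraction}. In both cases Theorem~\ref{2.4} gives \(\overline F\) as the sum of the images of the surviving components, each with coefficient \(1\). The local shape at the new singular point is then read off from the configuration of \(F\) together with Propositions~\ref{prop:smoothness-criterion} and~\ref{prop:delta-one-node-cusp}.

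\emph{Case \textup{(1)}.} After renumbering, let \(R=C_1\), and put \(y:=\nu(C_1)\), an \(A_1\)-point. By Theorem~\ref{2.4}, \(\overline F=\nu(C_2)+\nu(C_3)\).
\begin{itemize}
\item Each \(C_i\) (\(i=2,3\)) is smooth rational, and \(\nu|_{C_i}\) is birational onto its image, hence it is the normalization.
\item \(C_i\) meets \(\nu^{-1}(y)=C_1\) only at the common point \(q\), with \((C_1\cdot C_i)_q=1\), since the tangent directions are distinct and \(C_1\cdot C_i=1\).
\item Proposition~\ref{prop:smoothness-criterion} therefore shows that \(\nu(C_i)\) is smooth at \(y\). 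Away from \(y\), \(\nu\) is an isomorphism and \(C_i\) is smooth, so \(\nu(C_i)\) is a smooth rational curve.
\item The curves \(C_2\) and \(C_3\) meet only at \(q\in C_1\), so their images meet only at \(y\).
\end{itemize}
This is exactly the configuration \(\mathfrak S_2(A_1)\).

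\emph{Case \textup{(2)}.} Let \(R=C_1+C_2\), put \(y:=\nu(R)\), an \(A_2\)-point, and let \(C:=\nu(C_3)\), so that \(\overline F=C\) by Theorem~\ref{2.4}.
\begin{itemize}
\item As before, \(\nu|_{C_3}\) is the normalization, and \(C\) is smooth away from \(y\).
\item Since \(C^2=0\), adjunction gives \(p_a(C)=1\), hence \(\delta_y(C)=1\), exactly as in the proof of Proposition~\ref{prop:I2-contraction}.
\item \(C_3\cap\nu^{-1}(y)=\{q\}\).
\end{itemize}
To apply Proposition~\ref{prop:delta-one-node-cusp}(2), I need \((\nu^{-1}(y)\cdot C_3)_q=2\), where \(\nu^{-1}(y)\) is the scheme-theoretic fiber.

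This local intersection number is the main obstacle. At \(q\) the three components are smooth with pairwise transverse tangents, so \((C_1\cdot C_3)_q+(C_2\cdot C_3)_q=2\). The difficulty is that the scheme-theoretic fiber \(\nu^{-1}(y)\) need not equal the reduced divisor \(C_1+C_2\). I would first try to show that, near \(q\), the fiber \(\nu^{-1}(y)\) is the Cartier divisor \(C_1+C_2\). Choose local coordinates \((u,v)\) at \(q\) in which \(C_1=\{u=0\}\) and \(C_2=\{v=0\}\). The maximal ideal of \(y\) pulls back to an ideal sheaf whose associated divisor is the fundamental cycle \(Z=C_1+C_2\) of the \(A_2\) singularity, possibly together with embedded points. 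Any embedded point could only increase the length, so what I must show is that the ideal \(\mathfrak m_y\mathcal O_X\) is exactly \(\mathcal O_X(-Z)\) near \(q\).

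For this I would use that rational double points have reduced fundamental cycle and that \(\mathfrak m_y\mathcal O_X=\mathcal O_X(-Z)\) for rational singularities (Artin \cite{a66}). Then \(\nu^{-1}(y)=Z\) scheme-theoretically, and
\[
(\nu^{-1}(y)\cdot C_3)_q=(C_1\cdot C_3)_q+(C_2\cdot C_3)_q=2.
\]
The same fact, applied with \(Z=C_1\), also justifies \((\nu^{-1}(y)\cdot C_i)_q=1\) in case \textup{(1)}; in the earlier propositions the paper used this implicitly.

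With this, Proposition~\ref{prop:delta-one-node-cusp}(2) shows that \(C\) has a cusp at \(y\), and \(C\) has no other singularities. Hence \(\overline F\) is of type \(\mathfrak C(A_2)\).
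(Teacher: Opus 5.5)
Your proof is correct and follows the paper's argument essentially step for step: Theorem~\ref{2.4} gives the surviving components, Proposition~\ref{prop:smoothness-criterion} handles the \(A_1\) case, and for the \(A_2\) case you use adjunction to get \(\delta_y=1\) and then Proposition~\ref{prop:delta-one-node-cusp}(2). Your extra step, which identifies the scheme-theoretic fiber \(\nu^{-1}(y)\) with the fundamental cycle via Artin's \(\mathfrak m_y\mathcal O_X=\mathcal O_X(-Z)\), supplies a justification the paper leaves implicit; one small correction is that rational double points do not in general have reduced fundamental cycle (for \(D_n\) and \(E_n\) it is not reduced), but for \(A_1\) and \(A_2\) it is, namely \(C_1\) and \(C_1+C_2\), and that is all you use.
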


\begin{proof}
By Proposition~\ref{prop:ADE-IV}, the ADER \(R\) is either of type \(A_1\) or of
type \(A_2\).

First, we assume that \(R\) is of type \(A_1\).  After relabeling the
components, we may write
\[
R=C_1.
\]
Then the contraction of \(R\) produces one rational double point
\[
y:=\nu(C_1)
\]
of type \(A_1\).  The surviving components are \(C_2\) and \(C_3\).  We put
\[
\overline C_2:=\nu(C_2),
\qquad
\overline C_3:=\nu(C_3).
\]
By Theorem~\ref{2.4}, we have
\[
\overline F=\overline C_2+\overline C_3.
\]
For \(i=2,3\), the component \(C_i\) meets the exceptional curve \(C_1\)
transversely at one point.  Hence, by
Proposition~\ref{prop:smoothness-criterion}, the curve \(\overline C_i\) is
smooth at \(y\).  Away from \(y\), the morphism \(\nu\) is an isomorphism.
Therefore \(\overline C_2\) and \(\overline C_3\) are smooth rational curves.

Moreover, both \(\overline C_2\) and \(\overline C_3\) pass through \(y\), and
they have no other intersection points.  Thus \(\overline F\) is of type
\[
\mathfrak S_2(A_1).
\]

Next, we assume that \(R\) is of type \(A_2\).  After relabeling the
components, we may write
\[
R=C_1+C_2.
\]
Then the contraction of \(R\) produces one rational double point
\[
y:=\nu(C_1\cup C_2)
\]
of type \(A_2\).  The only surviving component is \(C_3\).  We put
\[
\overline C:=\nu(C_3).
\]
By Theorem~\ref{2.4}, we have
\[
\overline F=\overline C.
\]
The strict transform of \(\overline C\) on \(X\) is \(C_3\cong\mathbb P^1\),
and the restriction
\[
\nu|_{C_3}\colon C_3\rightarrow \overline C
\]
is the normalization of \(\overline C\).
As in the proof of Proposition~\ref{prop:I2-contraction},
\[
\delta_y(\overline C)=1.
\]
The component \(C_3\) meets the exceptional configuration
\(C_1+C_2\) at the common point of the three components of the \(IV\)-fiber, and
\[
((C_1+C_2)\cdot C_3)=2.
\]
Thus
\[
C_3\cap \nu^{-1}(y)=\{x\},
\qquad
(\nu^{-1}(y)\cdot C_3)_x=2.
\]
By Proposition~\ref{prop:delta-one-node-cusp}, the curve \(\overline C\) has a
cusp at \(y\).  Hence \(\overline F\) is of type
\[
\mathfrak C(A_2).
\]
\end{proof}

\section{Fibers of type $I_n$}
Configurations of \(I_n\)-fibers on elliptic \(K3\) surfaces were studied
by Miranda and Persson \cite{mp89}, and related questions for extremal
semistable elliptic \(K3\) surfaces were studied in \cite{a02}.

We study singular fibers of type \(I_n\) for $n\geq3$. Recall that an \(I_n\)-fiber is a cycle of \(n\) smooth rational curves. 
\begin{prop}\label{prop:ADE-In-fiber}
We assume that \(F\) is of Kodaira type \(I_n\) with \(n\geq 3\).  Let \(R\)
be an ADER in \(F\).  Then \(R\) is of type
\[
A_{\lambda_1}\oplus\cdots\oplus A_{\lambda_r}
\]
for some integers \(\lambda_1,\dots,\lambda_r\geq 1\) satisfying
\[
\lambda_1+\cdots+\lambda_r+r\leq n.
\]

Conversely, every such type occurs as an ADER in $F$.
\end{prop}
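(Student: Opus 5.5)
We work directly with the dual graph of \(F\). Since \(n\geq 3\), an \(I_n\)-fiber is a cycle \(C_1+\cdots+C_n\) (indices modulo \(n\)) of smooth rational \((-2)\)-curves. Each \(C_i\) meets only \(C_{i-1}\) and \(C_{i+1}\), transversally at one point each, and these \(n\) intersection points are distinct. Hence the dual graph of \(\operatorname{Supp}F\) is the cycle graph on \(n\) vertices (the affine diagram \(\widetilde A_{n-1}\)). An ADER \(R\) is a set of components, and its dual graph is the full subgraph of this cycle induced on those vertices.

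\textbf{Step 1: \(R\) is a proper subset.} If \(R\) contained all components, its dual graph would be the whole cycle. A cycle is not a Dynkin diagram: it contains a loop, whereas Dynkin diagrams of ADE type are trees. Equivalently, \(F\) itself would have \(F^2=0\), contradicting negative definiteness. So at least one vertex is omitted.

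\textbf{Step 2: identify the components of \(R\).} Removing at least one vertex from a cycle leaves a disjoint union of paths. Each connected component of \(R\) is therefore a chain \(C_{a+1},\dots,C_{a+\lambda}\) of consecutive curves, whose dual graph is \(A_\lambda\). No \(D\) or \(E\) type can appear, because every vertex of the cycle has degree at most \(2\), so no branch vertex exists.

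\textbf{Step 3: the counting inequality.} Write the components as chains of lengths \(\lambda_1,\dots,\lambda_r\), listed in cyclic order. Two distinct components cannot be adjacent in the cycle, since otherwise they would be connected. So between consecutive chains, including between the last and the first going around the cycle, there is at least one vertex not in \(R\). This gives at least \(r\) omitted vertices. When \(r=1\), Step 1 supplies the one omitted vertex. Hence \(\sum\lambda_i+r\leq n\).

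\textbf{Step 4: the converse.} Given \(\lambda_i\geq1\) with \(\sum\lambda_i+r\leq n\), build \(R\) by going around the cycle. Take \(\lambda_1\) consecutive curves, skip one, take \(\lambda_2\) consecutive curves, skip one, and so on, until \(\lambda_r\) curves have been taken followed by a skip. This uses \(\sum\lambda_i+r\leq n\) vertices, and any leftover vertices are also omitted. The resulting chains are pairwise non-adjacent, including across the wrap-around, so the induced subgraph is exactly \(A_{\lambda_1}\oplus\cdots\oplus A_{\lambda_r}\). Each chain is then a configuration of \((-2)\)-curves with the corresponding Dynkin dual graph, so \(R\) is an ADER.

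The only real care point is Step 3 in the cyclic setting: the wrap-around gap must be counted, and for \(r=1\) the gap comes from Step 1 rather than from separation. Everything else is elementary graph combinatorics. One must also note that for \(n\geq3\) there are no double edges, which is exactly why \(I_2\) was handled separately.
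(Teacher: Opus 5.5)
Your proposal is correct and follows essentially the same route as the paper: vertices of valency at most $2$ rule out $D$ and $E$ components, the components of $R$ are consecutive chains separated by at least one omitted vertex (giving $\sum\lambda_i+r\le n$), and the converse is built by alternating chains and single gaps around the cycle. Your Step~1 and Step~3 make explicit a point the paper's proof leaves implicit. For $r=1$, the required omitted vertex comes from $R$ not being the whole cycle, since a cycle is not a Dynkin diagram.
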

\begin{proof}
The dual graph of an \(I_n\)-fiber is a cycle with \(n\) vertices, and every
vertex has valency \(2\).  Hence every subgraph of this cycle has all vertices
of valency at most \(2\).  On the other hand, every Dynkin diagram of type
\(D_m\) \((m\geq 4)\) and of type \(E_6,E_7,E_8\) contains a vertex of valency
\(3\).  Therefore no connected component of the dual graph of \(R\) can be of
type \(D\) or \(E\).  Thus every connected component of the dual graph of \(R\)
is of type \(A_k\) for some \(k\geq 1\).  Hence \(R\) is of type
\[
A_{\lambda_1}\oplus\cdots\oplus A_{\lambda_r}
\]
for some integers \(\lambda_1,\dots,\lambda_r\geq 1\).

Each connected component of \(R\) is a chain of consecutive vertices in the
cycle.  If the connected components have types \(A_{\lambda_1},\dots,A_{\lambda_r}\), then they contain altogether
\[
\lambda_1+\cdots+\lambda_r
\]
vertices.  Since distinct connected components must be separated by at least
one vertex not contained in \(R\), at least \(r\) vertices of the cycle are not
contained in \(R\).  Therefore
\[
\lambda_1+\cdots+\lambda_r+r\leq n.
\]

Conversely, let \(\lambda_1,\dots,\lambda_r\geq 1\) be integers satisfying
\[
\lambda_1+\cdots+\lambda_r+r\leq n.
\]
Choose \(\lambda_1\) consecutive vertices on the cycle, leave one vertex unused,
then choose \(\lambda_2\) consecutive vertices, leave one vertex unused, and
continue in this way.  After choosing the \(r\)-th chain, the inequality above
ensures that there is still at least one unused vertex separating the last chain
from the first one on the cycle.  Therefore the chosen vertices define a reduced
subdivisor of \(F\) whose connected components are of types
\[
A_{\lambda_1},\dots,A_{\lambda_r}.
\]
Thus this subdivisor is an ADER in \(F\) of type
\[
A_{\lambda_1}\oplus\cdots\oplus A_{\lambda_r}.
\]
This proves the converse.
\end{proof}

\begin{prop}\label{prop:In-fiber-contraction}
We assume that \(F\) is of Kodaira type \(I_n\) with \(n\geq 3\).  Let \(R\)
be an ADER in \(F\) of type
\[
A_{\lambda_1}\oplus\cdots\oplus A_{\lambda_r}.
\]
We set
\[
m:=n-\sum_{i=1}^r\lambda_i.
\]
Let $\nu\colon X\rightarrow Y$ be the contraction of \(R\).  Then the singular points of \(Y\) lying on
\(\overline F\) are precisely \(r\) rational double points of types $A_{\lambda_1},\dots,A_{\lambda_r}$.
Moreover, the fiber \(\overline F\) is described as follows.
\begin{enumerate}[\rm(1)]
\item If \(m\geq 2\), then \(\overline F\) has underlying Kodaira type
\[
I_m.
\]
More precisely, \(\overline F\) has exactly \(m\) irreducible components.
All of them are smooth rational curves, and they form a cycle.  The rational
double points obtained by contracting \(R\) lie at some of the intersection
points of adjacent components in this cycle.

\item If \(m=1\), equivalently \(R\) is of type \(A_{n-1}\), then
\(\overline F\) has underlying Kodaira type
\[
I_1.
\]
More precisely, \(\overline F\) has exactly one irreducible component.
It is an irreducible rational curve with one node.  This node is supported
at the rational double point of type \(A_{n-1}\) obtained by contracting
\(R\).
\end{enumerate}
\end{prop}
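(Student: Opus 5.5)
Write $F=\sum_{i\in\mathbb Z/n}C_i$ with $C_i\cdot C_{i+1}=1$ transversally and no other intersections, which is possible since $n\geq3$. By Proposition~\ref{prop:ADE-In-fiber}, the connected components $R_1,\dots,R_r$ of $R$ are chains of consecutive curves $C_a+C_{a+1}+\dots+C_{a+\lambda_j-1}$, separated by surviving curves. Contracting $R$ therefore gives $r$ rational double points $y_j=\nu(R_j)$ of type $A_{\lambda_j}$. These are exactly the singular points of $Y$ on $\overline F$, since $\nu$ contracts only $R$. By Theorem~\ref{2.4}, $\overline F=\sum \nu(G)$, where $G$ runs over the $m$ surviving components, each with multiplicity $1$.

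\textbf{Case $m\geq2$.} Let $G$ be a surviving curve. At each of its two neighbours in the cycle, either the neighbour survives (and $\nu$ is an isomorphism near that intersection point), or the neighbour is an end of some chain $R_j$. In the latter case $G$ meets $\nu^{-1}(y_j)=R_j$ in a single point with multiplicity $1$. Since $m\geq2$, the two neighbours of $G$ cannot both be ends of the same chain $R_j$: that would mean $R_j$ is all of $F$ except $G$, i.e.\ $m=1$. Hence $G\cap\nu^{-1}(y_j)$ is one point of multiplicity one for every $j$. Since $G\cong\mathbb P^1$ and $\nu|_G$ is birational and finite, $\nu|_G$ is the normalization, and Proposition~\ref{prop:smoothness-criterion} shows $\nu(G)$ is smooth at each $y_j$; it is smooth elsewhere because $\nu$ is an isomorphism there. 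So each $\nu(G)$ is a smooth rational curve.

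Two surviving curves that are consecutive among survivors either meet transversally in $X$ away from $R$, or are both attached to a common chain $R_j$, in which case their images meet at $y_j$. This gives the cyclic adjacency, and the points $y_j$ sit at intersection points of adjacent components. Non-consecutive survivors have disjoint images, since they touch neither each other nor a common chain. For $m=2$ the two curves meet in two points, which is the $I_2$ configuration. This gives underlying type $I_m$.

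\textbf{Case $m=1$.} Then $R=F-G$ is a single $A_{n-1}$ chain whose two ends both meet $G$, transversally, at two distinct points. I would argue exactly as in Proposition~\ref{prop:I2-contraction}. $C=\nu(G)$ satisfies $C^2=0$, so $p_a(C)=1$; its normalization is $G\cong\mathbb P^1$, and $C$ is smooth off $y=\nu(R)$, so $\delta_y(C)=1$. Since $G\cap\nu^{-1}(y)=\{x_1,x_2\}$ with local intersection multiplicity $1$ at each point, Proposition~\ref{prop:delta-one-node-cusp}(1) gives a node at $y$.

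The main obstacle is the bookkeeping in the case $m\geq2$, namely checking that the local hypotheses of Proposition~\ref{prop:smoothness-criterion} hold. The key point is excluding a surviving curve that meets the same chain twice, which is handled by the count $m\geq2$.
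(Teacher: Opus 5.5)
Your proof is correct and follows essentially the same route as the paper's. You show smoothness of each surviving image via Proposition~\ref{prop:smoothness-criterion}, using $m\geq 2$ to rule out a survivor meeting one chain twice, and you settle the $m=1$ case via $\delta_y(C)=1$ and Proposition~\ref{prop:delta-one-node-cusp}(1), exactly as in the $I_2$ case. Your check of the cyclic adjacency of the images, including the $m=2$ case, is somewhat more explicit than the paper's.
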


\begin{proof}
Each connected component of \(R\) of type \(A_{\lambda_i}\) is contracted to a rational double point of type \(A_{\lambda_i}\).  Since the connected components of \(R\) are disjoint,
the singular points of \(Y\) lying on \(\overline F\) are precisely the \(r\) rational double points of types $A_{\lambda_1},\dots,A_{\lambda_r}$.

The fiber \(F\) has \(n\) irreducible components, and \(R\) contains $\sum_{i=1}^r\lambda_i$ of them.  Hence, by Theorem~\ref{2.4}, the fiber \(\overline F\) has $m=n-\sum_{i=1}^r\lambda_i$
irreducible components.

We first assume that \(m\geq 2\).  Let \(E\) be an irreducible component of
\(F\) not contained in \(R\).
We put $C:=\nu(E)$.
We show that \(C\) is smooth.  Let
\[
y\in C\cap\Sing(Y).
\]
Then \(\nu^{-1}(y)\) is a connected component of \(R\), say \(R_y\), of type
\(A_{\lambda_i}\) for some \(i\).  Since \(R_y\) is a chain of consecutive
components in the \(I_n\)-cycle and \(m\geq 2\), the component \(E\) meets
\(R_y\) at exactly one point.  Moreover, this intersection is transverse.  Thus
\[
E\cap \nu^{-1}(y)=\{x\}
\qquad\text{and}\qquad
(\nu^{-1}(y)\cdot E)_x=1.
\]
The restriction $\nu|_E\colon E\rightarrow C$ is the normalization of \(C\).  Therefore, by
Proposition~\ref{prop:smoothness-criterion}, the curve \(C\) is smooth at
\(y\).  Since \(\nu\) is an isomorphism away from the exceptional locus, \(C\)
is smooth away from the singular points of \(Y\).  Hence \(C\) is a smooth
rational curve.
It follows that all irreducible components of \(\overline F\) are smooth
rational curves.  Contracting each connected component of \(R\) replaces a chain
of consecutive components in the \(I_n\)-cycle by the corresponding rational
double point, and it preserves the cyclic order of the remaining components.
Thus the \(m\) remaining smooth rational curves form a cycle.  The rational
double points obtained by contracting \(R\) lie at some of the intersection
points of adjacent components in this cycle.  Therefore \(\overline F\) has
underlying Kodaira type \(I_m\).

We consider the case \(m=1\).  Then $\sum_{i=1}^r\lambda_i=n-1$.
By Proposition~\ref{prop:ADE-In-fiber}, we have $\sum_{i=1}^r\lambda_i+r\leq n$.
Hence \(r=1\), and therefore \(R\) is of type \(A_{n-1}\).
Let \(E\) be the unique irreducible component of \(F\) not contained in \(R\).
We put $C:=\nu(E)$.
By Theorem~\ref{2.4}, we have
\[
\overline F=C.
\]
The curve \(E\) is smooth and rational, and the restriction $\nu|_E\colon E\rightarrow C$
is the normalization of \(C\).
We set $y:=\nu(R)$.
Since \(R\) is the chain consisting of all components of the \(I_n\)-fiber except
\(E\), the component \(E\) meets the two end components of \(R\) transversely at
two distinct points.  Thus
\[
E\cap R=\{x_1,x_2\}
\qquad\text{and}\qquad
(R\cdot E)_{x_i}=1
\quad
(i=1,2).
\]
Since \(\nu^{-1}(y)=R\), we obtain
\[
E\cap \nu^{-1}(y)=\{x_1,x_2\}
\qquad\text{and}\qquad
(\nu^{-1}(y)\cdot E)_{x_i}=1
\quad
(i=1,2).
\]
As in the proof of Proposition~\ref{prop:I2-contraction},
\[
\delta_y(\overline C)=1.
\]
Thus the assumptions of Proposition~\ref{prop:delta-one-node-cusp} are
satisfied.  
Thus \(C\) has a node at \(y=\nu(R)\).  This node is supported at the rational
double point of type \(A_{n-1}\) obtained by contracting \(R\).  Therefore
\(\overline F\) is an irreducible rational curve with one node.  In particular,
\(\overline F\) has underlying Kodaira type \(I_1\).
\end{proof}

\section{Fibers of type $I_0^*$}
We first treat the fiber of type \(I_0^*\) separately. Throughout this
subsection, we use the following notation. Let \(F\) be a singular fiber of
Kodaira type \(I_0^*\). We write
\[
F=2C_0+C_1+C_2+C_3+C_4,
\]
where \(C_0\) is the unique component meeting four other components. The dual
graph of \(F\) is the affine Dynkin diagram \(\widetilde D_4\), and it is given
as follows:
\[
\begin{tikzpicture}[baseline=(current bounding box.center)]
\node (C1) at (-1,0) {$C_1$};
\node (C0) at (0,0) {$C_0$};
\node (C2) at (1,0) {$C_2$};

\node (C3) at (0,1) {$C_3$};
\node (C4) at (0,-1) {$C_4$};

\draw[thick] (C1.east) -- (C0.west);
\draw[thick] (C0.east) -- (C2.west);

\draw[thick] (C0.north) -- (C3.south);
\draw[thick] (C0.south) -- (C4.north);
\end{tikzpicture}
\]
\begin{prop}\label{prop:ADE-I0star}
We assume that \(F\) is of Kodaira type \(I_0^*\).  Let \(R\) be an ADER in
\(F\).  Then \(R\) is one of the following types:
\[
D_4,\qquad A_3,\qquad A_2,\qquad A_1^{\oplus s}\quad (1\leq s\leq 4).
\]

More precisely:
\begin{enumerate}[\rm(1)]
\item If \(C_0\subset R\), then \(R\) is connected and is of type
\[
D_4,\qquad A_t\quad (1\le t\le 3).
\]

\item If \(C_0\not\subset R\), then \(R\) is a disjoint union of leaves, hence is of type
\[
A_1^{\oplus s}\quad (1\le s\le 4).
\]
\end{enumerate}

Conversely, every configuration listed above occurs.
\end{prop}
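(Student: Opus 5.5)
The plan is a direct combinatorial analysis of induced subgraphs of the dual graph \(\widetilde D_4\). The basic point is that, for an ADER \(R\), the dual graph of \(R\) is the full subgraph of the dual graph of \(F\) spanned by the components of \(R\). Every component of \(F\) is a smooth rational \((-2)\)-curve, and distinct components meet transversely in at most one point. So the question reduces to listing the subsets of \(\{C_0,C_1,C_2,C_3,C_4\}\) whose induced subgraph is a disjoint union of Dynkin diagrams. We then sort this list according to whether the central vertex \(C_0\) is included.

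\emph{Case \(C_0\subset R\).} Let \(k\) be the number of leaves \(C_i\) (\(1\le i\le 4\)) contained in \(R\). Each leaf in \(R\) is adjacent to \(C_0\), so \(R\) is connected, and its dual graph is a star with \(k\) edges.
\begin{enumerate}[\rm(i)]
\item If \(k=0,1,2\), the star is a chain with \(k+1\) vertices, so \(R\) is of type \(A_1,A_2,A_3\) respectively.
\item If \(k=3\), the star is the Dynkin diagram \(D_4\).
\item If \(k=4\), \(R=\operatorname{Supp}F\) and the graph is the affine diagram \(\widetilde D_4\), which is excluded. Equivalently, \(R\) would carry the divisor \(2C_0+C_1+C_2+C_3+C_4\) of self-intersection \(0\), so its intersection matrix is not negative definite.
\end{enumerate}
This gives (1).

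\emph{Case \(C_0\not\subset R\).} \(R\) is a nonempty set of \(s\) leaves with \(1\le s\le 4\). The leaves are pairwise disjoint, since in \(\widetilde D_4\) each meets only \(C_0\). So the dual graph is \(s\) isolated vertices, and \(R\) is of type \(A_1^{\oplus s}\). This gives (2).

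For the union of the two cases, note that \(A_1\) appears in both: as \(C_0\) alone, or as a single leaf. It is listed in the statement as \(A_1^{\oplus 1}\).

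For the converse, we exhibit each configuration explicitly:
\begin{enumerate}[\rm(i)]
\item \(C_0+C_1+C_2+C_3\) gives \(D_4\);
\item \(C_1+C_0+C_2\) gives \(A_3\);
\item \(C_0+C_1\) gives \(A_2\);
\item \(C_1+\cdots+C_s\) gives \(A_1^{\oplus s}\).
\end{enumerate}
Each is a reduced subdivisor of \(F\) whose dual graph is the stated Dynkin diagram, so it is an ADER.

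There is no real obstacle here. The only step needing care is justifying that the dual graph of \(R\) is the induced subgraph, with simple edges and no multiple intersections. This holds because in an \(I_0^*\)-fiber adjacent components meet transversely at a single point, unlike the \(I_2\) and \(III\) cases. We also need to exclude all five components together, via the affine diagram or the self-intersection argument.
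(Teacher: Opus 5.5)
Your proposal is correct and follows essentially the same route as the paper. Both split according to whether \(C_0\subset R\): with \(C_0\) included you count the leaves \(q\in\{0,1,2,3\}\) to get \(A_1,A_2,A_3,D_4\), and without it you use that the leaves are pairwise disjoint to get \(A_1^{\oplus s}\). The only cosmetic difference is how the four-leaf case is excluded: you use the affine diagram or the self-intersection argument, where the paper simply notes a vertex of valency \(4\).
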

\begin{proof}
We first assume that \(C_0\subset R\).  Since \(C_0\) is connected to each of
the leaves \(C_1,C_2,C_3,C_4\), the divisor \(R\) is connected.  Let \(q\) be
the number of leaves contained in \(R\).  Then the dual graph of \(R\) has one
central vertex and \(q\) leaves.  If \(q=4\), then the central vertex has
valency \(4\), and hence the graph is not a Dynkin diagram of ADE type.
Therefore, since \(R\) is an ADER, we have \(0\leq q\leq 3\).  Hence:
\begin{itemize}
\item if \(q=0\), then \(R\) is of type \(A_1\);
\item if \(q=1\), then \(R\) is of type \(A_2\);
\item if \(q=2\), then \(R\) is of type \(A_3\);
\item if \(q=3\), then \(R\) is of type \(D_4\).
\end{itemize}
Thus the possible ADERs containing \(C_0\) are exactly of types
\[
A_1,\quad A_2,\quad A_3,\quad D_4.
\]

Next, we assume that \(C_0\not\subset R\). Then \(R\) is supported on some of the leaves
$C_1,C_2,C_3$, and $C_4$.
Since the leaves are mutually disjoint, every connected component of \(R\) is an isolated vertex,
hence is of type \(A_1\). Therefore
\[
R\sim A_1^{\oplus s}\qquad (1\le s\le 4).
\]

Conversely, every configuration listed above occurs. 
Indeed, if \(C_0\subset R\), the types \(A_1, A_2, A_3, D_4\) are obtained by 
choosing \(C_0\) together with \(q=0,1,2,3\) leaves, respectively; if 
\(C_0\not\subset R\), the type \(A_1^{\oplus s}\ (1\le s\le 4)\) is obtained by choosing any \(s\) disjoint leaves.
\end{proof}

\begin{dfn}\label{dfn:new-I0star-fibers}
We define the following singular fibers on a normal \(K3\) surface.

\begin{enumerate}[\rm(1)] 
\item Let \(1\leq s\leq 4\), and let \(\Gamma\) be an ADE type.  We say that a
singular fiber is of type
\[
\mathfrak S_s(\Gamma)
\]
if, as a divisor, it is of the form
\[
R_1+\cdots+R_s,
\]
where \(R_1,\dots,R_s\) are smooth rational curves, all components have
multiplicity \(1\), and all of them pass through one rational double point of
type \(\Gamma\).  Moreover, these components have no other intersection points.

When \(s=1\), this means that the fiber is a single smooth rational curve
containing one rational double point of type \(\Gamma\).

For \(s=1,2,3,4\), the support graphs are as follows:
\[
\begin{array}{cccc}
\begin{tikzpicture}[
  baseline=(P),
  scale=0.85,
  rdp/.style={circle, fill, inner sep=1.5pt},
  every label/.style={font=\scriptsize, inner sep=2pt}
]
  \useasboundingbox (-1.6,-1.8) rectangle (1.6,1.5);
  \coordinate (P) at (0,0);

  \draw[thick] (-0.9,0) -- (0.9,0);
  \node[rdp, label=above:{$\Gamma$}] at (P) {};
  \node at (0,-0.35) {$R_1$};
  \node at (0,-1.45) {$s=1$};
\end{tikzpicture}
&
\begin{tikzpicture}[
  baseline=(P),
  scale=0.85,
  rdp/.style={circle, fill, inner sep=1.5pt},
  every label/.style={font=\scriptsize, inner sep=2pt}
]
  \useasboundingbox (-1.6,-1.8) rectangle (1.6,1.5);
  \coordinate (P) at (0,0);

  \draw[thick] (-0.9,0) -- (0,0);
  \draw[thick] (0,0) -- (0.9,0);
  \node[rdp, label=above:{$\Gamma$}] at (P) {};
  \node at (-1.15,0) {$R_1$};
  \node at (1.15,0) {$R_2$};
  \node at (0,-1.45) {$s=2$};
\end{tikzpicture}
&
\begin{tikzpicture}[
  baseline=(P),
  scale=0.85,
  rdp/.style={circle, fill, inner sep=1.5pt},
  every label/.style={font=\scriptsize, inner sep=2pt}
]
  \useasboundingbox (-1.6,-1.8) rectangle (1.6,1.5);
  \coordinate (P) at (0,0);

  \draw[thick] (-0.9,0) -- (0,0);
  \draw[thick] (0,0) -- (0.9,0);
  \draw[thick] (0,0) -- (0,0.9);
  \node[rdp, label=below right:{$\Gamma$}] at (P) {};
  \node at (-1.15,0) {$R_1$};
  \node at (1.15,0) {$R_2$};
  \node at (0,1.15) {$R_3$};
  \node at (0,-1.45) {$s=3$};
\end{tikzpicture}
&
\begin{tikzpicture}[
  baseline=(P),
  scale=0.85,
  rdp/.style={circle, fill, inner sep=1.5pt},
  every label/.style={font=\scriptsize, inner sep=2pt}
]
  \useasboundingbox (-1.6,-1.8) rectangle (1.6,1.5);
  \coordinate (P) at (0,0);

  \draw[thick] (-0.9,0) -- (0,0);
  \draw[thick] (0,0) -- (0.9,0);
  \draw[thick] (0,0) -- (0,0.9);
  \draw[thick] (0,0) -- (0,-0.9);
  \node[rdp, label=above right:{$\Gamma$}] at (P) {};
  \node at (-1.15,0) {$R_1$};
  \node at (1.15,0) {$R_2$};
  \node at (0,1.15) {$R_3$};
  \node at (0,-1.15) {$R_4$};
  \node at (0,-1.45) {$s=4$};
\end{tikzpicture}
\end{array}
\]
The case \(s=2\) and \(\Gamma=A_1\) agrees with
Definition~\ref{dfn:IV-contracted-fibers}\textup{(1)}.

\item We say that a singular fiber is of type
\[
\mathfrak D_4^{\mathrm{mark}}(A_1)
\]
if, as a divisor, it is of the form
\[
R_0+2R_1+R_2+R_3,
\]
where \(R_0,R_1,R_2,R_3\) are smooth rational curves whose support graph is as
follows:
\[
\begin{tikzpicture}[
  baseline=(P),
  scale=0.9,
  rdp/.style={circle, fill, inner sep=1.5pt},
  every label/.style={font=\scriptsize, inner sep=2pt}
]
  \useasboundingbox (-1.8,-1.5) rectangle (1.8,1.5);
  \coordinate (P) at (0,0);

  \node (C1) at (0,0) {$R_1$};
  \node (C0) at (-1.2,0) {$R_0$};
  \node (C2) at (0,1.1) {$R_2$};
  \node (C3) at (0,-1.1) {$R_3$};

  \draw[thick] (C0.east) -- (C1.west);
  \draw[thick] (C1.north) -- (C2.south);
  \draw[thick] (C1.south) -- (C3.north);

  \node[rdp, label=right:{$A_1$}] at (0.35,0) {};
\end{tikzpicture}
\]
Moreover, \(R_1\) contains one rational double point of type \(A_1\), away from the intersection points of the irreducible components.

\item We say that a singular fiber is of type
\[
\mathfrak A_3^{\mathrm{mark}}(A_1^{\oplus 2})
\]
if, as a divisor, it is of the form
\[
R_0+2R_1+R_2,
\]
where \(R_0,R_1,R_2\) are smooth rational curves whose support graph is as
follows:
\[
\begin{tikzpicture}[
  baseline=(P),
  scale=0.9,
  rdp/.style={circle, fill, inner sep=1.5pt},
  every label/.style={font=\scriptsize, inner sep=2pt}
]
  \useasboundingbox (-1.8,-1.4) rectangle (1.8,1.4);
  \coordinate (P) at (0,0);

  \node (C0) at (-1.2,0) {$R_0$};
  \node (C1) at (0,0) {$R_1$};
  \node (C2) at (1.2,0) {$R_2$};

  \draw[thick] (C0.east) -- (C1.west);
  \draw[thick] (C1.east) -- (C2.west);

  \node[rdp, label=above:{$A_1$}] at (0,0.42) {};
  \node[rdp, label=below:{$A_1$}] at (0,-0.42) {};
\end{tikzpicture}
\]
Moreover, \(R_1\) contains two rational double points of type \(A_1\), neither of which is an intersection point of two irreducible components of the fiber.

\item We say that a singular fiber is of type
\[
\mathfrak A_2^{\mathrm{mark}}(A_1^{\oplus 3})
\]
if, as a divisor, it is of the form
\[
R_0+2R_1,
\]
where \(R_0\) and \(R_1\) are smooth rational curves whose support graph is as
follows:
\[
\begin{tikzpicture}[
  baseline=(P),
  scale=0.9,
  rdp/.style={circle, fill, inner sep=1.5pt},
  every label/.style={font=\scriptsize, inner sep=2pt}
]
  \useasboundingbox (-1.8,-1.4) rectangle (1.8,1.4);
  \coordinate (P) at (0,0);

  \node (C0) at (-1.2,0) {$R_0$};
  \node (C1) at (0.6,0) {$R_1$};

  \draw[thick] (C0.east) -- (C1.west);

  \node[rdp, label=above:{$A_1$}] at (0.6,0.42) {};
  \node[rdp, label=right:{$A_1$}] at (1.02,0) {};
  \node[rdp, label=below:{$A_1$}] at (0.6,-0.42) {};
\end{tikzpicture}
\]
Moreover, the component \(R_1\) contains three rational double points of type
\(A_1\), and none of them is the intersection point \(R_0\cap R_1\).

\item We say that a singular fiber is of type
\[
\mathfrak A_1^{\mathrm{mark}}(A_1^{\oplus 4})
\]
if, as a divisor, it is of the form
\[
2R,
\]
where \(R\) is a smooth rational curve containing four rational double points of type \(A_1\).
\end{enumerate}
\end{dfn}

\begin{prop}\label{prop:I0star-contraction}
We assume that \(F\) is of Kodaira type \(I_0^*\), with notation as above.
Let \(R\) be an ADER in \(F\), and let $\nu\colon X\rightarrow Y$
be the contraction of \(R\). Then all irreducible components of \(\overline F\) are smooth rational curves.  
Moreover, \(\overline F\) is described as follows.

\begin{enumerate}[\rm(1)]
\item Suppose that \(C_0\subset R\).  Then one of the following holds:
\begin{enumerate}[\rm(a)]
\item If \(R\) is of type \(A_1\), equivalently \(R=C_0\), then
\(\overline F\) is of type 
\[\mathfrak S_4(A_1)\]
in the sense of
Definition~\ref{dfn:new-I0star-fibers}. 

\item If \(R\) is of type \(A_2\), equivalently \(R=C_0+C_i\) for some \(i\),
then \(\overline F\) is a singular fiber of type 
\[\mathfrak S_3(A_2).\]  

\item If \(R\) is of type \(A_3\), equivalently $R=C_0+C_i+C_j$
for distinct \(i,j\), then \(\overline F\) is of type 
\[\mathfrak S_2(A_3)\] in
the sense of Definition~\ref{dfn:new-I0star-fibers}. 

\item If \(R\) is of type \(D_4\), equivalently $R=C_0+C_i+C_j+C_k$
for distinct \(i,j,k\), then \(\overline F\) is of type 
\[\mathfrak S_1(D_4)\] in
the sense of Definition~\ref{dfn:new-I0star-fibers}. 
\end{enumerate}

\item Suppose that \(C_0\not\subset R\).  Then \(R\) is of type
\(A_1^{\oplus s}\), where \(1\leq s\leq 4\).  In this case \(\overline F\) is
of type
\[
\mathfrak D_4^{\mathrm{mark}}(A_1),\quad
\mathfrak A_3^{\mathrm{mark},}(A_1^{\oplus 2}),\quad
\mathfrak A_2^{\mathrm{mark}}(A_1^{\oplus 3}),\quad
\mathfrak A_1^{\mathrm{mark}}(A_1^{\oplus 4})
\]
according as \(s=1,2,3,4\).
\end{enumerate}
\end{prop}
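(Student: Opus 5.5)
Using Proposition~\ref{prop:ADE-I0star}, I would treat the types one at a time, following the same template as Propositions~\ref{prop:IV-contraction} and~\ref{prop:In-fiber-contraction}. In every case I first apply Theorem~\ref{2.4}: $\overline F$ is obtained by deleting the components in $R$ and keeping the multiplicities of the others. The leaves $C_i$ have multiplicity $1$ and $C_0$ has multiplicity $2$, which already gives the divisors $\sum_{i\notin R}\overline C_i$ in case (1) and $2\overline C_0+\sum_{i\notin R}\overline C_i$ in case (2). Each connected component of $R$ contracts to a rational double point of the stated ADE type.

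The main step is smoothness of each surviving component, and I would get it from Proposition~\ref{prop:smoothness-criterion}. Let $E$ be a surviving component and $y$ a singular point on $\nu(E)$. I need to check that $E$ meets $\nu^{-1}(y)$ in exactly one point, transversally.

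In case (1), $E=C_l$ is a leaf not in $R$. It meets the connected configuration $R$ only in $C_0$, at a single transverse point, since $C_l\cdot C_0=1$ and $C_l\cdot C_j=0$ for $j\neq 0$. Hence $\nu(C_l)$ is smooth at $y=\nu(R)$.

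In case (2), $E=C_0$. Each $\nu^{-1}(y)$ is a single leaf $C_i$ meeting $C_0$ transversally once, and the leaves in $R$ are disjoint. So $\nu(C_0)$ is smooth at each of the $s$ points $\nu(C_i)$. The leaves not in $R$ avoid $\Sing(Y)$ altogether.

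Away from the exceptional locus $\nu$ is an isomorphism, so all components of $\overline F$ are smooth rational curves.

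It remains to read off the configuration. In case (1), all surviving leaves $\overline C_l$ pass through the single point $\nu(R)$, since each meets $C_0\subset R$. They have no other common points, because on $X$ the leaves are pairwise disjoint and $\nu$ is an isomorphism off $R$. The number of surviving leaves is $4-q$, where $q$ is the number of leaves in $R$. This gives $\mathfrak S_4(A_1)$, $\mathfrak S_3(A_2)$, $\mathfrak S_2(A_3)$ and $\mathfrak S_1(D_4)$ for $q=0,1,2,3$.

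In case (2), $\overline C_0$ has multiplicity $2$ and carries the $s$ points of type $A_1$. These points are distinct from the points $\overline C_0\cap\overline C_l$ with $C_l\not\subset R$, because $C_l$ and $C_i$ meet $C_0$ at distinct points: the leaves are disjoint. The remaining $4-s$ leaves meet $\overline C_0$ transversally, once each. After relabeling, with $R_1=\overline C_0$, this matches Definition~\ref{dfn:new-I0star-fibers}(2)--(5) for $s=1,\dots,4$.

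The only delicate point, which I expect to be the main obstacle, is justifying that the intersection pattern on $Y$ is exactly as claimed. That means no new intersections appear and the incidences at the singular points are as stated. I would argue this set-theoretically: $\nu$ is bijective off $R$ and maps each connected component of $R$ to a single point. I would not need intersection numbers on $Y$ here, though they could be checked with rational pullbacks as in the example of the earlier section.
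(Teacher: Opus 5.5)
Your outline follows the paper's argument almost verbatim: Theorem~\ref{2.4} for the divisor, Proposition~\ref{prop:smoothness-criterion} for smoothness, and a set-theoretic reading of the configuration. It has a genuine gap in the smoothness step, and the gap cannot be repaired, because case (1)(d) is false as stated.

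The hypothesis of Proposition~\ref{prop:smoothness-criterion} is $(\nu^{-1}(y)\cdot E)_x=1$ for the \emph{scheme-theoretic} fibre $\nu^{-1}(y)$; that is what forces $\rho^{-1}(y)$ to have length one. For a rational double point, $\mathfrak m_y\mathcal O_X=\mathcal O_X(-Z_y)$, where $Z_y$ is the fundamental cycle (Artin~\cite{a66}). So $\nu^{-1}(y)$ is $Z_y$, not the reduced configuration.
\begin{itemize}
\item For type $A_k$ the cycle $Z_y$ is reduced. Then ``$E$ meets the configuration once, transversally'' does give multiplicity $1$, so your cases (1)(a)--(c) and (2) are fine.
\item For $R=C_0+C_i+C_j+C_k$ of type $D_4$, however, $Z_y=2C_0+C_i+C_j+C_k$. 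The surviving leaf $C_l$ meets $C_0$, so $(\nu^{-1}(y)\cdot C_l)_x=2$ and the criterion does not apply.
\end{itemize}

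In fact the conclusion fails in the $D_4$ case. By Theorem~\ref{2.4}, $\overline F=\overline{C_l}$ with multiplicity one, so $\overline{C_l}$ is an integral Cartier curve with $\overline{C_l}^{\,2}=0$. The adjunction formula of Section~2 gives $p_a(\overline{C_l})=1$, while the normalization of $\overline{C_l}$ is $C_l\cong\mathbb P^1$. Hence $\delta_y(\overline{C_l})=1$ and $\overline{C_l}$ is singular at the $D_4$ point. Equivalently, a smooth rational Cartier curve on $Y$ would have self-intersection $-2$; consistently, $\nu^*\overline{C_l}=C_l+2C_0+C_i+C_j+C_k=F$.

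Proposition~\ref{prop:delta-one-node-cusp}(2) then shows that $\overline{C_l}$ has a cusp at $y$. So $\overline F$ is an irreducible cuspidal rational curve with the cusp at a $D_4$ point. This is the analogue of $\mathfrak C(A_2)$ in Definition~\ref{dfn:IV-contracted-fibers}, not $\mathfrak S_1(D_4)$, and the opening claim that all components of $\overline F$ are smooth is false in this case.

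The paper's preliminary observation makes the same identification of $\nu^{-1}(y)$ with the reduced component $R'$. So the defect lies in the statement itself, not only in your write-up. The correct smoothness test is that $E$ meets $\nu^{-1}(y)$ at a single point, transversally, on a component whose coefficient in $Z_y$ is $1$. The point you flagged as the main obstacle, the set-theoretic intersection pattern, is harmless; the real issue is this one.
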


\begin{proof}
We first record the following observation.  Let \(C\) be an irreducible
component of \(F\) not contained in \(R\), and put \(E:=\nu(C)\).  Let
\(R'\) be a connected component of \(R\), and put \(y:=\nu(R')\).  If \(C\)
meets \(R'\) transversely at one point \(x\), then
\[
C\cap \nu^{-1}(y)=\{x\}
\qquad\text{and}\qquad
(\nu^{-1}(y)\cdot C)_x=1.
\]
The restriction \(\nu|_C\colon C\rightarrow E\) is the normalization of \(E\).
Therefore, by Proposition~\ref{prop:smoothness-criterion}, the curve \(E\) is
smooth at \(y\).  If \(C\) is disjoint from \(R'\), then \(E\) does not pass
through \(y\).  Since \(\nu\) is an isomorphism away from the exceptional
locus, it follows that \(E\) is a smooth rational curve whenever \(C\) meets
each connected component of \(R\) transversely in at most one point.

We assume that \(C_0\subset R\).  Then
\[
R = C_0 + C_{i_1} + \cdots + C_{i_q}
\]
for some \(0 \leq q \leq 3\).  By Proposition~\ref{prop:ADE-I0star}, the ADER
\(R\) is of type \(A_1, A_2, A_3\), or \(D_4\), according as
\(q=0,1,2,3\).  Hence the contraction of \(R\) produces a rational double
point \(y:=\nu(R)\) of the corresponding type.

The components of \(F\) not contained in \(R\) are precisely the \(4-q\) leaves
not contained in \(R\).  By Theorem~\ref{2.4}, the fiber \(\overline F\) is
obtained by deleting the components contained in \(R\), preserving
multiplicities.  By the observation above, the images of the remaining leaves
are smooth rational curves passing through \(y\).  Therefore \(\overline F\)
is of type
\[
\mathfrak S_4(A_1),\quad
\mathfrak S_3(A_2),\quad
\mathfrak S_2(A_3),\quad
\mathfrak S_1(D_4)
\]
according as \(q=0,1,2,3\).  This proves \textup{(1)}.

We assume that \(C_0\not\subset R\).  Then
\[
R=C_{i_1}+\cdots+C_{i_s}
\]
for some \(1\leq s\leq 4\).  Each \(C_{i_j}\) is contracted to a rational
double point \(y_j:=\nu(C_{i_j})\) of type \(A_1\), and these rational double
points are mutually distinct.

By Theorem~\ref{2.4}, the fiber \(\overline F\) is obtained by deleting the
components contained in \(R\), preserving multiplicities.  The rational double
points \(y_1,\dots,y_s\) lie on the component \(\nu(C_0)\).  By the observation
above, \(\nu(C_0)\) is a smooth rational curve.  The images of the remaining
leaves are also smooth rational curves, since those leaves are disjoint from
\(R\).  Moreover, each remaining leaf meets \(\nu(C_0)\) transversely at a
smooth point.  Hence, by Definition~\ref{dfn:new-I0star-fibers},
\(\overline F\) is of type
\[
\mathfrak D_4^{\mathrm{mark}}(A_1),\quad
\mathfrak A_3^{\mathrm{mark}}(A_1^{\oplus 2}),\quad
\mathfrak A_2^{\mathrm{mark}}(A_1^{\oplus 3}),\quad
\mathfrak A_1^{\mathrm{mark}}(A_1^{\oplus 4})
\]
according as \(s=1,2,3,4\).  This proves \textup{(2)}.
\end{proof}

\section{Fibers of type \(I_n^*\) for \(n\geq 1\)}
Fibers of type \(I_n^*\) correspond to affine Dynkin diagrams of type
\(\widetilde D_{n+4}\).  Elliptic \(K3\) surfaces with prescribed fibers
of type \(I_4^*\) were studied by Utsumi \cite{u10}, and large fibers of
type \(I_n^*\), in particular the maximal case \(I_{14}^*\), have been
studied in connection with maximal singular fibers of elliptic \(K3\)
surfaces \cite{s03,sch07,schs13}.

Throughout this section, we use the following notation. Let \(F\) be a singular
fiber of Kodaira type \(I_n^*\), where \(n\geq 1\). We write
\[
F=
C_0'+C_0''+2C_0+2C_1+\cdots+2C_n+C_n'+C_n'',
\]
where \(C_0,C_1,\dots,C_n\) form the central chain, \(C_0'\) and \(C_0''\)
meet \(C_0\), and \(C_n'\) and \(C_n''\) meet \(C_n\). The dual graph of \(F\)
is the affine Dynkin diagram \(\widetilde D_{n+4}\), and it is given as follows:
\[
\begin{tikzpicture}[baseline=(current bounding box.center)]
\node (C0p)  at (0,1) {$C_0'$};
\node (C0)   at (0,0) {$C_0$};
\node (C0pp) at (0,-1) {$C_0''$};

\node (C1)   at (1,0) {$C_1$};
\node (dots) at (2,0) {$\cdots$};
\node (Cn)   at (3,0) {$C_n$};

\node (Cnp)  at (3,1) {$C_n'$};
\node (Cnpp) at (3,-1) {$C_n''$};

\draw[thick] (C0.north) -- (C0p.south);
\draw[thick] (C0.south) -- (C0pp.north);

\draw[thick] (C0.east) -- (C1.west);
\draw[thick] (C1.east) -- (dots.west);
\draw[thick] (dots.east) -- (Cn.west);

\draw[thick] (Cn.north) -- (Cnp.south);
\draw[thick] (Cn.south) -- (Cnpp.north);
\end{tikzpicture}
\]

We classify ADERs in \(F\) according to the position of their support with
respect to the two end components \(C_0\) and \(C_n\) of the central chain.  We
divide the argument into the following three cases:
\begin{itemize}
\item \(C_0,C_n\not\subset \operatorname{Supp}(R)\);
\item \(C_0\not\subset \operatorname{Supp}(R)\) and
\(C_n\subset \operatorname{Supp}(R)\);
\item \(C_0,C_n\subset \operatorname{Supp}(R)\).
\end{itemize}
By symmetry, the second case also treats the case where
\[
C_0\subset \operatorname{Supp}(R),
\qquad
C_n\not\subset \operatorname{Supp}(R).
\]
For each case, we first classify the possible ADERs supported on \(F\), and then
describe the singular fiber obtained after contracting them.

\begin{lem}\label{lem:Instar-smooth-components}
We assume that \(F\) is of Kodaira type \(I_n^*\) with \(n\geq 1\).  Let \(R\)
be an ADER in \(F\), and let $\nu\colon X\rightarrow Y$
be the contraction of \(R\).  Let \(E\) be an irreducible component of \(F\) not
contained in \(R\). We put
\[
C:=\nu(E).
\]
Then \(C\) is a smooth rational curve.  In particular, every irreducible
component of \(\overline F\) is a smooth rational curve.
\end{lem}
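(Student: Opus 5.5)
The plan is to reduce the claim to the local smoothness criterion, Proposition~\ref{prop:smoothness-criterion}, exactly as in the observation at the start of the proof of Proposition~\ref{prop:I0star-contraction}. First, every irreducible component \(E\) of an \(I_n^*\)-fiber is a smooth rational \((-2)\)-curve. The restriction \(\nu|_E\colon E\to C\) is finite and birational onto \(C\) and \(E\) is smooth, so it is the normalization of \(C\). Since \(\nu\) is an isomorphism away from the exceptional locus, \(C\) is smooth at every point outside \(\Sing(Y)\). It therefore suffices to show that \(C\) is smooth at each \(y\in C\cap\Sing(Y)\).

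Fix such a \(y\), and let \(R_y:=\nu^{-1}(y)\), which is a connected component of \(R\). The key combinatorial step is to show that \(E\) meets \(R_y\) in exactly one point, transversally. Intersections inside \(F\) are transversal and pairwise intersections of distinct components are at most one point, since the dual graph \(\widetilde D_{n+4}\) is a tree without multiple edges and all components of an \(I_n^*\)-fiber meet transversally at distinct points for \(n\ge 1\), in contrast with type \(III\) or \(IV\). Thus \((R_y\cdot E)_x\) at each intersection point equals the number of components of \(R_y\) passing through \(x\). Since \(F\) has simple normal crossings, this number is \(1\).

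It remains to see that \(E\) meets only one component of \(R_y\). Suppose \(E\) meets two distinct components \(A,B\) of the connected divisor \(R_y\). The connecting path from \(A\) to \(B\) inside \(R_y\), together with \(E\), would then form a cycle in the dual graph of \(F\). This is impossible because \(\widetilde D_{n+4}\) is a tree. Hence \(E\cap\nu^{-1}(y)=\{x\}\) and \((\nu^{-1}(y)\cdot E)_x=1\), and Proposition~\ref{prop:smoothness-criterion} gives smoothness of \(C\) at \(y\).

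Combining these steps, \(C\) is smooth everywhere, and it is rational since it is birational to \(E\cong\mathbb P^1\). By Theorem~\ref{2.4}, the components of \(\overline F\) are exactly such images \(\nu(E)\), which yields the final assertion. The only step requiring care is the tree argument excluding multiple contacts with one connected component of \(R\). The rest is formal, and the essential fact is that \(I_n^*\) with \(n\ge1\) has a simply connected dual graph with simple normal crossings.
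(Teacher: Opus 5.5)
Your route is the same as the paper's proof, and it breaks at the same step. Proposition~\ref{prop:smoothness-criterion} needs \((\nu^{-1}(y)\cdot E)_x=1\) for the \emph{scheme-theoretic} fiber \(\nu^{-1}(y)\), which is cut out by \(\mathfrak m_y\mathcal O_X\). You compute the local intersection with the reduced divisor \(R_y\) and then silently identify \(R_y\) with \(\nu^{-1}(y)\).

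For a rational double point, \(\mathfrak m_y\mathcal O_X=\mathcal O_X(-Z_y)\), where \(Z_y\) is the fundamental cycle of \(R_y\) \cite{a66}. So \((\nu^{-1}(y)\cdot E)_x\) equals the coefficient in \(Z_y\) of the component of \(R_y\) that \(E\) meets. Transversality and your tree argument give the value \(1\) only when that coefficient is \(1\). This is automatic for type \(A\), where \(Z_y\) is reduced, but not for types \(D\) and \(E\).

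The statement itself fails in one case. Take the ADER \(R=C_0'+C_0''+C_0+C_1+\cdots+C_n+C_n'\) of type \(D_{n+4}\) from Proposition~\ref{prop:Instar-both-ends}(2)(e), and put \(E=C_n''\).
\begin{itemize}
\item \emph{Local failure.} Here \(Z_y=C_0'+C_0''+2C_0+\cdots+2C_n+C_n'=F-E\). The curve \(E\) meets \(C_n\), which has coefficient \(2\), so \((\nu^{-1}(y)\cdot E)_x=2\).
\item \emph{Global contradiction.} By Theorem~\ref{2.4}, \(\overline F=\nu(E)\) with multiplicity one, so \(\nu(E)\) is an integral Cartier curve with \(\nu(E)^2=0\). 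The adjunction argument from the proof of Proposition~\ref{prop:I2-contraction} then gives \(p_a(\nu(E))=1\), while its normalization is \(E\cong\mathbb P^1\). Hence \(\nu(E)\) is singular; by Proposition~\ref{prop:delta-one-node-cusp}(2) it is cuspidal at \(y\).
\end{itemize}
The fiber \(\mathfrak S_1(D_4)\) for \(I_0^*\) has the same defect.

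To repair the argument, you must also check that \(E\) meets a component of coefficient \(1\) in \(Z_y\).
\begin{itemize}
\item This holds for every \(A\)-type component.
\item It also holds for the \(D\)-type components \(C_0'+C_0''+C_0+\cdots+C_{p-1}\), including \(D_{n+3}\), and their mirror images. There \(E\) meets the end of the chain \(C_1,\dots,C_{p-1}\) (or \(C_0\) when \(p=1\) is excluded), which has coefficient \(1\).
\item No \(E\)-type components occur in \(I_n^*\).
\end{itemize}
So the lemma holds exactly when \(R\) is not of type \(D_{n+4}\). That case must be excluded and treated as a cuspidal fiber.
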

\begin{proof}
Since \(E\) is a smooth rational curve and \(\nu\) is an isomorphism at the
generic point of \(E\), the curve \(C\) is rational, and the restriction
\[
\nu|_E\colon E\rightarrow C
\]
is the normalization of \(C\).
Let
\[
y\in C\cap \Sing(Y).
\]
Then \(\nu^{-1}(y)\) is a connected component of \(R\).  Since the dual graph of
an \(I_n^*\)-fiber is a tree, and since each connected component of \(R\) is a
subtree, the component \(E\) meets \(\nu^{-1}(y)\) in at most one point.  Since
\(y\in C\), this intersection is non-empty.  Hence
\[
E\cap \nu^{-1}(y)=\{x\}.
\]
Moreover, the components of an \(I_n^*\)-fiber meet transversely, and therefore
\[
(\nu^{-1}(y)\cdot E)_x=1.
\]
By Proposition~\ref{prop:smoothness-criterion}, the curve \(C\) is smooth at
\(y\).  Since \(\nu\) is an isomorphism away from the exceptional locus, \(C\)
is smooth away from \(\Sing(Y)\).  Thus \(C\) is a smooth rational curve.
By Theorem~\ref{2.4}, the irreducible components of \(\overline F\) are
precisely the images of the irreducible components of \(F\) not contained in
\(R\).  Hence every irreducible component of \(\overline F\) is a smooth
rational curve.
\end{proof}

\begin{prop}\label{prop:Instar-no-end}
We assume that \(F\) is of Kodaira type \(I_n^*\) with \(n\geq 1\), with
notation as above.  Let \(R\) be an ADER in \(F\).  We assume that
\[
C_0,\ C_n\not\subset \operatorname{Supp}(R).
\]
Then every connected component of \(R\) is of type \(A_k\).  More precisely,
one of the following holds.

\begin{enumerate}[\rm(1)]
\item If $\operatorname{Supp}(R)\subset
\operatorname{Supp}(C_1+\cdots+C_{n-1})$,
then \(R\) is of type
\[
A_{\lambda_1}\oplus\cdots\oplus A_{\lambda_r}
\]
for some \(\lambda_1,\dots,\lambda_r\geq 1\) satisfying
\[
\sum_{i=1}^r\lambda_i+r-1\leq n-1.
\]

\item We assume that $\operatorname{Supp}(R)\not\subset
\operatorname{Supp}(C_1+\cdots+C_{n-1})$.
Then, up to symmetry between the left and right ends, one of the following
occurs.

\begin{enumerate}[\rm(a)]
\item The divisor \(R\) contains leaves only at the left end.  In this case
\[
R=B_L+R_M,
\]
where
\[
B_L\in\{C_0',\,C_0'',\,C_0'+C_0''\},
\]
and \(R_M\) is supported on mutually disjoint consecutive subchains of
\[
C_1-C_2-\cdots-C_{n-1}.
\]
If \(R_M\neq 0\) and \(R_M\) is of type
\[
A_{\lambda_1}\oplus\cdots\oplus A_{\lambda_r},
\]
then
\[
\sum_{i=1}^r\lambda_i+r-1\leq n-1.
\]

\item The divisor \(R\) contains leaves at both ends.  In this case
\[
R=B_L+R_M+B_R,
\]
where
\[
B_L\in\{C_0',\,C_0'',\,C_0'+C_0''\},
\qquad
B_R\in\{C_n',\,C_n'',\,C_n'+C_n''\},
\]
and \(R_M\) is supported on mutually disjoint consecutive subchains of
\[
C_1-C_2-\cdots-C_{n-1}.
\]
If \(R_M\neq 0\) and \(R_M\) is of type
\[
A_{\lambda_1}\oplus\cdots\oplus A_{\lambda_r},
\]
then
\[
\sum_{i=1}^r\lambda_i+r-1\leq n-1.
\]
\end{enumerate}
\end{enumerate}

Conversely, every reduced subdivisor satisfying one of the above conditions is
an ADER.
\end{prop}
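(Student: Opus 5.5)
The proof is a graph-theoretic analysis of which induced subgraphs of the dual graph \(\widetilde D_{n+4}\) of \(F\) occur once \(C_0\) and \(C_n\) are deleted. First I will observe that, since \(C_0,C_n\not\subset\operatorname{Supp}(R)\), the support of \(R\) lies in the disjoint union of three pieces:
\begin{itemize}
\item the two left leaves \(C_0',C_0''\), which are mutually disjoint;
\item the open chain \(C_1-\cdots-C_{n-1}\), which is empty if \(n=1\);
\item the two right leaves \(C_n',C_n''\).
\end{itemize}
These pieces are pairwise disjoint. The leaves meet only \(C_0\) or \(C_n\), and the chain meets the rest of \(F\) only through \(C_0\) and \(C_n\). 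Hence the dual graph of \(R\) is a disjoint union of isolated vertices (the chosen leaves) and of induced subgraphs of a path. Every vertex therefore has valency at most \(2\), and as in the proof of Proposition~\ref{prop:ADE-In-fiber} no component can be of type \(D\) or \(E\). So every connected component is of type \(A_k\); each leaf in \(R\) is an isolated \(A_1\), and \(R_M:=R\cap(C_1+\cdots+C_{n-1})\) is a disjoint union of consecutive subchains.

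Next I will split into cases. If \(R\) contains no leaf, we are in case (1). If it contains leaves at exactly one end, we are in case (2)(a), after using the symmetry \(C_i\leftrightarrow C_{n-i}\). If it contains leaves at both ends, we are in case (2)(b). At each end the possible leaf parts are exactly the nonempty subsets of \(\{C_0',C_0''\}\), namely \(C_0'\), \(C_0''\) or \(C_0'+C_0''\).

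The numerical inequality is the only step that needs counting. Suppose \(R_M\) has components of types \(A_{\lambda_1},\dots,A_{\lambda_r}\), placed along the path \(C_1,\dots,C_{n-1}\) with \(n-1\) vertices. Distinct components must be separated by at least one vertex not in \(R\). Hence \(\sum\lambda_i+(r-1)\le n-1\), exactly as in Proposition~\ref{prop:ADE-In-fiber} but for a path instead of a cycle. The leaves impose no further restriction, because they never touch the chain.

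For the converse, I will argue as follows.
\begin{itemize}
\item Given \(\lambda_i\) satisfying the inequality, place the chains consecutively along \(C_1,\dots,C_{n-1}\) with one gap between successive chains; the inequality guarantees that this fits.
\item Add any allowed leaf parts \(B_L\), \(B_R\).
\item The resulting reduced divisor has as dual graph a disjoint union of \(A\)-chains and isolated \(A_1\)'s, so it is an ADER.
\end{itemize}
The only points needing care are the degenerate cases. When \(n=1\) the chain is empty, so \(R_M=0\) and case (1) is vacuous. I expect no real obstacle; the main thing is to make the induced-subgraph argument precise, namely that disjoint supports give disjoint components because distinct components of \(F\) meet only along edges of the tree.
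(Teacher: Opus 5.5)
Your proposal is correct and follows essentially the same route as the paper. Both arguments split $\operatorname{Supp}(R)$ into the mutually isolated left leaves, the middle chain $C_1-\cdots-C_{n-1}$, and the right leaves, and count the gaps on a path of $n-1$ vertices to get $\sum\lambda_i+(r-1)\leq n-1$. Both prove the converse by noting that the dual graph is a disjoint union of $A$-chains and isolated vertices. Your first converse bullet, which realizes every admissible tuple $(\lambda_i)$, proves more than the statement asks: it only requires that any subdivisor of the described shape be an ADER, and your third bullet alone gives that.
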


\begin{proof}
Since $C_0,\ C_n\not\subset \operatorname{Supp}(R)$,
the support of \(R\) is contained in the disjoint union
\[
\operatorname{Supp}(C_1+\cdots+C_{n-1})
\sqcup
\{C_0',C_0''\}
\sqcup
\{C_n',C_n''\}.
\]
Here the chain \(C_1-C_2-\cdots-C_{n-1}\) is understood to be empty if \(n=1\).
Let \(R_M\) be the part of \(R\) supported on the middle chain
\[
C_1-C_2-\cdots-C_{n-1}.
\]
Every connected component of \(R_M\) is a chain of consecutive vertices, and
hence is of type \(A_{\lambda_i}\) for some \(\lambda_i\geq 1\).  If
\(R_M\neq 0\) and \(R_M\) is of type
\[
A_{\lambda_1}\oplus\cdots\oplus A_{\lambda_r},
\]
then distinct connected components of \(R_M\) must be separated by at least one
unused vertex of the middle chain.  Therefore
\[
\sum_{i=1}^r\lambda_i+(r-1)\leq n-1.
\]
Since \(C_0\not\subset \operatorname{Supp}(R)\), the leaves \(C_0'\) and
\(C_0''\) are isolated from the middle chain and from each other.  Hence the
part of \(R\) supported on the left leaves is either \(0\) or one of
\[
C_0',\qquad C_0'',\qquad C_0'+C_0''.
\]
Similarly, since \(C_n\not\subset \operatorname{Supp}(R)\), the part of \(R\)
supported on the right leaves is either \(0\) or one of
\[
C_n',\qquad C_n'',\qquad C_n'+C_n''.
\]
If \(R\) is supported entirely on the middle chain, then we obtain
\textup{(1)}.  Otherwise \(R\) contains at least one leaf.  Up to symmetry
between the two ends, either \(R\) contains leaves only at the left end, or it contains leaves at both ends.  In the first case we obtain \textup{(2)(a)}, and
in the second case we obtain \textup{(2)(b)}.

Conversely, we suppose that a reduced subdivisor satisfies one of the conditions
\textup{(1)}, \textup{(2)(a)}, or \textup{(2)(b)}.  Then \(R_M\) is a disjoint
union of chains, and \(B_L,B_R\), if present, are disjoint unions of isolated
vertices.  Therefore the dual graph is a disjoint union of Dynkin diagrams of
type \(A\).  Hence the subdivisor is an ADER.
\end{proof}

\begin{dfn}\label{dfn:marked-fibers}
Let
\[
\Gamma,\ \Gamma_i,\ \Gamma_L,\ \Gamma_R,\ 
\Gamma_{L,i},\ \Gamma_{R,i}\quad (i=1,2)
\]
be ADE types.
We also allow a finite, possibly empty, list
\[
\Delta_1,\dots,\Delta_t
\]
of ADE types.
In the definitions below, the singularities of types \(\Gamma\), \(\Gamma_1\), \(\Gamma_2\), \(\Gamma_L\), \(\Gamma_R\), \(\Gamma_{L,i}\), and \(\Gamma_{R,i}\) are marked rational double points lying on the indicated irreducible components and not lying at the intersection points with other components. 
The singularities of types \(\Delta_1,\dots,\Delta_t\), when present, are rational double points lying at some of the intersection points of adjacent components.

\begin{enumerate}[\rm(1)]
\item Let \(m\geq 4\). We say that a singular fiber is of type 
\[ \mathfrak D_m^{\mathrm{mark},1}(\Gamma;\Delta_1,\dots,\Delta_t) \]
if, as a divisor, it is of the form \[ C_0+2C_1+\cdots+2C_{m-3}+C_{m-2}+C_{m-1}, \] where \(C_0,C_1,\dots,C_{m-1}\) are smooth rational curves whose support graph is as follows: 
\[ \begin{tikzpicture}
[ baseline=(P), scale=0.9, rdp/.style={circle, fill, inner sep=1.5pt}, every label/.style={font=\scriptsize, inner sep=2pt} ] 
\useasboundingbox (-0.7,-1.6) rectangle (4.4,1.6); 
\coordinate (P) at (0,0); \node (C0) at (0,0) {$C_0$}; \node (C1) at (1.2,0) {$C_1$}; \node (dots) at (2.4,0) {$\cdots$}; \node (Cm3) at (3.6,0) {$C_{m-3}$}; \node (Cm2) at (3.6,1.1) {$C_{m-2}$}; \node (Cm1) at (3.6,-1.1) {$C_{m-1}$}; \draw[thick] (C0.east) -- (C1.west); \draw[thick] (C1.east) -- (dots.west); \draw[thick] (dots.east) -- (Cm3.west); \draw[thick] (Cm3.north) -- (Cm2.south); \draw[thick] (Cm3.south) -- (Cm1.north); \node[rdp, label=above:{$\Gamma$}] at (1.2,0.42) {}; 
\end{tikzpicture} \] 
Moreover, \(C_1\) contains a rational double point of type \(\Gamma\), and there may be rational double points of types \(\Delta_1,\dots,\Delta_t\) at some of the intersection points of adjacent components.

\item Let \(m\geq 4\). 
We say that a singular fiber is of type 
\[ \mathfrak D_m^{\mathrm{mark},2}(\Gamma_1,\Gamma_2;\Delta_1,\dots,\Delta_t) \] 
if, as a divisor, it is of the form 
\[ 2C_1+\cdots+2C_{m-2}+C_{m-1}+C_m, \]
where \(C_1,\dots,C_m\) are smooth rational curves whose support graph is as follows: \[ \begin{tikzpicture}[ baseline=(P), scale=0.9, rdp/.style={circle, fill, inner sep=1.5pt}, every label/.style={font=\scriptsize, inner sep=2pt} ] \useasboundingbox (-0.7,-1.6) rectangle (4.4,1.6); \coordinate (P) at (0,0); \node (C1) at (0,0) {$C_1$}; \node (C2) at (1.2,0) {$C_2$}; \node (dots) at (2.4,0) {$\cdots$}; \node (Cm2) at (3.6,0) {$C_{m-2}$}; \node (Cm1) at (3.6,1.1) {$C_{m-1}$}; \node (Cm) at (3.6,-1.1) {$C_m$}; \draw[thick] (C1.east) -- (C2.west); \draw[thick] (C2.east) -- (dots.west); \draw[thick] (dots.east) -- (Cm2.west); \draw[thick] (Cm2.north) -- (Cm1.south); \draw[thick] (Cm2.south) -- (Cm.north); \node[rdp, label=above:{$\Gamma_1$}] at (0,0.42) {}; \node[rdp, label=below:{$\Gamma_2$}] at (0,-0.42) {}; \end{tikzpicture} \] 
Moreover, \(C_1\) contains two rational double points of types \(\Gamma_1\) and \(\Gamma_2\), and there may be rational double points of types \(\Delta_1,\dots,\Delta_t\) at some of the intersection points of adjacent components.

\item Let \(m\geq 3\). We say that a singular fiber is of type 
\[ \mathfrak A_m^{\mathrm{mark},1,1}(\Gamma_L,\Gamma_R;\Delta_1,\dots,\Delta_t) \]
if, as a divisor, it is of the form \[ C_0+2C_1+\cdots+2C_{m-2}+C_{m-1}, \] where \(C_0,C_1,\dots,C_{m-1}\) are smooth rational curves whose support graph is as follows: 
\[ \begin{tikzpicture}[ baseline=(P), scale=0.9, rdp/.style={circle, fill, inner sep=1.5pt}, every label/.style={font=\scriptsize, inner sep=2pt} ] \useasboundingbox (-0.7,-1.0) rectangle (5.2,1.0); \coordinate (P) at (0,0); \node (C0) at (0,0) {$C_0$}; \node (C1) at (1.2,0) {$C_1$}; \node (dots) at (2.4,0) {$\cdots$}; \node (Cm2) at (3.6,0) {$C_{m-2}$}; \node (Cm1) at (5.3,0) {$C_{m-1}$}; \draw[thick] (C0.east) -- (C1.west); \draw[thick] (C1.east) -- (dots.west); \draw[thick] (dots.east) -- (Cm2.west); \draw[thick] (Cm2.east) -- (Cm1.west); \node[rdp, label=above:{$\Gamma_L$}] at (1.2,0.42) {}; \node[rdp, label=above:{$\Gamma_R$}] at (3.6,0.42) {}; \end{tikzpicture} \] 
Moreover, \(C_1\) contains a rational double point of type \(\Gamma_L\), \(C_{m-2}\) contains a rational double point of type \(\Gamma_R\), and there may be rational double points of types \(\Delta_1,\dots,\Delta_t\) at some of the intersection points of adjacent components.

\item Let \(m\geq 2\). We say that a singular fiber is of type 
\[ \mathfrak A_m^{\mathrm{mark},1,0} (\Gamma_L;\Gamma_{R,1},\Gamma_{R,2};\Delta_1,\dots,\Delta_t) \]
if, as a divisor, it is of the form \[ C_0+2C_1+\cdots+2C_{m-1}, \] where \(C_0,C_1,\dots,C_{m-1}\) are smooth rational curves whose support graph is as follows: 
\[ \begin{tikzpicture}[ baseline=(P), scale=0.9, rdp/.style={circle, fill, inner sep=1.5pt}, every label/.style={font=\scriptsize, inner sep=2pt} ] \useasboundingbox (-0.7,-1.0) rectangle (4.4,1.0); 
\coordinate (P) at (0,0);
\node (C0) at (0,0) {$C_0$}; \node (C1) at (1.2,0) {$C_1$}; \node (dots) at (2.4,0) {$\cdots$}; \node (Cm1) at (3.6,0) {$C_{m-1}$}; 
\draw[thick] (C0.east) -- (C1.west); \draw[thick] (C1.east) -- (dots.west); \draw[thick] (dots.east) -- (Cm1.west); \node[rdp, label=above:{$\Gamma_L$}] at (1.2,0.42) {}; \node[rdp, label=above:{$\Gamma_{R,1}$}] at (3.35,0.42) {}; \node[rdp, label=below:{$\Gamma_{R,2}$}] at (3.35,-0.42) {}; \end{tikzpicture} \] 
Moreover, \(C_1\) contains a rational double point of type \(\Gamma_L\), \(C_{m-1}\) contains two rational double points of types \(\Gamma_{R,1}\) and \(\Gamma_{R,2}\), and there may be rational double points of types \(\Delta_1,\dots,\Delta_t\) at some of the intersection points of adjacent components. The symmetric type \[ \mathfrak A_m^{\mathrm{mark},0,1} (\Gamma_{L,1},\Gamma_{L,2};\Gamma_R;\Delta_1,\dots,\Delta_t) \] is defined similarly.

\item Let \(m\geq 1\). We say that a singular fiber is of type \[ \mathfrak A_m^{\mathrm{mark},0,0} (\Gamma_{L,1},\Gamma_{L,2};\Gamma_{R,1},\Gamma_{R,2}; \Delta_1,\dots,\Delta_t) \] if, as a divisor, it is of the form \[ 2C_1+\cdots+2C_m, \] where \(C_1,\dots,C_m\) are smooth rational curves whose support graph is as follows: 
\[ \begin{tikzpicture}[ baseline=(P), scale=0.9, rdp/.style={circle, fill, inner sep=1.5pt}, every label/.style={font=\scriptsize, inner sep=2pt} ] \useasboundingbox (-0.7,-1.0) rectangle (4.4,1.0); 
\coordinate (P) at (0,0);
\node (C1) at (0,0) {$C_1$}; \node (dots) at (1.8,0) {$\cdots$}; \node (Cm) at (3.6,0) {$C_m$}; \draw[thick] (C1.east) -- (dots.west); \draw[thick] (dots.east) -- (Cm.west);
\node[rdp, label=above:{$\Gamma_{L,1}$}] at (-0.1,0.42) {}; \node[rdp, label=below:{$\Gamma_{L,2}$}] at (-0.1,-0.42) {}; 
\node[rdp, label=above:{$\Gamma_{R,1}$}] at (3.5,0.42) {}; \node[rdp, label=below:{$\Gamma_{R,2}$}] at (3.5,-0.42) {}; \end{tikzpicture} \] 
Moreover, \(C_1\) contains two rational double points of types \(\Gamma_{L,1}\) and \(\Gamma_{L,2}\), \(C_m\) contains two rational double points of types \(\Gamma_{R,1}\) and \(\Gamma_{R,2}\), and there may be rational double points of types \(\Delta_1,\dots,\Delta_t\) at some of the intersection points of adjacent components. 
\end{enumerate}

If there are no rational double points at the intersections of adjacent components, we omit the part \(\Delta_1,\dots,\Delta_t\) from the notation.
\end{dfn}

\begin{prop}\label{prop:Instar-no-end-contraction}
We assume that \(F\) is of Kodaira type \(I_n^*\) with \(n\geq 1\), with
notation as above.  Let \(R\) be an ADER in \(F\), and assume that
\[
C_0,\ C_n\not\subset \operatorname{Supp}(R).
\]
Let \(R_M\) be the part of \(R\) supported on the middle chain
\[
C_1-C_2-\cdots-C_{n-1}.
\]
We put
\[
q:=\#\operatorname{Irr}(R_M).
\]
If \(R_M=0\), we set \(q=0\).  If \(R_M\neq 0\), we write the type of \(R_M\)
as
\[
A_{\lambda_1}\oplus\cdots\oplus A_{\lambda_r}.
\]
Let $\nu\colon X\rightarrow Y$
be the contraction of \(R\).  Then every irreducible component of
\(\overline F\) is a smooth rational curve.  More precisely, the following hold.

\begin{enumerate}[\rm(1)]
\item We assume that \(R\) is supported only on the middle chain.  Then
\(R=R_M\).  If \(R\) is of type $A_{\lambda_1}\oplus\cdots\oplus A_{\lambda_r}$,
then the rational double points of types
\[
A_{\lambda_1},\dots,A_{\lambda_r}
\]
lie at some of the intersection points of adjacent components of
\(\overline F\).  Moreover, \(\overline F\) has underlying Kodaira type
\[
I_{n-q}^*.
\]

\item We assume that \(R\) contains leaves only at one end.  By symmetry, we may
assume that this end is the left end.  We write $R=B_L+R_M$, where $B_L\in\{C_0',\,C_0'',\,C_0'+C_0''\}$.
Then the following hold.
\begin{enumerate}[\rm(a)]
\item If \(B_L\) consists of one component, then the contracted leaf gives one
rational double point of type \(A_1\) lying on the image \(\nu(C_0)\).  Moreover,
if \(R_M=0\), then \(\overline F\) is of type
\[
\mathfrak D_{\,n-q+4}^{\mathrm{mark},1}(A_1),
\]
and if \(R_M\neq 0\), then \(\overline F\) is of type
\[
\mathfrak D_{\,n-q+4}^{\mathrm{mark},1}
(A_1;A_{\lambda_1},\dots,A_{\lambda_r})
\]
in the sense of Definition~\ref{dfn:marked-fibers}.

\item If \(B_L=C_0'+C_0''\), then the two contracted leaves give two rational
double points of type \(A_1\) lying on the image \(\nu(C_0)\).  Moreover, if
\(R_M=0\), then \(\overline F\) is of type
\[
\mathfrak D_{\,n-q+3}^{\mathrm{mark},2}(A_1,A_1),
\]
and if \(R_M\neq 0\), then \(\overline F\) is of type
\[
\mathfrak D_{\,n-q+3}^{\mathrm{mark},2}
(A_1,A_1;A_{\lambda_1},\dots,A_{\lambda_r})
\]
in the sense of Definition~\ref{dfn:marked-fibers}.
\end{enumerate}

\item We assume that \(R\) contains leaves at both ends.  We write $R=B_L+R_M+B_R$, where $B_L\in\{C_0',\,C_0'',\,C_0'+C_0''\}$ and $B_R\in\{C_n',\,C_n'',\,C_n'+C_n''\}$.
Let
\[
s_L:=2-\#\operatorname{Irr}(B_L),
\qquad
s_R:=2-\#\operatorname{Irr}(B_R).
\]
Thus \(s_L,s_R\in\{0,1\}\), and \(s_L\) and \(s_R\) are the numbers of
surviving leaves at the left and right ends, respectively.  Then the following
hold.
\begin{enumerate}[\rm(a)]
\item If \(s_L=s_R=1\), then one leaf survives at each end.  If \(R_M=0\), then
\(\overline F\) is of type
\[
\mathfrak A_{\,n-q+3}^{\mathrm{mark},1,1}(A_1,A_1),
\]
and if \(R_M\neq 0\), then \(\overline F\) is of type
\[
\mathfrak A_{\,n-q+3}^{\mathrm{mark},1,1}
(A_1,A_1;A_{\lambda_1},\dots,A_{\lambda_r})
\]
in the sense of Definition~\ref{dfn:marked-fibers}.

\item If \(s_L=1\) and \(s_R=0\), then one leaf survives at the left end and no
leaf survives at the right end.  If \(R_M=0\), then \(\overline F\) is of type
\[
\mathfrak A_{\,n-q+2}^{\mathrm{mark},1,0}(A_1;A_1,A_1),
\]
and if \(R_M\neq 0\), then \(\overline F\) is of type
\[
\mathfrak A_{\,n-q+2}^{\mathrm{mark},1,0}
(A_1;A_1,A_1;A_{\lambda_1},\dots,A_{\lambda_r})
\]
in the sense of Definition~\ref{dfn:marked-fibers}.

\item If \(s_L=0\) and \(s_R=1\), then no leaf survives at the left end and one leaf
survives at the right end.  If \(R_M=0\), then \(\overline F\) is of type
\[
\mathfrak A_{\,n-q+2}^{\mathrm{mark},0,1}(A_1,A_1;A_1),
\]
and if \(R_M\neq 0\), then \(\overline F\) is of type
\[
\mathfrak A_{\,n-q+2}^{\mathrm{mark},0,1}
(A_1,A_1;A_1;A_{\lambda_1},\dots,A_{\lambda_r})
\]
in the sense of Definition~\ref{dfn:marked-fibers}.

\item If \(s_L=s_R=0\), then no leaf survives at either end.  If \(R_M=0\),
then \(\overline F\) is of type
\[
\mathfrak A_{\,n-q+1}^{\mathrm{mark},0,0}(A_1,A_1;A_1,A_1),
\]
and if \(R_M\neq 0\), then \(\overline F\) is of type
\[
\mathfrak A_{\,n-q+1}^{\mathrm{mark},0,0}
(A_1,A_1;A_1,A_1;A_{\lambda_1},\dots,A_{\lambda_r})
\]
in the sense of Definition~\ref{dfn:marked-fibers}.
\end{enumerate}
\end{enumerate}
\end{prop}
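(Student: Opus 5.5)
The plan is to combine Lemma~\ref{lem:Instar-smooth-components} with Theorem~\ref{2.4} and then do bookkeeping of the surviving dual graph. By Lemma~\ref{lem:Instar-smooth-components}, every irreducible component of \(\overline F\) is a smooth rational curve. By Theorem~\ref{2.4}, \(\overline F\) is obtained from \(F\) by deleting the components of \(R\) and keeping the multiplicities of the others. So it remains to determine three things: which surviving components meet on \(Y\), where each rational double point sits, and the multiplicities.

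\textbf{Step 1: local description at each rational double point.} Each connected component \(R'\) of \(R\) is a subtree of the tree \(\widetilde D_{n+4}\). For \(y=\nu(R')\), the surviving components through \(y\) are exactly those meeting \(R'\). Two surviving components \(E,E'\) meet on \(Y\) exactly when they meet on \(X\) at a point not in \(R\), or both meet a common connected component of \(R\). This holds because \(\nu\) is an isomorphism off \(R\) and \(\nu(E)\cap\nu(E')\subset\nu(E\cap E')\cup\nu(R)\).

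\textbf{Step 2: the connected components of \(R\) in the present case.} By Proposition~\ref{prop:Instar-no-end}, they are chains \(A_{\lambda_i}\) in the middle chain and isolated leaves.

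\begin{itemize}
\item A middle chain \(C_a+\cdots+C_b\) with \(1\le a\le b\le n-1\) meets exactly the two surviving components \(C_{a-1}\) and \(C_{b+1}\). Its contraction therefore replaces the subchain by a single point of type \(A_{b-a+1}\) at the new intersection point of \(\nu(C_{a-1})\) and \(\nu(C_{b+1})\). These are the \(\Delta\)-singularities.
\item A leaf meets only \(C_0\) (respectively \(C_n\)). It gives an \(A_1\) point on \(\nu(C_0)\) (respectively \(\nu(C_n)\)), and this point is not an intersection point with any other surviving component. This is the marked point \(\Gamma\).
\end{itemize}

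The surviving central chain \(C_0,\dots,C_n\) minus \(R_M\) has \(n+1-q\) components, all of multiplicity \(2\), and they are consecutively linked. Hence the support graph is a chain of \(n+1-q\) double components with the surviving leaves of multiplicity \(1\) attached at the ends.

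\textbf{Step 3: match with Definition~\ref{dfn:marked-fibers}.} In case (1), all four leaves survive, so the configuration is \(\widetilde D_{(n-q)+4}\) with the multiplicities of \(I_{n-q}^*\), and the \(A_{\lambda_i}\) points lie at intersection points.

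In case (2)(a), the surviving graph is one leaf \(C_0''\) (say), then the chain \(\nu(C_0),\dots,\nu(C_n)\) of length \(n+1-q\), then two leaves. The total count is \(n-q+4\) components, matching \(C_0+2C_1+\cdots+2C_{m-3}+C_{m-2}+C_{m-1}\) with \(m=n-q+4\). The marked \(A_1\) lies on \(\nu(C_0)\), which plays the role of \(C_1\).

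In case (2)(b), both left leaves are removed, leaving \(n-q+3\) components. This is \(\mathfrak D^{\mathrm{mark},2}_{n-q+3}\), with two \(A_1\) points on the first double component.

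Case (3) is handled the same way:
\begin{itemize}
\item \(s_L=s_R=1\) gives \(n+1-q+2=n-q+3\) components, hence \(\mathfrak A^{\mathrm{mark},1,1}_{n-q+3}\).
\item \(s_L+s_R=1\) gives \(n-q+2\) components.
\item \(s_L=s_R=0\) gives \(n-q+1\) components.
\end{itemize}
In each sub-case, the marked points lie on the first and last double components as required.

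The only point needing care is the claim in Step 1 that the marked \(A_1\) points are not intersection points, and that no new intersections arise. This follows since each leaf component of \(R\) meets only one surviving curve. I expect the main obstacle to be purely the index bookkeeping: checking that the relabelled chain lengths agree with the subscripts \(m\) in Definition~\ref{dfn:marked-fibers}. No further geometric input beyond Lemma~\ref{lem:Instar-smooth-components} and Theorem~\ref{2.4} is needed.
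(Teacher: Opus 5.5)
Your proposal is correct and follows essentially the same route as the paper: smoothness from Lemma~\ref{lem:Instar-smooth-components}, multiplicities from Theorem~\ref{2.4}, and then a case-by-case count of surviving components matched against Definition~\ref{dfn:marked-fibers}. Your Step~1 (two surviving curves meet on \(Y\) exactly when they meet on \(X\) off \(R\), or both meet a common connected component of \(R\)) spells out incidence claims that the paper only asserts, and your index checks, e.g.\ \(m-3=n+1-q\) double components for \(\mathfrak D^{\mathrm{mark},1}_{n-q+4}\), are correct.
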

\begin{proof}
The singular points of \(Y\) lying on \(\overline F\) are exactly the rational
double points obtained by contracting the connected components of \(R\).
By Lemma~\ref{lem:Instar-smooth-components}, every irreducible component of
\(\overline F\) is a smooth rational curve.  Moreover, by Theorem~\ref{2.4},
the fiber \(\overline F\) is obtained from \(F\) by deleting the components
contained in \(R\), while preserving the multiplicities of the remaining
components.

We now describe \(\overline F\) case by case.
First, we suppose that \(R\) is supported only on the middle chain.  Then
\(R=R_M\).  If \(R\) is of type $A_{\lambda_1}\oplus\cdots\oplus A_{\lambda_r}$,
then
\[
q=\#\operatorname{Irr}(R_M)=\sum_{i=1}^r\lambda_i.
\]
Contracting the connected components of \(R\) shortens the central chain by
\(q\) components.  The two forked ends remain unchanged.  Hence \(\overline F\)
has the same component configuration and multiplicities as a fiber of type
\[
I_{n-q}^*.
\]
The rational double points of types
$A_{\lambda_1},\dots,A_{\lambda_r}$
lie at some of the intersection points of adjacent components of
\(\overline F\).

Next, we suppose that \(R\) contains leaves only at one end.  By symmetry, we may
assume that this end is the left end.  We write
\[R=B_L+R_M,\]
where
$B_L\in\{C_0',\,C_0'',\,C_0'+C_0''\}$.
The right fork survives unchanged, while the left end changes according to the
number of leaves contained in \(B_L\).

If \(B_L\) consists of one component, then one left leaf survives and the other
left leaf is contracted.  The contracted leaf gives one rational double point of
type \(A_1\) lying on the image \(\nu(C_0)\).  The point \(\nu(B_L)\) does not
lie at an intersection point of two irreducible components of \(\overline F\).
After contracting \(R_M\), the remaining components have incidence graph of type
\(D_{n-q+4}\), with the multiplicities inherited from \(F\).  If \(R_M=0\),
then \(\overline F\) is of type
\[
\mathfrak D_{\,n-q+4}^{\mathrm{mark},1}(A_1).
\]
If \(R_M\neq0\), then the rational double points coming from the connected
components of \(R_M\) lie at some of the intersection points of adjacent
components, and \(\overline F\) is of type
\[
\mathfrak D_{\,n-q+4}^{\mathrm{mark},1}
(A_1;A_{\lambda_1},\dots,A_{\lambda_r})
\]
in the sense of Definition~\ref{dfn:marked-fibers}.

If $B_L=C_0'+C_0''$,
then both left leaves are contracted.  They give two rational double points of
type \(A_1\) lying on the image \(\nu(C_0)\).  These two points do not lie at
intersection points of irreducible components of \(\overline F\).  After
contracting \(R_M\), the remaining components have incidence graph of type
\(D_{n-q+3}\), with the multiplicities inherited from \(F\).  If \(R_M=0\),
then \(\overline F\) is of type
\[
\mathfrak D_{\,n-q+3}^{\mathrm{mark},2}(A_1,A_1).
\]
If \(R_M\neq0\), then the rational double points coming from the connected
components of \(R_M\) lie at some of the intersection points of adjacent
components, and \(\overline F\) is of type
\[
\mathfrak D_{\,n-q+3}^{\mathrm{mark},2}
(A_1,A_1;A_{\lambda_1},\dots,A_{\lambda_r})
\]
in the sense of Definition~\ref{dfn:marked-fibers}.

Finally, we suppose that \(R\) contains leaves at both ends.  We write
\[R=B_L+R_M+B_R,\]
where $B_L\in\{C_0',\,C_0'',\,C_0'+C_0''\}$ and $B_R\in\{C_n',\,C_n'',\,C_n'+C_n''\}$.
After contracting \(R_M\), the surviving part of the central chain has
\[
n+1-q
\]
irreducible components, all with multiplicity \(2\).  The remaining leaves, if
any, have multiplicity \(1\).  The rational double points obtained by
contracting the leaves in \(B_L\) and \(B_R\) lie on the images of the end
components \(C_0\) and \(C_n\), respectively, and they do not lie at intersection
points of irreducible components of \(\overline F\).

If \(s_L=s_R=1\), then one leaf survives at each end.  If \(R_M=0\), then
\(\overline F\) is of type
\[
\mathfrak A_{\,n-q+3}^{\mathrm{mark},1,1}(A_1,A_1).
\]
If \(R_M\neq0\), then the rational double points coming from the connected
components of \(R_M\) lie at some of the intersection points of adjacent
components, and \(\overline F\) is of type
\[
\mathfrak A_{\,n-q+3}^{\mathrm{mark},1,1}
(A_1,A_1;A_{\lambda_1},\dots,A_{\lambda_r})
\]
in the sense of Definition~\ref{dfn:marked-fibers}.

If \(s_L=1\) and \(s_R=0\), then one leaf survives at the left end and no leaf
survives at the right end.  If \(R_M=0\), then \(\overline F\) is of type
\[
\mathfrak A_{\,n-q+2}^{\mathrm{mark},1,0}(A_1;A_1,A_1).
\]
If \(R_M\neq0\), then the rational double points coming from the connected
components of \(R_M\) lie at some of the intersection points of adjacent
components, and \(\overline F\) is of type
\[
\mathfrak A_{\,n-q+2}^{\mathrm{mark},1,0}
(A_1;A_1,A_1;A_{\lambda_1},\dots,A_{\lambda_r})
\]
in the sense of Definition~\ref{dfn:marked-fibers}.

If \(s_L=0\) and \(s_R=1\), then no leaf survives at the left end and one leaf
survives at the right end.  If \(R_M=0\), then \(\overline F\) is of type
\[
\mathfrak A_{\,n-q+2}^{\mathrm{mark},0,1}(A_1,A_1;A_1).
\]
If \(R_M\neq0\), then the rational double points coming from the connected
components of \(R_M\) lie at some of the intersection points of adjacent
components, and \(\overline F\) is of type
\[
\mathfrak A_{\,n-q+2}^{\mathrm{mark},0,1}
(A_1,A_1;A_1;A_{\lambda_1},\dots,A_{\lambda_r})
\]
in the sense of Definition~\ref{dfn:marked-fibers}.

If \(s_L=s_R=0\), then no leaf survives at either end.  If \(R_M=0\), then
\(\overline F\) is of type
\[
\mathfrak A_{\,n-q+1}^{\mathrm{mark},0,0}(A_1,A_1;A_1,A_1).
\]
If \(R_M\neq0\), then the rational double points coming from the connected
components of \(R_M\) lie at some of the intersection points of adjacent
components, and \(\overline F\) is of type
\[
\mathfrak A_{\,n-q+1}^{\mathrm{mark},0,0}
(A_1,A_1;A_1,A_1;A_{\lambda_1},\dots,A_{\lambda_r})
\]
in the sense of Definition~\ref{dfn:marked-fibers}.
\end{proof}

Next, we consider the case where
\[
C_0\subset \operatorname{Supp}(R),
\qquad
C_n\not\subset \operatorname{Supp}(R).
\]
We denote by \(A_3^{\mathrm{sp}}\) the special \(A_3\)-configuration supported
at one end of an \(I_n^*\)-fiber:
\[
C_0'+C_0+C_0''
\]
at the left end, or symmetrically
\[
C_n'+C_n+C_n''
\]
at the right end.

Let \(\Gamma_L\) be the connected component of \(R\) containing \(C_0\).
We put
\[ c_L:= \# \bigl( \operatorname{Supp}(\Gamma_L)\cap\{C_0,\dots,C_n\} \bigr). \]
Since \(C_n\not\subset \operatorname{Supp}(R)\), the remaining part of \(R\) on the central chain is supported, after a gap, on the chain
\[ C_{c_L+1}-C_{c_L+2}-\cdots-C_{n-1}. \]
We denote this part by \(R_M\). We also denote by \(B_R\) the part of \(R\) supported on the right leaves \(C_n'\) and \(C_n''\). 
Thus
\[ R=\Gamma_L+R_M+B_R, \]
where
\[ B_R\in \{0,\ C_n',\ C_n'',\ C_n'+C_n''\}. \]
Here the chain \(C_{c_L+1}-\cdots-C_{n-1}\) is understood to be empty if \(c_L\geq n-1\).

\begin{prop}\label{prop:Instar-left-end}
We assume that \(F\) is of Kodaira type \(I_n^*\) with \(n\geq 1\), with
notation as above.  Let \(R\) be an ADER in \(F\), and assume that
\[
C_0\subset \operatorname{Supp}(R),
\qquad
C_n\not\subset \operatorname{Supp}(R).
\]
We write $R=\Gamma_L+R_M+B_R$
as above, where \(\Gamma_L\) is the connected component of \(R\) containing
\(C_0\), \(R_M\) is the part supported on the remaining middle chain, and
\(B_R\) is the part supported on the right leaves.

Then \(R_M\) is either zero or of type
\[
A_{\lambda_1}\oplus\cdots\oplus A_{\lambda_r}
\]
for some \(\lambda_1,\dots,\lambda_r\geq 1\) satisfying
\[
\sum_{i=1}^r\lambda_i+r\leq n-c_L.
\]
Moreover, \(\Gamma_L\) is one of the following types.
\begin{enumerate}[\rm(1)]
\item
\[
\Gamma_L=C_0+C_1+\cdots+C_{p-1}
\]
for some \(1\leq p\leq n\).  In this case \(\Gamma_L\) is of type \(A_p\).

\item
\[
\Gamma_L=L+C_0+C_1+\cdots+C_{p-1}
\]
for some $L\in\{C_0',C_0''\}$
and \(1\leq p\leq n\).  In this case \(\Gamma_L\) is of type \(A_{p+1}\).

\item
\[
\Gamma_L=C_0'+C_0+C_0''.
\]
In this case \(\Gamma_L\) is of type \(A_3^{\mathrm{sp}}\).

\item
\[
\Gamma_L=C_0'+C_0''+C_0+C_1+\cdots+C_{p-1}
\]
for some \(2\leq p\leq n\).  In this case \(\Gamma_L\) is of type \(D_{p+2}\).
\end{enumerate}
In cases \textup{(1)}, \textup{(2)}, and \textup{(4)}, one has \(c_L=p\),
while in case \textup{(3)} one has \(c_L=1\).

Conversely, every reduced subdivisor satisfying the above conditions is an
ADER.
\end{prop}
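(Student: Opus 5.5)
The plan is a purely combinatorial analysis of subtrees of the dual graph \(\widetilde D_{n+4}\), in the same spirit as the proofs of Propositions~\ref{prop:ADE-I0star} and~\ref{prop:Instar-no-end}. We first determine \(\Gamma_L\). Since \(\Gamma_L\) is connected and contains \(C_0\), its intersection with the central chain is a consecutive subchain starting at \(C_0\). This subchain cannot reach \(C_n\), because \(C_n\not\subset\operatorname{Supp}(R)\). So it has the form \(C_0+\cdots+C_{p-1}\) with \(1\le p\le n\), and \(c_L=p\). In addition, \(\Gamma_L\) contains some subset of \(\{C_0',C_0''\}\), because these leaves meet only \(C_0\). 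No right leaf can lie in \(\Gamma_L\), since right leaves meet only \(C_n\).

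We then split into cases according to the number of left leaves in \(\Gamma_L\).
\begin{itemize}
\item With no leaf, \(\Gamma_L\) is a chain of length \(p\), which gives type \(A_p\). This is case (1).
\item With one leaf \(L\), the graph is a chain \(L-C_0-\cdots-C_{p-1}\) of length \(p+1\), which gives type \(A_{p+1}\). This is case (2).
\item With both leaves and \(p=1\), the graph is the chain \(C_0'-C_0-C_0''\), which is of type \(A_3\). This is the special configuration \(A_3^{\mathrm{sp}}\) of case (3), with \(c_L=1\).
\item With both leaves and \(p\ge2\), \(C_0\) has valency \(3\), with two arms of length \(1\) and one arm of length \(p-1\). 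This is exactly the Dynkin diagram \(D_{p+2}\), giving case (4).
\end{itemize}
In every case the graph is a Dynkin diagram, so no case needs to be excluded. The only check is that a trivalent vertex with arms of lengths \((1,1,p-1)\) is always of type \(D\).

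Next we treat \(R_M\). Since \(\Gamma_L\) is a connected component of \(R\), the vertex \(C_{c_L}\) is not in \(R\). Here \(c_L\le n\), and \(c_L=n\) is allowed because \(C_n\notin R\). Hence the remaining central part of \(R\) lies in \(C_{c_L+1}-\cdots-C_{n-1}\), which has \(n-1-c_L\) vertices. Its connected components are consecutive subchains, so they are of types \(A_{\lambda_i}\), and consecutive components are separated by at least one unused vertex. This gives \(\sum\lambda_i+(r-1)\le n-1-c_L\), that is, \(\sum\lambda_i+r\le n-c_L\). The components of \(R_M\) are disjoint from \(\Gamma_L\) because of the gap at \(C_{c_L}\). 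They are disjoint from \(B_R\) because the right leaves meet only \(C_n\notin R\). The part \(B_R\) is a union of isolated vertices.

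For the converse, take any \(\Gamma_L\) from (1)--(4), an \(R_M\) satisfying the inequality placed inside \(C_{c_L+1}-\cdots-C_{n-1}\) with gaps as in Proposition~\ref{prop:In-fiber-contraction}'s construction, and any \(B_R\). The pieces are mutually nonadjacent, so the dual graph is their disjoint union. Each piece is of type \(A\) or \(D\), hence the subdivisor is an ADER.

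The main obstacle is bookkeeping at the boundary values. One must handle \(c_L=n\) or \(c_L=n-1\), where the middle chain is empty and the inequality forces \(R_M=0\). One must also confirm that the convention \(c_L=1\) in case (3) agrees with the gap condition at \(C_1\).
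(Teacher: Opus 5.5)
Your proposal is correct and follows essentially the same route as the paper. Both identify the central part of \(\Gamma_L\) as an initial segment \(C_0+\cdots+C_{p-1}\) with \(p\le n\), split by the number of left leaves to obtain \(A_p\), \(A_{p+1}\), \(A_3^{\mathrm{sp}}\), \(D_{p+2}\), count the \(n-1-c_L\) vertices of \(C_{c_L+1}-\cdots-C_{n-1}\) to get \(\sum\lambda_i+r\le n-c_L\), and prove the converse by noting that the dual graph is a disjoint union of type \(A\) and type \(D\) diagrams. One small correction: the gap construction you cite for the converse comes from the proof of Proposition~\ref{prop:ADE-In-fiber}, not Proposition~\ref{prop:In-fiber-contraction}.
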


\begin{proof}
Since \(\Gamma_L\) is connected and \(C_n\not\subset\operatorname{Supp}(R)\),
the central-chain part of \(\Gamma_L\) is an initial segment
\(C_0+C_1+\cdots+C_{p-1}\) for some \(1\leq p\leq n\).
If \(\Gamma_L\)
contains no left leaf, then
\[
\Gamma_L=C_0+C_1+\cdots+C_{p-1},
\]
and \(\Gamma_L\) is of type \(A_p\).
If \(\Gamma_L\) contains exactly one left leaf $L\in\{C_0',C_0''\}$,
then
\[
\Gamma_L=L+C_0+C_1+\cdots+C_{p-1},
\]
and \(\Gamma_L\) is of type \(A_{p+1}\).
If \(\Gamma_L\) contains both left leaves and no component \(C_i\) with
\(i\geq 1\), then
\[
\Gamma_L=C_0'+C_0+C_0'',
\]
and \(\Gamma_L\) is of type \(A_3^{\mathrm{sp}}\).
Finally, if \(\Gamma_L\) contains both left leaves and also \(C_1\), then
\[
\Gamma_L=C_0'+C_0''+C_0+C_1+\cdots+C_{p-1}
\]
for some \(2\leq p\leq n\), and \(\Gamma_L\) is of type \(D_{p+2}\).  These are exactly the possibilities
for \(\Gamma_L\).

We assume that \(R_M\neq 0\).  Since \(R_M\) is a disjoint union of consecutive subchains of $C_{c_L+1}-C_{c_L+2}-\cdots-C_{n-1}$,
\(R_M\) is of type
\[
A_{\lambda_1}\oplus\cdots\oplus A_{\lambda_r}
\]
for some \(\lambda_1,\dots,\lambda_r\geq 1\).
The chain $C_{c_L+1}-C_{c_L+2}-\cdots-C_{n-1}$
has \(n-c_L-1\) vertices.  Since the \(r\) connected components of \(R_M\) are mutually disjoint and separated by at least one unused vertex, we have
\[
\sum_{i=1}^r\lambda_i+(r-1)\leq n-c_L-1.
\]
Equivalently, $\sum_{i=1}^r\lambda_i+r\leq n-c_L$.

Conversely, any reduced subdivisor satisfying the stated conditions is a
disjoint union of one Dynkin diagram of type \(A\) or \(D\), together with
possibly some additional \(A\)-type chains and isolated \(A_1\)'s.  Hence its
dual graph is a disjoint union of Dynkin diagrams of ADE type.  Therefore it is an ADER.
\end{proof}

\begin{dfn}\label{dfn:left-end-marked-fibers}
Let \(\Gamma\) be an ADE type, and let
\[
\Delta_1,\dots,\Delta_t
\]
be a finite, possibly empty, list of ADE types.  Let
\(\varepsilon\in\{0,1,2\}\).  The integer \(\varepsilon\) denotes the number of
contracted leaves at the right end.

In the diagrams below, chains with very few vertices are interpreted in the
evident way; the dotted part is omitted when there are no intermediate
vertices.  In particular, when \(m=0\) in
\(\mathfrak I_m^{*,\mathrm{fork},\varepsilon}\), the chain consists only of
\(C_0\).  If \(\varepsilon=2\), the terms
\(R_1+\cdots+R_{2-\varepsilon}\) are omitted.

In the definitions below, the rational double points of types
\(\Delta_1,\dots,\Delta_t\), when present, lie at some of the intersection
points of adjacent components in the indicated chain.  The marked rational
double points of type \(A_1\) at the right end are not intersection points of
two irreducible components of the fiber.

\begin{enumerate}[\rm(1)]
\item Let \(m\geq 0\).  We say that a singular fiber is of type
\[
\mathfrak I_m^{*,\mathrm{fork},\varepsilon}
(\Gamma;\Delta_1,\dots,\Delta_t)
\]
if, as a divisor, it is of the form
\[
L_1+L_2+2C_0+\cdots+2C_m+R_1+\cdots+R_{2-\varepsilon},
\]
where all components are smooth rational curves and the support graph is as
follows:
\[
\begin{array}{ccc}
\begin{tikzpicture}[
  baseline=(P),
  scale=0.85,
  rdp/.style={circle, fill, inner sep=1.5pt},
  every label/.style={font=\scriptsize, inner sep=2pt}
]
  \useasboundingbox (-1.1,-1.8) rectangle (3.3,1.8);
  \coordinate (P) at (0,0);

  \node (C0)   at (0,0) {$C_0$};
  \node (L1)   at (0,1.15) {$L_1$};
  \node (L2)   at (0,-1.15) {$L_2$};
  \node (C1)   at (1.0,0) {$C_1$};
  \node (dots) at (2.0,0) {$\cdots$};
  \node (Cm)   at (3.0,0) {$C_m$};
  \node (R1)   at (3.0,1.15) {$R_1$};
  \node (R2)   at (3.0,-1.15) {$R_2$};

  \draw[thick] (C0.north) -- (L1.south);
  \draw[thick] (C0.south) -- (L2.north);
  \draw[thick] (C0.east) -- (C1.west);
  \draw[thick] (C1.east) -- (dots.west);
  \draw[thick] (dots.east) -- (Cm.west);
  \draw[thick] (Cm.north) -- (R1.south);
  \draw[thick] (Cm.south) -- (R2.north);

  \node[rdp, label=left:{$\Gamma$}] at (-0.45,0) {};

\node at (1.5,-1.55) {$\varepsilon=0$};
\end{tikzpicture}
&
\begin{tikzpicture}[
  baseline=(P),
  scale=0.85,
  rdp/.style={circle, fill, inner sep=1.5pt},
  every label/.style={font=\scriptsize, inner sep=2pt}
]
  \useasboundingbox (-1.1,-1.8) rectangle (3.3,1.8);
  \coordinate (P) at (0,0);

  \node (C0)   at (0,0) {$C_0$};
  \node (L1)   at (0,1.15) {$L_1$};
  \node (L2)   at (0,-1.15) {$L_2$};
  \node (C1)   at (1.0,0) {$C_1$};
  \node (dots) at (2.0,0) {$\cdots$};
  \node (Cm)   at (3.0,0) {$C_m$};
  \node (R1)   at (3.0,1.15) {$R_1$};

  \draw[thick] (C0.north) -- (L1.south);
  \draw[thick] (C0.south) -- (L2.north);
  \draw[thick] (C0.east) -- (C1.west);
  \draw[thick] (C1.east) -- (dots.west);
  \draw[thick] (dots.east) -- (Cm.west);
  \draw[thick] (Cm.north) -- (R1.south);

  \node[rdp, label=left:{$\Gamma$}] at (-0.45,0) {};
  \node[rdp, label=below:{$A_1$}] at (3.0,-0.45) {};

  \node at (1.5,-1.55) {$\varepsilon=1$};
\end{tikzpicture}
&
\begin{tikzpicture}[
  baseline=(P),
  scale=0.85,
  rdp/.style={circle, fill, inner sep=1.5pt},
  every label/.style={font=\scriptsize, inner sep=2pt}
]
  \useasboundingbox (-1.1,-1.8) rectangle (3.3,1.8);
  \coordinate (P) at (0,0);

  \node (C0)   at (0,0) {$C_0$};
  \node (L1)   at (0,1.15) {$L_1$};
  \node (L2)   at (0,-1.15) {$L_2$};
  \node (C1)   at (1.0,0) {$C_1$};
  \node (dots) at (2.0,0) {$\cdots$};
  \node (Cm)   at (3.0,0) {$C_m$};

  \draw[thick] (C0.north) -- (L1.south);
  \draw[thick] (C0.south) -- (L2.north);
  \draw[thick] (C0.east) -- (C1.west);
  \draw[thick] (C1.east) -- (dots.west);
  \draw[thick] (dots.east) -- (Cm.west);

  \node[rdp, label=left:{$\Gamma$}] at (-0.45,0) {};
  \node[rdp, label=above:{$A_1$}] at (3.0,0.45) {};
  \node[rdp, label=below:{$A_1$}] at (3.0,-0.45) {};

  \node at (1.5,-1.55) {$\varepsilon=2$};
\end{tikzpicture}
\end{array}
\]
Here \(L_1\), \(L_2\), and \(C_0\) pass through one rational double point of
type \(\Gamma\).  The components \(R_1,\dots,R_{2-\varepsilon}\), if any, meet
\(C_m\) at smooth points of the surface.  The component \(C_m\) contains
\(\varepsilon\) marked rational double points of type \(A_1\), none of which is
an intersection point of two irreducible components of the fiber.  There may
also be rational double points of types \(\Delta_1,\dots,\Delta_t\) at some of
the intersection points of adjacent components in the chain
\(C_0-C_1-\cdots-C_m\).

\item Let \(m\geq 4\). We say that a singular fiber is of type \[ \mathfrak D_m^{\mathrm{edge},\varepsilon} (\Gamma;\Delta_1,\dots,\Delta_t) \] if, as a divisor, it is of the form 
\[ C_0+2C_1+\cdots+2C_{m-3}+R_1+\cdots+R_{2-\varepsilon}, \]
where all components are smooth rational curves and the support graph is as follows: 
\[ \begin{array}{ccc} \begin{tikzpicture}[ baseline=(P), scale=0.85, rdp/.style={circle, fill, inner sep=1.5pt}, every label/.style={font=\scriptsize, inner sep=2pt} ] 
\useasboundingbox (-1.1,-1.8) rectangle (3.4,1.8);
\coordinate (P) at (0,0); 
\node (C0) at (0,0) {$C_0$};
\node (C1) at (1.0,0) {$C_1$};
\node (dots) at (2.0,0) {$\cdots$};
\node (Cm3) at (3.3,0) {$C_{m-3}$};
\node (R1) at (3.3,1.1) {$R_1$};
\node (R2) at (3.3,-1.1) {$R_2$}; 
\draw[thick] (C0.east) -- (C1.west);
\draw[thick] (C1.east) -- (dots.west);
\draw[thick] (dots.east) -- (Cm3.west);
\draw[thick] (Cm3.north) -- (R1.south);
\draw[thick] (Cm3.south) -- (R2.north);
\node[rdp, label=above:{$\Gamma$}] at (0,0.45) {};
\node at (1.6,-1.55) {$\varepsilon=0$}; \end{tikzpicture} & \begin{tikzpicture}[ baseline=(P), scale=0.85, rdp/.style={circle, fill, inner sep=1.5pt}, every label/.style={font=\scriptsize, inner sep=2pt} ]
\useasboundingbox (-1.1,-1.8) rectangle (3.4,1.8);
\coordinate (P) at (0,0);
\node (C0) at (0,0) {$C_0$};
\node (C1) at (1,0) {$C_1$}; 
\node (dots) at (2,0) {$\cdots$};
\node (Cm3) at (3.3,0) {$C_{m-3}$};
\node (R1) at (3.3,1.1) {$R_1$};
\draw[thick] (C0.east) -- (C1.west);
\draw[thick] (C1.east) -- (dots.west); 
\draw[thick] (dots.east) -- (Cm3.west);
\draw[thick] (Cm3.north) -- (R1.south);
\node[rdp, label=above:{$\Gamma$}] at (0,0.45) {}; \node[rdp, label=below:{$A_1$}] at (3.3,-0.45) {}; \node at (1.6,-1.55) {$\varepsilon=1$}; \end{tikzpicture} & \begin{tikzpicture}[ baseline=(P), scale=0.85, rdp/.style={circle, fill, inner sep=1.5pt}, every label/.style={font=\scriptsize, inner sep=2pt} ] 
\useasboundingbox (-1.1,-1.8) rectangle (3.4,1.8);
\coordinate (P) at (0,0); 
\node (C0) at (0,0) {$C_0$}; \node (C1) at (1,0) {$C_1$}; \node (dots) at (2,0) {$\cdots$}; \node (Cm3) at (3.3,0) {$C_{m-3}$}; \draw[thick] (C0.east) -- (C1.west); \draw[thick] (C1.east) -- (dots.west); \draw[thick] (dots.east) -- (Cm3.west); 
\node[rdp, label=above:{$\Gamma$}] at (0,0.45) {}; 
\node[rdp, label=above:{$A_1$}] at (3.3,0.45) {}; 
\node[rdp, label=below:{$A_1$}] at (3.3,-0.45) {}; \node at (1.6,-1.55) {$\varepsilon=2$}; \end{tikzpicture} \end{array} \] 
Here \(C_0\cap C_1\) is a rational double point of type \(\Gamma\). The components \(R_1,\dots,R_{2-\varepsilon}\), if any, meet \(C_{m-3}\) at smooth points of the surface. The component \(C_{m-3}\) contains \(\varepsilon\) marked rational double points of type \(A_1\). There may also be rational double points of types \(\Delta_1,\dots,\Delta_t\) at some of the other intersection points of adjacent components in the chain \(C_0-C_1-\cdots-C_{m-3}\). When \(\varepsilon=0\), the incidence graph is of type \(D_m\).

\item Let \(m\geq 3\). We say that a singular fiber is of type \[ \mathfrak D_m^{\mathrm{mark},\varepsilon} (\Gamma;\Delta_1,\dots,\Delta_t) \] if, as a divisor, it is of the form \[ 2C_1+\cdots+2C_{m-2}+R_1+\cdots+R_{2-\varepsilon}, \] where all components are smooth rational curves and the support graph is as follows: 
\[ \begin{array}{ccc} \begin{tikzpicture}[ baseline=(P), scale=0.85, rdp/.style={circle, fill, inner sep=1.5pt}, every label/.style={font=\scriptsize, inner sep=2pt} ] 
\useasboundingbox (-0.7,-1.8) rectangle (3.4,1.8); \coordinate (P) at (0,0); \node (C1) at (0,0) {$C_1$}; \node (C2) at (1,0) {$C_2$}; \node (dots) at (2,0) {$\cdots$}; \node (Cm2) at (3.3,0) {$C_{m-2}$}; \node (R1) at (3.3,1.1) {$R_1$}; \node (R2) at (3.3,-1.1) {$R_2$}; \draw[thick] (C1.east) -- (C2.west); \draw[thick] (C2.east) -- (dots.west); \draw[thick] (dots.east) -- (Cm2.west); \draw[thick] (Cm2.north) -- (R1.south); \draw[thick] (Cm2.south) -- (R2.north); \node[rdp, label=above:{$\Gamma$}] at (0,0.45) {}; \node at (1.5,-1.55) {$\varepsilon=0$}; \end{tikzpicture} & \begin{tikzpicture}[ baseline=(P), scale=0.85, rdp/.style={circle, fill, inner sep=1.5pt}, every label/.style={font=\scriptsize, inner sep=2pt} ] 
\useasboundingbox (-0.7,-1.8) rectangle (3.4,1.8); \coordinate (P) at (0,0); \node (C1) at (0,0) {$C_1$}; \node (C2) at (1,0) {$C_2$}; \node (dots) at (2,0) {$\cdots$}; \node (Cm2) at (3.3,0) {$C_{m-2}$}; \node (R1) at (3.3,1.1) {$R_1$}; \draw[thick] (C1.east) -- (C2.west); \draw[thick] (C2.east) -- (dots.west); \draw[thick] (dots.east) -- (Cm2.west); \draw[thick] (Cm2.north) -- (R1.south); \node[rdp, label=above:{$\Gamma$}] at (0,0.45) {}; \node[rdp, label=below:{$A_1$}] at (3.3,-0.45) {}; \node at (1.5,-1.55) {$\varepsilon=1$}; \end{tikzpicture} & \begin{tikzpicture}[ baseline=(P), scale=0.85, rdp/.style={circle, fill, inner sep=1.5pt}, every label/.style={font=\scriptsize, inner sep=2pt} ] 
\useasboundingbox (-0.7,-1.8) rectangle (3.4,1.8); \coordinate (P) at (0,0); \node (C1) at (0,0) {$C_1$}; \node (C2) at (1,0) {$C_2$}; \node (dots) at (2,0) {$\cdots$}; \node (Cm2) at (3.3,0) {$C_{m-2}$}; \draw[thick] (C1.east) -- (C2.west); \draw[thick] (C2.east) -- (dots.west); \draw[thick] (dots.east) -- (Cm2.west); \node[rdp, label=above:{$\Gamma$}] at (0,0.45) {}; \node[rdp, label=above:{$A_1$}] at (3.3,0.45) {}; \node[rdp, label=below:{$A_1$}] at (3.3,-0.45) {}; \node at (1.5,-1.55) {$\varepsilon=2$}; \end{tikzpicture} \end{array} \] Here \(C_1\) contains a marked rational double point of type \(\Gamma\). The components \(R_1,\dots,R_{2-\varepsilon}\), if any, meet \(C_{m-2}\) at smooth points of the surface. The component \(C_{m-2}\) contains \(\varepsilon\) marked rational double points of type \(A_1\). There may also be rational double points of types \(\Delta_1,\dots,\Delta_t\) at some of the intersection points of adjacent components in the chain \(C_1-C_2-\cdots-C_{m-2}\). When \(\varepsilon=0\), the incidence graph is of type \(D_m\), with the convention \(D_3=A_3\). \end{enumerate} If there are no rational double points at the intersections of adjacent components, we omit the part \(\Delta_1,\dots,\Delta_t\) from the notation. If \(\varepsilon=2\), the terms \(R_1+\cdots+R_{2-\varepsilon}\) are omitted.
\end{dfn}

For the contraction statement, we use the following notation.  We write
\[
R=\Gamma_L+R_M+B_R
\]
as in Proposition~\ref{prop:Instar-left-end}.  We put
\[
\varepsilon_R:=\#\operatorname{Irr}(B_R).
\]
Thus \(\varepsilon_R\in\{0,1,2\}\).  If \(R_M\neq 0\), we write the type of
\(R_M\) as
\[
A_{\lambda_1}\oplus\cdots\oplus A_{\lambda_r}
\]
and put
\[
q:=\sum_{i=1}^r\lambda_i.
\]
If \(R_M=0\), we set \(q=0\).

\begin{prop}\label{prop:Instar-left-end-contraction}
We assume that \(F\) is of Kodaira type \(I_n^*\) with \(n\geq 1\), with
notation as above.  Let \(R\) be an ADER in \(F\). We assume that
\[
C_0\subset \operatorname{Supp}(R),
\qquad
C_n\not\subset \operatorname{Supp}(R).
\]
We write $R=\Gamma_L+R_M+B_R$ as above.
Let $\nu\colon X\rightarrow Y$
be the contraction of \(R\).  Then every irreducible component of
\(\overline F\) is a smooth rational curve.  More precisely, the following
hold.

\begin{enumerate}[\rm(1)]
\item We assume that $\Gamma_L=C_0+C_1+\cdots+C_{p-1}$
for some \(1\leq p\leq n\).  Then \(\Gamma_L\) is of type \(A_p\).  If
\(R_M=0\), then \(\overline F\) is of type
\[
\mathfrak I_{\,n-p-q}^{*,\mathrm{fork},\varepsilon_R}(A_p)
\]
in the sense of Definition~\ref{dfn:left-end-marked-fibers}.  If
\(R_M\neq 0\), then \(\overline F\) is of type
\[
\mathfrak I_{\,n-p-q}^{*,\mathrm{fork},\varepsilon_R}
(A_p;A_{\lambda_1},\dots,A_{\lambda_r})
\]
in the sense of Definition~\ref{dfn:left-end-marked-fibers}.

\item We assume that $\Gamma_L=L+C_0+C_1+\cdots+C_{p-1}$ for some
$L\in\{C_0',C_0''\}$ and \(1\leq p\leq n\).  Then \(\Gamma_L\) is of type \(A_{p+1}\).  If
\(R_M=0\), then \(\overline F\) is of type
\[
\mathfrak D_{\,n-p+4-q}^{\mathrm{edge},\varepsilon_R}(A_{p+1})
\]
in the sense of Definition~\ref{dfn:left-end-marked-fibers}.  If
\(R_M\neq 0\), then \(\overline F\) is of type
\[
\mathfrak D_{\,n-p+4-q}^{\mathrm{edge},\varepsilon_R}
(A_{p+1};A_{\lambda_1},\dots,A_{\lambda_r})
\]
in the sense of Definition~\ref{dfn:left-end-marked-fibers}.

\item We assume that $\Gamma_L=C_0'+C_0+C_0''$.
Then \(\Gamma_L\) is of type \(A_3^{\mathrm{sp}}\).  If \(R_M=0\), then
\(\overline F\) is of type
\[
\mathfrak D_{\,n-q+2}^{\mathrm{mark},\varepsilon_R}(A_3)
\]
in the sense of Definition~\ref{dfn:left-end-marked-fibers}.  If
\(R_M\neq 0\), then \(\overline F\) is of type
\[
\mathfrak D_{\,n-q+2}^{\mathrm{mark},\varepsilon_R}
(A_3;A_{\lambda_1},\dots,A_{\lambda_r})
\]
in the sense of Definition~\ref{dfn:left-end-marked-fibers}.

\item We assume that $\Gamma_L=C_0'+C_0''+C_0+C_1+\cdots+C_{p-1}$
for some \(2\leq p\leq n\).  Then \(\Gamma_L\) is of type \(D_{p+2}\).  If
\(R_M=0\), then \(\overline F\) is of type
\[
\mathfrak D_{\,n-p+3-q}^{\mathrm{mark},\varepsilon_R}(D_{p+2})
\]
in the sense of Definition~\ref{dfn:left-end-marked-fibers}.  If
\(R_M\neq 0\), then \(\overline F\) is of type
\[
\mathfrak D_{\,n-p+3-q}^{\mathrm{mark},\varepsilon_R}
(D_{p+2};A_{\lambda_1},\dots,A_{\lambda_r})
\]
in the sense of Definition~\ref{dfn:left-end-marked-fibers}.
\end{enumerate}
\end{prop}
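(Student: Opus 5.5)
The plan follows the same pattern as the proof of Proposition~\ref{prop:Instar-no-end-contraction}. First, by Lemma~\ref{lem:Instar-smooth-components}, every irreducible component of \(\overline F\) is a smooth rational curve. By Theorem~\ref{2.4}, \(\overline F\) is obtained from \(F\) by deleting the components of \(R\) and keeping the multiplicities of the survivors. The singular points of \(Y\) on \(\overline F\) are exactly the images of the connected components \(\Gamma_L\), the chains of \(R_M\), and the leaves of \(B_R\). Their types follow from Proposition~\ref{prop:Instar-left-end}; in case (3) the type is \(A_3\), since \(A_3^{\mathrm{sp}}\) is a chain \(C_0'-C_0-C_0''\). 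So the remaining work is bookkeeping: in each of the four cases, identify the surviving configuration, its multiplicities, and where each rational double point sits.

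\emph{Right end.} This analysis is common to all cases. Since \(C_n\not\subset\operatorname{Supp}(R)\), the component \(\nu(C_n)\) survives with multiplicity \(2\). The \(2-\varepsilon_R\) surviving right leaves meet it at smooth points of \(Y\) with multiplicity \(1\). Each contracted right leaf is a connected \(A_1\)-component of \(R\) that meets only \(C_n\). Its image therefore lies on \(\nu(C_n)\) and on no other surviving component, which gives the \(\varepsilon_R\) marked \(A_1\)-points in Definition~\ref{dfn:left-end-marked-fibers}.

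\emph{Middle chain.} Each chain of \(R_M\) lies strictly inside \(C_{c_L+1}-\cdots-C_{n-1}\), with its two neighbours on the central chain surviving. Contracting it produces a point where these two neighbours meet, giving the \(\Delta\)-points \(A_{\lambda_i}\). It also shortens the surviving central chain by \(\lambda_i\), for a total of \(q\).

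\emph{Left end.} This is the case-specific step.
\begin{enumerate}[(1)]
\item Both leaves \(C_0',C_0''\) survive with multiplicity \(1\), and \(C_p\) survives, so all three pass through the \(A_p\)-point \(\nu(\Gamma_L)\). The surviving central chain is \(C_p,\dots,C_n\) minus the \(q\) components of \(R_M\). That leaves \(n-p-q+1\) components, so after relabelling it is \(C_0,\dots,C_{n-p-q}\) in the fork notation, which gives \(\mathfrak I^{*,\mathrm{fork},\varepsilon_R}_{n-p-q}\).
\item The remaining leaf (multiplicity \(1\)) and \(C_p\) (multiplicity \(2\)) meet exactly at the \(A_{p+1}\)-point. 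The chain leaf\(-C_p-\cdots-C_n\) has \(n-p+2-q\) components. Together with the two right leaves this gives \(m-1\) components for \(m=n-p+4-q\), matching \(\mathfrak D_m^{\mathrm{edge}}\).
\item Only \(C_1\) meets \(\Gamma_L\), so \(\nu(\Gamma_L)\) is a marked point on \(\nu(C_1)\). The chain \(C_1,\dots,C_n\) has \(n-q\) components, so \(m-2=n-q\), which gives \(\mathfrak D^{\mathrm{mark}}_{n-q+2}(A_3)\).
\item This is analogous to (3). The marked \(D_{p+2}\)-point lies on \(\nu(C_p)\), and the chain has \(n-p+1-q\) components, so \(m=n-p+3-q\).
\end{enumerate}

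In each case I will check that the surviving component meeting \(\Gamma_L\) meets it transversally at a single point, so the point is not an extra intersection. The step I expect to need most care is justifying which surviving components pass through each singular point. Two surviving components pass through \(\nu(R')\) if and only if both meet the connected component \(R'\). This must be read off the tree structure of \(\widetilde D_{n+4}\). Throughout, the index conventions \(c_L=p\) versus \(c_L=1\), and the degenerate cases where a chain is empty, also have to be handled correctly.
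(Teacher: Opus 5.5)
Your proposal is correct and follows the same route as the paper. It uses Lemma~\ref{lem:Instar-smooth-components} for smoothness and Theorem~\ref{2.4} for the multiplicities, then does case-by-case bookkeeping of the surviving central chain and of which surviving components pass through each contracted point. There is one small counting slip in case (2): the surviving leaf, the chain \(C_p-\cdots-C_n\) minus \(R_M\), and two right leaves give \(n-p+4-q=m\) components, not \(m-1\). The cleaner check, valid for every \(\varepsilon_R\), is that the surviving central chain has \(n-p+1-q=m-3\) components, matching \(C_1,\dots,C_{m-3}\) in \(\mathfrak D_m^{\mathrm{edge},\varepsilon}\).
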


\begin{proof}
The singular points of \(Y\) lying on \(\overline F\) are exactly the rational
double points obtained by contracting the connected components of \(R\).

By Lemma~\ref{lem:Instar-smooth-components}, every irreducible component of
\(\overline F\) is a smooth rational curve.  Moreover, by Theorem~\ref{2.4},
the fiber \(\overline F\) is obtained from \(F\) by deleting the components
contained in \(R\), while preserving the multiplicities of the remaining
components.

We describe \(\overline F\) according to the four possibilities for
\(\Gamma_L\).
We suppose that $\Gamma_L=C_0+C_1+\cdots+C_{p-1}$.
Then \(\Gamma_L\) is of type \(A_p\).  The two left leaves \(C_0'\) and
\(C_0''\) survive, and the first surviving component on the central chain is the
image of \(C_p\).  These three components pass through the rational double point
of type \(A_p\) obtained by contracting \(\Gamma_L\).  After contracting
\(R_M\), the surviving central chain has
\[
n-p+1-q
\]
components of multiplicity \(2\).  At the right end, exactly
\(\varepsilon_R\) leaves are contracted, producing \(\varepsilon_R\) marked
rational double points of type \(A_1\) on the image of the right end component.
If \(R_M=0\), then \(\overline F\) is of type
\[
\mathfrak I_{\,n-p-q}^{*,\mathrm{fork},\varepsilon_R}(A_p).
\]
If \(R_M\neq0\), then the rational double points coming from the connected
components of \(R_M\) have types
\[
A_{\lambda_1},\dots,A_{\lambda_r}
\]
and lie at some of the intersection points of adjacent components in the
surviving central chain.  Hence \(\overline F\) is of type
\[
\mathfrak I_{\,n-p-q}^{*,\mathrm{fork},\varepsilon_R}
(A_p;A_{\lambda_1},\dots,A_{\lambda_r}).
\]

Next, we suppose that $\Gamma_L=L+C_0+C_1+\cdots+C_{p-1}$ for some $L\in\{C_0',C_0''\}$.
Then \(\Gamma_L\) is of type \(A_{p+1}\).  Exactly one left leaf survives.  The
image of this leaf and the first surviving central component meet at the
rational double point of type \(A_{p+1}\) obtained by contracting
\(\Gamma_L\).  After contracting \(R_M\), the surviving central chain has
\[
n-p+1-q
\]
components of multiplicity \(2\).  At the right end, exactly
\(\varepsilon_R\) leaves are contracted, producing \(\varepsilon_R\) marked
rational double points of type \(A_1\) on the image of the right end component.
If \(R_M=0\), then \(\overline F\) is of type
\[
\mathfrak D_{\,n-p+4-q}^{\mathrm{edge},\varepsilon_R}(A_{p+1}).
\]
If \(R_M\neq0\), then the rational double points coming from the connected
components of \(R_M\) lie at some of the intersection points of adjacent
components.  Hence \(\overline F\) is of type
\[
\mathfrak D_{\,n-p+4-q}^{\mathrm{edge},\varepsilon_R}
(A_{p+1};A_{\lambda_1},\dots,A_{\lambda_r}).
\]

Next, we suppose that $
\Gamma_L=C_0'+C_0+C_0''$.
Then \(\Gamma_L\) is of type \(A_3^{\mathrm{sp}}\).  The rational double point
of type \(A_3\) obtained by contracting \(\Gamma_L\) lies as a marked point on
the image of the first surviving central component \(C_1\).  After contracting
\(R_M\), the surviving central chain has
\[
n-q
\]
components of multiplicity \(2\).  At the right end, exactly
\(\varepsilon_R\) leaves are contracted, producing \(\varepsilon_R\) marked
rational double points of type \(A_1\) on the image of the right end component.
If \(R_M=0\), then \(\overline F\) is of type
\[
\mathfrak D_{\,n-q+2}^{\mathrm{mark},\varepsilon_R}(A_3).
\]
If \(R_M\neq0\), then the rational double points coming from the connected
components of \(R_M\) lie at some of the intersection points of adjacent
components.  Hence \(\overline F\) is of type
\[
\mathfrak D_{\,n-q+2}^{\mathrm{mark},\varepsilon_R}
(A_3;A_{\lambda_1},\dots,A_{\lambda_r}).
\]

Finally, we suppose that $\Gamma_L=C_0'+C_0''+C_0+C_1+\cdots+C_{p-1}$
for some \(2\leq p\leq n\).  Then \(\Gamma_L\) is of type \(D_{p+2}\).  The
rational double point of type \(D_{p+2}\) obtained by contracting \(\Gamma_L\)
lies as a marked point on the image of the first surviving central component
\(C_p\).  After contracting \(R_M\), the surviving central chain has
\[
n-p+1-q
\]
components of multiplicity \(2\).  At the right end, exactly
\(\varepsilon_R\) leaves are contracted, producing \(\varepsilon_R\) marked
rational double points of type \(A_1\) on the image of the right end component.
If \(R_M=0\), then \(\overline F\) is of type
\[
\mathfrak D_{\,n-p+3-q}^{\mathrm{mark},\varepsilon_R}(D_{p+2}).
\]
If \(R_M\neq0\), then the rational double points coming from the connected
components of \(R_M\) lie at some of the intersection points of adjacent
components.  Hence \(\overline F\) is of type
\[
\mathfrak D_{\,n-p+3-q}^{\mathrm{mark},\varepsilon_R}
(D_{p+2};A_{\lambda_1},\dots,A_{\lambda_r}).
\]
\end{proof}

We finally treat the case where the support of \(R\) contains both end
components of the central chain:
\[
C_0,\ C_n\subset \operatorname{Supp}(R).
\]

When the connected component of \(R\) containing \(C_0\) is different from the
connected component of \(R\) containing \(C_n\), we use the following notation.
Let \(\Gamma_L\) be the connected component of \(R\) containing \(C_0\), and let
\(\Gamma_R\) be the connected component of \(R\) containing \(C_n\).  We put
\[
c_L:=
\#
\bigl(
\operatorname{Supp}(\Gamma_L)\cap\{C_0,\dots,C_n\}
\bigr),
\qquad
c_R:=
\#
\bigl(
\operatorname{Supp}(\Gamma_R)\cap\{C_0,\dots,C_n\}
\bigr).
\]
Then the remaining part of \(R\) on the central chain is supported, after gaps,
on the chain
\[
C_{c_L+1}-C_{c_L+2}-\cdots-C_{n-c_R-1}.
\]
We denote this part by \(R_M\).  Thus, in this case, we write
\[
R=\Gamma_L+R_M+\Gamma_R.
\]
Here the chain \(C_{c_L+1}-\cdots-C_{n-c_R-1}\) is understood to be empty if
\(c_L+c_R\geq n-1\).
\begin{prop}\label{prop:Instar-both-ends}
We assume that \(F\) is of Kodaira type \(I_n^*\) with \(n\geq 1\), with
notation as above.  Let \(R\) be an ADER in \(F\), and assume that
\[
C_0,\ C_n\subset \operatorname{Supp}(R).
\]
Then exactly one of the following two cases occurs.

\begin{enumerate}[\rm(1)]
\item The connected component of \(R\) containing \(C_0\) is different from the
connected component of \(R\) containing \(C_n\).

In this case, with the notation above, we have
\[
R=\Gamma_L+R_M+\Gamma_R.
\]
The divisor \(R_M\) is either zero or of type
\[
A_{\lambda_1}\oplus\cdots\oplus A_{\lambda_r}
\]
for some \(\lambda_1,\dots,\lambda_r\geq 1\) satisfying
\[
\sum_{i=1}^r\lambda_i+r\leq n-c_L-c_R.
\]
If \(R_M=0\), then
\[
c_L+c_R\leq n.
\]

Moreover, \(\Gamma_L\) is one of the following types:
\begin{enumerate}[\rm(a)]
\item
\[
\Gamma_L=C_0+C_1+\cdots+C_{p_L-1}
\]
for some \(1\leq p_L\leq n\).  In this case \(\Gamma_L\) is of type
\(A_{p_L}\).

\item
\[
\Gamma_L=L+C_0+C_1+\cdots+C_{p_L-1}
\]
for some $L\in\{C_0',C_0''\}$
and \(1\leq p_L\leq n\).  In this case \(\Gamma_L\) is of type
\(A_{p_L+1}\).

\item
\[
\Gamma_L=C_0'+C_0+C_0''.
\]
In this case \(\Gamma_L\) is of type \(A_3^{\mathrm{sp}}\).

\item
\[
\Gamma_L=C_0'+C_0''+C_0+C_1+\cdots+C_{p_L-1}
\]
for some \(2\leq p_L\leq n\).  In this case \(\Gamma_L\) is of type
\(D_{p_L+2}\).
\end{enumerate}
In cases \textup{(a)}, \textup{(b)}, and \textup{(d)}, one has
\(c_L=p_L\), while in case \textup{(c)} one has \(c_L=1\).

The connected component \(\Gamma_R\) is described symmetrically at the right
end.  We define \(p_R\) and \(c_R\) in the same way.  In particular,
\(c_R=p_R\) in the three non-special cases, while \(c_R=1\) in the special case
\[
\Gamma_R=C_n'+C_n+C_n'',
\]
where \(\Gamma_R\) is of type \(A_3^{\mathrm{sp}}\).

\item A single connected component of \(R\) contains both \(C_0\) and \(C_n\).

Then \(R\) is connected and one of the following occurs:
\begin{enumerate}[\rm(a)]
\item
\[
R=C_0+C_1+\cdots+C_n.
\]
In this case \(R\) is of type \(A_{n+1}\).

\item
\[
R=L+C_0+C_1+\cdots+C_n
\]
for some $L\in\{C_0',C_0'',C_n',C_n''\}$.
In this case \(R\) is of type \(A_{n+2}\).

\item
\[
R=L_0+C_0+C_1+\cdots+C_n+L_n
\]
for some $L_0\in\{C_0',C_0''\}$ and $L_n\in\{C_n',C_n''\}$.
In this case \(R\) is of type \(A_{n+3}\).

\item
\[
R=C_0'+C_0''+C_0+C_1+\cdots+C_n,
\]
or symmetrically
\[
R=C_0+C_1+\cdots+C_n+C_n'+C_n''.
\]
In this case \(R\) is of type \(D_{n+3}\).

\item
\[
R=C_0'+C_0''+C_0+C_1+\cdots+C_n+L_n
\]
for some $L_n\in\{C_n',C_n''\}$,
or symmetrically
\[
R=L_0+C_0+C_1+\cdots+C_n+C_n'+C_n''
\]
for some $L_0\in\{C_0',C_0''\}$.
In this case \(R\) is of type \(D_{n+4}\).
\end{enumerate}
\end{enumerate}

Conversely, every reduced subdivisor satisfying one of the above conditions is
an ADER.
\end{prop}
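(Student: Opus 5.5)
The argument is purely combinatorial on the dual graph \(\widetilde D_{n+4}\) of \(F\). It follows the proofs of Propositions~\ref{prop:Instar-no-end} and~\ref{prop:Instar-left-end}.

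The dichotomy between (1) and (2) is immediate: either \(C_0\) and \(C_n\) lie in the same connected component of \(R\), or they do not.

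\textbf{Case (1).} Let \(\Gamma_L\) and \(\Gamma_R\) be the components containing \(C_0\) and \(C_n\). Since they are distinct connected subtrees, \(\Gamma_L\) cannot contain all of \(C_0,\dots,C_n\). Hence its central-chain part is an initial segment \(C_0+\cdots+C_{p_L-1}\) with \(p_L\le n\). Sorting by how many left leaves \(\Gamma_L\) contains, exactly as in Proposition~\ref{prop:Instar-left-end}, gives (a)--(d):
\begin{itemize}
\item no leaf gives type \(A_{p_L}\);
\item one leaf gives type \(A_{p_L+1}\);
\item both leaves and \(p_L=1\) give \(A_3^{\mathrm{sp}}\);
\item both leaves and \(p_L\ge 2\) give \(D_{p_L+2}\).
\end{itemize}
The right end is handled symmetrically.

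Because \(\Gamma_L\) and \(\Gamma_R\) are different connected components, some central vertex between them is unused. This gives \(c_L+c_R\le n\); the central chain has \(n+1\) vertices, and at least one separates the two components. If \(R_M\neq0\), it lies on the chain \(C_{c_L+1}-\cdots-C_{n-c_R-1}\), which has \(n-c_L-c_R-1\) vertices. Its components are consecutive subchains, hence of type \(A\), and consecutive ones are separated by gaps. This yields \(\sum\lambda_i+(r-1)\le n-c_L-c_R-1\), which is the stated inequality.

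\textbf{Case (2).} The component containing both \(C_0\) and \(C_n\) is a connected subtree, so it contains the whole central chain \(C_0+\cdots+C_n\). The only vertices of \(F\) outside the chain are the four leaves. Each leaf is adjacent only to \(C_0\) or \(C_n\), so every leaf in \(R\) is attached to this single component, and \(R\) is connected.

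Let \(a\) and \(b\) be the numbers of left and right leaves included. Then \((a,b)=(2,2)\) is excluded, since that graph is \(\widetilde D_{n+4}\) itself, which is not a Dynkin diagram. Equivalently, \(R=F_{\mathrm{red}}\) would give a non-negative-definite configuration. The remaining cases give the listed types:
\begin{itemize}
\item \((0,0)\) gives \(A_{n+1}\);
\item one leaf in total gives \(A_{n+2}\);
\item \((1,1)\) gives \(A_{n+3}\);
\item \((2,0)\) or \((0,2)\) gives \(D_{n+3}\), a fork at one end and a tail of length \(n+1\);
\item \((2,1)\) or \((1,2)\) gives \(D_{n+4}\).
\end{itemize}
For the \(D_{n+3}\) case with \(n=1\) we get \(D_4\), which is consistent with the list.

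\textbf{Converse.} In each listed configuration the dual graph is a disjoint union of the indicated chains and \(D\)-graphs. These are Dynkin, so the divisor is an ADER.

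The only step needing care is counting separating vertices in case (1), in particular the boundary situations. When \(c_L+c_R=n\), exactly one vertex separates the two components and \(R_M\) must vanish. The special case \(A_3^{\mathrm{sp}}\) has \(c_L=1\) while still covering \(C_0\), and the counts must account for this.
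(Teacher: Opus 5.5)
Your proposal is correct and takes essentially the same route as the paper's proof. The paper also uses the tree/initial-segment argument for \(\Gamma_L\) and \(\Gamma_R\) (reusing Proposition~\ref{prop:Instar-left-end}), counts the vertices of the chain \(C_{c_L+1}-\cdots-C_{n-c_R-1}\), and in case (2) counts left and right leaves with \((2,2)\) excluded. The one point you leave implicit, as the paper does, is that every leaf in \(R\) belongs to \(\Gamma_L\) or \(\Gamma_R\) because leaves meet only \(C_0\) or \(C_n\); this is what justifies writing \(R=\Gamma_L+R_M+\Gamma_R\) with \(R_M\) supported on the central chain.
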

\begin{proof}
Let \(\Gamma_L\) be the connected component of \(R\) containing \(C_0\), and let
\(\Gamma_R\) be the connected component of \(R\) containing \(C_n\).

First, we suppose that \(\Gamma_L\neq \Gamma_R\).  Since the dual graph of an
\(I_n^*\)-fiber is a tree, the intersection of
\(\operatorname{Supp}(\Gamma_L)\) with the central chain is an initial segment $C_0+C_1+\cdots+C_{p_L-1}$,
except for the special configuration $C_0'+C_0+C_0''$.
Thus, by the same argument as in Proposition~\ref{prop:Instar-left-end}, the
component \(\Gamma_L\) is one of the four listed left-end types.  Similarly,
\(\Gamma_R\) is one of the corresponding right-end types.
Since \(\Gamma_L\) and \(\Gamma_R\) are distinct connected components, at least
one central component must be omitted between them.  Therefore the remaining
central part of \(R\), if nonzero, is supported on mutually disjoint consecutive
subchains of $C_{c_L+1}-C_{c_L+2}-\cdots-C_{n-c_R-1}$.
If \(R_M\neq 0\), then \(R_M\) is of type
\[
A_{\lambda_1}\oplus\cdots\oplus A_{\lambda_r}
\]
for some \(\lambda_1,\dots,\lambda_r\geq 1\).  The chain $C_{c_L+1}-C_{c_L+2}-\cdots-C_{n-c_R-1}$
has \(n-c_L-c_R-1\) vertices.  Since the \(r\) connected components of \(R_M\)
are mutually disjoint and separated by at least one unused vertex, we have
\[
\sum_{i=1}^r\lambda_i+(r-1)\leq n-c_L-c_R-1.
\]
Equivalently, $\sum_{i=1}^r\lambda_i+r\leq n-c_L-c_R$.
If \(R_M=0\), the condition that \(\Gamma_L\) and \(\Gamma_R\) are distinct
gives
\[
c_L+c_R\leq n.
\]
This proves \textup{(1)}.

Next, we suppose that \(\Gamma_L=\Gamma_R\).  Since this connected component
contains both \(C_0\) and \(C_n\), it contains the whole central chain
\[
C_0+C_1+\cdots+C_n.
\]
Thus any leaf contained in \(R\) is attached to this same connected component,
and hence \(R\) is connected.
Let \(\alpha_L\) be the number of left leaves among \(C_0',C_0''\) contained in
\(R\), and let \(\alpha_R\) be the number of right leaves among \(C_n',C_n''\)
contained in \(R\).  Since \(R\) is an ADER, its connected dual graph is a
Dynkin diagram.  In particular, it cannot have two vertices of valency \(3\).
Therefore we cannot have
\[
\alpha_L=\alpha_R=2.
\]
If \(\alpha_L,\alpha_R\leq 1\), then the dual graph of \(R\) is a chain.
According as
\[
\alpha_L+\alpha_R=0,\quad 1,\quad 2,
\]
we obtain types
\[
A_{n+1},\qquad A_{n+2},\qquad A_{n+3}.
\]
The case \(\alpha_L+\alpha_R=2\) means that one leaf is chosen at each end.
If exactly one of \(\alpha_L,\alpha_R\) is equal to \(2\), then the dual graph
has a unique branching vertex.  If no leaf is chosen at the other end, the type
is \(D_{n+3}\).  If one leaf is chosen at the other end, the type is
\(D_{n+4}\).  These are precisely the cases listed in \textup{(2)}.

Conversely, every subdivisor described in \textup{(1)} is a disjoint union of
Dynkin diagrams of type \(A\) and possibly \(D\), and every subdivisor described
in \textup{(2)} is a Dynkin diagram of type \(A\) or \(D\).  Hence in all cases
the dual graph is a disjoint union of Dynkin diagrams of ADE type.  Therefore
the subdivisor is an ADER.
\end{proof}

\begin{dfn}\label{dfn:both-end-marked-fibers}
Let \(\Gamma_L\) and \(\Gamma_R\) be ADE types, and let
\[
\tau_L,\tau_R\in\{\mathrm{fork},\mathrm{edge},\mathrm{mark}\}.
\]
The symbols \(\tau_L\) and \(\tau_R\) describe the left and right end behavior,
respectively.  We also allow a finite, possibly empty, list
\[
\Delta_1,\dots,\Delta_t
\]
of ADE types.

In the definitions below, the rational double points specified by
\(\Gamma_L\) and \(\Gamma_R\) describe the singularities at the left and right
ends, respectively.  These two singular points are distinct.  The rational
double points of types \(\Delta_1,\dots,\Delta_t\), when present, lie at some of
the intersection points of adjacent components in the central chain.

In the diagrams below, chains with very few vertices are interpreted in the
evident way; the dotted part is omitted when there are no intermediate
vertices.  In particular, when \(m=1\), the left and right end contributions
are attached to the same component \(C_1\), and the two rational double points
specified by the left and right end data are required to be distinct.

\begin{enumerate}[\rm(1)]
\item Let \(m\geq 1\).  We say that a singular fiber is of type
\[
\mathfrak A_m^{\tau_L,\tau_R}
(\Gamma_L,\Gamma_R;\Delta_1,\dots,\Delta_t)
\]
if, as a divisor, it is of the form
\[
2C_1+\cdots+2C_m+L(\tau_L)+R(\tau_R),
\]
where \(C_1,\dots,C_m\) are smooth rational curves forming the central chain
\[
C_1-C_2-\cdots-C_m.
\]

The left end contribution \(L(\tau_L)\) is defined as follows:
\[
\begin{array}{ccc}
\begin{tikzpicture}[
  baseline=(P),
  scale=0.9,
  rdp/.style={circle, fill, inner sep=1.5pt},
  every label/.style={font=\scriptsize, inner sep=2pt}
]
  \useasboundingbox (-1.4,-1.45) rectangle (2.1,1.45);
  \coordinate (P) at (0,0);

  \node (C1) at (0,0) {$C_1$};
  \node (L1) at (0,1.0) {$L_1$};
  \node (L2) at (0,-1.0) {$L_2$};
  \node (C2) at (1.2,0) {$C_2$};

  \draw[thick] (C1.north) -- (L1.south);
  \draw[thick] (C1.south) -- (L2.north);
  \draw[thick] (C1.east) -- (C2.west);

  \node[rdp, label=left:{$\Gamma_L$}] at (-0.45,0) {};
  \node at (0,-1.35) {$\tau_L=\mathrm{fork}$};
\end{tikzpicture}
&
\begin{tikzpicture}[
  baseline=(P),
  scale=0.9,
  rdp/.style={circle, fill, inner sep=1.5pt},
  every label/.style={font=\scriptsize, inner sep=2pt}
]
  \useasboundingbox (-1.4,-1.45) rectangle (2.1,1.45);
  \coordinate (P) at (0,0);

  \node (L)  at (-1.0,0) {$L$};
  \node (C1) at (0,0) {$C_1$};
  \node (C2) at (1.2,0) {$C_2$};

  \draw[thick] (L.east) -- (C1.west);
  \draw[thick] (C1.east) -- (C2.west);

  \node[rdp, label=above:{$\Gamma_L$}] at (0,0.45) {};
  \node at (0,-1.35) {$\tau_L=\mathrm{edge}$};
\end{tikzpicture}
&
\begin{tikzpicture}[
  baseline=(P),
  scale=0.9,
  rdp/.style={circle, fill, inner sep=1.5pt},
  every label/.style={font=\scriptsize, inner sep=2pt}
]
  \useasboundingbox (-1.4,-1.45) rectangle (2.1,1.45);
  \coordinate (P) at (0,0);

  \node (C1) at (0,0) {$C_1$};
  \node (C2) at (1.2,0) {$C_2$};

  \draw[thick] (C1.east) -- (C2.west);

  \node[rdp, label=above:{$\Gamma_L$}] at (0,0.45) {};
  \node at (0,-1.35) {$\tau_L=\mathrm{mark}$};
\end{tikzpicture}
\end{array}
\]

More precisely, 
\begin{enumerate}[\rm(a)]
\item if \(\tau_L=\mathrm{fork}\), then
\[
L(\tau_L)=L_1+L_2,
\]
and the three curves \(L_1\), \(L_2\), and \(C_1\) pass through one rational
double point of type \(\Gamma_L\).
\item If \(\tau_L=\mathrm{edge}\), then
\[
L(\tau_L)=L,
\]
and \(L\) meets \(C_1\) at a rational double point of type \(\Gamma_L\).  
\item If
\(\tau_L=\mathrm{mark}\), then
\[
L(\tau_L)=0,
\]
and \(C_1\) contains a rational double point of type \(\Gamma_L\), which is not
an intersection point of two irreducible components of the fiber.
\end{enumerate}

\item The right end contribution \(R(\tau_R)\) is defined symmetrically, by replacing
\[
(C_1,C_2,L_1,L_2,L,\Gamma_L,\tau_L)
\]
with
\[
(C_m,C_{m-1},R_1,R_2,R,\Gamma_R,\tau_R),
\]
respectively.
Moreover, there may be rational double points of types
\(\Delta_1,\dots,\Delta_t\) at some of the intersection points of adjacent
components in the central chain
\[
C_1-C_2-\cdots-C_m.
\]
\end{enumerate}

If there are no rational double points at the intersections of adjacent
components in the central chain, we omit the part
\[
\Delta_1,\dots,\Delta_t
\]
from the notation.
\end{dfn}

For the contraction statement in the case of
Proposition~\ref{prop:Instar-both-ends}\textup{(1)}, we use the following
notation.  We write
\[
R=\Gamma_L+R_M+\Gamma_R
\]
as in Proposition~\ref{prop:Instar-both-ends}\textup{(1)}.  If \(R_M\neq 0\),
we write the type of \(R_M\) as
\[
A_{\lambda_1}\oplus\cdots\oplus A_{\lambda_r}
\]
and put
\[
q:=\sum_{i=1}^r\lambda_i.
\]
If \(R_M=0\), we set \(q=0\).  We also put
\[
m:=n+1-c_L-c_R-q.
\]

Let \(\Delta_L\) and \(\Delta_R\) be the Dynkin types of \(\Gamma_L\) and
\(\Gamma_R\), respectively.  We define
\[
\tau_L\in\{\mathrm{fork},\mathrm{edge},\mathrm{mark}\}
\]
by
\[
\tau_L=
\begin{cases}
\mathrm{fork} & \text{in case \textup{(1)(a)} of Proposition~\ref{prop:Instar-both-ends}},\\
\mathrm{edge} & \text{in case \textup{(1)(b)} of Proposition~\ref{prop:Instar-both-ends}},\\
\mathrm{mark} & \text{in cases \textup{(1)(c)} and \textup{(1)(d)} of Proposition~\ref{prop:Instar-both-ends}}.
\end{cases}
\]
We define \(\tau_R\) symmetrically at the right end.
\begin{prop}\label{prop:Instar-both-ends-contraction}
We assume that \(F\) is of Kodaira type \(I_n^*\) with \(n\geq 1\), with
notation as above.  Let \(R\) be an ADER in \(F\), and assume that
\[
C_0,\ C_n\subset \operatorname{Supp}(R).
\]
Let $\nu\colon X\rightarrow Y$
be the contraction of \(R\).  Then every irreducible component of
\(\overline F\) is a smooth rational curve.  More precisely, the following
hold.

\begin{enumerate}[\rm(1)]
\item We suppose that the connected component of \(R\) containing \(C_0\) is
different from the connected component of \(R\) containing \(C_n\).  If
\(R_M=0\), then \(\overline F\) is of type
\[
\mathfrak A_m^{\tau_L,\tau_R}(\Delta_L,\Delta_R)
\]
in the sense of Definition~\ref{dfn:both-end-marked-fibers}.  If \(R_M\neq 0\),
then \(\overline F\) is of type
\[
\mathfrak A_m^{\tau_L,\tau_R}
(\Delta_L,\Delta_R;A_{\lambda_1},\dots,A_{\lambda_r})
\]
in the sense of Definition~\ref{dfn:both-end-marked-fibers}.

\item We suppose that a single connected component of \(R\) contains both \(C_0\)
and \(C_n\).  Then \(R\) is connected, and the resulting fiber is described as
follows.
\begin{enumerate}[\rm(a)]
\item If \(R\) is of type \(A_{n+1}\), namely $R=C_0+C_1+\cdots+C_n$,
then \(\overline F\) is of type
\[
\mathfrak S_4(A_{n+1})
\]
in the sense of Definition~\ref{dfn:both-end-marked-fibers}.

\item If \(R\) is of type \(A_{n+2}\), namely $R=L+C_0+C_1+\cdots+C_n$
for some $L\in\{C_0',C_0'',C_n',C_n''\}$,
then \(\overline F\) is of type
\[
\mathfrak S_3(A_{n+2})
\]
in the sense of Definition~\ref{dfn:both-end-marked-fibers}.

\item If \(R\) is of type \(A_{n+3}\), namely
$R=L_0+C_0+C_1+\cdots+C_n+L_n$
for some $L_0\in\{C_0',C_0''\}$ and $
L_n\in\{C_n',C_n''\}$,
then \(\overline F\) is of type
\[
\mathfrak S_2(A_{n+3})
\]
in the sense of Definition~\ref{dfn:both-end-marked-fibers}.

\item If \(R\) is of type \(D_{n+3}\), namely $R=C_0'+C_0''+C_0+C_1+\cdots+C_n$,
or symmetrically $R=C_0+C_1+\cdots+C_n+C_n'+C_n''$,
then \(\overline F\) is of type
\[
\mathfrak S_2(D_{n+3})
\]
in the sense of Definition~\ref{dfn:both-end-marked-fibers}.

\item If \(R\) is of type \(D_{n+4}\), namely $R=C_0'+C_0''+C_0+C_1+\cdots+C_n+L_n$
for some $L_n\in\{C_n',C_n''\}$,
or symmetrically $R=L_0+C_0+C_1+\cdots+C_n+C_n'+C_n''$
for some $L_0\in\{C_0',C_0''\}$,
then \(\overline F\) is of type
\[
\mathfrak S_1(D_{n+4})
\]
in the sense of Definition~\ref{dfn:both-end-marked-fibers}.

\end{enumerate}
\end{enumerate}
\end{prop}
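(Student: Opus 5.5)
The proof follows the template of Propositions~\ref{prop:Instar-no-end-contraction} and~\ref{prop:Instar-left-end-contraction}. First I would record the general facts. The singular points of \(Y\) on \(\overline F\) are exactly the images of the connected components of \(R\). Every irreducible component of \(\overline F\) is a smooth rational curve by Lemma~\ref{lem:Instar-smooth-components}. By Theorem~\ref{2.4}, \(\overline F\) is obtained from \(F\) by deleting the components of \(R\) and keeping the multiplicities of the others. The rest is bookkeeping of which components survive, their multiplicities, and where the rational double points sit relative to the intersection points.

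For case (1), I would use the decomposition \(R=\Gamma_L+R_M+\Gamma_R\) from Proposition~\ref{prop:Instar-both-ends}(1). The surviving central components are \(C_i\) with \(c_L\le i\le n-c_R\) and \(C_i\notin R_M\). There are \(n+1-c_L-c_R-q=m\) of them, all of multiplicity \(2\), and they form a chain after contraction. This requires \(m\ge1\), which holds by the inequalities in Proposition~\ref{prop:Instar-both-ends}(1). Each component of \(R_M\) is a consecutive subchain separating two surviving central curves, so its image lies at their intersection point; this gives the \(\Delta\)-data \(A_{\lambda_i}\). At the left end I would go through cases (a)–(d).
\begin{itemize}
\item In case (a), both leaves \(C_0',C_0''\) survive with multiplicity \(1\). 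Each meets \(C_0\in\Gamma_L\), and the first surviving central curve \(C_{p_L}\) meets \(C_{p_L-1}\). All three pass through \(\nu(\Gamma_L)\), giving \(\tau_L=\mathrm{fork}\).
\item In case (b), one leaf survives and meets only \(\Gamma_L\), giving \(\mathrm{edge}\).
\item In cases (c) and (d), no leaf survives and \(\nu(\Gamma_L)\) lies only on the first surviving central curve, giving \(\mathrm{mark}\).
\end{itemize}
The right end is symmetric. Because \(\Gamma_L\ne\Gamma_R\), the two end points are distinct, and this also covers the degenerate case \(m=1\) required in Definition~\ref{dfn:both-end-marked-fibers}.

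For case (2), \(R\) is connected and contains the whole central chain, so only leaves survive. Each surviving leaf has multiplicity \(1\) and meets \(R\) transversally in exactly one point, namely the point of \(C_0\) or \(C_n\). The number of surviving leaves is \(4\), \(3\), \(2\), \(2\), \(1\) in subcases (a)–(e). All surviving leaves pass through the single point \(\nu(R)\). They have no other mutual intersections, because leaves of \(F\) are pairwise disjoint. Comparing with Definition~\ref{dfn:new-I0star-fibers}(1) gives \(\mathfrak S_s(\Gamma)\) with \(\Gamma\) the type of \(R\) from Proposition~\ref{prop:Instar-both-ends}(2). One small point needs attention: the statement cites Definition~\ref{dfn:both-end-marked-fibers}, but the symbol \(\mathfrak S_s\) is defined in Definition~\ref{dfn:new-I0star-fibers}, so I would use the latter.

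The main obstacle is not conceptual. It is the careful index count of \(m\), together with checking the incidence claims. Two facts must be justified. First, a contracted connected component meets exactly the listed surviving curves; this follows because the dual graph is a tree and each component of \(R\) is a subtree. Second, marked points are not intersection points of surviving components. This holds because in the mark case only one surviving curve is adjacent to \(\Gamma_L\), and in the fork case the three curves meet only at \(\nu(\Gamma_L)\).
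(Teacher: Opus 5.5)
Your proposal is correct and follows the paper's own proof. It uses the same ingredients: Lemma~\ref{lem:Instar-smooth-components}, Theorem~\ref{2.4}, the count \(m=n+1-c_L-c_R-q\) in case (1), and the count of surviving leaves in case (2); your checks of \(m\ge 1\) and of the fork/edge/mark incidences add detail the paper leaves implicit, and your remark that \(\mathfrak S_s(\Gamma)\) is defined in Definition~\ref{dfn:new-I0star-fibers}, not Definition~\ref{dfn:both-end-marked-fibers}, is also right.
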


\begin{proof}
The singular points of \(Y\) lying on \(\overline F\) are exactly the rational
double points obtained by contracting the connected components of \(R\).
By Lemma~\ref{lem:Instar-smooth-components}, every irreducible component of
\(\overline F\) is a smooth rational curve.  Moreover, by Theorem~\ref{2.4},
the fiber \(\overline F\) is obtained from \(F\) by deleting the components
contained in \(R\), while preserving the multiplicities of the remaining
components.

We first consider the case where the connected component of \(R\) containing
\(C_0\) is different from the connected component of \(R\) containing \(C_n\).
Then
\[
R=\Gamma_L+R_M+\Gamma_R.
\]
After contracting \(\Gamma_L\) and \(\Gamma_R\), the left and right ends of the
resulting fiber are described by
\[
(\tau_L,\Delta_L)
\qquad\text{and}\qquad
(\tau_R,\Delta_R),
\]
respectively.  More precisely, the type \(\tau_L\) records whether the
singularity obtained by contracting \(\Gamma_L\) appears at the left end as a
fork, an edge, or a marked point; the type \(\tau_R\) is defined similarly at
the right end.

The part \(R_M\) may be disconnected.  If \(R_M\neq0\), then each connected
component of \(R_M\) is a chain of type \(A_{\lambda_i}\).  Contracting these
components removes
\[
q=\sum_{i=1}^r\lambda_i
\]
components from the middle part of the central chain.  Hence the number of
surviving central components between the left and right end singularities is
\[
m=n+1-c_L-c_R-q.
\]
If \(R_M\neq0\), the rational double points obtained by contracting the
connected components of \(R_M\) are of types
\[
A_{\lambda_1},\dots,A_{\lambda_r},
\]
and they lie at some of the intersection points of adjacent components in the
surviving central chain.  Thus, by Definition~\ref{dfn:both-end-marked-fibers},
\(\overline F\) is of type
\[
\mathfrak A_m^{\tau_L,\tau_R}
(\Delta_L,\Delta_R;A_{\lambda_1},\dots,A_{\lambda_r})
\]
if \(R_M\neq0\), and of type
\[
\mathfrak A_m^{\tau_L,\tau_R}(\Delta_L,\Delta_R)
\]
if \(R_M=0\).  This proves \textup{(1)}.

It remains to consider the case where a single connected component of \(R\)
contains both \(C_0\) and \(C_n\).  Then \(R\) contains the whole central chain
\[
C_0+C_1+\cdots+C_n,
\]
and \(R\) is connected.  Therefore the contraction of \(R\) produces exactly one
rational double point, whose type is the Dynkin type of \(R\).  The only
surviving components are the leaves not contained in \(R\), and all of them have
multiplicity \(1\).

If \(R\) is of type \(A_{n+1}\), then no leaf is contained in \(R\).  Hence four
leaves survive, and all of them pass through the rational double point of type
\(A_{n+1}\).  Therefore $\overline F$
is of type
\[
\mathfrak S_4(A_{n+1}).
\]

If \(R\) is of type \(A_{n+2}\), then exactly one leaf is contained in \(R\).
Hence three leaves survive, and all of them pass through the rational double
point of type \(A_{n+2}\).  Therefore \(\overline F\) is of type
\[
\mathfrak S_3(A_{n+2}).
\]

If \(R\) is of type \(A_{n+3}\), then exactly two leaves, one at each end, are
contained in \(R\).  Hence two leaves survive, and both pass through the
rational double point of type \(A_{n+3}\).  Therefore \(\overline F\) is of type
\[
\mathfrak S_2(A_{n+3}).
\]

If \(R\) is of type \(D_{n+3}\), then two leaves at one end are contained in
\(R\), and no leaf at the other end is contained in \(R\).  Hence two leaves
survive, and both pass through the rational double point of type \(D_{n+3}\).
Therefore \(\overline F\) is of type
\[
\mathfrak S_2(D_{n+3}).
\]

If \(R\) is of type \(D_{n+4}\), then two leaves at one end and one leaf at the
other end are contained in \(R\).  Hence exactly one leaf survives, and its
image contains the rational double point of type \(D_{n+4}\).  Therefore
\(\overline F\) is of type
\[
\mathfrak S_1(D_{n+4}).
\]
\end{proof}

\section{Fibers of type \(II^*\)}
Fibers of type \(II^*\) correspond to the affine Dynkin diagram
\(\widetilde E_8\) in Kodaira's classification \cite{k63}. These play a particularly important role in the theory of singular $K3$ surfaces \cite{si77}. Their
occurrence on elliptic \(K3\) surfaces is also included in the ADE-type and
extremal classification results of Shimada and Shimada--Zhang
\cite{s00,sz01}.

Throughout this section, we use the following notation. Let \(F\) be a singular
fiber of Kodaira type \(II^*\). We write
\[
F=
6C+3S_1+4M_1+2M_2+5L_1+4L_2+3L_3+2L_4+L_5,
\]
where \(C\) is the unique component meeting three other components. The dual
graph of \(F\) is the affine Dynkin diagram \(\widetilde E_8\), and it is given
as follows:
\[
\begin{tikzpicture}[baseline=(current bounding box.center)]
\node (L5) at (-5,0) {$L_5$};
\node (L4) at (-4,0) {$L_4$};
\node (L3) at (-3,0) {$L_3$};
\node (L2) at (-2,0) {$L_2$};
\node (L1) at (-1,0) {$L_1$};
\node (C)  at (0,0) {$C$};
\node (M1) at (1,0) {$M_1$};
\node (M2) at (2,0) {$M_2$};

\node (S1) at (0,1) {$S_1$};

\draw[thick] (L5.east) -- (L4.west);
\draw[thick] (L4.east) -- (L3.west);
\draw[thick] (L3.east) -- (L2.west);
\draw[thick] (L2.east) -- (L1.west);
\draw[thick] (L1.east) -- (C.west);

\draw[thick] (C.east) -- (M1.west);
\draw[thick] (M1.east) -- (M2.west);

\draw[thick] (C.north) -- (S1.south);
\end{tikzpicture}
\]

\begin{prop}\label{prop:ADE-IIstar-no-C}
We assume that \(F\) is of Kodaira type \(II^*\), with notation as above. Let
\(R\) be an ADER in \(F\). 
We assume that
\[
C\not\subset \operatorname{Supp}(R).
\]
Then \(R\) decomposes uniquely as a disjoint sum
\[
R=R_S+R_M+R_L,
\]
where \(R_S\in\{0,S_1\}\),
and \(R_M\) and \(R_L\) are either zero or disjoint unions of mutually disjoint
consecutive subchains of \(M_1-M_2\) and
\(L_1-L_2-L_3-L_4-L_5\), respectively.
\begin{enumerate}[\rm(1)]
\item If \(R_M\neq0\), then \(R_M\) is of type
\[
A_{\mu_1}\oplus\cdots\oplus A_{\mu_a}
\]
for some \(\mu_1,\dots,\mu_a\geq1\) satisfying
\[
\sum_{i=1}^a\mu_i+a-1\leq2.
\]

\item If \(R_L\neq0\), then \(R_L\) is of type
\[
A_{\lambda_1}\oplus\cdots\oplus A_{\lambda_b}
\]
for some \(\lambda_1,\dots,\lambda_b\geq1\) satisfying
\[
\sum_{j=1}^b\lambda_j+b-1\leq5.
\]
\end{enumerate}

Conversely, every reduced subdivisor satisfying the above conditions is an
ADER.
\end{prop}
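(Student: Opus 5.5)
The plan is to follow the same pattern as the proofs of Propositions~\ref{prop:ADE-In-fiber} and~\ref{prop:Instar-no-end}, using only the combinatorics of the dual graph $\widetilde E_8$.

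\emph{Decomposition.} Deleting the vertex $C$ from $\widetilde E_8$ leaves three disjoint chains:
\[
S_1,\qquad M_1-M_2,\qquad L_1-L_2-L_3-L_4-L_5.
\]
No vertex of one chain is adjacent to a vertex of another, since all of their mutual connections pass through $C$. Since $C\not\subset\operatorname{Supp}(R)$, every irreducible component of $R$ lies in exactly one of these chains. We let $R_S$, $R_M$, $R_L$ be the parts of $R$ supported on the three chains. Then $R=R_S+R_M+R_L$, the three summands are pairwise disjoint, and the decomposition is unique because the supports of the chains are disjoint. Clearly $R_S\in\{0,S_1\}$.

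\emph{Types of $R_M$ and $R_L$.} Every vertex in a chain has valency at most $2$ within that chain. So, exactly as in Proposition~\ref{prop:ADE-In-fiber}, each connected component of $R_M$ or $R_L$ is a set of consecutive vertices of its chain. It therefore has no branch vertex and is of type $A_k$, where $k$ is its number of vertices. Suppose $R_M$ has components of types $A_{\mu_1},\dots,A_{\mu_a}$. Distinct components must be separated by at least one unused vertex, so the chain $M_1-M_2$ contains at least $\sum\mu_i+(a-1)$ vertices, which gives $\sum\mu_i+a-1\le 2$. The same count on $L_1-\cdots-L_5$ gives $\sum\lambda_j+b-1\le5$.

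\emph{Converse.} Take a reduced subdivisor that avoids $C$ and whose parts on the three chains are unions of consecutive subchains satisfying the stated inequalities. Its dual graph is an induced subgraph of the disjoint union of three paths. Hence it is a disjoint union of paths, that is, of Dynkin diagrams of type $A$, and so the subdivisor is an ADER. The inequalities are the ones needed for the components to be listed with the prescribed types; they are not required for the ADE property itself.

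\emph{Main obstacle.} There is no real difficulty here. The only point needing care is that a component of $R_M$ and a component of $R_L$ cannot merge into one connected component. This holds because the only vertex adjacent to both chains is $C$, and $C$ has been excluded.
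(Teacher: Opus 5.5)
Your proposal is correct and follows essentially the same argument as the paper: deleting $C$ splits $\widetilde E_8$ into the chains $S_1$, $M_1-M_2$, $L_1-\cdots-L_5$, each connected component of $R_M$ and $R_L$ is a consecutive subchain of type $A$, and the separation count gives the two inequalities. Likewise, the converse holds because the dual graph is a disjoint union of paths; your extra remark that the inequalities hold automatically and are not needed for the ADE property is accurate.
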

\begin{proof}
Since \(C\not\subset \operatorname{Supp}(R)\), the support of \(R\) is contained
in the union of the three branches
\[
S_1,\qquad M_1-M_2,\qquad L_1-L_2-L_3-L_4-L_5.
\]
These three branches meet each other only through the component \(C\). They are
mutually disconnected inside \(\operatorname{Supp}(R)\). Therefore \(R\)
decomposes uniquely as a disjoint sum
\[
R=R_S+R_M+R_L,
\]
where \(R_S\), \(R_M\), and \(R_L\) are supported on the \(S\)-, \(M\)-, and
\(L\)-branches, respectively.
Since the \(S\)-branch consists of the single component \(S_1\),
\(R_S\in\{0,S_1\}\). Since \(M_1-M_2\) and
\(L_1-L_2-L_3-L_4-L_5\) are chains, every connected component of \(R_M\) and
\(R_L\) is a consecutive subchain. Thus, if \(R_M\neq0\), then \(R_M\) is of type
\[
A_{\mu_1}\oplus\cdots\oplus A_{\mu_a},
\]
and if \(R_L\neq0\), then \(R_L\) is of type
\[
A_{\lambda_1}\oplus\cdots\oplus A_{\lambda_b}.
\]
Distinct connected components must be separated by at least one unused vertex.
Since the \(M\)-branch has \(2\) vertices and the \(L\)-branch has \(5\)
vertices, we obtain
\[
\sum_{i=1}^a\mu_i+(a-1)\leq2
\]
for \(R_M\), and
\[
\sum_{j=1}^b\lambda_j+(b-1)\leq5
\]
for \(R_L\).

Conversely, any reduced subdivisor satisfying the above conditions is a
disjoint union of chains, and hence its dual graph is a disjoint union of
Dynkin diagrams of type \(A\). Therefore it is an ADER.
\end{proof}

In the case where \(C\subset \operatorname{Supp}(R)\), we use the following
notation. Let \(\Gamma\) be the connected component of \(R\) containing \(C\).
Since \(\Gamma\) is connected, its intersection with each branch is an initial
segment. Thus there exist integers
\[
\alpha\in\{0,1\},
\qquad
\beta\in\{0,1,2\},
\qquad
\gamma\in\{0,1,2,3,4,5\}
\]
such that
\[
\Gamma=
C+S_1+\cdots+S_\alpha
+M_1+\cdots+M_\beta
+L_1+\cdots+L_\gamma,
\]
where the terms \(S_1+\cdots+S_\alpha\), \(M_1+\cdots+M_\beta\), and
\(L_1+\cdots+L_\gamma\) are omitted if the corresponding integer is \(0\).
We denote by \(R_M^{\mathrm{tail}}\) and \(R_L^{\mathrm{tail}}\) the remaining
parts of \(R\) supported on the \(M\)- and \(L\)-branches, respectively. Thus,
in this case, we write
\[
R=\Gamma+R_M^{\mathrm{tail}}+R_L^{\mathrm{tail}}.
\]
\begin{prop}\label{prop:ADE-IIstar-C}
We assume that \(F\) is of Kodaira type \(II^*\), with notation as above. Let
\(R\) be an ADER in \(F\). We assume that
\[
C\subset \operatorname{Supp}(R).
\]
We write $R=\Gamma+R_M^{\mathrm{tail}}+R_L^{\mathrm{tail}}$
as above, where $
\Gamma=
C+S_1+\cdots+S_\alpha
+M_1+\cdots+M_\beta
+L_1+\cdots+L_\gamma$.
Then the triple \((\alpha,\beta,\gamma)\) satisfies one of the following
conditions.

\begin{enumerate}[\rm(1)]
\item At most two of \(\alpha,\beta,\gamma\) are nonzero. In this case
\(\Gamma\) is of type $A_{1+\alpha+\beta+\gamma}$.
\item The three integers \(\alpha,\beta,\gamma\) are all nonzero. Then
\(\alpha=1\), and
\[
(\beta,\gamma)\neq(2,5).
\]
The type of \(\Gamma\) is as follows:
\begin{enumerate}[\rm(a)]
\item If \(\beta=1\), then \(\Gamma\) is of type $D_{\gamma+3}$.

\item If \((\beta,\gamma)=(2,1)\), then \(\Gamma\) is of type $D_5$.

\item If \((\beta,\gamma)=(2,2)\), then \(\Gamma\) is of type $E_6$.

\item If \((\beta,\gamma)=(2,3)\), then \(\Gamma\) is of type $E_7$.

\item If \((\beta,\gamma)=(2,4)\), then \(\Gamma\) is of type $E_8$.
\end{enumerate}
\end{enumerate}

Moreover, \(R_M^{\mathrm{tail}}=0\) if \(\beta\geq1\).  If
\(\beta=0\), then \(R_M^{\mathrm{tail}}\) is either zero or equal to \(M_2\);
in the latter case it is of type \(A_1\).
The divisor \(R_L^{\mathrm{tail}}\) is either zero or a disjoint union of
mutually disjoint consecutive subchains of
\[
L_{\gamma+2}-L_{\gamma+3}-\cdots-L_5
\]
if \(\gamma\leq3\), and it is zero if \(\gamma\geq4\).  If
\(R_L^{\mathrm{tail}}\neq0\), then it is of type
\[
A_{\lambda_1}\oplus\cdots\oplus A_{\lambda_b}
\]
for some \(\lambda_1,\dots,\lambda_b\geq1\) satisfying
\[
\sum_{j=1}^b\lambda_j+b-1\leq4-\gamma.
\]

Conversely, every reduced subdivisor satisfying the above conditions is an
ADER.
\end{prop}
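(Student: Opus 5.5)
The plan is a purely combinatorial analysis of the dual graph \(\widetilde E_8\), in the same way as Propositions~\ref{prop:ADE-IIstar-no-C} and~\ref{prop:Instar-both-ends}. Since \(\Gamma\) is a connected subgraph of a tree containing \(C\), its intersection with each of the three branches \(S_1\), \(M_1-M_2\), \(L_1-\cdots-L_5\) is an initial segment. This justifies the description of \(\Gamma\) by a triple \((\alpha,\beta,\gamma)\). The dual graph of \(\Gamma\) is then the star \(T_{1+\alpha,1+\beta,1+\gamma}\): a central vertex \(C\) with arms of lengths \(\alpha,\beta,\gamma\).

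\textbf{Step 1: the type of \(\Gamma\).}
\begin{enumerate}[(i)]
\item If at most two of \(\alpha,\beta,\gamma\) are nonzero, the graph is a chain with \(1+\alpha+\beta+\gamma\) vertices. Hence \(\Gamma\) is of type \(A_{1+\alpha+\beta+\gamma}\).
\item If all three are nonzero, the only \(S\)-arm available has length one, so \(\alpha=1\). We then use the standard fact that \(T_{p,q,r}\) is a Dynkin diagram if and only if \(1/p+1/q+1/r>1\).
\item With \(p=2\), \(q=1+\beta\) and \(r=1+\gamma\), the case \(\beta=1\) gives \(D_{\gamma+3}\).
\item The case \(\beta=2\) gives \(T_{2,3,1+\gamma}\). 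This is \(D_5\), \(E_6\), \(E_7\), \(E_8\) for \(\gamma=1,2,3,4\), respectively.
\item The case \(\beta=2\), \(\gamma=5\) is \(T_{2,3,6}=\widetilde E_8\), the whole fiber. It is not of finite type, so it is excluded.
\end{enumerate}

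\textbf{Step 2: the tails.} The remaining components of \(R\) must be disjoint from \(\Gamma\) and not adjacent to it. Otherwise they would lie in the same connected component, contradicting maximality of the initial segments.
\begin{enumerate}[(i)]
\item On the \(M\)-branch: if \(\beta\ge1\), the only vertex left is \(M_2\) when \(\beta=1\), and it is adjacent to \(M_1\subset\Gamma\). When \(\beta=2\) no vertex is left. So \(R_M^{\mathrm{tail}}=0\).
\item If \(\beta=0\), then \(M_1\notin R\) and \(M_1\) separates \(M_2\) from \(C\). Hence \(R_M^{\mathrm{tail}}\in\{0,M_2\}\).
\item On the \(L\)-branch: \(L_{\gamma+1}\notin R\) (when \(\gamma\le4\)). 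So \(R_L^{\mathrm{tail}}\) is supported on \(L_{\gamma+2}-\cdots-L_5\), which has \(4-\gamma\) vertices and is empty if \(\gamma\ge4\).
\item The separation count from Proposition~\ref{prop:ADE-IIstar-no-C} gives \(\sum\lambda_j+b-1\le4-\gamma\).
\end{enumerate}

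\textbf{Step 3: the converse.} A subdivisor of the described shape has dual graph equal to the disjoint union of \(\Gamma\) and the tail chains. This holds because the tails are separated from \(\Gamma\) by the unused vertices \(M_1\) or \(L_{\gamma+1}\), and from each other by unused vertices. Each piece is of ADE type by Step 1, so the subdivisor is an ADER.

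The main obstacle is the bookkeeping in Step 1(ii). The cleanest route is to invoke the \(T_{p,q,r}\) criterion rather than verify each graph individually; one must also check that the case \(\beta=1\) covers \(\gamma=1,\dots,5\) (giving \(D_4,\dots,D_8\)) with no overlap with the \(\beta=2\) list.
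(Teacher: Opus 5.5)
Your proposal is correct and follows essentially the same route as the paper. You use the initial-segment description along the three branches, identify $\Gamma$ from its arm lengths $(1,\beta,\gamma)$, exclude the affine case $(\beta,\gamma)=(2,5)$, and bound the tails by noting that the vertex adjacent to $\Gamma$ on each branch cannot lie in $R$. The only difference is that you cite the $T_{p,q,r}$ criterion where the paper checks each case $(\beta,\gamma)$ directly; this is cosmetic.
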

\begin{proof}
If at most two of \(\alpha,\beta,\gamma\) are nonzero, then the dual graph of
\(\Gamma\) is a chain. Therefore \(\Gamma\) is of type $A_{1+\alpha+\beta+\gamma}$.

Next, we assume that \(\alpha,\beta,\gamma\) are all nonzero. Then
\(\alpha=1\), and \(C\) is the unique branching vertex of \(\Gamma\). The three
branch lengths of \(\Gamma\) starting from \(C\) are
\[
1,\qquad \beta,\qquad \gamma.
\]

If \(\beta=1\), then the branch lengths are
\[
1,\quad 1,\quad \gamma,
\]
and hence \(\Gamma\) is of type $D_{\gamma+3}$
where \(1\leq\gamma\leq5\).

If \(\beta=2\) and \(\gamma=1\), then the branch lengths are
\[
1,\quad 1,\quad 2,
\]
and hence \(\Gamma\) is of type
$D_5$.

If \(\beta=2\) and \(\gamma=2\), then the branch lengths are
\[
1,\quad 2,\quad 2,
\]
and hence \(\Gamma\) is of type
$E_6$.

If \(\beta=2\) and \(\gamma=3\), then the branch lengths are
\[
1,\quad 2,\quad 3,
\]
and hence \(\Gamma\) is of type
$E_7$.

If \(\beta=2\) and \(\gamma=4\), then the branch lengths are
\[
1,\quad 2,\quad 4,
\]
and hence \(\Gamma\) is of type
$E_8$.

The remaining case is $(\beta,\gamma)=(2,5)$.
In this case \(\Gamma\) is the whole affine diagram \(\widetilde E_8\), and
therefore it is not of ADE type. Hence this case cannot occur for an ADER.
This proves the stated possibilities for \(\Gamma\).

Since \(\Gamma\) is a connected component of \(R\), the vertex immediately
adjacent to \(\Gamma\) on any branch cannot belong to \(R\). On the \(S\)-branch
there is no remaining component. 
On the \(M\)-branch, if \(\beta=0\), then \(M_1\) is adjacent to
\(\Gamma\), and hence cannot belong to \(R\).  Thus the only possible
remaining component on the \(M\)-branch is \(M_2\).  If \(\beta\geq1\), then
there is no possible remaining component on the \(M\)-branch.  Hence
\(R_M^{\mathrm{tail}}=0\) if \(\beta\geq1\), while if \(\beta=0\), then
\(R_M^{\mathrm{tail}}\) is either zero or \(M_2\).

On the \(L\)-branch, the component \(L_{\gamma+1}\), if it exists, is adjacent
to \(\Gamma\), and hence cannot belong to \(R\).  Therefore the remaining
components on the \(L\)-branch are supported on
\[
L_{\gamma+2}-L_{\gamma+3}-\cdots-L_5
\]
if \(\gamma\leq3\), and there are no remaining components if
\(\gamma\geq4\).  Thus \(R_L^{\mathrm{tail}}\) is either zero or a disjoint
union of mutually disjoint consecutive subchains of this chain.  The stated
inequality for \(R_L^{\mathrm{tail}}\) follows because this chain has
\(4-\gamma\) vertices.

Conversely, suppose that a reduced subdivisor satisfies the above conditions.
Then its dual graph is a disjoint union of the graph of \(\Gamma\), which is of
type \(A\), \(D\), \(E_6\), \(E_7\), or \(E_8\), and possibly some additional
chains of type \(A\). Therefore its dual graph is a disjoint union of Dynkin
diagrams of ADE type. Hence the subdivisor is an ADER.
\end{proof}

Before describing the contraction, we introduce notation for the fibers obtained
from a fiber of type \(II^*\) by contracting an ADER.

\begin{dfn}\label{dfn:IIstar-contracted-fibers}
Let
\[
F=
6C+3S_1+4M_1+2M_2+5L_1+4L_2+3L_3+2L_4+L_5
=\sum_E m_EE
\]
be a fiber of type \(II^*\), and let \(R\) be an ADER in \(F\).
Let $\nu\colon X\rightarrow Y$ be the contraction of \(R\). 
For every irreducible component \(E\not\subset R\), we write
\(\overline E:=\nu(E)\). Then the resulting fiber is $\overline F=\sum_{E\not\subset R}m_E\overline E$.

\begin{enumerate}[\rm(1)]
\item If \(C\not\subset \operatorname{Supp}(R)\), and
\(R=R_S+R_M+R_L\)
is as in Proposition~\ref{prop:ADE-IIstar-no-C}, then we denote the resulting
fiber by
\[
\mathfrak{II}^{*,\mathrm{nb}}(R_S,R_M,R_L).
\]

\item If \(C\subset \operatorname{Supp}(R)\), and
\(R=\Gamma+R_M^{\mathrm{tail}}+R_L^{\mathrm{tail}}\)
is as in Proposition~\ref{prop:ADE-IIstar-C}, then we denote the resulting
fiber by
\[
\mathfrak{II}^{*,\mathrm{br}}
(\Gamma;R_M^{\mathrm{tail}},R_L^{\mathrm{tail}}).
\]
\end{enumerate}
\end{dfn}

In both cases, the notation refers to the fiber obtained from \(F\) by deleting
the components contained in \(R\), while preserving the multiplicities of all
remaining components. The rational double points are determined as follows:
each connected component of \(R\) is contracted to a rational double point of
the corresponding Dynkin type, and the surviving components adjacent to it pass
through the resulting rational double point.

The explicit lists of the fibers of types
\[
\mathfrak{II}^{*,\mathrm{nb}}(R_S,R_M,R_L)
\qquad\text{and}\qquad
\mathfrak{II}^{*,\mathrm{br}}
(\Gamma;R_M^{\mathrm{tail}},R_L^{\mathrm{tail}})
\]
are given in Appendices~\ref{app:IIstar-no-branch-list} and
\ref{app:IIstar-with-branch-list}, respectively.

\begin{prop}\label{prop:IIstar-contraction}
We assume that \(F\) is of Kodaira type \(II^*\), with notation as above.
Let \(R\) be an ADER in \(F\), and let \(\nu\colon X\rightarrow Y\)
be the contraction of \(R\). Then every irreducible component of
\(\overline F\) is a smooth rational curve. 
Moreover, with the notation of
Propositions~\ref{prop:ADE-IIstar-no-C} and \ref{prop:ADE-IIstar-C}, \(\overline F\) is of
type
\[
\mathfrak{II}^{*,\mathrm{nb}}(R_S,R_M,R_L)
\]
if \(C\not\subset \operatorname{Supp}(R)\), and of type
\[
\mathfrak{II}^{*,\mathrm{br}}
(\Gamma;R_M^{\mathrm{tail}},R_L^{\mathrm{tail}})
\]
if \(C\subset \operatorname{Supp}(R)\), in the sense of
Definition~\ref{dfn:IIstar-contracted-fibers}.
\end{prop}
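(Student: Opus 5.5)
The plan mirrors the proof of Lemma~\ref{lem:Instar-smooth-components}, followed by a direct appeal to Theorem~\ref{2.4} and Definition~\ref{dfn:IIstar-contracted-fibers}.

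\emph{Smoothness of the surviving components.} Let \(E\) be an irreducible component of \(F\) with \(E\not\subset R\), and put \(\overline E:=\nu(E)\). Since \(\nu\) is an isomorphism at the generic point of \(E\) and \(E\cong\mathbb P^1\), the restriction \(\nu|_E\colon E\to\overline E\) is the normalization. Take \(y\in\overline E\cap\Sing(Y)\). Then \(\nu^{-1}(y)\) is a connected component \(R'\) of \(R\). This is the only structural input specific to \(II^*\): the dual graph \(\widetilde E_8\) is a tree, and \(R'\) is a connected subtree not containing the vertex \(E\). Hence \(E\) is adjacent to at most one vertex of \(R'\); otherwise there would be a cycle through \(E\). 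Also, two components of a \(II^*\)-fiber meet transversely in at most one point. Therefore
\[
E\cap\nu^{-1}(y)=\{x\},\qquad (\nu^{-1}(y)\cdot E)_x=1,
\]
and Proposition~\ref{prop:smoothness-criterion} shows that \(\overline E\) is smooth at \(y\). Away from \(\Sing(Y)\), the map \(\nu\) is an isomorphism, so \(\overline E\) is a smooth rational curve.

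\emph{Identifying the fiber type.} By Theorem~\ref{2.4}, \(\overline F=\sum_{E\not\subset R}m_E\overline E\), with the multiplicities inherited from \(F\). Each connected component of \(R\) contracts to a rational double point of its Dynkin type (Artin), and the surviving components adjacent to it pass through that point. Distinct connected components map to distinct points. We then split according to whether \(C\subset\operatorname{Supp}(R)\):
\begin{itemize}
\item if \(C\not\subset\operatorname{Supp}(R)\), Proposition~\ref{prop:ADE-IIstar-no-C} gives \(R=R_S+R_M+R_L\);
\item if \(C\subset\operatorname{Supp}(R)\), Proposition~\ref{prop:ADE-IIstar-C} gives \(R=\Gamma+R_M^{\mathrm{tail}}+R_L^{\mathrm{tail}}\).
\end{itemize}
In each case the resulting configuration is exactly what Definition~\ref{dfn:IIstar-contracted-fibers} names \(\mathfrak{II}^{*,\mathrm{nb}}(R_S,R_M,R_L)\), respectively \(\mathfrak{II}^{*,\mathrm{br}}(\Gamma;R_M^{\mathrm{tail}},R_L^{\mathrm{tail}})\). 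That definition is precisely ``delete \(R\), keep multiplicities, mark the contracted points'', so the identification is tautological once the decomposition is fixed.

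\emph{Expected obstacle.} The only point needing care is the smoothness step at branch-type singularities. For example, when \(\Gamma\) contains \(C\), several surviving curves such as \(\overline{L_{\gamma+1}}\), \(\overline{M_{\beta+1}}\) and \(\overline{S_1}\) pass through the same singular point. The argument must be applied curve by curve, checking that each individual \(E\) still meets \(\nu^{-1}(y)\) transversally in a single point. The tree property guarantees this. It is also worth remarking that these curves meet each other only at \(y\), so no new intersections are created elsewhere, since \(\nu\) is an isomorphism off the exceptional locus.
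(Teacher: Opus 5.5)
Your smoothness step reproduces the paper's own proof, and both have the same genuine gap. From ``\(E\) meets \(\nu^{-1}(y)\) transversally in one point'' you cannot conclude \((\nu^{-1}(y)\cdot E)_x=1\). In Proposition~\ref{prop:smoothness-criterion}, \(\nu^{-1}(y)\) is the \emph{scheme-theoretic} fiber; this is what yields \(\rho^{-1}(y)\cong E\times_X\nu^{-1}(y)\) in its proof. Over a rational double point one has \(\mathfrak m_y\mathcal O_X=\mathcal O_X(-Z_y)\), where \(Z_y\) is the fundamental cycle of \(R'=\nu^{-1}(y)\) \cite{a66}. Hence \((\nu^{-1}(y)\cdot E)_x\) is the coefficient in \(Z_y\) of the unique component of \(R'\) that meets \(E\).

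This coefficient is \(1\) when \(R'\) is of type \(A\). In the \(D\)-, \(E_6\)- and \(E_7\)-cases of Proposition~\ref{prop:ADE-IIstar-C}, the surviving neighbours of \(\Gamma\) happen to be attached to vertices of coefficient \(1\), so your argument goes through there once this computation is supplied. The \(E_8\) case is different. For \(\Gamma=C+S_1+M_1+M_2+L_1+L_2+L_3+L_4\) one has \(Z_y=6C+3S_1+4M_1+2M_2+5L_1+4L_2+3L_3+2L_4\). Since \(L_5\) meets \(L_4\), we get \((\nu^{-1}(y)\cdot L_5)_x=2\), and Proposition~\ref{prop:smoothness-criterion} does not apply.

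This cannot be repaired, because the statement is false in that case (No.~35 in Appendix~\ref{app:IIstar-with-branch-list}). By Theorem~\ref{2.4}, \(\overline F=\overline{L_5}\) with multiplicity one, so \(\overline{L_5}\) is an integral Cartier curve with \(\overline{L_5}^2=0\). Adjunction on \(Y\) gives \(p_a(\overline{L_5})=1\), while its normalization is \(L_5\cong\mathbb P^1\), so \(\delta_y(\overline{L_5})=1\). Proposition~\ref{prop:delta-one-node-cusp}\,(2) then shows that \(\overline{L_5}\) has a cusp at the \(E_8\)-point; this is the classical Weierstrass picture of a \(II^*\) fiber. More directly, a Cartier curve through a point of embedding dimension \(3\) has embedding dimension at least \(2\) there, so it is never smooth.

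What your corrected argument does prove is that every surviving component is smooth except when \(R=\Gamma\) is of type \(E_8\). In that case \(\overline F\) is a cuspidal rational curve with its cusp at the rational double point, in analogy with Proposition~\ref{prop:III-contraction}. The type identification via Theorem~\ref{2.4} and Definition~\ref{dfn:IIstar-contracted-fibers} is fine. The obstacle you flagged (several curves through one point) is harmless; the real one is that the fundamental cycle of a \(D\)- or \(E\)-type point is not reduced.
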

\begin{proof}
Let \(E\) be an irreducible component of \(F\) not contained in
\(R\). We put $C_E:=\nu(E)$.
Let $y\in C_E\cap\Sing(Y)$.
Then \(\nu^{-1}(y)\) is a connected component of \(R\), say \(\Delta\).  Since
the dual graph of a \(II^*\)-fiber is a tree, and since \(\Delta\) is a subtree,
the component \(E\) meets \(\Delta\) in at most one point.  Since \(y\in C_E\),
this intersection is nonempty.  Hence
\[
E\cap \nu^{-1}(y)=\{x\}.
\]
Moreover, the components of a \(II^*\)-fiber meet transversely, and therefore
\[
(\nu^{-1}(y)\cdot E)_x=1.
\]
The restriction $\nu|_E\colon E\rightarrow C_E$
is the normalization of \(C_E\).  Therefore, by
Proposition~\ref{prop:smoothness-criterion}, the curve \(C_E\) is smooth at
\(y\).  Since \(\nu\) is an isomorphism away from the exceptional locus,
\(C_E\) is smooth away from \(\Sing(Y)\).  Hence every irreducible component of
\(\overline F\) is a smooth rational curve.
\end{proof}

\section{Fibers of type $III^*$}
Fibers of type \(III^*\) correspond to the affine Dynkin diagram
\(\widetilde E_7\).  As in the case of \(II^*\)-fibers, their occurrence
on elliptic \(K3\) surfaces is covered by the ADE-type and extremal
classification results \cite{s00,sz01}.

Throughout this section, we use the following notation. Let $F$ be a singular
fiber of Kodaira type \(III^*\). We write
\[
F=
4C+2S_1+3M_1+2M_2+M_3+3L_1+2L_2+L_3
\]
where \(C\) is the unique component meeting three other components. The dual
graph of \(F\) is the affine Dynkin diagram \(\widetilde E_7\), and it is given
as follows:
\[
\begin{tikzpicture}[baseline=(current bounding box.center)]
\node (L3) at (-3,0) {$L_3$};
\node (L2) at (-2,0) {$L_2$};
\node (L1) at (-1,0) {$L_1$};
\node (C) at (0,0) {$C$};
\node (M1) at (1,0) {$M_1$};
\node (M2) at (2,0) {$M_2$};
\node (M3) at (3,0) {$M_3$};

\node (S1) at (0,1) {$S_1$};

\draw[thick] (L3.east) -- (L2.west);
\draw[thick] (L2.east) -- (L1.west);
\draw[thick] (L1.east) -- (C.west);

\draw[thick] (C.east) -- (M1.west);
\draw[thick] (M1.east) -- (M2.west);
\draw[thick] (M2.east) -- (M3.west);

\draw[thick] (C.north) -- (S1.south);
\end{tikzpicture}
\]
\begin{prop}\label{prop:ADE-IIIstar-no-C}
We assume that \(F\) is of Kodaira type \(III^*\), with notation as above. Let \(R\) be an ADER in \(F\). 
We assume that
\[
C\not\subset \operatorname{Supp}(R).
\]
Then \(R\) decomposes uniquely as a disjoint sum
\[
R=R_S+R_M+R_L,
\]
where $R_S\in\{0,S_1\}$,
and \(R_M\) and \(R_L\) are either zero or disjoint unions of mutually disjoint
consecutive subchains of $M_1-M_2-M_3$ and $L_1-L_2-L_3$,
respectively. 
\begin{enumerate}[\rm(1)]
\item If \(R_M\neq0\), then \(R_M\) is of type
\[
A_{\mu_1}\oplus\cdots\oplus A_{\mu_a}
\]
for some \(\mu_1,\dots,\mu_a\geq1\) satisfying
\[
\sum_{i=1}^a\mu_i+a-1\leq3.
\]
\item If \(R_L\neq0\), then \(R_L\) is of type
\[
A_{\lambda_1}\oplus\cdots\oplus A_{\lambda_b}
\]
for some \(\lambda_1,\dots,\lambda_b\geq1\) satisfying
\[
\sum_{j=1}^b\lambda_j+b-1\leq3.
\]
\end{enumerate}

Conversely, every reduced subdivisor satisfying the above conditions is an
ADER.
\end{prop}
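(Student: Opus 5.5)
The argument runs exactly as in Proposition~\ref{prop:ADE-IIstar-no-C}, with the branch lengths of \(\widetilde E_7\) in place of those of \(\widetilde E_8\).

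First we fix the decomposition. Since \(C\not\subset\operatorname{Supp}(R)\), the support of \(R\) lies in the union of the three branches
\[
S_1,\qquad M_1-M_2-M_3,\qquad L_1-L_2-L_3 .
\]
In the dual graph of \(F\) these branches meet only through the vertex \(C\). Hence no component of \(R\) on one branch meets a component of \(R\) on another branch. Let \(R_S\), \(R_M\), \(R_L\) be the parts of \(R\) supported on the \(S\)-, \(M\)- and \(L\)-branches. Then \(R=R_S+R_M+R_L\) is a disjoint sum, and it is unique because every component of \(F\) other than \(C\) lies on exactly one branch. The \(S\)-branch has the single vertex \(S_1\), so \(R_S\in\{0,S_1\}\).

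Next we bound the chain parts. The \(M\)- and \(L\)-branches are chains, so each connected component of \(R_M\) or \(R_L\) is a consecutive subchain, hence of type \(A\). Distinct connected components on the same chain are separated by at least one vertex not in \(R\). With \(a\) components of lengths \(\mu_i\) on the three-vertex \(M\)-chain, this gives
\[
\sum_{i=1}^a\mu_i+(a-1)\leq 3 .
\]
The same count on the three-vertex \(L\)-chain gives \(\sum_j\lambda_j+(b-1)\leq 3\).

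For the converse, take any reduced subdivisor of the stated form. Its dual graph is a disjoint union of an isolated vertex (possibly) and some chains. Every such graph is a Dynkin diagram of type \(A\), so the subdivisor is an ADER.

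There is no real obstacle here. The only point needing care is the disjointness across branches, which holds because the branches are joined only through \(C\) and \(C\) is excluded from \(R\).
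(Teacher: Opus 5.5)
Your proposal is correct and follows essentially the same argument as the paper. Both split \(R\) along the three branches, which meet only through the excluded component \(C\). Both then count the separating gaps on the three-vertex \(M\)- and \(L\)-chains to get the bounds, and observe that a disjoint union of chains is of type \(A\) for the converse.
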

\begin{proof}
Since \(C\not\subset \operatorname{Supp}(R)\), the support of \(R\) is contained
in the union of the three branches
\[
S_1,\qquad M_1-M_2-M_3,\qquad L_1-L_2-L_3.
\]
These three branches meet each other only through the component \(C\). They are mutually disconnected inside
\(\operatorname{Supp}(R)\). Therefore \(R\) decomposes uniquely as a disjoint sum $R=R_S+R_M+R_L$
where \(R_S\), \(R_M\), and \(R_L\) are supported on the \(S\)-, \(M\)-, and
\(L\)-branches, respectively.

Since the \(S\)-branch consists of the single component \(S_1\), $R_S\in\{0,S_1\}$.
Since $M_1-M_2-M_3$ and $L_1-L_2-L_3$
are chains, every connected component of \(R_M\) and \(R_L\) is a consecutive
subchain. Thus, if \(R_M\neq0\), then \(R_M\) is of type
\[
A_{\mu_1}\oplus\cdots\oplus A_{\mu_a},
\]
and if \(R_L\neq0\), then \(R_L\) is of type
\[
A_{\lambda_1}\oplus\cdots\oplus A_{\lambda_b}.
\]
Distinct connected components must be separated by at least one unused vertex.
Since both the \(M\)- and \(L\)-branches have \(3\) vertices, we obtain
\[
\sum_{i=1}^a\mu_i+(a-1)\leq3
\]
for \(R_M\), and
\[
\sum_{j=1}^b\lambda_j+(b-1)\leq3
\]
for \(R_L\).

Conversely, any reduced subdivisor satisfying the above conditions is a
disjoint union of chains, and hence its dual graph is a disjoint union of
Dynkin diagrams of type \(A\). Therefore it is an ADER.
\end{proof}
In the case where \(C\subset \operatorname{Supp}(R)\), we use the following
notation. Let \(\Gamma\) be the connected component of \(R\) containing \(C\).
Since \(\Gamma\) is connected, its intersection with each branch is an initial
segment. Thus there exist integers
\[
\alpha\in\{0,1\},
\qquad
\beta\in\{0,1,2,3\},
\qquad
\gamma\in\{0,1,2,3\}
\]
such that
\[
\Gamma=
C+S_1+\cdots+S_\alpha
+M_1+\cdots+M_\beta
+L_1+\cdots+L_\gamma,
\]
where the terms \(S_1+\cdots+S_\alpha\), \(M_1+\cdots+M_\beta\), and
\(L_1+\cdots+L_\gamma\) are omitted if the corresponding integer is \(0\).

We denote by \(R_M^{\mathrm{tail}}\) and \(R_L^{\mathrm{tail}}\) the remaining
parts of \(R\) supported on the \(M\)- and \(L\)-branches, respectively. Thus,
in this case, we write
\[
R=\Gamma+R_M^{\mathrm{tail}}+R_L^{\mathrm{tail}}.
\]
\begin{prop}\label{prop:ADE-IIIstar-C}
We assume that \(F\) is of Kodaira type \(III^*\), with notation as above. Let
\(R\) be an ADER in \(F\). We assume that
\[
C\subset \operatorname{Supp}(R).
\]
We write $R=\Gamma+R_M^{\mathrm{tail}}+R_L^{\mathrm{tail}}$
as above, where $\Gamma=C+S_1+\cdots+S_\alpha+M_1+\cdots+M_\beta+L_1+\cdots+L_\gamma$.
Then the triple \((\alpha,\beta,\gamma)\) satisfies one of the following
conditions.

\begin{enumerate}[\rm(1)]
\item At most two of \(\alpha,\beta,\gamma\) are nonzero. In this case
\(\Gamma\) is of type
\[
A_{1+\alpha+\beta+\gamma}.
\]

\item The three integers \(\alpha,\beta,\gamma\) are all nonzero. By the
symmetry interchanging the \(M\)- and \(L\)-branches, we may assume, when
listing the possibilities, that
\[
\beta\leq\gamma.
\]
The remaining cases are obtained by interchanging the \(M\)- and \(L\)-branches.
Under this convention, we have
\[
(\beta,\gamma)\neq(3,3),
\]
and the type of \(\Gamma\) is as follows:
\begin{enumerate}[\rm(a)]
\item If \(\beta=1\), then \(\Gamma\) is of type $D_{\gamma+3}$.

\item If \((\beta,\gamma)=(2,2)\), then \(\Gamma\) is of type $E_6$.

\item If \((\beta,\gamma)=(2,3)\), then \(\Gamma\) is of type $E_7$.
\end{enumerate}
\end{enumerate}

Moreover, \(R_M^{\mathrm{tail}}\) is either zero or a disjoint union of mutually
disjoint consecutive subchains of
\[
M_{\beta+2}-M_{\beta+3}-\cdots-M_3.
\]
Here this chain means \(M_2-M_3\) if \(\beta=0\), \(M_3\) if \(\beta=1\), and
the empty chain if \(\beta\geq2\). If
\(R_M^{\mathrm{tail}}\neq0\), then it is of type
\[
A_{\mu_1}\oplus\cdots\oplus A_{\mu_a}
\]
for some \(\mu_1,\dots,\mu_a\geq1\) satisfying
\[
\sum_{i=1}^a\mu_i+a-1\leq2-\beta.
\]
The same assertion holds for \(R_L^{\mathrm{tail}}\), with
\(M,\beta,\mu_i,a\) replaced by \(L,\gamma,\lambda_j,b\), respectively.

Conversely, every reduced subdivisor satisfying the above conditions, or one
obtained from such a subdivisor by interchanging the \(M\)- and \(L\)-branches,
is an ADER.
\end{prop}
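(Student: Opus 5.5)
We will argue exactly as in the proof of Proposition~\ref{prop:ADE-IIstar-C}, replacing the $\widetilde E_8$ diagram by $\widetilde E_7$. First, the connected component $\Gamma$ containing $C$ meets each of the three branches $S_1$, $M_1-M_2-M_3$, $L_1-L_2-L_3$ in an initial segment; this is because the dual graph is a tree and $\Gamma$ is connected. So $\Gamma$ is determined by $(\alpha,\beta,\gamma)$.

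If at most two of the three integers are nonzero, the dual graph of $\Gamma$ is a chain through $C$ with $1+\alpha+\beta+\gamma$ vertices, which gives type $A_{1+\alpha+\beta+\gamma}$.

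If all three are nonzero, then $\alpha=1$ and $C$ is the unique branching vertex. The branch lengths are $(1,\beta,\gamma)$, and by the $M\leftrightarrow L$ symmetry of $\widetilde E_7$ we may assume $\beta\le\gamma$. We then compare with the star-shaped Dynkin diagrams:
\begin{enumerate}[\rm(i)]
\item branch lengths $(1,1,\gamma)$ give $D_{\gamma+3}$;
\item $(1,2,2)$ gives $E_6$;
\item $(1,2,3)$ gives $E_7$;
\item $(1,3,3)$ is all of $\widetilde E_7$, which is not of finite ADE type and is therefore excluded.
\end{enumerate}
To confirm this list is exhaustive, one checks that a tree with a single trivalent vertex and arms $p\le q\le r$ (counting edges) is Dynkin if and only if $1/(p+1)+1/(q+1)+1/(r+1)>1$. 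Since $\beta,\gamma\le 3$, the cases above are all that arise.

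For the tails, note that the vertex adjacent to $\Gamma$ on each branch ($M_{\beta+1}$, resp.\ $L_{\gamma+1}$) cannot lie in $R$, since $\Gamma$ is a connected component. Hence $R_M^{\mathrm{tail}}$ is supported on $M_{\beta+2}-\cdots-M_3$, which has $\max(0,2-\beta)$ vertices. Each connected component of the tail is a consecutive subchain, hence of type $A$. Distinct components need a separating unused vertex, which gives $\sum\mu_i+a-1\le 2-\beta$. The same argument applies to $L$. On the $S$-branch nothing remains.

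For the converse, a subdivisor of this form is a disjoint union of $\Gamma$ (of type $A$, $D$, $E_6$ or $E_7$) and chains. There are no edges between the pieces, because of the separating vertices. Hence it is an ADER.

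The only step needing care is the enumeration in the three-branch case, specifically checking that $(\beta,\gamma)=(3,3)$ is the unique exclusion. The criterion $1/(p+1)+1/(q+1)+1/(r+1)>1$ settles this directly. Everything else is bookkeeping identical to the $II^*$ case.
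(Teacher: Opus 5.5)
Your proposal is correct and follows the paper's proof essentially step for step. The shared steps are: initial segments on each branch, the chain case giving $A_{1+\alpha+\beta+\gamma}$, branch lengths $(1,\beta,\gamma)$ with $\beta\le\gamma$ and $(3,3)$ excluded as the full $\widetilde E_7$, and the separating-vertex argument for the tails and the converse. The one difference is your cross-check via $1/(p+1)+1/(q+1)+1/(r+1)>1$, which is harmless but not needed, since with $\beta,\gamma\in\{1,2,3\}$ the paper simply lists the six possible pairs directly.
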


\begin{proof}
If at most two of \(\alpha,\beta,\gamma\) are nonzero, then the dual graph of
\(\Gamma\) is a chain. Therefore \(\Gamma\) is of type
\[
A_{1+\alpha+\beta+\gamma}.
\]

Next, we assume that \(\alpha,\beta,\gamma\) are all nonzero. Then \(\alpha=1\),
and \(C\) is the unique branching vertex of \(\Gamma\). The three branch
lengths of \(\Gamma\) starting from \(C\) are
\[
1,\qquad \beta,\qquad \gamma.
\]
By the symmetry interchanging the \(M\)- and \(L\)-branches, we may assume,
when listing the possibilities, that $\beta\leq\gamma$.
The remaining cases are obtained by interchanging the \(M\)- and \(L\)-branches.

Under this convention, if \(\beta=1\), then the branch lengths are
\[1,\quad 1,\quad \gamma,\]
and hence \(\Gamma\) is of type $D_{\gamma+3}$
where \(\gamma=1,2,3\).

If \(\beta=2\) and \(\gamma=2\), then the branch lengths are
\[1,\quad 2,\quad 2,\]
and hence \(\Gamma\) is of type $E_6$.

If \(\beta=2\) and \(\gamma=3\), then the branch lengths are 
\[1,\quad 2,\quad 3,\]
and hence \(\Gamma\) is of type $E_7$.

The remaining case under the convention \(\beta\leq\gamma\) is $(\beta,\gamma)=(3,3)$.
In this case \(\Gamma\) is the whole affine diagram \(\widetilde E_7\), and
therefore it is not of ADE type. Hence this case cannot occur for an ADER.
This proves the stated possibilities for \(\Gamma\).

Since \(\Gamma\) is a connected component of \(R\), the vertex immediately
adjacent to \(\Gamma\) on any branch cannot belong to \(R\). On the \(S\)-branch
there is no remaining component. On the \(M\)-branch, the component
\(M_{\beta+1}\), if it exists, is adjacent to \(\Gamma\), and hence cannot
belong to \(R\). 
Therefore the remaining components on the \(M\)-branch are
supported on
\[
M_{\beta+2}-M_{\beta+3}-\cdots-M_3.
\]
Here this chain means \(M_2-M_3\) if \(\beta=0\), \(M_3\) if \(\beta=1\), and
the empty chain if \(\beta\geq2\).
Thus \(R_M^{\mathrm{tail}}\) is either zero or a disjoint union of mutually disjoint
consecutive subchains of this chain.

If \(R_M^{\mathrm{tail}}\neq0\), then each connected component of
\(R_M^{\mathrm{tail}}\) is a chain, and hence is of type \(A\). Thus
\(R_M^{\mathrm{tail}}\) is of type
\[
A_{\mu_1}\oplus\cdots\oplus A_{\mu_a}
\]
for some \(\mu_1,\dots,\mu_a\geq1\). Since the chain
\[
M_{\beta+2}-M_{\beta+3}-\cdots-M_3
\]
has \(2-\beta\) vertices, and distinct connected components must be separated
by at least one unused vertex, we obtain
\[
\sum_{i=1}^a\mu_i+(a-1)\leq2-\beta.
\]
The same argument applies to the \(L\)-branch, with
\(M,\beta,\mu_i,a\) replaced by \(L,\gamma,\lambda_j,b\), respectively. Hence
\(R_L^{\mathrm{tail}}\) has the asserted form and satisfies the stated
inequality.

Conversely, suppose that a reduced subdivisor satisfies the above conditions,
or is obtained from such a subdivisor by interchanging the \(M\)- and
\(L\)-branches. Then its dual graph is a disjoint union of the graph of
\(\Gamma\), which is of type \(A\), \(D\), \(E_6\), or \(E_7\), and possibly
some additional chains of type \(A\). Therefore its dual graph is a disjoint
union of Dynkin diagrams of ADE type. Hence the subdivisor is an ADER.
\end{proof}

Before describing the contraction, we introduce notation for the fibers obtained
from a fiber of type \(III^*\) by contracting an ADER.

\begin{dfn}\label{dfn:IIIstar-contracted-fibers}
Let
\[
F=
4C+2S_1+3M_1+2M_2+M_3+3L_1+2L_2+L_3
=\sum_E m_EE
\]
be a fiber of type \(III^*\), and let \(R\) be an ADER in \(F\).
Let \(\nu\colon X\rightarrow Y\) be the contraction of \(R\).
For every irreducible component \(E\not\subset R\), we write
\(\overline E:=\nu(E)\). Then the resulting fiber is
\[
\overline F=\sum_{E\not\subset R}m_E\overline E.
\]

\begin{enumerate}[\rm(1)]
\item If \(C\not\subset \operatorname{Supp}(R)\), and
$R=R_S+R_M+R_L$
is as in Proposition~\ref{prop:ADE-IIIstar-no-C}, then we denote the resulting
fiber by
\[
\mathfrak{III}^{*,\mathrm{nb}}(R_S,R_M,R_L).
\]

\item If \(C\subset \operatorname{Supp}(R)\), and
$R=\Gamma+R_M^{\mathrm{tail}}+R_L^{\mathrm{tail}}$
is as in Proposition~\ref{prop:ADE-IIIstar-C}, then we denote the resulting
fiber by
\[
\mathfrak{III}^{*,\mathrm{br}}
(\Gamma;R_M^{\mathrm{tail}},R_L^{\mathrm{tail}}).
\]
\end{enumerate}
\end{dfn}

In both cases, the notation refers to the fiber obtained from \(F\) by deleting
the components contained in \(R\), while preserving the multiplicities of all
remaining components. The rational double points are determined as follows:
each connected component of \(R\) is contracted to a rational double point of
the corresponding Dynkin type, and the surviving components adjacent to it pass
through the resulting rational double point.

The explicit lists of the fibers of types
\[
\mathfrak{III}^{*,\mathrm{nb}}(R_S,R_M,R_L)
\qquad\text{and}\qquad
\mathfrak{III}^{*,\mathrm{br}}
(\Gamma;R_M^{\mathrm{tail}},R_L^{\mathrm{tail}})
\]
are given in Appendices~\ref{app:IIIstar-no-branch-list} and
\ref{app:IIIstar-with-branch-list}, respectively.

\begin{prop}\label{prop:IIIstar-contraction}
We assume that \(F\) is of Kodaira type \(III^*\), with notation as above.
Let \(R\) be an ADER in \(F\), and let \(\nu\colon X\rightarrow Y\)
be the contraction of \(R\). Then every irreducible component of
\(\overline F\) is a smooth rational curve. 
Moreover, with the notation of
Propositions~\ref{prop:ADE-IIIstar-no-C} and \ref{prop:ADE-IIIstar-C},
\(\overline F\) is of type
\[
\mathfrak{III}^{*,\mathrm{nb}}(R_S,R_M,R_L)
\]
if \(C\not\subset \operatorname{Supp}(R)\), and of type
\[
\mathfrak{III}^{*,\mathrm{br}}
(\Gamma;R_M^{\mathrm{tail}},R_L^{\mathrm{tail}})
\]
if \(C\subset \operatorname{Supp}(R)\), in the sense of
Definition~\ref{dfn:IIIstar-contracted-fibers}.
\end{prop}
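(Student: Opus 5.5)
The plan is to follow the proof of Proposition~\ref{prop:IIstar-contraction} verbatim, since the $III^*$ case has exactly the same structural features: the dual graph $\widetilde E_7$ is a tree and all components meet transversely. First I will fix an irreducible component $E$ of $F$ with $E\not\subset R$ and put $C_E:=\nu(E)$. Since $\nu$ is an isomorphism at the generic point of $E$ and $E\cong\mathbb P^1$, the restriction $\nu|_E\colon E\to C_E$ is the normalization of $C_E$. Moreover, $C_E$ is smooth away from $\Sing(Y)$ because $\nu$ is an isomorphism off the exceptional locus.

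Next I take $y\in C_E\cap\Sing(Y)$. Then $\Delta:=\nu^{-1}(y)$ is a connected component of $R$, hence a connected subtree of the tree $\widetilde E_7$ not containing the vertex $E$. The vertex $E$ can be adjacent to at most one vertex of $\Delta$: if it were adjacent to two, then together with the path inside $\Delta$ joining them this would produce a cycle. Since the components of a $III^*$-fiber meet pairwise in at most one point and transversely, $E\cap\Delta=\{x\}$ is a single point, and it is nonempty because $y\in C_E$. This gives $(\nu^{-1}(y)\cdot E)_x=1$. Proposition~\ref{prop:smoothness-criterion} then shows that $C_E$ is smooth at $y$. Hence every irreducible component of $\overline F$ is a smooth rational curve.

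For the second assertion I will invoke Theorem~\ref{2.4}, which shows that $\overline F=\sum_{E\not\subset R}m_E\overline E$, with multiplicities inherited from $F$. This is exactly the divisor denoted $\mathfrak{III}^{*,\mathrm{nb}}(R_S,R_M,R_L)$ or $\mathfrak{III}^{*,\mathrm{br}}(\Gamma;R_M^{\mathrm{tail}},R_L^{\mathrm{tail}})$ in Definition~\ref{dfn:IIIstar-contracted-fibers}. Which label applies is decided by whether $C\subset\operatorname{Supp}(R)$, using the decompositions of Propositions~\ref{prop:ADE-IIIstar-no-C} and~\ref{prop:ADE-IIIstar-C}. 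Each connected component of $R$ contracts to a rational double point of its Dynkin type, and the surviving adjacent components pass through that point.

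There is no real obstacle here. The only point requiring care is the tree argument guaranteeing a single transverse intersection point with each connected component of $R$, and this is the same argument already used in Lemma~\ref{lem:Instar-smooth-components}.
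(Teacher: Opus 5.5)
There is a genuine gap, and it sits at the step you describe as routine. From ``$E$ meets $\Delta$ transversely in a single point $x$'' you conclude $(\nu^{-1}(y)\cdot E)_x=1$. In Proposition~\ref{prop:smoothness-criterion}, however, $\pi^{-1}(y)$ is the \emph{scheme-theoretic} fiber, whose ideal is $\mathfrak m_y\mathcal O_X$. For a rational double point, Artin \cite{a66} gives $\mathfrak m_y\mathcal O_X=\mathcal O_X(-Z_\Delta)$, where $Z_\Delta$ is the fundamental cycle of $\Delta$. Hence $(\nu^{-1}(y)\cdot E)_x=(Z_\Delta\cdot E)_x$, which is the coefficient in $Z_\Delta$ of the component of $\Delta$ that $E$ meets. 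Your tree argument only controls the reduced curve $\Delta$. That coefficient is $1$ on every vertex of an $A$-chain, but it can be $\geq 2$ for $D$ and $E$ types. The paper's own proof makes the same inference, so it does not rescue you. The same applies to Lemma~\ref{lem:Instar-smooth-components}, which you cite as precedent; it fails too, for example in its $D_{n+4}$ case, where the surviving leaf meets $C_n$, which has coefficient $2$.

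In the $III^*$ case this breaks the statement itself. Take $R=\Gamma=C+S_1+M_1+M_2+L_1+L_2+L_3$, of type $E_7$ (Proposition~\ref{prop:ADE-IIIstar-C}(2)(c); No.~85 in Appendix~\ref{app:IIIstar-with-branch-list}). Here $Z_\Gamma=4C+2S_1+3M_1+2M_2+3L_1+2L_2+L_3$, so $F=Z_\Gamma+M_3$. Since $M_3$ meets $M_2$, which has coefficient $2$, you get $(\nu^{-1}(y)\cdot M_3)_x=2$, not $1$.

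This can also be seen without Artin's theorem. $\overline F=\overline{M_3}$ is an integral Cartier curve with $\overline{M_3}^2=0$, so adjunction on $Y$ gives $p_a(\overline{M_3})=1$, while its normalization is $M_3\cong\mathbb P^1$. Hence $\delta_y(\overline{M_3})=1$, and Proposition~\ref{prop:delta-one-node-cusp}(2) shows that $\overline{M_3}$ has a cusp at the $E_7$ point. The same holds for $\overline{L_3}$ after interchanging the $M$- and $L$-branches. So no argument can prove smoothness here.

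Once you add the missing coefficient check, your approach does prove smoothness in every other case. For $A$-type components all coefficients of the fundamental cycle are $1$. For $\Gamma$ of type $D_k$ or $E_6$, each surviving component adjacent to $\Gamma$ is attached to a vertex of coefficient $1$ in $Z_\Gamma$: a leaf of the $D$-diagram, or the end of a long arm. The correct statement must therefore single out the $E_7$ case, where $\overline F$ is a cuspidal rational curve with its cusp at the $E_7$ point, analogous to $\mathfrak C(A_2)$.
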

\begin{proof}
The proof is the same as that of Proposition~\ref{prop:IIstar-contraction}.
Since the dual graph of a \(III^*\)-fiber is a tree, every surviving component
meets each contracted connected component in at most one point, transversely.
Hence Proposition~\ref{prop:smoothness-criterion} shows that the image of every
surviving component is smooth. The explicit list follows from
Theorem~\ref{2.4} and Definition~\ref{dfn:IIIstar-contracted-fibers}.
\end{proof}

\section{Fibers of type \(IV^*\)}
Fibers of type \(IV^*\) correspond to the affine Dynkin diagram
\(\widetilde E_6\).  This is the remaining exceptional additive case, and
its occurrence on elliptic \(K3\) surfaces is covered by the classification
results in \cite{s00,sz01}.

Throughout this section, we use the following notation. Let $F$ be a singular fiber of Kodaira type $IV^*$. We write
\[
F = 3C + 2(B_1 + B_2 + B_3) + B'_1 + B'_2 + B'_3.
\]
The dual graph of \(F\) is given as follows:
\[
\begin{tikzpicture}[baseline=(current bounding box.center)]
  \node (B1p) at (-2,0) {$B'_1$};
  \node (B1)  at (-1,0) {$B_1$};
  \node (C)   at (0,0)  {$C$};
  \node (B2)  at (1,0)  {$B_2$};
  \node (B2p) at (2,0)  {$B'_2$};

  \node (B3)  at (0,1) {$B_3$};
  \node (B3p) at (0,2) {$B'_3$};

  \draw[thick] (B1p.east) -- (B1.west);
  \draw[thick] (B1.east)  -- (C.west);
  \draw[thick] (C.east)   -- (B2.west);
  \draw[thick] (B2.east)  -- (B2p.west);

  \draw[thick] (C.north)  -- (B3.south);
  \draw[thick] (B3.north) -- (B3p.south);
\end{tikzpicture}
\]
\begin{prop}\label{prop:ADE-IVstar-no-branch}
We assume that \(F\) is of Kodaira type \(IV^*\), with notation as above.  Let
\(R\) be an ADER in \(F\). We assume that
\[
C\not\subset \operatorname{Supp}(R).
\]
Then \(R\) decomposes uniquely as a disjoint sum
\[
R=R_1+R_2+R_3
\]
where $R_i\in\{0,\ B_i,\ B_i',\ B_i+B_i'\}$
for each \(i=1,2,3\).
Conversely, every nonzero reduced subdivisor satisfying these conditions is an
ADER.
\end{prop}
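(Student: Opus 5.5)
The plan follows the pattern of Propositions~\ref{prop:ADE-IIstar-no-C} and \ref{prop:ADE-IIIstar-no-C}. Since \(C\not\subset\operatorname{Supp}(R)\), the support of \(R\) lies in the union of the three branches
\[
B_1-B_1',\qquad B_2-B_2',\qquad B_3-B_3'.
\]
In the dual graph of \(F\), these branches are connected to one another only through the vertex \(C\). Once \(C\) is removed, the branches become three mutually disjoint connected subgraphs. I will therefore set \(R_i\) to be the part of \(R\) supported on the \(i\)-th branch. This gives \(R=R_1+R_2+R_3\) as a disjoint sum. The decomposition is unique, since each irreducible component of \(R\) lies on exactly one branch.

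Next I will determine the possible values of each \(R_i\). Because \(R\) is reduced and the \(i\)-th branch has only the two components \(B_i\) and \(B_i'\), the subdivisor \(R_i\) is a reduced subdivisor of \(B_i+B_i'\). Hence
\[
R_i\in\{0,\ B_i,\ B_i',\ B_i+B_i'\}.
\]
Unlike the \(II^*\) and \(III^*\) cases, no inequality on the types arises here. A branch of length two cannot contain two disjoint chains separated by a gap.

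For the converse, let \(R=R_1+R_2+R_3\) be nonzero and reduced with each \(R_i\) of the stated form. I will check the dual graph of each \(R_i\). If \(R_i\) is \(B_i\) or \(B_i'\), it is a single vertex, of type \(A_1\). If \(R_i=B_i+B_i'\), it consists of two vertices joined by one edge, since \(B_i\) and \(B_i'\) are smooth rational curves meeting transversely at one point; this is type \(A_2\). Distinct \(R_i\) are disjoint because \(C\notin\operatorname{Supp}(R)\) and different branches meet only \(C\). So the dual graph of \(R\) is a disjoint union of diagrams of types \(A_1\) and \(A_2\), and \(R\) is an ADER.

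There is no substantial obstacle. The only point requiring care is to justify that different branches are disconnected inside \(\operatorname{Supp}(R)\). This follows from the tree structure of \(\widetilde E_6\): every path between two branches passes through \(C\).
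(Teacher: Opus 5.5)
Your proposal is correct and follows essentially the same argument as the paper: you split $R$ along the three branches $B_i-B_i'$, which meet only through $C$, and you read off the four possible reduced subdivisors on each two-vertex branch. For the converse you identify these as disjoint $A_1$ and $A_2$ configurations, exactly as the paper does.
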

\begin{proof}
Since $C\not\subset \operatorname{Supp}(R)$,
the support of \(R\) is contained in the union of the three branches
\[
B_1-B_1',
\qquad
B_2-B_2',
\qquad
B_3-B_3'.
\]
These branches meet each other only through the component \(C\).  Hence, they are mutually disconnected inside \(\operatorname{Supp}(R)\).
Therefore \(R\) decomposes uniquely as a disjoint sum $R=R_1+R_2+R_3$
where \(R_i\) is supported on the branch \(B_i-B_i'\).
Since each branch is a chain with two vertices, the possible reduced subdivisors
supported on the \(i\)-th branch are
\[
0,\qquad B_i,\qquad B_i',\qquad B_i+B_i'.
\]

If \(R_i=B_i\) or \(R_i=B_i'\), then \(R_i\) is of type \(A_1\).  If
\(R_i=B_i+B_i'\), then \(R_i\) is of type \(A_2\).  
Thus every nonzero reduced
subdivisor satisfying the stated conditions is a disjoint union of Dynkin
diagrams of type \(A_1\) or \(A_2\), and hence is an ADER.
\end{proof}
In the case where $C\subset \operatorname{Supp}(R)$,
we use the following notation.  Let \(\Gamma\) be the connected component of
\(R\) containing \(C\).  Since \(\Gamma\) is connected, its intersection with
each branch is an initial segment.  Thus there exist integers $m_1,m_2,m_3\in\{0,1,2\}$
such that \(\Gamma\) contains exactly the first \(m_i\) components on the \(i\)-th branch.  
By symmetry among the three branches, we may renumber the
branches so that
\[
m_1\leq m_2\leq m_3.
\]
More explicitly,
\[
\Gamma
=
C+\sum_{i:m_i\geq1}B_i+\sum_{i:m_i=2}B_i'.
\]
We denote by \(B^{\mathrm{tail}}\) the remaining part of \(R\) supported on the
terminal components of branches with \(m_i=0\). 
More explicitly,
\[
B^{\mathrm{tail}}
\subset
\sum_{i:m_i=0}B_i'.
\]
Thus, in this case, we write
\[
R=\Gamma+B^{\mathrm{tail}}.
\]
\begin{prop}\label{prop:ADE-IVstar-with-branch}
We assume that \(F\) is of Kodaira type \(IV^*\), with notation as above.  Let
\(R\) be an ADER in \(F\).
We assume that
\[
C\subset \operatorname{Supp}(R).
\]
We write $R=\Gamma+B^{\mathrm{tail}}$
as above, where $
\Gamma
=
C+\sum_{i\,:\,m_i\geq1}B_i+\sum_{i\,:\,m_i=2}B_i'$.
After renumbering the three branches if necessary, we may assume that
$m_1\leq m_2\leq m_3$.
Then
\[
(m_1,m_2,m_3)\neq(2,2,2),
\]
and \(\Gamma\) is of type
\[
\begin{cases}
A_{1+m_1+m_2+m_3},
& \text{if at most two of } m_1,m_2,m_3 \text{ are nonzero},\\
D_4,
& \text{if } (m_1,m_2,m_3)=(1,1,1),\\
D_5,
& \text{if } (m_1,m_2,m_3)=(1,1,2),\\
E_6,
& \text{if } (m_1,m_2,m_3)=(1,2,2).
\end{cases}
\]
Moreover, \(B^{\mathrm{tail}}\) is a reduced subdivisor of
\[
\sum_{i\,:\,m_i=0}B_i'.
\]

Conversely, every reduced subdivisor satisfying these conditions is an ADER.
\end{prop}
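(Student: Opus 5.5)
The plan follows the pattern of Propositions~\ref{prop:ADE-IIstar-C} and~\ref{prop:ADE-IIIstar-C}: reduce everything to the shape of the subtree \(\Gamma\) of the \(\widetilde E_6\) dual graph. Since \(\Gamma\) is the connected component of \(R\) containing \(C\), and the dual graph is a tree with \(C\) as the unique branching vertex, the intersection of \(\Gamma\) with the \(i\)-th branch \(B_i-B_i'\) is an initial segment of length \(m_i\in\{0,1,2\}\). This justifies the displayed formula for \(\Gamma\). After renumbering we take \(m_1\le m_2\le m_3\).

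First, we identify the Dynkin type of \(\Gamma\).
\begin{enumerate}[(1)]
\item If at most two of the \(m_i\) are nonzero, then \(C\) has valency at most \(2\) in \(\Gamma\). Hence \(\Gamma\) is a chain with \(1+m_1+m_2+m_3\) vertices, of type \(A_{1+m_1+m_2+m_3}\).
\item If all three \(m_i\) are nonzero, then \(\Gamma\) is a star with center \(C\) and arm lengths \(m_1,m_2,m_3\).
\begin{enumerate}[(a)]
\item The arm lengths \((1,1,1)\) give \(D_4\).
\item The arm lengths \((1,1,2)\) give \(D_5\).
\item The arm lengths \((1,2,2)\) give \(E_6\).
\item The arm lengths \((2,2,2)\) make \(\Gamma\) the whole fiber, i.e.\ the affine diagram \(\widetilde E_6\). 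This is not a Dynkin diagram, so it is excluded for an ADER.
\end{enumerate}
\end{enumerate}
Alternatively, these identifications follow from the standard criterion for a star with arm lengths \(p\le q\le r\) (branch lengths counting the center being \(p+1,q+1,r+1\)) to be Dynkin, namely \(\frac1{p+1}+\frac1{q+1}+\frac1{r+1}>1\).

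Next, we determine the tail. Since \(\Gamma\) is a connected component of \(R\), any vertex of \(F\) adjacent to \(\Gamma\) but not in \(\Gamma\) cannot lie in \(R\).
\begin{enumerate}[(1)]
\item On a branch with \(m_i=0\), the component \(B_i\) is adjacent to \(C\). So only \(B_i'\) may belong to \(R\).
\item On a branch with \(m_i=1\), the component \(B_i'\) is adjacent to \(B_i\subset\Gamma\). So nothing else from that branch lies in \(R\).
\item On a branch with \(m_i=2\), no components remain.
\end{enumerate}
Hence \(B^{\mathrm{tail}}\) is a reduced subdivisor of \(\sum_{i:m_i=0}B_i'\). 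Each such \(B_i'\) is isolated from \(\Gamma\) and from the other \(B_j'\), so it contributes a component of type \(A_1\).

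For the converse, a reduced subdivisor of the stated form is the disjoint union of \(\Gamma\), whose type is \(A\), \(D_4\), \(D_5\) or \(E_6\) by the first step, together with isolated \(A_1\)'s coming from \(B^{\mathrm{tail}}\). Its dual graph is therefore a disjoint union of Dynkin diagrams, so it is an ADER.

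There is no real obstacle here; it is a finite check. The only point needing care is the bookkeeping under the renumbering convention \(m_1\le m_2\le m_3\): all orderings must be covered by symmetry, and the \(A\)-case must include configurations where some \(m_i=0\) while the other two lengths are arbitrary.
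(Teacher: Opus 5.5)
Your proposal is correct and follows the paper's proof essentially step by step. You use the chain case when at most two of the $m_i$ are nonzero, the star case where arm lengths $(1,1,1),(1,1,2),(1,2,2)$ give $D_4,D_5,E_6$ and $(2,2,2)$ is excluded as $\widetilde E_6$, the adjacency argument forcing $B^{\mathrm{tail}}$ into $\sum_{i:m_i=0}B_i'$, and the converse as a disjoint union of $\Gamma$ with isolated $A_1$'s; your optional check via $\frac{1}{p+1}+\frac{1}{q+1}+\frac{1}{r+1}>1$ is a harmless extra that the paper does not use.
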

\begin{proof}
If at most two of \(m_1,m_2,m_3\) are nonzero, then the dual graph of
\(\Gamma\) is a chain.  Hence \(\Gamma\) is of type
\[
A_{1+m_1+m_2+m_3}.
\]

We assume that all three of \(m_1,m_2,m_3\) are nonzero.  Then \(C\) is the unique
branching vertex of \(\Gamma\), and the three branch lengths from \(C\) are
\[
m_1,\qquad m_2,\qquad m_3.
\]
Since $m_1\leq m_2\leq m_3$
and \(m_i\in\{1,2\}\) for all \(i\), the possible triples are
\[
(1,1,1),\quad (1,1,2),\quad (1,2,2),\quad (2,2,2).
\]
The first three triples give respectively the Dynkin diagrams
\[
D_4,\qquad D_5,\qquad E_6.
\]
The last triple \((2,2,2)\) gives the full affine diagram
\(\widetilde E_6\), and hence is not of ADE type.  This gives the stated
possibilities for \(\Gamma\).

It remains to describe the components of \(R\) not contained in \(\Gamma\).
Since \(\Gamma\) is a connected component of \(R\), the vertex immediately
adjacent to \(\Gamma\) on any branch cannot belong to \(R\).  Hence a remaining
component can occur only on a branch with \(m_i=0\), and then the only possible
remaining component on that branch is the terminal component \(B_i'\).  Thus
\(B^{\mathrm{tail}}\) is a sum of some of the components
\[
B_i'\qquad (m_i=0).
\]

Conversely, every reduced subdivisor satisfying the stated conditions is a
disjoint union of \(\Gamma\), which is of ADE type, together with possibly some
isolated \(A_1\)-configurations.  Hence its dual graph is a disjoint union of
Dynkin diagrams of ADE type.  Therefore it is an ADER.
\end{proof}

Before describing the contraction, we introduce notation for the fibers obtained
from a fiber of type \(IV^*\) by contracting an ADER.

\begin{dfn}\label{dfn:IVstar-contracted-fibers}
Let
\[
F=3C+2(B_1+B_2+B_3)+B_1'+B_2'+B_3'=\sum_Em_EE
\]
be a fiber of type \(IV^*\), and let \(R\) be an ADER in \(F\).
Let \(\nu\colon X\rightarrow Y\) be the contraction of \(R\).
For every irreducible component \(E\not\subset R\), we write $\overline E:=\nu(E)$.
Then the resulting fiber is $\overline F=\sum_{E\not\subset R}m_E\overline E$.

\begin{enumerate}[\rm(1)]
\item If $C\not\subset \operatorname{Supp}(R)$,
and $R=R_1+R_2+R_3$
is as in Proposition~\ref{prop:ADE-IVstar-no-branch}, then we denote the
resulting fiber by
\[
\mathfrak{IV}^{*,\mathrm{nb}}(R_1,R_2,R_3).
\]

\item If $C\subset \operatorname{Supp}(R)$, and $R=\Gamma+B^{\mathrm{tail}}$
is as in Proposition~\ref{prop:ADE-IVstar-with-branch}, then we denote the
resulting fiber by
\[
\mathfrak{IV}^{*,\mathrm{br}}(\Gamma;B^{\mathrm{tail}}).
\]
\end{enumerate}
\end{dfn}
In both cases, the notation refers to the fiber obtained from \(F\) by deleting
the components contained in \(R\), while preserving the multiplicities of all
remaining components.  The rational double points are determined as follows:
each connected component of \(R\) is contracted to a rational double point of
the corresponding Dynkin type, and the surviving components adjacent to it pass
through the resulting rational double point.

The explicit lists of the fibers of types
\[
\mathfrak{IV}^{*,\mathrm{nb}}(R_1,R_2,R_3)
\qquad\text{and}\qquad
\mathfrak{IV}^{*,\mathrm{br}}(\Gamma;B^{\mathrm{tail}})
\]
are given in Appendices~\ref{app:IVstar-no-branch-list} and
\ref{app:IVstar-with-branch-list}, respectively.

\begin{prop}\label{prop:IVstar-contraction}
We assume that \(F\) is of Kodaira type \(IV^*\), with notation as above.
Let \(R\) be an ADER in \(F\), and let $\nu\colon X\rightarrow Y$
be the contraction of \(R\).  Then every irreducible component of
\(\overline F\) is a smooth rational curve.  
Moreover, with the notation of
Propositions~\ref{prop:ADE-IVstar-no-branch} and
\ref{prop:ADE-IVstar-with-branch}, \(\overline F\) is of type
\[
\mathfrak{IV}^{*,\mathrm{nb}}(R_1,R_2,R_3)
\]
if \(C\not\subset \operatorname{Supp}(R)\), and of type
\[
\mathfrak{IV}^{*,\mathrm{br}}(\Gamma;B^{\mathrm{tail}})
\]
if \(C\subset \operatorname{Supp}(R)\), in the sense of
Definition~\ref{dfn:IVstar-contracted-fibers}.
\end{prop}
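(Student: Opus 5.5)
The plan is to follow the proof of Proposition~\ref{prop:IIstar-contraction} verbatim, since the only input it used was that the dual graph of the fiber is a tree whose components meet transversely. The \(IV^*\)-fiber has dual graph \(\widetilde E_6\), which is a tree, and its components are smooth rational \((-2)\)-curves meeting transversely at distinct points.

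First, let \(E\) be an irreducible component of \(F\) not contained in \(R\), and put \(C_E:=\nu(E)\). Since \(\nu\) is an isomorphism at the generic point of \(E\), the restriction \(\nu|_E\colon E\to C_E\) is the normalization of \(C_E\). Take \(y\in C_E\cap\Sing(Y)\). Then \(\nu^{-1}(y)\) is a connected component \(\Delta\) of \(R\), which is a connected subtree of the \(\widetilde E_6\) diagram not containing the vertex \(E\).

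The key combinatorial point is that \(E\) is adjacent to at most one vertex of \(\Delta\). If \(E\) met two vertices of \(\Delta\), then together with a path in \(\Delta\) joining them we would get a cycle in a tree. Hence \(E\cap\nu^{-1}(y)=\{x\}\) is a single point. Moreover \((\nu^{-1}(y)\cdot E)_x=1\), because the components of a \(IV^*\)-fiber meet transversely and \(E\) meets only one component of \(\Delta\) at \(x\). Proposition~\ref{prop:smoothness-criterion} then gives smoothness of \(C_E\) at \(y\). Away from \(\Sing(Y)\), the curve \(C_E\) is isomorphic to an open subset of \(E\) and hence smooth. Therefore \(C_E\) is a smooth rational curve.

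Finally, the identification of \(\overline F\) with \(\mathfrak{IV}^{*,\mathrm{nb}}(R_1,R_2,R_3)\) or \(\mathfrak{IV}^{*,\mathrm{br}}(\Gamma;B^{\mathrm{tail}})\) is essentially definitional. Theorem~\ref{2.4} gives \(\overline F=\sum_{E\not\subset R}m_E\overline E\), which is exactly the divisor in Definition~\ref{dfn:IVstar-contracted-fibers}. Each connected component of \(R\) contracts to a rational double point of its Dynkin type, as classified in Propositions~\ref{prop:ADE-IVstar-no-branch} and~\ref{prop:ADE-IVstar-with-branch}, and the surviving components adjacent to it pass through that point. The decomposition \(R=R_1+R_2+R_3\) or \(R=\Gamma+B^{\mathrm{tail}}\) is supplied by these two propositions according as \(C\not\subset\operatorname{Supp}(R)\) or \(C\subset\operatorname{Supp}(R)\).

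There is no real obstacle here. The only point needing care is the tree argument ensuring a single transverse intersection point, which is what allows Proposition~\ref{prop:smoothness-criterion} to apply.
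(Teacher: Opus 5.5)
You follow the paper's proof closely. The gap is the sentence claiming $(\nu^{-1}(y)\cdot E)_x=1$ ``because the components of a $IV^*$-fiber meet transversely.'' In Proposition~\ref{prop:smoothness-criterion}, $\nu^{-1}(y)$ is the \emph{scheme-theoretic} fiber. For a rational double point its ideal is $\mathfrak m_y\mathcal O_X=\mathcal O_X(-Z_\Delta)$, where $Z_\Delta$ is Artin's fundamental cycle \cite{a66}, i.e.\ the highest root, and this cycle is non-reduced for types $D$ and $E$. Transversality only gives $(\Delta\cdot E)_x=1$. What you actually need is that the component of $\Delta$ met by $E$ has coefficient $1$ in $Z_\Delta$.

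That holds for type $A$, and for the $D_4$ and $D_5$ configurations of Proposition~\ref{prop:ADE-IVstar-with-branch}. It fails for $(m_1,m_2,m_3)=(1,2,2)$, i.e.\ $R=\Gamma=C+B_1+B_2+B_2'+B_3+B_3'$ of type $E_6$ (No.~18 of Appendix~\ref{app:IVstar-with-branch-list}). There the only surviving component $B_1'$ meets the spur $B_1$. Since $Z_{E_6}=3C+2B_1+2B_2+2B_3+B_2'+B_3'=F-B_1'$, one gets $(\nu^{-1}(y)\cdot B_1')_x=2$, so Proposition~\ref{prop:smoothness-criterion} does not apply.

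In this case the conclusion itself is false, so no refinement of the tree argument can rescue it. By Theorem~\ref{2.4}, $\overline F=\overline{B_1'}$ is an integral Cartier curve with $\overline F^2=0$. Adjunction on $Y$ then gives $p_a(\overline F)=1$, while the normalization of $\overline F$ is $B_1'\cong\mathbb P^1$. Hence $\delta_y(\overline F)=1$, and Proposition~\ref{prop:delta-one-node-cusp}\textup{(2)} shows that $\overline F$ is a rational curve with a cusp at the $E_6$ point, of underlying Kodaira type $II$. This is exactly the adjunction argument the paper uses for the $I_2$, $III$ and $IV$ cases.

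The paper's own proof makes the same ``transverse, hence multiplicity one'' inference. So the proposition needs amending: every component is smooth except in this single case (up to permuting branches). In the remaining cases the correct check is that $(Z_\Delta\cdot E)_x=1$, not merely transversality. More generally, whenever $R$ is all of $F$ except one reduced component $E_0$, one has $Z_R=F-E_0$ and $Z_R\cdot E_0=2$, so the image of $E_0$ is necessarily singular.
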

\begin{proof}
The proof is the same as that of Proposition~\ref{prop:IIstar-contraction}.
Since the dual graph of a \(IV^*\)-fiber is a tree, every surviving component
meets each contracted connected component in at most one point, transversely.
Hence Proposition~\ref{prop:smoothness-criterion} shows that the image of every
surviving component is smooth.  The explicit list follows from
Theorem~\ref{2.4} and Definition~\ref{dfn:IVstar-contracted-fibers}.
\end{proof}

\appendix

\section{List of fibers of type
\texorpdfstring{\(\mathfrak{II}^{*,\mathrm{nb}}\)}{II* nb}}
\label{app:IIstar-no-branch-list}

In this appendix, we list the fibers of type
\[
\mathfrak{II}^{*,\mathrm{nb}}(R_S,R_M,R_L).
\]
Here $R=R_S+R_M+R_L$
is as in Proposition~\ref{prop:ADE-IIstar-no-C}. 
Let \(\nu\colon X\rightarrow Y\) be the contraction of \(R\). For an
irreducible component \(D\) of \(F\) with
\(D\not\subset \operatorname{Supp}(R)\), we write $\overline D:=\nu(D)$.
We write $R_L=L_{j_1}+\cdots+L_{j_s}$
with $0\leq s\leq5$ and $1\leq j_1<\cdots<j_s\leq5$,
where the sum is understood to be \(0\) if \(s=0\).
We put 
\[J=\{j_1,\dots,j_s\}\subset\{1,\dots,5\}.\]
When \(s=0\), we put \(J=\emptyset\).
We define
\[
\Lambda_J
:=
\sum_{i\in\{1,\dots,5\}\setminus J}(6-i)\overline{L_i}.
\]
When \(J=\emptyset\), this means
\[
\Lambda_J=
5\overline{L_1}+4\overline{L_2}+3\overline{L_3}
+2\overline{L_4}+\overline{L_5},
\]
and when \(J=\{1,\dots,5\}\), we understand that
\[
\Lambda_J=0.
\]

\subsection*{The case \(R_S=0\)}

\begin{longtable}{c|c|p{0.36\textwidth}|p{0.42\textwidth}}
No.
&
\(\#\operatorname{Irr}(R)\)
&
\(R\)
&
\(\overline F\)
\\
\hline
\endfirsthead

No.
&
\(\#\operatorname{Irr}(R)\)
&
\(R\)
&
\(\overline F\)
\\
\hline
\endhead

\(1\)
&
\(s\)
&
\(L_{j_1}+\cdots+L_{j_s}\quad (1\leq s\leq5)\)
&
\(6\overline C+3\overline{S_1}
+4\overline{M_1}+2\overline{M_2}+\Lambda_J\)
\\
\hline

\(2\)
&
\(1+s\)
&
\(M_1+L_{j_1}+\cdots+L_{j_s}\quad (0\leq s\leq5)\)
&
\(6\overline C+3\overline{S_1}
+2\overline{M_2}+\Lambda_J\)
\\
\hline

\(3\)
&
\(1+s\)
&
\(M_2+L_{j_1}+\cdots+L_{j_s}\quad (0\leq s\leq5)\)
&
\(6\overline C+3\overline{S_1}
+4\overline{M_1}+\Lambda_J\)
\\
\hline

\(4\)
&
\(2+s\)
&
\(M_1+M_2+L_{j_1}+\cdots+L_{j_s}\quad (0\leq s\leq5)\)
&
\(6\overline C+3\overline{S_1}+\Lambda_J\)
\\
\hline

\end{longtable}

\subsection*{The case \(R_S=S_1\)}

\begin{longtable}{c|c|p{0.36\textwidth}|p{0.42\textwidth}}
No.
&
\(\#\operatorname{Irr}(R)\)
&
\(R\)
&
\(\overline F\)
\\
\hline
\endfirsthead

No.
&
\(\#\operatorname{Irr}(R)\)
&
\(R\)
&
\(\overline F\)
\\
\hline
\endhead

\(5\)
&
\(1+s\)
&
\(S_1+L_{j_1}+\cdots+L_{j_s}\quad (0\leq s\leq5)\)
&
\(6\overline C+4\overline{M_1}
+2\overline{M_2}+\Lambda_J\)
\\
\hline

\(6\)
&
\(2+s\)
&
\(S_1+M_1+L_{j_1}+\cdots+L_{j_s}\quad (0\leq s\leq5)\)
&
\(6\overline C+2\overline{M_2}+\Lambda_J\)
\\
\hline

\(7\)
&
\(2+s\)
&
\(S_1+M_2+L_{j_1}+\cdots+L_{j_s}\quad (0\leq s\leq5)\)
&
\(6\overline C+4\overline{M_1}+\Lambda_J\)
\\
\hline

\(8\)
&
\(3+s\)
&
\(S_1+M_1+M_2+L_{j_1}+\cdots+L_{j_s}\quad (0\leq s\leq5)\)
&
\(6\overline C+\Lambda_J\)
\\
\hline
\end{longtable}

In the following table, we record only the unlabeled configuration of
\(\operatorname{Supp}(\overline F)\); in particular, we do not distinguish
whether a branch comes from the \(M\)-side or from the \(L\)-side. Thus two
graphs are identified whenever they are isomorphic as configurations of
irreducible components and their intersection points. The component
\(\overline C\), which appears in every case, is represented by a thin
horizontal line. A solid black line represents an irreducible component
different from \(\overline C\), while a black dot \((\bullet)\) represents
an intersection point of irreducible components.

\newcommand{\IIstarNbFbarGraphRSzeroC}[2]{%
\begin{tikzpicture}[scale=0.50, baseline=-0.5ex]
  \tikzset{
    Ccomp/.style={line width=.3pt},
    comp/.style={line width=1.2pt},
    intpt/.style={circle, fill, inner sep=1.7pt}
  }

  \draw[Ccomp] (-2.65,0)--(2.65,0);

  \draw[comp] (0,0)--(0,-1.05);
  \node[intpt] at (0,0) {};

  \ifnum#1>0
    \draw[comp] (-1,0)--(-1,0.75);
    \node[intpt] at (-1,0) {};
  \fi
  \ifnum#1>1
    \draw[comp] (-1,0.75)--(-1,1.50);
    \node[intpt] at (-1,0.75) {};
  \fi

  \ifnum#2>0
    \draw[comp] (1,0)--(1,0.75);
    \node[intpt] at (1,0) {};
  \fi
  \ifnum#2>1
    \draw[comp] (1,0.75)--(1,1.50);
    \node[intpt] at (1,0.75) {};
  \fi
  \ifnum#2>2
    \draw[comp] (1,1.50)--(1,2.25);
    \node[intpt] at (1,1.50) {};
  \fi
  \ifnum#2>3
    \draw[comp] (1,2.25)--(1,3.00);
    \node[intpt] at (1,2.25) {};
  \fi
  \ifnum#2>4
    \draw[comp] (1,3.00)--(1,3.75);
    \node[intpt] at (1,3.00) {};
  \fi
\end{tikzpicture}%
}

\begin{longtable}{c|c}
Corresponding cases
&
Graph of \(\operatorname{Supp}(\overline F)\)
\\
\hline
\endfirsthead

Corresponding cases
&
Graph of \(\operatorname{Supp}(\overline F)\)
\\
\hline
\endhead

No.~1, \(s=1\)
&
\IIstarNbFbarGraphRSzeroC{2}{4}
\\
\hline

No.~1, \(s=2\)
&
\IIstarNbFbarGraphRSzeroC{2}{3}
\\
\hline

No.~1, \(s=3\)
&
\IIstarNbFbarGraphRSzeroC{2}{2}
\\
\hline

No.~1, \(s=4\);
No.~2--No.~3, \(s=3\)
&
\IIstarNbFbarGraphRSzeroC{1}{2}
\\
\hline

No.~1, \(s=5\);
No.~4, \(s=3\)
&
\IIstarNbFbarGraphRSzeroC{0}{2}
\\
\hline

No.~2--No.~3, \(s=0\)
&
\IIstarNbFbarGraphRSzeroC{1}{5}
\\
\hline

No.~2--No.~3, \(s=1\)
&
\IIstarNbFbarGraphRSzeroC{1}{4}
\\
\hline

No.~2--No.~3, \(s=2\)
&
\IIstarNbFbarGraphRSzeroC{1}{3}
\\
\hline

No.~2--No.~3, \(s=4\)
&
\IIstarNbFbarGraphRSzeroC{1}{1}
\\
\hline

No.~2--No.~3, \(s=5\);
No.~4, \(s=4\)
&
\IIstarNbFbarGraphRSzeroC{0}{1}
\\
\hline

No.~4, \(s=0\)
&
\IIstarNbFbarGraphRSzeroC{0}{5}
\\
\hline

No.~4, \(s=1\)
&
\IIstarNbFbarGraphRSzeroC{0}{4}
\\
\hline

No.~4, \(s=2\)
&
\IIstarNbFbarGraphRSzeroC{0}{3}
\\
\hline

No.~4, \(s=5\)
&
\IIstarNbFbarGraphRSzeroC{0}{0}
\\
\hline

\end{longtable}

When \(R_S=S_1\), the component \(S_1\) is contained in \(R\), and hence it
does not appear in \(\operatorname{Supp}(\overline F)\). In this case,
\(\operatorname{Supp}(\overline F)\) has no branching and no cycle. Therefore
its configuration is a chain, and it is determined only by the number of
irreducible components of \(\operatorname{Supp}(\overline F)\).

\section{List of fibers of type
\texorpdfstring{\(\mathfrak{II}^{*,\mathrm{br}}\)}{II* br}}
\label{app:IIstar-with-branch-list}

In this appendix, we list the fibers of type
\[
\mathfrak{II}^{*,\mathrm{br}}
(\Gamma;R_M^{\mathrm{tail}},R_L^{\mathrm{tail}}).
\]
Here $R=\Gamma+R_M^{\mathrm{tail}}+R_L^{\mathrm{tail}}$
is as in Proposition~\ref{prop:ADE-IIstar-C}. 
Let
\(\nu\colon X\rightarrow Y\) be the contraction of \(R\). 
We write
\(\overline D:=\nu(D)\) for each irreducible component
\(D\not\subset \operatorname{Supp}(R)\) of \(F\).
We write
$\Gamma
=
C+\sum_{i=1}^{\alpha}S_i
+\sum_{i=1}^{\beta}M_i
+\sum_{j=1}^{\gamma}L_j$
where $
\alpha\in\{0,1\},
\beta\in\{0,1,2\}$, and
$\gamma\in\{0,1,2,3,4,5\}$.
Here an empty sum is understood to be \(0\). We put
\[
\Gamma_M:=\sum_{i=1}^{\beta}M_i,
\qquad
\Gamma_L:=\sum_{j=1}^{\gamma}L_j.
\]
Thus
\[
\Gamma=C+\sum_{i=1}^{\alpha}S_i+\Gamma_M+\Gamma_L.
\]

The tails are given as follows. If \(\beta=0\), then
\[
R_M^{\mathrm{tail}}=\varepsilon M_2
\qquad
(\varepsilon\in\{0,1\}).
\]
If \(\beta>0\), then we put
\[
R_M^{\mathrm{tail}}=0.
\]
If \(\gamma\leq3\), then
\[
R_L^{\mathrm{tail}}
=
\sum_{j\in J'}L_j,
\qquad
J'\subset\{\gamma+2,\ldots,5\}.
\]
If \(\gamma\geq4\), then we put
\[
R_L^{\mathrm{tail}}=0.
\]
We put \(t:=\#J'\).
For each row, we put
\[
J:=\{1,\ldots,\gamma\}\cup J',
\]
where \(\{1,\ldots,\gamma\}\) is understood to be empty if \(\gamma=0\).
Thus \(J\subset\{1,\ldots,5\}\) is the set of indices \(j\) such that
\(L_j\subset \Gamma_L+R_L^{\mathrm{tail}}\). We define
\[
\Lambda_J
:=
\sum_{i\in\{1,\dots,5\}\setminus J}(6-i)\overline{L_i}.
\]
When \(J=\emptyset\), this means
\[
\Lambda_J=
5\overline{L_1}+4\overline{L_2}+3\overline{L_3}
+2\overline{L_4}+\overline{L_5},
\]
and when \(J=\{1,\ldots,5\}\), we understand that
\[
\Lambda_J=0.
\]

For convenience, the cases are numbered consecutively as
No.~1, No.~2, \dots.
The corresponding configurations of
\(\operatorname{Supp}(\overline F)\)
are summarized after the list.

\subsection*{The case \(R_S=0\)}

In this case, $\Gamma=C+M_1+\cdots+M_\beta+L_1+\cdots+L_\gamma$.

\begin{longtable}{c|c|p{0.4\textwidth}|p{0.35\textwidth}}
No.
&
\(\#\operatorname{Irr}(R)\)
&
\((\Gamma_M,\Gamma_L,R_M^{\mathrm{tail}},R_L^{\mathrm{tail}})\)
&
\(\overline F\)
\\
\hline
\endfirsthead

No.
&
\(\#\operatorname{Irr}(R)\)
&
\((\Gamma_M,\Gamma_L,R_M^{\mathrm{tail}},R_L^{\mathrm{tail}})\)
&
\(\overline F\)
\\
\hline
\endhead

\(1\)
&
\(1+\varepsilon+t\)
&
\(\displaystyle
(0,0,\varepsilon M_2,\sum_{j\in J'}L_j)
\quad
\bigl(J'\subset\{2,3,4,5\}\bigr)\)
&
\(3\overline{S_1}+4\overline{M_1}
+(2-2\varepsilon)\overline{M_2}+\Lambda_J\)
\\
\hline

\(2\)
&
\(2+\varepsilon+t\)
&
\(\displaystyle
(0,L_1,\varepsilon M_2,\sum_{j\in J'}L_j)
\quad
\bigl(J'\subset\{3,4,5\}\bigr)\)
&
\(3\overline{S_1}+4\overline{M_1}
+(2-2\varepsilon)\overline{M_2}+\Lambda_J\)
\\
\hline

\(3\)
&
\(3+\varepsilon+t\)
&
\(\displaystyle
(0,L_1+L_2,\varepsilon M_2,\sum_{j\in J'}L_j)
\quad
\bigl(J'\subset\{4,5\}\bigr)\)
&
\(3\overline{S_1}+4\overline{M_1}
+(2-2\varepsilon)\overline{M_2}+\Lambda_J\)
\\
\hline

\(4\)
&
\(4+\varepsilon+t\)
&
\(\displaystyle
(0,L_1+L_2+L_3,\varepsilon M_2,\sum_{j\in J'}L_j)
\quad
\bigl(J'\subset\{5\}\bigr)\)
&
\(3\overline{S_1}+4\overline{M_1}
+(2-2\varepsilon)\overline{M_2}+\Lambda_J\)
\\
\hline

\(5\)
&
\(5+\varepsilon\)
&
\((0,L_1+L_2+L_3+L_4,\varepsilon M_2,0)\)
&
\(3\overline{S_1}+4\overline{M_1}
+(2-2\varepsilon)\overline{M_2}+\Lambda_J\)
\\
\hline

\(6\)
&
\(6+\varepsilon\)
&
\((0,L_1+L_2+L_3+L_4+L_5,\varepsilon M_2,0)\)
&
\(3\overline{S_1}+4\overline{M_1}
+(2-2\varepsilon)\overline{M_2}\)
\\
\hline

\(7\)
&
\(2+t\)
&
\(\displaystyle
(M_1,0,0,\sum_{j\in J'}L_j)
\quad
\bigl(J'\subset\{2,3,4,5\}\bigr)\)
&
\(3\overline{S_1}+2\overline{M_2}+\Lambda_J\)
\\
\hline

\(8\)
&
\(3+t\)
&
\(\displaystyle
(M_1,L_1,0,\sum_{j\in J'}L_j)
\quad
\bigl(J'\subset\{3,4,5\}\bigr)\)
&
\(3\overline{S_1}+2\overline{M_2}+\Lambda_J\)
\\
\hline

\(9\)
&
\(4+t\)
&
\(\displaystyle
(M_1,L_1+L_2,0,\sum_{j\in J'}L_j)
\quad
\bigl(J'\subset\{4,5\}\bigr)\)
&
\(3\overline{S_1}+2\overline{M_2}+\Lambda_J\)
\\
\hline

\(10\)
&
\(5+t\)
&
\(\displaystyle
(M_1,L_1+L_2+L_3,0,\sum_{j\in J'}L_j)
\quad
\bigl(J'\subset\{5\}\bigr)\)
&
\(3\overline{S_1}+2\overline{M_2}+\Lambda_J\)
\\
\hline

\(11\)
&
\(6\)
&
\((M_1,L_1+L_2+L_3+L_4,0,0)\)
&
\(3\overline{S_1}+2\overline{M_2}+\Lambda_J\)
\\
\hline

\(12\)
&
\(7\)
&
\((M_1,L_1+L_2+L_3+L_4+L_5,0,0)\)
&
\(3\overline{S_1}+2\overline{M_2}\)
\\
\hline

\(13\)
&
\(3+t\)
&
\(\displaystyle
(M_1+M_2,0,0,\sum_{j\in J'}L_j)
\quad
\bigl(J'\subset\{2,3,4,5\}\bigr)\)
&
\(3\overline{S_1}+\Lambda_J\)
\\
\hline

\(14\)
&
\(4+t\)
&
\(\displaystyle
(M_1+M_2,L_1,0,\sum_{j\in J'}L_j)
\quad
\bigl(J'\subset\{3,4,5\}\bigr)\)
&
\(3\overline{S_1}+\Lambda_J\)
\\
\hline

\(15\)
&
\(5+t\)
&
\(\displaystyle
(M_1+M_2,L_1+L_2,0,\sum_{j\in J'}L_j)
\quad
\bigl(J'\subset\{4,5\}\bigr)\)
&
\(3\overline{S_1}+\Lambda_J\)
\\
\hline

\(16\)
&
\(6+t\)
&
\(\displaystyle
(M_1+M_2,L_1+L_2+L_3,0,\sum_{j\in J'}L_j)
\quad
\bigl(J'\subset\{5\}\bigr)\)
&
\(3\overline{S_1}+\Lambda_J\)
\\
\hline

\(17\)
&
\(7\)
&
\((M_1+M_2,L_1+L_2+L_3+L_4,0,0)\)
&
\(3\overline{S_1}+\Lambda_J\)
\\
\hline

\(18\)
&
\(8\)
&
\((M_1+M_2,L_1+L_2+L_3+L_4+L_5,0,0)\)
&
\(3\overline{S_1}\)
\\
\hline

\end{longtable}

\subsection*{The case \(R_S=S_1\)}

In this case, $\Gamma=C+S_1+M_1+\cdots+M_\beta+L_1+\cdots+L_\gamma$.
The case \((\beta,\gamma)=(2,5)\) is omitted, because then \(\Gamma\) is the
whole affine diagram \(\widetilde E_8\), and hence is not of ADE type.

\begin{longtable}{c|c|p{0.5\textwidth}|p{0.3\textwidth}}
No.
&
\(\#\operatorname{Irr}(R)\)
&
\((\Gamma_M,\Gamma_L,R_M^{\mathrm{tail}},R_L^{\mathrm{tail}})\)
&
\(\overline F\)
\\
\hline
\endfirsthead

No.
&
\(\#\operatorname{Irr}(R)\)
&
\((\Gamma_M,\Gamma_L,R_M^{\mathrm{tail}},R_L^{\mathrm{tail}})\)
&
\(\overline F\)
\\
\hline
\endhead

\(19\)
&
\(2+\varepsilon+t\)
&
\(\displaystyle
(0,0,\varepsilon M_2,\sum_{j\in J'}L_j)
\quad
\bigl(J'\subset\{2,3,4,5\}\bigr)\)
&
\(4\overline{M_1}
+(2-2\varepsilon)\overline{M_2}+\Lambda_J\)
\\
\hline

\(20\)
&
\(3+\varepsilon+t\)
&
\(\displaystyle
(0,L_1,\varepsilon M_2,\sum_{j\in J'}L_j)
\quad
\bigl(J'\subset\{3,4,5\}\bigr)\)
&
\(4\overline{M_1}
+(2-2\varepsilon)\overline{M_2}+\Lambda_J\)
\\
\hline

\(21\)
&
\(4+\varepsilon+t\)
&
\(\displaystyle
(0,L_1+L_2,\varepsilon M_2,\sum_{j\in J'}L_j)
\quad
\bigl(J'\subset\{4,5\}\bigr)\)
&
\(4\overline{M_1}
+(2-2\varepsilon)\overline{M_2}+\Lambda_J\)
\\
\hline

\(22\)
&
\(5+\varepsilon+t\)
&
\(\displaystyle
(0,L_1+L_2+L_3,\varepsilon M_2,\sum_{j\in J'}L_j)
\quad
\bigl(J'\subset\{5\}\bigr)\)
&
\(4\overline{M_1}
+(2-2\varepsilon)\overline{M_2}+\Lambda_J\)
\\
\hline

\(23\)
&
\(6+\varepsilon\)
&
\((0,L_1+L_2+L_3+L_4,\varepsilon M_2,0)\)
&
\(4\overline{M_1}
+(2-2\varepsilon)\overline{M_2}+\Lambda_J\)
\\
\hline

\(24\)
&
\(7+\varepsilon\)
&
\((0,L_1+L_2+L_3+L_4+L_5,\varepsilon M_2,0)\)
&
\(4\overline{M_1}
+(2-2\varepsilon)\overline{M_2}\)
\\
\hline

\(25\)
&
\(3+t\)
&
\(\displaystyle
(M_1,0,0,\sum_{j\in J'}L_j)
\quad
\bigl(J'\subset\{2,3,4,5\}\bigr)\)
&
\(2\overline{M_2}+\Lambda_J\)
\\
\hline

\(26\)
&
\(4+t\)
&
\(\displaystyle
(M_1,L_1,0,\sum_{j\in J'}L_j)
\quad
\bigl(J'\subset\{3,4,5\}\bigr)\)
&
\(2\overline{M_2}+\Lambda_J\)
\\
\hline

\(27\)
&
\(5+t\)
&
\(\displaystyle
(M_1,L_1+L_2,0,\sum_{j\in J'}L_j)
\quad
\bigl(J'\subset\{4,5\}\bigr)\)
&
\(2\overline{M_2}+\Lambda_J\)
\\
\hline

\(28\)
&
\(6+t\)
&
\(\displaystyle
(M_1,L_1+L_2+L_3,0,\sum_{j\in J'}L_j)
\quad
\bigl(J'\subset\{5\}\bigr)\)
&
\(2\overline{M_2}+\Lambda_J\)
\\
\hline

\(29\)
&
\(7\)
&
\((M_1,L_1+L_2+L_3+L_4,0,0)\)
&
\(2\overline{M_2}+\Lambda_J\)
\\
\hline

\(30\)
&
\(8\)
&
\((M_1,L_1+L_2+L_3+L_4+L_5,0,0)\)
&
\(2\overline{M_2}\)
\\
\hline

\(31\)
&
\(4+t\)
&
\(\displaystyle
(M_1+M_2,0,0,\sum_{j\in J'}L_j)
\quad
\bigl(J'\subset\{2,3,4,5\}\bigr)\)
&
\(\Lambda_J\)
\\
\hline

\(32\)
&
\(5+t\)
&
\(\displaystyle
(M_1+M_2,L_1,0,\sum_{j\in J'}L_j)
\quad
\bigl(J'\subset\{3,4,5\}\bigr)\)
&
\(\Lambda_J\)
\\
\hline

\(33\)
&
\(6+t\)
&
\(\displaystyle
(M_1+M_2,L_1+L_2,0,\sum_{j\in J'}L_j)
\quad
\bigl(J'\subset\{4,5\}\bigr)\)
&
\(\Lambda_J\)
\\
\hline

\(34\)
&
\(7+t\)
&
\(\displaystyle
(M_1+M_2,L_1+L_2+L_3,0,\sum_{j\in J'}L_j)
\quad
\bigl(J'\subset\{5\}\bigr)\)
&
\(\Lambda_J\)
\\
\hline

\(35\)
&
\(8\)
&
\((M_1+M_2,L_1+L_2+L_3+L_4,0,0)\)
&
\(\Lambda_J\)
\\
\hline

\end{longtable}

We first consider the case \(R_S=0\).
In this case, the component \(S_1\) appears in \(\operatorname{Supp}(\overline F)\). 
Put \[ \ell:=5-\gamma-t, \]
which is the number of components on the \(L\)-side appearing in \(\operatorname{Supp}(\overline F)\). 
In the following table, we record only the unlabeled configuration of \(\operatorname{Supp}(\overline F)\); in particular, we do not distinguish whether a branch comes from the \(M\)-side or from the \(L\)-side. 
Thus two graphs are identified whenever they are isomorphic as configurations of irreducible components and their intersection points. A solid black line represents an irreducible component, while a black dot \((\bullet)\) represents an intersection point of irreducible components.

\newcommand{\IIstarBrFbarGraphAlphaZero}[2]{%
\begin{tikzpicture}[scale=0.50, baseline=-0.5ex]
  \tikzset{
    comp/.style={line width=1.2pt},
    intpt/.style={circle, fill, inner sep=1.7pt}
  }

  \coordinate (O) at (0,0);

  \coordinate (S) at (0,-1.05);

  \coordinate (Mone) at (-0.75,0.55);
  \coordinate (Mtwo) at (-1.50,1.10);

  \coordinate (Lone)   at (0.75,0.55);
  \coordinate (Ltwo)   at (1.50,1.10);
  \coordinate (Lthree) at (2.25,1.65);
  \coordinate (Lfour)  at (3.00,2.20);
  \coordinate (Lfive)  at (3.75,2.75);

  \draw[comp] (O)--(S);

  \ifnum#1>0
    \draw[comp] (O)--(Mone);
  \fi
  \ifnum#1>1
    \draw[comp] (Mone)--(Mtwo);
  \fi

  \ifnum#2>0
    \draw[comp] (O)--(Lone);
  \fi
  \ifnum#2>1
    \draw[comp] (Lone)--(Ltwo);
  \fi
  \ifnum#2>2
    \draw[comp] (Ltwo)--(Lthree);
  \fi
  \ifnum#2>3
    \draw[comp] (Lthree)--(Lfour);
  \fi
  \ifnum#2>4
    \draw[comp] (Lfour)--(Lfive);
  \fi

  \ifnum#1>0
    \node[intpt] at (O) {};
  \fi
  \ifnum#2>0
    \node[intpt] at (O) {};
  \fi

  \ifnum#1>1
    \node[intpt] at (Mone) {};
  \fi

  \ifnum#2>1
    \node[intpt] at (Lone) {};
  \fi
  \ifnum#2>2
    \node[intpt] at (Ltwo) {};
  \fi
  \ifnum#2>3
    \node[intpt] at (Lthree) {};
  \fi
  \ifnum#2>4
    \node[intpt] at (Lfour) {};
  \fi
\end{tikzpicture}%
}

\begingroup
\small
\setlength{\tabcolsep}{4pt}
\setlength{\LTleft}{0pt}
\setlength{\LTright}{0pt}

\begin{longtable}{@{}
>{\raggedright\arraybackslash}p{0.62\textwidth}|
>{\centering\arraybackslash}p{0.30\textwidth}
@{}}
Corresponding cases
&
Graph of \(\operatorname{Supp}(\overline F)\)
\\
\hline
\endfirsthead

Corresponding cases
&
Graph of \(\operatorname{Supp}(\overline F)\)
\\
\hline
\endhead

No.~1--No.~6, \(\varepsilon=0,\ \ell=5\)
&
\IIstarBrFbarGraphAlphaZero{2}{5}
\\
\hline

No.~1--No.~6, \(\varepsilon=0,\ \ell=4\)
&
\IIstarBrFbarGraphAlphaZero{2}{4}
\\
\hline

No.~1--No.~6, \(\varepsilon=0,\ \ell=3\)
&
\IIstarBrFbarGraphAlphaZero{2}{3}
\\
\hline

No.~1--No.~6, \(\varepsilon=0,\ \ell=2\)
&
\IIstarBrFbarGraphAlphaZero{2}{2}
\\
\hline

No.~1--No.~6, \(\varepsilon=0,\ \ell=1\);
No.~1--No.~6, \(\varepsilon=1,\ \ell=2\);
No.~7--No.~12, \(\ell=2\)
&
\IIstarBrFbarGraphAlphaZero{2}{1}
\\
\hline

No.~1--No.~6, \(\varepsilon=0,\ \ell=0\);
No.~13--No.~18, \(\ell=2\)
&
\IIstarBrFbarGraphAlphaZero{2}{0}
\\
\hline

No.~1--No.~6, \(\varepsilon=1,\ \ell=5\);
No.~7--No.~12, \(\ell=5\)
&
\IIstarBrFbarGraphAlphaZero{1}{5}
\\
\hline

No.~1--No.~6, \(\varepsilon=1,\ \ell=4\);
No.~7--No.~12, \(\ell=4\)
&
\IIstarBrFbarGraphAlphaZero{1}{4}
\\
\hline

No.~1--No.~6, \(\varepsilon=1,\ \ell=3\);
No.~7--No.~12, \(\ell=3\)
&
\IIstarBrFbarGraphAlphaZero{1}{3}
\\
\hline

No.~1--No.~6, \(\varepsilon=1,\ \ell=1\);
No.~7--No.~12, \(\ell=1\)
&
\IIstarBrFbarGraphAlphaZero{1}{1}
\\
\hline

No.~1--No.~6, \(\varepsilon=1,\ \ell=0\);
No.~7--No.~12, \(\ell=0\);
No.~13--No.~18, \(\ell=1\)
&
\IIstarBrFbarGraphAlphaZero{1}{0}
\\
\hline

No.~13--No.~18, \(\ell=5\)
&
\IIstarBrFbarGraphAlphaZero{0}{5}
\\
\hline

No.~13--No.~18, \(\ell=4\)
&
\IIstarBrFbarGraphAlphaZero{0}{4}
\\
\hline

No.~13--No.~18, \(\ell=3\)
&
\IIstarBrFbarGraphAlphaZero{0}{3}
\\
\hline

No.~13--No.~18, \(\ell=0\)
&
\IIstarBrFbarGraphAlphaZero{0}{0}
\\
\hline

\end{longtable}

\endgroup

When \(R_S=S_1\), the component \(S_1\) is contained in \(R\), and hence it
does not appear in \(\operatorname{Supp}(\overline F)\). 
In this case,
\(\operatorname{Supp}(\overline F)\) has no branching and no cycle.  Therefore
its configuration is a chain, and it is determined only by the number of
irreducible components of \(\operatorname{Supp}(\overline F)\).

\section{List of fibers of type
\texorpdfstring{\(\mathfrak{III}^{*,\mathrm{nb}}\)}{III* nb}}
\label{app:IIIstar-no-branch-list}
In this appendix, we list the fibers of type
\[
\mathfrak{III}^{*,\mathrm{nb}}(R_S,R_M,R_L).
\]
Here $R=R_S+R_M+R_L$
is as in Proposition~\ref{prop:ADE-IIIstar-no-C}. 
Let \(\nu\colon X\rightarrow Y\) be the contraction of \(R\). We
write \(\overline D:=\nu(D)\) for each irreducible component
\(D\not\subset \operatorname{Supp}(R)\) of \(F\).
We write
$R_L=L_{j_1}+\cdots+L_{j_s}$
with \(0\leq s\leq3\) and
\(1\leq j_1<\cdots<j_s\leq3\), where the sum is understood to be
\(0\) if \(s=0\). We put
\[
J=\{j_1,\dots,j_s\}\subset\{1,2,3\}.
\]
When \(s=0\), we put \(J=\emptyset\).
We define
\[
\Lambda_J
:=
\sum_{i\in\{1,2,3\}\setminus J}(4-i)\overline{L_i}.
\]
When \(J=\emptyset\), this means
\[
\Lambda_J=3\overline{L_1}+2\overline{L_2}+\overline{L_3},
\]
and when \(J=\{1,2,3\}\), we understand that
\[
\Lambda_J=0.
\]
By the symmetry interchanging the \(M\)-branch and the \(L\)-branch, we may
assume, when listing the possibilities, that
\[
\#\operatorname{Irr}(R_M)\leq \#\operatorname{Irr}(R_L).
\]
The remaining cases are obtained by interchanging the \(M\)- and
\(L\)-branches. Thus, if $R_L=L_{j_1}+\cdots+L_{j_s}$, and $J=\{j_1,\ldots,j_s\}\subset\{1,2,3\}$,
then the range of \(s\) in each row is restricted by this convention.

\subsection*{The case \(R_S=0\)}

\begin{longtable}{c|c|p{0.36\textwidth}|p{0.42\textwidth}}
No.
&
\(\#\operatorname{Irr}(R)\)
&
\(R\)
&
\(\overline F\)
\\
\hline
\endfirsthead

No.
&
\(\#\operatorname{Irr}(R)\)
&
\(R\)
&
\(\overline F\)
\\
\hline
\endhead

\(1\)
&
\(s\)
&
\(L_{j_1}+\cdots+L_{j_s}\quad (1\leq s\leq3)\)
&
\(4\overline C+2\overline{S_1}
+3\overline{M_1}+2\overline{M_2}+\overline{M_3}+\Lambda_J\)
\\
\hline

\(2\)
&
\(1+s\)
&
\(M_1+L_{j_1}+\cdots+L_{j_s}\quad (1\leq s\leq3)\)
&
\(4\overline C+2\overline{S_1}
+2\overline{M_2}+\overline{M_3}+\Lambda_J\)
\\
\hline

\(3\)
&
\(1+s\)
&
\(M_2+L_{j_1}+\cdots+L_{j_s}\quad (1\leq s\leq3)\)
&
\(4\overline C+2\overline{S_1}
+3\overline{M_1}+\overline{M_3}+\Lambda_J\)
\\
\hline

\(4\)
&
\(1+s\)
&
\(M_3+L_{j_1}+\cdots+L_{j_s}\quad (1\leq s\leq3)\)
&
\(4\overline C+2\overline{S_1}
+3\overline{M_1}+2\overline{M_2}+\Lambda_J\)
\\
\hline

\(5\)
&
\(2+s\)
&
\(M_1+M_2+L_{j_1}+\cdots+L_{j_s}\quad (2\leq s\leq3)\)
&
\(4\overline C+2\overline{S_1}
+\overline{M_3}+\Lambda_J\)
\\
\hline

\(6\)
&
\(2+s\)
&
\(M_1+M_3+L_{j_1}+\cdots+L_{j_s}\quad (2\leq s\leq3)\)
&
\(4\overline C+2\overline{S_1}
+2\overline{M_2}+\Lambda_J\)
\\
\hline

\(7\)
&
\(2+s\)
&
\(M_2+M_3+L_{j_1}+\cdots+L_{j_s}\quad (2\leq s\leq3)\)
&
\(4\overline C+2\overline{S_1}
+3\overline{M_1}+\Lambda_J\)
\\
\hline

\(8\)
&
\(6\)
&
\(M_1+M_2+M_3+L_1+L_2+L_3\)
&
\(4\overline C+2\overline{S_1}\)
\\
\hline
\end{longtable}

\subsection*{The case \(R_S=S_1\)}

\begin{longtable}{c|c|p{0.36\textwidth}|p{0.42\textwidth}}
No.
&
\(\#\operatorname{Irr}(R)\)
&
\(R\)
&
\(\overline F\)
\\
\hline
\endfirsthead

No.
&
\(\#\operatorname{Irr}(R)\)
&
\(R\)
&
\(\overline F\)
\\
\hline
\endhead

\(9\)
&
\(1+s\)
&
\(S_1+L_{j_1}+\cdots+L_{j_s}\quad (0\leq s\leq3)\)
&
\(4\overline C
+3\overline{M_1}+2\overline{M_2}+\overline{M_3}+\Lambda_J\)
\\
\hline

\(10\)
&
\(2+s\)
&
\(S_1+M_1+L_{j_1}+\cdots+L_{j_s}\quad (1\leq s\leq3)\)
&
\(4\overline C
+2\overline{M_2}+\overline{M_3}+\Lambda_J\)
\\
\hline

\(11\)
&
\(2+s\)
&
\(S_1+M_2+L_{j_1}+\cdots+L_{j_s}\quad (1\leq s\leq3)\)
&
\(4\overline C
+3\overline{M_1}+\overline{M_3}+\Lambda_J\)
\\
\hline

\(12\)
&
\(2+s\)
&
\(S_1+M_3+L_{j_1}+\cdots+L_{j_s}\quad (1\leq s\leq3)\)
&
\(4\overline C
+3\overline{M_1}+2\overline{M_2}+\Lambda_J\)
\\
\hline

\(13\)
&
\(3+s\)
&
\(S_1+M_1+M_2+L_{j_1}+\cdots+L_{j_s}\quad (2\leq s\leq3)\)
&
\(4\overline C+\overline{M_3}+\Lambda_J\)
\\
\hline

\(14\)
&
\(3+s\)
&
\(S_1+M_1+M_3+L_{j_1}+\cdots+L_{j_s}\quad (2\leq s\leq3)\)
&
\(4\overline C+2\overline{M_2}+\Lambda_J\)
\\
\hline

\(15\)
&
\(3+s\)
&
\(S_1+M_2+M_3+L_{j_1}+\cdots+L_{j_s}\quad (2\leq s\leq3)\)
&
\(4\overline C+3\overline{M_1}+\Lambda_J\)
\\
\hline

\(16\)
&
\(7\)
&
\(S_1+M_1+M_2+M_3+L_1+L_2+L_3\)
&
\(4\overline C\)
\\
\hline

\end{longtable}

The graphs below describe the configuration of
\(\operatorname{Supp}(\overline F)\) in the case \(R_S=0\).
The component \(\overline C\), which appears in every case, is represented by a
thin horizontal line. A solid black line represents an irreducible component
different from \(\overline C\), while a black dot \((\bullet)\) represents an
intersection point of irreducible components.

\newcommand{\IIIstarNbFbarGraphRSzeroC}[2]{%
\begin{tikzpicture}[scale=0.52, baseline=-0.5ex]
  \tikzset{
    Ccomp/.style={line width=.3pt},
    comp/.style={line width=1.2pt},
    intpt/.style={circle, fill, inner sep=1.7pt}
  }

  \draw[Ccomp] (-2.45,0)--(2.45,0);

  \draw[comp] (0,0)--(0,-1.05);
  \node[intpt] at (0,0) {};

  \ifnum#1>0
    \draw[comp] (-1,0)--(-1,0.75);
    \node[intpt] at (-1,0) {};
  \fi
  \ifnum#1>1
    \draw[comp] (-1,0.75)--(-1,1.50);
    \node[intpt] at (-1,0.75) {};
  \fi
  \ifnum#1>2
    \draw[comp] (-1,1.50)--(-1,2.25);
    \node[intpt] at (-1,1.50) {};
  \fi

  \ifnum#2>0
    \draw[comp] (1,0)--(1,0.75);
    \node[intpt] at (1,0) {};
  \fi
  \ifnum#2>1
    \draw[comp] (1,0.75)--(1,1.50);
    \node[intpt] at (1,0.75) {};
  \fi
  \ifnum#2>2
    \draw[comp] (1,1.50)--(1,2.25);
    \node[intpt] at (1,1.50) {};
  \fi
\end{tikzpicture}%
}

\begin{longtable}{c|c}
Corresponding cases
&
Graph of \(\operatorname{Supp}(\overline F)\)
\\
\hline
\endfirsthead

Corresponding cases
&
Graph of \(\operatorname{Supp}(\overline F)\)
\\
\hline
\endhead

No.~1, \(s=1\)
&
\IIIstarNbFbarGraphRSzeroC{3}{2}
\\
\hline

No.~1, \(s=2\)
&
\IIIstarNbFbarGraphRSzeroC{3}{1}
\\
\hline

No.~1, \(s=3\)
&
\IIIstarNbFbarGraphRSzeroC{3}{0}
\\
\hline

No.~2--No.~4, \(s=1\)
&
\IIIstarNbFbarGraphRSzeroC{2}{2}
\\
\hline

No.~2--No.~4, \(s=2\)
&
\IIIstarNbFbarGraphRSzeroC{2}{1}
\\
\hline

No.~2--No.~4, \(s=3\)
&
\IIIstarNbFbarGraphRSzeroC{2}{0}
\\
\hline

No.~5--No.~7, \(s=2\)
&
\IIIstarNbFbarGraphRSzeroC{1}{1}
\\
\hline

No.~5--No.~7, \(s=3\)
&
\IIIstarNbFbarGraphRSzeroC{1}{0}
\\
\hline

No.~8
&
\IIIstarNbFbarGraphRSzeroC{0}{0}
\\
\hline

\end{longtable}

When \(R_S=S_1\), the component \(S_1\) is contained in \(R\), and hence it
does not appear in \(\operatorname{Supp}(\overline F)\). In this case,
\(\operatorname{Supp}(\overline F)\) has no branching and no cycle. Therefore
its configuration is a chain, and it is determined only by the number of
irreducible components of \(\operatorname{Supp}(\overline F)\).

\section{List of fibers of type
\texorpdfstring{\(\mathfrak{III}^{*,\mathrm{br}}\)}{III* br}}
\label{app:IIIstar-with-branch-list}

In this appendix, we list the fibers of type
\[
\mathfrak{III}^{*,\mathrm{br}}
(\Gamma;R_M^{\mathrm{tail}},R_L^{\mathrm{tail}}).
\]
Here $R=\Gamma+R_M^{\mathrm{tail}}+R_L^{\mathrm{tail}}$
is as in Proposition~\ref{prop:ADE-IIIstar-C}. 
Let \(\nu\colon X\rightarrow Y\) be the contraction of \(R\).
We write \(\overline D:=\nu(D)\) for each irreducible component \(D\not\subset \operatorname{Supp}(R)\) of \(F\).
We write $\Gamma=C
+\sum_{i=1}^{\alpha}S_i
+\sum_{i=1}^{\beta}M_i
+\sum_{j=1}^{\gamma}L_j$
where $\alpha\in\{0,1\}$ and $\beta,\gamma\in\{0,1,2,3\}$.
Here an empty sum is understood to be \(0\). We put
\[
R_S:=\sum_{i=1}^{\alpha}S_i,\qquad
\Gamma_M:=\sum_{i=1}^{\beta}M_i,\qquad
\Gamma_L:=\sum_{j=1}^{\gamma}L_j.
\]
Thus
\[
\Gamma=C+R_S+\Gamma_M+\Gamma_L.
\]
By the symmetry interchanging the \(M\)- and \(L\)-branches, we list only the
cases satisfying
\[
\beta\leq\gamma.
\]
The remaining cases are obtained by interchanging the \(M\)- and
\(L\)-branches.

The tails are given as follows. If \(\beta\leq1\), then
\[
R_M^{\mathrm{tail}}
=
\sum_{i\in I}M_i,
\qquad
I\subset\{\beta+2,\ldots,3\}.
\]
If \(\beta\geq2\), then we put
\[
R_M^{\mathrm{tail}}=0.
\]
The same convention applies to \(R_L^{\mathrm{tail}}\), with
\(M,\beta,I\) replaced by \(L,\gamma,J\), respectively.

For convenience, the cases are numbered consecutively as
No.~1, No.~2, \dots.
The corresponding configurations of
\(\operatorname{Supp}(\overline F)\)
are summarized after the list.

\subsection*{The case \(R_S=0\)}

In this case, $\Gamma=C+M_1+\cdots+M_\beta+L_1+\cdots+L_\gamma$.

\begin{longtable}{c|c|p{0.28\textwidth}|p{0.45\textwidth}}
No.
&
\(\#\operatorname{Irr}(R)\)
&
\((\Gamma_M,\Gamma_L,R_M^{\mathrm{tail}},R_L^{\mathrm{tail}})\)
&
\(\overline F\)
\\
\hline
\endfirsthead

No.
&
\(\#\operatorname{Irr}(R)\)
&
\((\Gamma_M,\Gamma_L,R_M^{\mathrm{tail}},R_L^{\mathrm{tail}})\)
&
\(\overline F\)
\\
\hline
\endhead

\(1\)&\(1\)&\((0,0,0,0)\)
&
\(2\overline{S_1}
+3\overline{M_1}+2\overline{M_2}+\overline{M_3}
+3\overline{L_1}+2\overline{L_2}+\overline{L_3}\)
\\
\hline

\(2\)&\(2\)&\((0,0,0,L_2)\)
&
\(2\overline{S_1}
+3\overline{M_1}+2\overline{M_2}+\overline{M_3}
+3\overline{L_1}+\overline{L_3}\)
\\

\(3\)&\(2\)&\((0,0,0,L_3)\)
&
\(2\overline{S_1}
+3\overline{M_1}+2\overline{M_2}+\overline{M_3}
+3\overline{L_1}+2\overline{L_2}\)
\\

\(4\)&\(2\)&\((0,L_1,0,0)\)
&
\(2\overline{S_1}
+3\overline{M_1}+2\overline{M_2}+\overline{M_3}
+2\overline{L_2}+\overline{L_3}\)
\\

\(5\)&\(2\)&\((0,0,M_2,0)\)
&
\(2\overline{S_1}
+3\overline{M_1}+\overline{M_3}
+3\overline{L_1}+2\overline{L_2}+\overline{L_3}\)
\\

\(6\)&\(2\)&\((0,0,M_3,0)\)
&
\(2\overline{S_1}
+3\overline{M_1}+2\overline{M_2}
+3\overline{L_1}+2\overline{L_2}+\overline{L_3}\)
\\
\hline

\(7\)&\(3\)&\((0,0,0,L_2+L_3)\)
&
\(2\overline{S_1}
+3\overline{M_1}+2\overline{M_2}+\overline{M_3}
+3\overline{L_1}\)
\\

\(8\)&\(3\)&\((0,L_1,0,L_3)\)
&
\(2\overline{S_1}
+3\overline{M_1}+2\overline{M_2}+\overline{M_3}
+2\overline{L_2}\)
\\

\(9\)&\(3\)&\((0,L_1+L_2,0,0)\)
&
\(2\overline{S_1}
+3\overline{M_1}+2\overline{M_2}+\overline{M_3}
+\overline{L_3}\)
\\

\(10\)&\(3\)&\((0,0,M_2,L_2)\)
&
\(2\overline{S_1}
+3\overline{M_1}+\overline{M_3}
+3\overline{L_1}+\overline{L_3}\)
\\

\(11\)&\(3\)&\((0,0,M_2,L_3)\)
&
\(2\overline{S_1}
+3\overline{M_1}+\overline{M_3}
+3\overline{L_1}+2\overline{L_2}\)
\\

\(12\)&\(3\)&\((0,0,M_3,L_2)\)
&
\(2\overline{S_1}
+3\overline{M_1}+2\overline{M_2}
+3\overline{L_1}+\overline{L_3}\)
\\

\(13\)&\(3\)&\((0,0,M_3,L_3)\)
&
\(2\overline{S_1}
+3\overline{M_1}+2\overline{M_2}
+3\overline{L_1}+2\overline{L_2}\)
\\

\(14\)&\(3\)&\((0,L_1,M_2,0)\)
&
\(2\overline{S_1}
+3\overline{M_1}+\overline{M_3}
+2\overline{L_2}+\overline{L_3}\)
\\

\(15\)&\(3\)&\((0,L_1,M_3,0)\)
&
\(2\overline{S_1}
+3\overline{M_1}+2\overline{M_2}
+2\overline{L_2}+\overline{L_3}\)
\\

\(16\)&\(3\)&\((0,0,M_2+M_3,0)\)
&
\(2\overline{S_1}
+3\overline{M_1}
+3\overline{L_1}+2\overline{L_2}+\overline{L_3}\)
\\

\(17\)&\(3\)&\((M_1,L_1,0,0)\)
&
\(2\overline{S_1}
+2\overline{M_2}+\overline{M_3}
+2\overline{L_2}+\overline{L_3}\)
\\
\hline

\(18\)&\(4\)&\((0,L_1+L_2+L_3,0,0)\)
&
\(2\overline{S_1}
+3\overline{M_1}+2\overline{M_2}+\overline{M_3}\)
\\

\(19\)&\(4\)&\((0,0,M_2,L_2+L_3)\)
&
\(2\overline{S_1}
+3\overline{M_1}+\overline{M_3}
+3\overline{L_1}\)
\\

\(20\)&\(4\)&\((0,0,M_3,L_2+L_3)\)
&
\(2\overline{S_1}
+3\overline{M_1}+2\overline{M_2}
+3\overline{L_1}\)
\\

\(21\)&\(4\)&\((0,L_1,M_2,L_3)\)
&
\(2\overline{S_1}
+3\overline{M_1}+\overline{M_3}
+2\overline{L_2}\)
\\

\(22\)&\(4\)&\((0,L_1,M_3,L_3)\)
&
\(2\overline{S_1}
+3\overline{M_1}+2\overline{M_2}
+2\overline{L_2}\)
\\

\(23\)&\(4\)&\((0,L_1+L_2,M_2,0)\)
&
\(2\overline{S_1}
+3\overline{M_1}+\overline{M_3}
+\overline{L_3}\)
\\

\(24\)&\(4\)&\((0,L_1+L_2,M_3,0)\)
&
\(2\overline{S_1}
+3\overline{M_1}+2\overline{M_2}
+\overline{L_3}\)
\\

\(25\)&\(4\)&\((0,0,M_2+M_3,L_2)\)
&
\(2\overline{S_1}
+3\overline{M_1}
+3\overline{L_1}+\overline{L_3}\)
\\

\(26\)&\(4\)&\((0,0,M_2+M_3,L_3)\)
&
\(2\overline{S_1}
+3\overline{M_1}
+3\overline{L_1}+2\overline{L_2}\)
\\

\(27\)&\(4\)&\((0,L_1,M_2+M_3,0)\)
&
\(2\overline{S_1}
+3\overline{M_1}
+2\overline{L_2}+\overline{L_3}\)
\\

\(28\)&\(4\)&\((M_1,L_1,0,L_3)\)
&
\(2\overline{S_1}
+2\overline{M_2}+\overline{M_3}
+2\overline{L_2}\)
\\

\(29\)&\(4\)&\((M_1,L_1+L_2,0,0)\)
&
\(2\overline{S_1}
+2\overline{M_2}+\overline{M_3}
+\overline{L_3}\)
\\

\(30\)&\(4\)&\((M_1,L_1,M_3,0)\)
&
\(2\overline{S_1}
+2\overline{M_2}
+2\overline{L_2}+\overline{L_3}\)
\\
\hline

\(31\)&\(5\)&\((0,L_1+L_2+L_3,M_2,0)\)
&
\(2\overline{S_1}
+3\overline{M_1}+\overline{M_3}\)
\\

\(32\)&\(5\)&\((0,L_1+L_2+L_3,M_3,0)\)
&
\(2\overline{S_1}
+3\overline{M_1}+2\overline{M_2}\)
\\

\(33\)&\(5\)&\((0,0,M_2+M_3,L_2+L_3)\)
&
\(2\overline{S_1}
+3\overline{M_1}
+3\overline{L_1}\)
\\

\(34\)&\(5\)&\((0,L_1,M_2+M_3,L_3)\)
&
\(2\overline{S_1}
+3\overline{M_1}
+2\overline{L_2}\)
\\

\(35\)&\(5\)&\((0,L_1+L_2,M_2+M_3,0)\)
&
\(2\overline{S_1}
+3\overline{M_1}
+\overline{L_3}\)
\\

\(36\)&\(5\)&\((M_1,L_1+L_2+L_3,0,0)\)
&
\(2\overline{S_1}
+2\overline{M_2}+\overline{M_3}\)
\\

\(37\)&\(5\)&\((M_1,L_1,M_3,L_3)\)
&
\(2\overline{S_1}
+2\overline{M_2}
+2\overline{L_2}\)
\\

\(38\)&\(5\)&\((M_1,L_1+L_2,M_3,0)\)
&
\(2\overline{S_1}
+2\overline{M_2}
+\overline{L_3}\)
\\

\(39\)&\(5\)&\((M_1+M_2,L_1+L_2,0,0)\)
&
\(2\overline{S_1}
+\overline{M_3}+\overline{L_3}\)
\\
\hline

\(40\)&\(6\)&\((0,L_1+L_2+L_3,M_2+M_3,0)\)
&
\(2\overline{S_1}
+3\overline{M_1}\)
\\

\(41\)&\(6\)&\((M_1,L_1+L_2+L_3,M_3,0)\)
&
\(2\overline{S_1}
+2\overline{M_2}\)
\\

\(42\)&\(6\)&\((M_1+M_2,L_1+L_2+L_3,0,0)\)
&
\(2\overline{S_1}
+\overline{M_3}\)
\\
\hline

\(43\)&\(7\)&\((M_1+M_2+M_3,L_1+L_2+L_3,0,0)\)
&
\(2\overline{S_1}\)
\\
\hline

\end{longtable}

\subsection*{The case \(R_S=S_1\)}

In this case, $\Gamma=C+S_1+M_1+\cdots+M_\beta+L_1+\cdots+L_\gamma$.

\begin{longtable}{c|c|p{0.28\textwidth}|p{0.43\textwidth}}
No.
&
\(\#\operatorname{Irr}(R)\)
&
\((R_M,R_L,R_M^{\mathrm{tail}},R_L^{\mathrm{tail}})\)
&
\(\overline F\)
\\
\hline
\endfirsthead

No.
&
\(\#\operatorname{Irr}(R)\)
&
\((R_M,R_L,R_M^{\mathrm{tail}},R_L^{\mathrm{tail}})\)
&
\(\overline F\)
\\
\hline
\endhead

\(44\)&\(2\)&\((0,0,0,0)\)
&
\(3\overline{M_1}+2\overline{M_2}+\overline{M_3}
+3\overline{L_1}+2\overline{L_2}+\overline{L_3}\)
\\
\hline

\(45\)&\(3\)&\((0,0,0,L_2)\)
&
\(3\overline{M_1}+2\overline{M_2}+\overline{M_3}
+3\overline{L_1}+\overline{L_3}\)
\\

\(46\)&\(3\)&\((0,0,0,L_3)\)
&
\(3\overline{M_1}+2\overline{M_2}+\overline{M_3}
+3\overline{L_1}+2\overline{L_2}\)
\\

\(47\)&\(3\)&\((0,L_1,0,0)\)
&
\(3\overline{M_1}+2\overline{M_2}+\overline{M_3}
+2\overline{L_2}+\overline{L_3}\)
\\

\(48\)&\(3\)&\((0,0,M_2,0)\)
&
\(3\overline{M_1}+\overline{M_3}
+3\overline{L_1}+2\overline{L_2}+\overline{L_3}\)
\\

\(49\)&\(3\)&\((0,0,M_3,0)\)
&
\(3\overline{M_1}+2\overline{M_2}
+3\overline{L_1}+2\overline{L_2}+\overline{L_3}\)
\\
\hline

\(50\)&\(4\)&\((0,0,0,L_2+L_3)\)
&
\(3\overline{M_1}+2\overline{M_2}+\overline{M_3}
+3\overline{L_1}\)
\\

\(51\)&\(4\)&\((0,L_1,0,L_3)\)
&
\(3\overline{M_1}+2\overline{M_2}+\overline{M_3}
+2\overline{L_2}\)
\\

\(52\)&\(4\)&\((0,L_1+L_2,0,0)\)
&
\(3\overline{M_1}+2\overline{M_2}+\overline{M_3}
+\overline{L_3}\)
\\

\(53\)&\(4\)&\((0,0,M_2,L_2)\)
&
\(3\overline{M_1}+\overline{M_3}
+3\overline{L_1}+\overline{L_3}\)
\\

\(54\)&\(4\)&\((0,0,M_2,L_3)\)
&
\(3\overline{M_1}+\overline{M_3}
+3\overline{L_1}+2\overline{L_2}\)
\\

\(55\)&\(4\)&\((0,0,M_3,L_2)\)
&
\(3\overline{M_1}+2\overline{M_2}
+3\overline{L_1}+\overline{L_3}\)
\\

\(56\)&\(4\)&\((0,0,M_3,L_3)\)
&
\(3\overline{M_1}+2\overline{M_2}
+3\overline{L_1}+2\overline{L_2}\)
\\

\(57\)&\(4\)&\((0,L_1,M_2,0)\)
&
\(3\overline{M_1}+\overline{M_3}
+2\overline{L_2}+\overline{L_3}\)
\\

\(58\)&\(4\)&\((0,L_1,M_3,0)\)
&
\(3\overline{M_1}+2\overline{M_2}
+2\overline{L_2}+\overline{L_3}\)
\\

\(59\)&\(4\)&\((0,0,M_2+M_3,0)\)
&
\(3\overline{M_1}
+3\overline{L_1}+2\overline{L_2}+\overline{L_3}\)
\\

\(60\)&\(4\)&\((M_1,L_1,0,0)\)
&
\(2\overline{M_2}+\overline{M_3}
+2\overline{L_2}+\overline{L_3}\)
\\
\hline

\(61\)&\(5\)&\((0,L_1+L_2+L_3,0,0)\)
&
\(3\overline{M_1}+2\overline{M_2}+\overline{M_3}\)
\\

\(62\)&\(5\)&\((0,0,M_2,L_2+L_3)\)
&
\(3\overline{M_1}+\overline{M_3}
+3\overline{L_1}\)
\\

\(63\)&\(5\)&\((0,0,M_3,L_2+L_3)\)
&
\(3\overline{M_1}+2\overline{M_2}
+3\overline{L_1}\)
\\

\(64\)&\(5\)&\((0,L_1,M_2,L_3)\)
&
\(3\overline{M_1}+\overline{M_3}
+2\overline{L_2}\)
\\

\(65\)&\(5\)&\((0,L_1,M_3,L_3)\)
&
\(3\overline{M_1}+2\overline{M_2}
+2\overline{L_2}\)
\\

\(66\)&\(5\)&\((0,L_1+L_2,M_2,0)\)
&
\(3\overline{M_1}+\overline{M_3}
+\overline{L_3}\)
\\

\(67\)&\(5\)&\((0,L_1+L_2,M_3,0)\)
&
\(3\overline{M_1}+2\overline{M_2}
+\overline{L_3}\)
\\

\(68\)&\(5\)&\((0,0,M_2+M_3,L_2)\)
&
\(3\overline{M_1}
+3\overline{L_1}+\overline{L_3}\)
\\

\(69\)&\(5\)&\((0,0,M_2+M_3,L_3)\)
&
\(3\overline{M_1}
+3\overline{L_1}+2\overline{L_2}\)
\\

\(70\)&\(5\)&\((0,L_1,M_2+M_3,0)\)
&
\(3\overline{M_1}
+2\overline{L_2}+\overline{L_3}\)
\\

\(71\)&\(5\)&\((M_1,L_1,0,L_3)\)
&
\(2\overline{M_2}+\overline{M_3}
+2\overline{L_2}\)
\\

\(72\)&\(5\)&\((M_1,L_1+L_2,0,0)\)
&
\(2\overline{M_2}+\overline{M_3}
+\overline{L_3}\)
\\

\(73\)&\(5\)&\((M_1,L_1,M_3,0)\)
&
\(2\overline{M_2}
+2\overline{L_2}+\overline{L_3}\)
\\
\hline

\(74\)&\(6\)&\((0,L_1+L_2+L_3,M_2,0)\)
&
\(3\overline{M_1}+\overline{M_3}\)
\\

\(75\)&\(6\)&\((0,L_1+L_2+L_3,M_3,0)\)
&
\(3\overline{M_1}+2\overline{M_2}\)
\\

\(76\)&\(6\)&\((0,0,M_2+M_3,L_2+L_3)\)
&
\(3\overline{M_1}+3\overline{L_1}\)
\\

\(77\)&\(6\)&\((0,L_1,M_2+M_3,L_3)\)
&
\(3\overline{M_1}+2\overline{L_2}\)
\\

\(78\)&\(6\)&\((0,L_1+L_2,M_2+M_3,0)\)
&
\(3\overline{M_1}+\overline{L_3}\)
\\

\(79\)&\(6\)&\((M_1,L_1+L_2+L_3,0,0)\)
&
\(2\overline{M_2}+\overline{M_3}\)
\\

\(80\)&\(6\)&\((M_1,L_1,M_3,L_3)\)
&
\(2\overline{M_2}+2\overline{L_2}\)
\\

\(81\)&\(6\)&\((M_1,L_1+L_2,M_3,0)\)
&
\(2\overline{M_2}+\overline{L_3}\)
\\

\(82\)&\(6\)&\((M_1+M_2,L_1+L_2,0,0)\)
&
\(\overline{M_3}+\overline{L_3}\)
\\
\hline

\(83\)&\(7\)&\((0,L_1+L_2+L_3,M_2+M_3,0)\)
&
\(3\overline{M_1}\)
\\

\(84\)&\(7\)&\((M_1,L_1+L_2+L_3,M_3,0)\)
&
\(2\overline{M_2}\)
\\

\(85\)&\(7\)&\((M_1+M_2,L_1+L_2+L_3,0,0)\)
&
\(\overline{M_3}\)
\\
\hline

\end{longtable}

The graphs below describe the configuration of
\(\operatorname{Supp}(\overline F)\) in the case \(R_S=0\).
A solid black line represents an irreducible component, while a black dot
\((\bullet)\) represents a point where irreducible components meet.

\newcommand{\IIIstarFbarGraphAlphaZero}[2]{%
\begin{tikzpicture}[scale=0.55, baseline=-0.5ex]
  \tikzset{
    comp/.style={line width=1.2pt},
    intpt/.style={circle, fill, inner sep=1.7pt}
  }

  \coordinate (O) at (0,0);

  \coordinate (S) at (0,-1.05);

  \coordinate (Mone)   at (-0.75,0.55);
  \coordinate (Mtwo)   at (-1.50,1.10);
  \coordinate (Mthree) at (-2.25,1.65);

  \coordinate (Lone)   at (0.75,0.55);
  \coordinate (Ltwo)   at (1.50,1.10);
  \coordinate (Lthree) at (2.25,1.65);

  \draw[comp] (O)--(S);

  \ifnum#1>0
    \draw[comp] (O)--(Mone);
  \fi
  \ifnum#1>1
    \draw[comp] (Mone)--(Mtwo);
  \fi
  \ifnum#1>2
    \draw[comp] (Mtwo)--(Mthree);
  \fi

  \ifnum#2>0
    \draw[comp] (O)--(Lone);
  \fi
  \ifnum#2>1
    \draw[comp] (Lone)--(Ltwo);
  \fi
  \ifnum#2>2
    \draw[comp] (Ltwo)--(Lthree);
  \fi

  \ifnum#1>0
    \node[intpt] at (O) {};
  \fi
  \ifnum#2>0
    \node[intpt] at (O) {};
  \fi

  \ifnum#1>1
    \node[intpt] at (Mone) {};
  \fi
  \ifnum#1>2
    \node[intpt] at (Mtwo) {};
  \fi

  \ifnum#2>1
    \node[intpt] at (Lone) {};
  \fi
  \ifnum#2>2
    \node[intpt] at (Ltwo) {};
  \fi
\end{tikzpicture}%
}

\begin{longtable}{c|c}
Corresponding cases
&
Graph of \(\operatorname{Supp}(\overline F)\)
\\
\hline
\endfirsthead

Corresponding cases
&
Graph of \(\operatorname{Supp}(\overline F)\)
\\
\hline
\endhead

No.~1
&
\IIIstarFbarGraphAlphaZero{3}{3}
\\
\hline

No.~2--No.~6
&
\IIIstarFbarGraphAlphaZero{3}{2}
\\
\hline

No.~7--No.~9, No.~16
&
\IIIstarFbarGraphAlphaZero{3}{1}
\\
\hline

No.~10--No.~15, No.~17
&
\IIIstarFbarGraphAlphaZero{2}{2}
\\
\hline

No.~18
&
\IIIstarFbarGraphAlphaZero{3}{0}
\\
\hline

No.~19--No.~30
&
\IIIstarFbarGraphAlphaZero{2}{1}
\\
\hline

No.~31, No.~32, No.~36
&
\IIIstarFbarGraphAlphaZero{2}{0}
\\
\hline

No.~33--No.~35, No.~37--No.~39
&
\IIIstarFbarGraphAlphaZero{1}{1}
\\
\hline

No.~40--No.~42
&
\IIIstarFbarGraphAlphaZero{1}{0}
\\
\hline

No.~43
&
\IIIstarFbarGraphAlphaZero{0}{0}
\\
\hline

\end{longtable}

When \(R_S=S_1\), the component \(S_1\) is contained in \(R\), and hence it
does not appear in \(\operatorname{Supp}(\overline F)\).
In this case,
\(\operatorname{Supp}(\overline F)\) has no branching and no cycle.  Therefore
the configuration of \(\operatorname{Supp}(\overline F)\) is determined only
by the number of its irreducible components.

\section{List of fibers of type
\texorpdfstring{\(\mathfrak{IV}^{*,\mathrm{nb}}\)}{IV* nb}}
\label{app:IVstar-no-branch-list}
In this appendix, we list the fibers of type 
\[ \mathfrak{IV}^{*,\mathrm{nb}}(R_1,R_2,R_3). \] 
Here $R=R_1+R_2+R_3$ is as in Proposition~\ref{prop:ADE-IVstar-no-branch}. 
Let \(\nu\colon X\rightarrow Y\) be the contraction of \(R\). We
write \(\overline D:=\nu(D)\) for each irreducible component
\(D\not\subset \operatorname{Supp}(R)\) of \(F\).
Since the three branches are symmetric, it is enough to list the triples \((R_1,R_2,R_3)\) up to permutation. We choose representatives so that
\[ \#\operatorname{Irr}(R_1)\leq \#\operatorname{Irr}(R_2)\leq \#\operatorname{Irr}(R_3). \]
For each \(i=1,2,3\), the possible values of \(R_i\) are 
\[ 0,\qquad B_i,\qquad B_i',\qquad B_i+B_i'. \]
We put 
\[ T_i:=B_i+B_i' \]
for each $i=1,2,3$.
\begin{longtable}{c|c|c|c} 
No. & \(\#\operatorname{Irr}(R)\) & \((R_1,R_2,R_3)\) & \(\overline F\) \\ \hline \endfirsthead 
No. & \(\#\operatorname{Irr}(R)\) & \((R_1,R_2,R_3)\) & \(\overline F\) \\ \hline \endhead 
\(1\)&\(1\)&\((0,0,B_3)\) & \(3\overline C+2\overline B_1+\overline B_1' +2\overline B_2+\overline B_2'+\overline B_3'\) \\ 
\(2\)&\(1\)&\((0,0,B_3')\) & \(3\overline C+2\overline B_1+\overline B_1' +2\overline B_2+\overline B_2'+2\overline B_3\) \\ \hline 
\(3\)&\(2\)&\((0,0,T_3)\) & \(3\overline C+2\overline B_1+\overline B_1' +2\overline B_2+\overline B_2'\) \\ 
\(4\)&\(2\)&\((0,B_2,B_3)\) & \(3\overline C+2\overline B_1+\overline B_1' +\overline B_2'+\overline B_3'\) \\ 
\(5\)&\(2\)&\((0,B_2,B_3')\) & \(3\overline C+2\overline B_1+\overline B_1' +\overline B_2'+2\overline B_3\) \\ 
\(6\)&\(2\)&\((0,B_2',B_3')\) & \(3\overline C+2\overline B_1+\overline B_1' +2\overline B_2+2\overline B_3\) \\ \hline 
\(7\)&\(3\)&\((0,B_2,T_3)\) & \(3\overline C+2\overline B_1+\overline B_1' +\overline B_2'\) \\ 
\(8\)&\(3\)&\((0,B_2',T_3)\) & \(3\overline C+2\overline B_1+\overline B_1' +2\overline B_2\) \\ 
\(9\)&\(3\)&\((B_1,B_2,B_3)\) & \(3\overline C+\overline B_1'+\overline B_2'+\overline B_3'\) \\ 
\(10\)&\(3\)&\((B_1,B_2,B_3')\) & \(3\overline C+\overline B_1'+\overline B_2'+2\overline B_3\) \\ 
\(11\)&\(3\)&\((B_1,B_2',B_3')\) & \(3\overline C+\overline B_1'+2\overline B_2+2\overline B_3\) \\ 
\(12\)&\(3\)&\((B_1',B_2',B_3')\) & \(3\overline C+2\overline B_1+2\overline B_2+2\overline B_3\) \\ \hline 
\(13\)&\(4\)&\((0,T_2,T_3)\) & \(3\overline C+2\overline B_1+\overline B_1'\) \\ 
\(14\)&\(4\)&\((B_1,B_2,T_3)\) & \(3\overline C+\overline B_1'+\overline B_2'\) \\ 
\(15\)&\(4\)&\((B_1,B_2',T_3)\) & \(3\overline C+\overline B_1'+2\overline B_2\) \\ 
\(16\)&\(4\)&\((B_1',B_2',T_3)\) & \(3\overline C+2\overline B_1+2\overline B_2\) \\ \hline 
\(17\)&\(5\)&\((B_1,T_2,T_3)\) & \(3\overline C+\overline B_1'\) \\ 
\(18\)&\(5\)&\((B_1',T_2,T_3)\) & \(3\overline C+2\overline B_1\) \\ 
\hline \(19\)&\(6\)&\((T_1,T_2,T_3)\) & \(3\overline C\) \\ \hline 
\end{longtable}

The graphs below describe the configuration of
\(\operatorname{Supp}(\overline F)\).
The component \(\overline C\), which appears in every case, is represented by a thin
horizontal line. A solid black line represents an irreducible component
different from \(\overline C\), while a black dot \((\bullet)\) represents an
intersection point of irreducible components.

\newcommand{\FbarGraphC}[3]{%
\begin{tikzpicture}[scale=0.48, baseline=-0.5ex]
  \tikzset{
    Ccomp/.style={line width=.3pt},
    comp/.style={line width=1.2pt},
    intpt/.style={circle, fill, inner sep=1.8pt}
  }

  \draw[Ccomp] (-2,0)--(2,0);

  \ifnum#1>0
    \draw[comp] (-1,0)--(-1,1);
    \node[intpt] at (-1,0) {};
  \fi
  \ifnum#1>1
    \draw[comp] (-1,1)--(-1,2);
    \node[intpt] at (-1,1) {};
  \fi

  \ifnum#2>0
    \draw[comp] (0,0)--(0,1);
    \node[intpt] at (0,0) {};
  \fi
  \ifnum#2>1
    \draw[comp] (0,1)--(0,2);
    \node[intpt] at (0,1) {};
  \fi

  \ifnum#3>0
    \draw[comp] (1,0)--(1,1);
    \node[intpt] at (1,0) {};
  \fi
  \ifnum#3>1
    \draw[comp] (1,1)--(1,2);
    \node[intpt] at (1,1) {};
  \fi
\end{tikzpicture}%
}

\begin{longtable}{c|c}
Corresponding cases
&
Graph of \(\operatorname{Supp}(\overline F)\)
\\
\hline
\endfirsthead

Corresponding cases
&
Graph of \(\operatorname{Supp}(\overline F)\)
\\
\hline
\endhead

No.~1, No.~2
&
\FbarGraphC{2}{2}{1}
\\
\hline

No.~3
&
\FbarGraphC{2}{2}{0}
\\
\hline

No.~4, No.~5, No.~6
&
\FbarGraphC{2}{1}{1}
\\
\hline

No.~7, No.~8
&
\FbarGraphC{2}{1}{0}
\\
\hline

No.~9, No.~10, No.~11, No.~12
&
\FbarGraphC{1}{1}{1}
\\
\hline

No.~13
&
\FbarGraphC{2}{0}{0}
\\
\hline

No.~14, No.~15, No.~16
&
\FbarGraphC{1}{1}{0}
\\
\hline

No.~17, No.~18
&
\FbarGraphC{1}{0}{0}
\\
\hline

No.~19
&
\FbarGraphC{0}{0}{0}
\\
\hline
\end{longtable}

\section{List of fibers of type
\texorpdfstring{\(\mathfrak{IV}^{*,\mathrm{br}}\)}{IV* br}}
\label{app:IVstar-with-branch-list}

In this appendix, we list the fibers of type \[ \mathfrak{IV}^{*,\mathrm{br}}(\Gamma;B^{\mathrm{tail}}). \]
Here $R=\Gamma+B^{\mathrm{tail}}$ is as in Proposition~\ref{prop:ADE-IVstar-with-branch}. 
Let \(\nu\colon X\rightarrow Y\) be the contraction of \(R\). We
write \(\overline D:=\nu(D)\) for each irreducible component
\(D\not\subset \operatorname{Supp}(R)\) of \(F\).
We write 
\[ \Gamma = C+\sum_{i:m_i\geq1}B_i+\sum_{i:m_i=2}B_i', \qquad m_i\in\{0,1,2\}, \]
with \(m_1\leq m_2\leq m_3\) after renumbering, and 
\[ (m_1,m_2,m_3)\neq(2,2,2). \] 
The remaining tail is a reduced subdivisor of
\[
\sum_{i:m_i=0}B_i'.
\]
Equivalently, for each \(i\) with \(m_i=0\), the component \(B_i'\) may or may
not be contained in \(B^{\mathrm{tail}}\).
For the table, we record the contribution on the \(i\)-th branch by 
\[ Q_i= \begin{cases} 0, & m_i=0,\ B_i'\not\subset B^{\mathrm{tail}},\\ B_i', & m_i=0,\ B_i'\subset B^{\mathrm{tail}},\\ B_i, & m_i=1,\\ T_i, & m_i=2, \end{cases} \qquad T_i:=B_i+B_i' \] 
for each $i=1,2,3$. 
Thus 
\[ R=C+Q_1+Q_2+Q_3. \]
Since the three branches are symmetric, it is enough to list the triples \((Q_1,Q_2,Q_3)\) up to permutation. 
The case \((Q_1,Q_2,Q_3)=(T_1,T_2,T_3)\) is excluded.

\begin{longtable}{c|c|c|c} 
No. & \(\#\operatorname{Irr}(R)\) & \((Q_1,Q_2,Q_3)\) & \(\overline F\) \\ \hline \endfirsthead 
No. & \(\#\operatorname{Irr}(R)\) & \((Q_1,Q_2,Q_3)\) & \(\overline F\) \\ \hline \endhead 
\(1\)&\(1\)&\((0,0,0)\) & \(2\overline B_1+\overline B_1' +2\overline B_2+\overline B_2' +2\overline B_3+\overline B_3'\) \\ \hline \(2\)&\(2\)&\((0,0,B_3)\) & \(2\overline B_1+\overline B_1' +2\overline B_2+\overline B_2' +\overline B_3'\) \\ \(3\)&\(2\)&\((0,0,B_3')\) & \(2\overline B_1+\overline B_1' +2\overline B_2+\overline B_2' +2\overline B_3\) \\ \hline \(4\)&\(3\)&\((0,0,T_3)\) & \(2\overline B_1+\overline B_1' +2\overline B_2+\overline B_2'\) \\ \(5\)&\(3\)&\((0,B_2,B_3)\) & \(2\overline B_1+\overline B_1' +\overline B_2'+\overline B_3'\) \\ \(6\)&\(3\)&\((0,B_2,B_3')\) & \(2\overline B_1+\overline B_1' +\overline B_2'+2\overline B_3\) \\ \(7\)&\(3\)&\((0,B_2',B_3')\) & \(2\overline B_1+\overline B_1' +2\overline B_2+2\overline B_3\) \\ \hline \(8\)&\(4\)&\((0,B_2,T_3)\) & \(2\overline B_1+\overline B_1' +\overline B_2'\) \\ 
\(9\)&\(4\)&\((0,B_2',T_3)\) & \(2\overline B_1+\overline B_1' +2\overline B_2\) \\ 
\(10\)&\(4\)&\((B_1,B_2,B_3)\) & \(\overline B_1'+\overline B_2'+\overline B_3'\) \\ \(11\)&\(4\)&\((B_1,B_2,B_3')\) & \(\overline B_1'+\overline B_2'+2\overline B_3\) \\ \(12\)&\(4\)&\((B_1,B_2',B_3')\) & \(\overline B_1'+2\overline B_2+2\overline B_3\) \\ \(13\)&\(4\)&\((B_1',B_2',B_3')\) & \(2\overline B_1+2\overline B_2+2\overline B_3\) \\ \hline \(14\)&\(5\)&\((0,T_2,T_3)\) & \(2\overline B_1+\overline B_1'\) \\ 
\(15\)&\(5\)&\((B_1,B_2,T_3)\) & \(\overline B_1'+\overline B_2'\) \\ 
\(16\)&\(5\)&\((B_1,B_2',T_3)\) & \(\overline B_1'+2\overline B_2\) \\ 
\(17\)&\(5\)&\((B_1',B_2',T_3)\) & \(2\overline B_1+2\overline B_2\) \\ \hline 
\(18\)&\(6\)&\((B_1,T_2,T_3)\) & \(\overline B_1'\) \\ 
\(19\)&\(6\)&\((B_1',T_2,T_3)\) & \(2\overline B_1\) \\ \hline 
\end{longtable} 

The graphs below describe the configuration of \(\operatorname{Supp}(\overline F)\). 
A solid black line represents an irreducible component, while a black dot
\((\bullet)\) represents a point where irreducible components meet.

\newcommand{\FbarGraph}[3]{%
\begin{tikzpicture}[scale=0.42, baseline=-0.5ex] 
\tikzset{ comp/.style={line width=1.1pt}, intpt/.style={circle, fill, inner sep=1.6pt} } 
\ifnum#1>0 \draw[comp] (0,0)--(0,1); 
\fi \ifnum#1>1 \draw[comp] (0,1)--(0,2); 
\node[intpt] at (0,1) {}; 
\fi \ifnum#2>0 
\draw[comp] (0,0)--(-0.866,-0.5); \fi \ifnum#2>1 \draw[comp] (-0.866,-0.5)--(-1.732,-1); \node[intpt] at (-0.866,-0.5) {}; \fi \ifnum#3>0 \draw[comp] (0,0)--(0.866,-0.5); \fi \ifnum#3>1 \draw[comp] (0.866,-0.5)--(1.732,-1); \node[intpt] at (0.866,-0.5) {}; \fi \ifnum#1>0 \ifnum#2>0 \node[intpt] at (0,0) {}; \fi \fi \ifnum#1>0 \ifnum#3>0 \node[intpt] at (0,0) {}; \fi \fi \ifnum#2>0 \ifnum#3>0 \node[intpt] at (0,0) {}; \fi \fi 
\end{tikzpicture}%
} 
\begin{longtable}{c|c} 
Corresponding cases & Graph of \(\operatorname{Supp}(\overline F)\) \\ \hline \endfirsthead 
Corresponding cases & Graph of \(\operatorname{Supp}(\overline F)\) \\ \hline \endhead 
No.~1 & \FbarGraph{2}{2}{2} \\ 
\hline 
No.~2, No.~3 & \FbarGraph{2}{2}{1} \\ \hline No.~4 & \FbarGraph{2}{2}{0} \\ \hline No.~5, No.~6, No.~7 & \FbarGraph{2}{1}{1} \\ \hline No.~8, No.~9 & \FbarGraph{2}{1}{0} \\ \hline No.~10, No.~11, No.~12, No.~13 & \FbarGraph{1}{1}{1} \\ 
\hline 
No.~14, No.~15, No.~16, No.~17 & \FbarGraph{2}{0}{0} \\
\hline 
No.~18, No.~19 & \FbarGraph{1}{0}{0} \\ 
\hline 
\end{longtable}

\section*{Declarations}

\subsection*{Conflict of interest}
The authors declare that they have no conflict of interest.

\subsection*{Funding}
No funding was received for conducting this study.

\subsection*{Data availability}
Data sharing is not applicable to this article as no datasets were generated or analyzed during the current study.

\subsection*{Author contributions}
All authors contributed to the conception and design of the study. 
The first draft of the manuscript was written by Taro Hayashi, and all authors commented on previous versions of the manuscript. 
All authors read and approved the final manuscript.


\begin{thebibliography}{99}
\bibitem{a02}
E. Artal-Bartolo, H. Tokunaga and D.-Q. Zhang, Miranda-Persson's problem on extremal elliptic $K3$ surfaces, Pacific J. Math. {\bf 202} (2002), no.~1, 37--72; MR1883969

\bibitem{a62}
M. Artin, Some numerical criteria for contractability of curves on algebraic surfaces, Amer. J. Math. {\bf 84} (1962), 485--496; MR0146182

\bibitem{a66}
M. Artin, On isolated rational singularities of surfaces, Amer. J. Math. {\bf 88} (1966), 129--136; MR0199191

\bibitem{ba18}
F. Balestrieri et al., Elliptic fibrations on covers of the elliptic modular surface of level 5, in {\it Women in numbers Europe II}, 159--197, Assoc. Women Math. Ser., 11, Springer, Cham, 2018 ; MR3882710

\bibitem{be15}
M.-J. Bertin et al., Classifications of elliptic fibrations of a singular K3 surface, in {\it Women in numbers Europe}, 17--49, Assoc. Women Math. Ser., 2, Springer, Cham, 2015 ; MR3596600

\bibitem{cko03}
F. Catanese, J.~H. Keum and K. Oguiso, Some remarks on the universal cover of an open $K3$ surface, Math. Ann. {\bf 325} (2003), no.~2, 279--286; MR1962049

\bibitem{c25}
P. Comparin et al., On strictly elliptic K3 surfaces and del Pezzo surfaces, Rev. Mat. Iberoam. {\bf 41} (2025), no.~4, 1489--1512; MR4912927


\bibitem{gs19}
A. Garbagnati and C. Salgado, Linear systems on rational elliptic surfaces and elliptic fibrations on K3 surfaces, J. Pure Appl. Algebra {\bf 223} (2019), no.~1, 277--300; MR3833460

\bibitem{gs20}
A. Garbagnati and C. Salgado, Elliptic fibrations on K3 surfaces with a non-symplectic involution fixing rational curves and a curve of positive genus, Rev. Mat. Iberoam. {\bf 36} (2020), no.~4, 1167--1206; MR4130832

\bibitem{h24}
T. Hayashi, Non-symplectic involutions of normal K3 surfaces associated with Hirzebruch surfaces, Geom. Dedicata {\bf 218} (2024), no.~6, Paper No. 112, 13 pp.; MR4813249

\bibitem{h26}
T. Hayashi, Orders of purely non-symplectic automorphisms of normal K3 surfaces, Bull. Malays. Math. Sci. Soc. {\bf 49} (2026), no.~1, Paper No. 25, 13 pp.; MR5006600

\bibitem{k06}
R.~N. Kloosterman, Classification of all Jacobian elliptic fibrations on certain $K3$ surfaces, J. Math. Soc. Japan {\bf 58} (2006), no.~3, 665--680; MR2254405

\bibitem{k63}
K. Kodaira, On compact analytic surfaces. II, Ann. of Math. (2) {\bf 77} (1963), 563--626; MR0184257


\bibitem{ku14}
A. Kumar, Elliptic fibrations on a generic Jacobian Kummer surface, J. Algebraic Geom. {\bf 23} (2014), no.~4, 599--667; MR3263663

\bibitem{ku18}
M. Kuwata and K. Utsumi, Mordell-Weil lattice of Inose's elliptic $K3$ surface arising from the product of 3-isogenous elliptic curves, J. Number Theory {\bf 190} (2018), 333--351; MR3805462


\bibitem{mp89}
R. Miranda and U.~A. Persson, Configurations of ${\rm I}_n$ fibers on elliptic $K3$ surfaces, Math. Z. {\bf 201} (1989), no.~3, 339--361; MR0999732


\bibitem{n96}
K. Nishiyama, The Jacobian fibrations on some $K3$ surfaces and their Mordell-Weil groups, Japan. J. Math. (N.S.) {\bf 22} (1996), no.~2, 293--347; MR1432379

\bibitem{o89}
K. Oguiso, On Jacobian fibrations on the Kummer surfaces of the product of nonisogenous elliptic curves, J. Math. Soc. Japan {\bf 41} (1989), no.~4, 651--680; MR1013073

\bibitem{schs13}
M. Sch\"utt and A. Schweizer, On the uniqueness of elliptic K3 surfaces with maximal singular fibre, Ann. Inst. Fourier (Grenoble) {\bf 63} (2013), no.~2, 689--713; MR3112845

\bibitem{s00}
I. Shimada, On elliptic $K3$ surfaces, Michigan Math. J. {\bf 47} (2000), no.~3, 423--446; MR1813537

\bibitem{sz01}
I. Shimada and D.-Q. Zhang, Classification of extremal elliptic $K3$ surfaces and fundamental groups of open $K3$ surfaces, Nagoya Math. J. {\bf 161} (2001), 23--54; MR1820211

\bibitem{s07}
I. Shimada, On normal $K3$ surfaces, Michigan Math. J. {\bf 55} (2007), no.~2, 395--416; MR2369942


\bibitem{s03}
T. Shioda, The elliptic $K3$ surfaces with a maximal singular fibre, C. R. Math. Acad. Sci. Paris {\bf 337} (2003), no.~7, 461--466; MR2023754

\bibitem{si77}
T. Shioda and H. Inose, On singular $K3$ surfaces, Complex analysis and algebraic geometry, 119--136, Iwanami Shoten Publishers, Tokyo, 1977; MR0441982


\bibitem{sch07}
M. Sch\"utt, The maximal singular fibres of elliptic $K3$ surfaces, Arch. Math. (Basel) {\bf 87} (2006), no.~4, 309--319; MR2263477

\bibitem{schs10}
M. Sch\"{u}tt and T. Shioda, Elliptic surfaces, In Algebraic geometry in East Asia—Seoul 2008, 51–160, Adv. Stud. Pure Math., {\bf 60}, Math. Soc. Japan, Tokyo (2010) ; MR2732092

\bibitem{u10}
K. Utsumi, On the structure of certain $K3$ surfaces, Comment. Math. Univ. St. Pauli {\bf 59} (2010), no.~1, 29--49; MR2730091

\bibitem{u12}
K. Utsumi, Weierstrass equations for Jacobian fibrations on a certain $K3$ surface, Hiroshima Math. J. {\bf 42} (2012), no.~3, 355--383; MR3050126

\bibitem{u16}
K. Utsumi, Jacobian fibrations on the singular $K3$ surface of discriminant $3$, J. Math. Soc. Japan {\bf 68} (2016), no.~3, 1133--1146; MR3523541

\bibitem{u23}
K. Utsumi, The Mordell-Weil lattice of an Inose surface arising from isogenous elliptic curves, Comment. Math. Univ. St. Pauli {\bf 71} (2023), 25--35; MR4762180

\end{thebibliography}
\end{document}